\newtheorem{lemma}{Lemma}[section]
\newtheorem{theorem}{Theorem}[section]
\newtheorem{definition}{Definition}[section]
\newtheorem{proposition}{Proposition}[section]
\newtheorem{remark}{Remark}[section]
\numberwithin{equation}{section}
\newcommand{\R}{\mathbb{R}}
\renewcommand{\S}{\mathbb{S}}
\newcommand{\FP}{\mathbf{P}}
\newcommand{\FI}{\mathbf{I}}
\newcommand{\FX}{\mathbf{X}}
\newcommand{\FY}{\mathbf{Y}}
\newcommand{\CA}{\mathcal{A}}
\newcommand{\CE}{\mathcal{E}}
\newcommand{\CI}{\mathcal{I}}
\newcommand{\CJ}{\mathcal{J}}
\newcommand{\CK}{\mathcal{K}}
\newcommand{\CL}{\mathcal{L}}
\newcommand{\na}{\nabla}
\newcommand{\al}{\alpha}
\newcommand{\ga}{\gamma}
\newcommand{\om}{\omega}
\newcommand{\Om}{\Omega}
\newcommand{\la}{\lambda}
\newcommand{\de}{\delta}
\newcommand{\si}{\sigma}
\newcommand{\pa}{\partial}
\newcommand{\ka}{\kappa}
\newcommand{\eps}{\epsilon}
\newcommand{\ta}{\theta}
\newcommand{\vth}{\vartheta}
\newcommand{\vho}{\varrho}
\newcommand{\vps}{\varepsilon}
\newcommand{\Ga}{\Gamma}
\newcommand{\eqdef}{\overset{\mbox{\tiny{def}}}{=}}
\begin{document}
\title[boundary value problem for Boltzmann equation with soft potential]{The initial boundary value problem for the Boltzmann equation with soft potential}

\author[S.-Q. Liu]{Shuangqian Liu}
\address[SQL]{Department of Mathematics, Jinan University, Guangzhou 510632, P.R.~China}
\email{tsqliu@jnu.edu.cn}

\author[X.-F. Yang]{Xiongfeng Yang}
\address[XFY]{School of Mathematical Science, MOE-LSC and SHL-MAC, Shanghai Jiao Tong University,
 Shanghai, 200240, P.R. China}
\email{xf-yang@sjtu.edu.cn}


\begin{abstract}
Boundary effects are central to the dynamics of the dilute particles governed by Boltzmann equation. In this paper, we study
both the diffuse reflection and the specular reflection boundary value problems for Boltzmann equation with soft potential, in which the collision kernel is ruled by the inverse power law. For the diffuse reflection boundary condition, based on an $L^2$ argument and its interplay with intricate $L^\infty$ analysis for the linearized Boltzmann equation, we first establish the global existence and then obtain the exponential decay in $L^\infty$ space for the nonlinear Boltzmann equation in general classes of bounded domain. It turns out that the zero lower bound of the collision frequency and the singularity of the collision kernel lead to some new difficulties for achieving the {\it a priori} $L^\infty$ estimates and time decay rates of the solution. In the course of the proof, we capture some new properties of the probability integrals along the stochastic cycles and improve the $L^2-L^\infty$ theory to give a more direct approach to overcome those difficulties.  As to the specular reflection condition, our key contribution is to develop a new time-velocity weighted $L^\infty$ theory so that we could deal with the greater difficulties stemmed from the complicated velocity relations among the specular cycles and the zero lower bound of the collision frequency. From this new point, we are also able to prove the solutions of the linearized Boltzmann equation tend to equilibrium exponentially in $L^\infty$ space with the aid of the $L^2$ theory and a bootstrap argument. These methods in the latter case can be applied to the Boltzmann equation with soft potential for all other types of boundary condition.
\end{abstract}
\maketitle

\thispagestyle{empty}
\tableofcontents

\section{Introduction}\label{In}

\subsection{The problem and background}
Boundary effects should been taken into account when we study the dynamics of rarefied gas governed by the Boltzmann equation in a bounded domain.
There are several standard classes of boundary conditions for Boltzmann equation, cf. \cite[pp.716]{Guo-2010}.
In this paper, we consider the the Boltzmann equation
\begin{equation}\label{be}
\pa_tF+v\cdot\na_xF=Q(F,F),\ \ (x,v)\in\Omega\times\R^3,\ t>0,
\end{equation}
with initial data
\begin{equation}\label{id}
F(0,x,v)=F_0(x,v),\ \ (x,v)\in\Omega\times\R^3,
\end{equation}
and either of the following boundary conditions:
\begin{itemize}
\item
The diffuse reflection boundary condition
\begin{equation}\label{dbd}
F(t,x,v)|_{n(x)\cdot v<0}=\mu(v)\int_{n(x)\cdot v'>0}F(t,x,v')(n(x)\cdot v')dv',\ x\in\pa\Omega,\ t\geq0;
\end{equation}
\item The specular reflection boundary condition
\begin{equation}\label{sbd}
F(t,x,v)|_{n(x)\cdot v<0}=F(t,x,R_xv),\ R_xv=v-2(v\cdot n(x))n(x),\ x\in\pa\Omega,\ t\geq0.
\end{equation}
\end{itemize}
Here, $F(t,x,v)\geq0$ denotes the density distribution function of the gas particles at time $t\geq0$, position $x\in\Omega$, and velocity $v\in\R^3$,
$\Omega$ is a bounded domain in $\R^3$, $n(x)$ is the outward pointing unit norm vector at boundary $x\in\pa\Omega$ and $\mu(v)$ stands for the global Maxwellian which is normalized as
\begin{equation*}
\mu(v)=\frac{1}{2\pi}e^{-\frac{|v|^2}{2}},
\end{equation*}
so that
\begin{equation}\label{mu.n}
\int_{n(x)\cdot v>0}\mu(v)(n(x)\cdot v)dv=1.
\end{equation}
Let $(u,v)$ and $(u', v')$ be the velocities of the particles before and after the collision, which satisfy
\begin{eqnarray}\label{v.re}
\left\{\begin{array}{lll}
&v'=v+[(u-v)\cdot\omega]\om,\ \ u'=u-[(u-v)\cdot\omega]\om,\\[2mm]
&|u|^2+|v|^2=|u'|^2+|v'|^2.\end{array}\right.
\end{eqnarray}
The Boltzmann collision operator $Q(\cdot,\cdot)$ is given as the following non-symmetric form
\begin{equation*}
\begin{split}
Q(F_1,F_2)=&\int_{\R^3\times\S^2}|u-v|^{\varrho}b_0(\ta)[F_1(u')F_2(v')-F_1(u)F_2(v)]dud\omega\\
=&Q^{\text{gain}}(F_1,F_2)-Q^{\text{loss}}(F_1,F_2),
\end{split}
\end{equation*}
where the exponent is $\varrho=1-\frac{4}{s}$ with inverse power $1<s<4$ and $\cos\ta=\omega\cdot\frac{u-v}{|u-v|}$. Through the paper, we assume
\begin{equation}\label{soft}
-3<\varrho<0,\ 0<b_0(\ta)\leq C\cos\ta,
\end{equation}
which is so-called soft potentials with Grad's angular cutoff. Traditionally, one calls the hard potentials case when $\varrho\in(0, 1]$,
the Maxwellian molecules case when $\varrho= 0$, as well as the soft potentials case
when $\varrho\in(-3, 0)$.

The boundary condition \eqref{dbd} says the incoming particles are a probability average of the outgoing particles, while the boundary condition \eqref{sbd} reveals that the gas particles elastically collide against
the wall like billiard balls.

The boundary effects in kinetic equations are fundamental to the dynamics of gas, for instance, the phenomena of slip boundary layer, thermal creep, curvature effects, and singularity of propagation due to the boundary \cite{So} can be understood only with the knowledge of the interaction mechanism of the particles with the boundary. Owing to the importance of the boundary effects, there have been many achievements in the mathematical study of different aspect of the Boltzmann boundary value problems, see \cite{Cer-92, CIP, CC, DL-VPB, ELM1, ELM2, Kim, LY-im, MS-03, Mis} and references therein. In what follows, we mention some works related to the current study of the paper.
Hamdache \cite{Ha-92}
constructed the global renormalization solution to the Boltzmann equation in the case of hard potential with isothermal Maxwell boundary condition which in fact extends the pioneering work \cite{DL} for the Cauchy problem to the initial boundary value problem. Later on, Arkeryd-Cercinagani \cite{AC-93} generalized the results in \cite{Ha-92} to more extensive situations including the case when the
boundaries are not isothermal and the velocity is bounded. Arkeryd-Maslova \cite{AM-94} then removed the restriction to the bounded velocity introduced in \cite{AC-93} to study the similar issue for the Boltzmann equation and the BGK model. Except for the topic concerning the existence of the weak solution to the Botlzmann equation with initial boundary value problem mentioned above, another interesting problem in the Boltzmann study is to prove existence and uniqueness of the solution, as well as their time decay toward an absolute Maxwllian, at the appearance of compatible physical boundary conditions in a general domain, cf. \cite{Gr-58,GrII}. Compared with the study for the Cauchy problem in the whole space, to our best knowledgement, there are much less rigorous mathematical results of uniqueness, regularity or time-decay for the Boltzmann solutions toward a Maxwellian in bounded domain.
Although it was announced in \cite{SA-77} that the solutions to the Boltzmann equation near a Maxwellian would tend exponentially to the same equilibrium in a smooth bounded convex domain with specular reflection boundary condition, there is no complete rigorous proof.
Ukai \cite{Ukai-86} made a rough outline for proving the existence and time convergence to a global Maxwellian for the initial boundary value problem with hard potential. Golse-Perthame-Sulem \cite{GPS} investigated the boundary layer of stationary Boltzmann equation in one spatial dimension with specular reflection boundary condition in the case of hard spheres model $(\varrho=1)$. Liu-Yu \cite{LY-IBP-07,LY-DCDS}
studied the stationary boundary layers and the propagating
fluid waves of the initial boundary value problem for the Botlzmann equation in half space by means of the Green's function introduced in \cite{LY-GF-04}. Based on an elementary energy method, Yang-Zhao \cite{YZ} proved the stability of the rarefaction waves for the one dimensional Botlzmann equation in half space with specular reflection boundary condition. Under the assumption that a priori strong Sobolev estimates can be
verified, Desvillettes and Villani \cite{Des-90, DV-05, V2} recently established an almost exponential decay rate for
Boltzmann solutions with large amplitude for general collision kernels and general
boundary conditions.
It should be pointed out that many of the natural
physical boundary conditions create singularities in general domains \cite{Kim-11}, for which the Sobolev estimates break down in the crucial
elliptic estimates for the macroscopic part \cite{G-IUMJ,G06}. 
A new $L^2-L^\infty$ theory was developed in \cite{Guo-2010} to obtain the global existence and the exponential decay rates of the
 solution around a global Maxwellian in the case of hard potentials for four basic types of boundary conditions: in flow, bounce back reflection, specular reflection and diffuse refection, we refer to \cite{EGKM-13, BG-2015, EGM} for the latest advancement on this topic. Different $L^{2}-L^{\infty }$ methods have also been used in
\cite{EGM, UY-AA}.  Thanks to the work of \cite{Guo-2010}, the regularity \cite{GKTT-12, GKTT-14} and hydrodynamic limits \cite{EGKM-15} for the Botlzmann equation in general classes of bounded domain were further pondered. All of those works are focused on the case of the hard potential.
A natural challenge is to extend $L^2-L^\infty$ analysis  developed in \cite{Guo-2010} to the case of soft potential. This is the
goal of the present paper. Namely, we will
investigate the the global existence and the large time behaviors of the initial boundary value problem of
\eqref{be}, \eqref{id}, \eqref{dbd} or \eqref{sbd} with the condition \eqref{soft}.

\subsection{Domain, Characteristics and Perturbation}
Throughout this paper, $\Omega $ is a connected and bounded domain in $%
\R^3$ and defined by the open set $\{x~|~\xi(x)<0\}$ with $\xi(x)$ being a smooth function. Let $\na\xi(x)\neq0$ at boundary $\xi(x)=0$. The outward pointing unit normal vector at every point $x\in\pa\Omega$ is given by
$$
n(x)=\frac{\na\xi(x)}{|\na\xi(x)|}.
$$
We say $\Om$ is strictly convex if there exists $c_\xi>0$, for any $\zeta=(\zeta^1,\zeta^2,\zeta^3) \in \R^3$, it satiafies
\begin{equation}\label{scon}
\pa_{ij}\xi(x)\zeta^i\zeta^j\geq c_\xi|\zeta|^2.
\end{equation}
We say that $\Omega $ has a rotational symmetry, if there are vectors $x_{0}$
and $\varpi ,$ such that for all $x\in \partial \Omega $
\begin{equation}
\{(x-x_{0})\times \varpi \}\cdot n(x)\equiv 0.  \label{axis}
\end{equation}

For convenience, the phase boundary in the phase space $\Omega \times \R
^{3} $ is denoted by $\gamma =\partial \Omega \times \R^{3}$, and we further split it into
the following three kinds:
\begin{eqnarray*}
\textrm{outgoing boundary}: \gamma _{+} &=&\{(x,v)\in \partial \Omega \times \R^{3}\ :\
n(x)\cdot v> 0\}, \\
\textrm{incoming boundary}: \gamma _{-} &=&\{(x,v)\in \partial \Omega \times \R^{3}\ :\
n(x)\cdot v< 0\}, \\
\textrm{grazing boundary}: \gamma _{0} &=&\{(x,v)\in \partial \Omega \times \R^{3}\ :\
n(x)\cdot v=0\}.
\end{eqnarray*}%
As it is shown \cite[pp.715]{Guo-2010}, the \textit{backward exit time} which plays a crucial role in the study of boundary value problem of the Botlzmann equation can be well-defined via the backward characteristic trajectory.
Given $(t,x,v)$, we let $[X(s),V(s)]$ satisfy
\begin{equation}\label{ch.eq}
\frac{dX(s)}{ds}=V(s),\ \ \frac{dV(s)}{ds}=0,
\end{equation}
with the initial data $[X(t;t,x,v),V(t;t,x,v)]=[x,v]$. Then
$$[X(s;t,x,v),V(s;t,x,v)]=[x-(t-s)v,v]=[X(s),V(s)],$$
 which is called as the backward characteristic trajectory for the Boltzmann equation
\eqref{be}.

For $%
(x,v)\in \Om\times \R^{3}$, the \textit{backward exit time} $t_{\mathbf{b}}(x,v)>0$ is defined to be the first moment at which the backward characteristic line
$[X(s;0,x,v),V(s;0,x,v)]$ emerges from $\pa\Om$:
\begin{equation*}
t_{\mathbf{b}}(x,v)=\inf \{\ t> 0:x-tv\notin \partial \Omega \},
\end{equation*}
and we also define $x_{\mathbf{b}}(x,v)=x-t_{\mathbf{b}}(x,v)v\in \partial \Omega $. Note that for any $(x,v)$, we use $t_{\mathbf{b}}(x,v)$
whenever it is well-defined.

Set the perturbation in a standard way $F=\mu+\sqrt{\mu}f$, the initial boundary value problem \eqref{be}, \eqref{id}, \eqref{dbd} and \eqref{sbd}
can be reformulated as
\begin{equation}\label{BE}
\pa_tf+v\cdot\na_x f+Lf=\Ga(f,f),
\end{equation}
\begin{equation}\label{ID}
f(0,x,v)=f_0(x,v),
\end{equation}
with the boundary condition
\begin{equation}\label{DBD}
f(t,x,v)|_{\ga_-}=\sqrt{\mu}\int_{n(x)\cdot v'>0}f(t,x,v')\sqrt{\mu(v')}n(x)\cdot v'dv',
\end{equation}
and
\begin{equation}\label{SBD}
f(t,x,v)|_{\ga_-}=f(t,x,R_xv),
\end{equation}
respectively.
The nonlinear operator $\Ga(\cdot,\cdot)$ and linear operator $L$ in \eqref{BE} are defined as
$$
\Ga(f_1,f_2)=\frac{1}{\sqrt{\mu}}Q(\sqrt{\mu}f_1,\sqrt{\mu}f_2),
$$
and
\begin{equation}\label{L.def}
Lf=-\frac{1}{\sqrt{\mu}}\{Q(\mu,\sqrt{\mu}f)+Q(\sqrt{\mu}f,\mu)\},
\end{equation}
respectively. $L$ can be further split into $L=\nu -K$
with $K$
a suitable integral kernel defined by \eqref{def.k}
in Section \ref{pre}, and
the collision frequency
$\nu (v)\equiv \int_{\R^{3}\times
\S^{2}}b_0(\ta)|u-v|^\varrho\mu(u)dud\omega$
for $-3<\varrho<0$, moreover there exists constant $C_\varrho>0$ such that
\begin{equation}\label{confre}
\frac{1}{C_\varrho}\{1+|v|^2\}^{\varrho/2 }\leq\nu (v)\leq C_\varrho \{1+|v|^2\}^{\varrho/2 }.
\end{equation}

Under the conditions \eqref{DBD} or \eqref{SBD}, it is straightforward to check that
$$
\int_{\ga_+}f(t,x,v)\sqrt{\mu(v)}|n(x)\cdot v|dS_xdv=\int_{\ga_-}f(t,x,v)\sqrt{\mu(v)}|n(x)\cdot v|dS_xdv,
$$
where $dS_x$ is the surface element.

Hence, in terms of perturbation $f(t,x,v)$, the mass conservation
\begin{equation}\label{mass.c}
\int_{\Om\times\R^3}f(t,x,v)\sqrt{\mu(v)}dxdv=0
\end{equation}
holds true for either of boundary conditions \eqref{DBD} and \eqref{SBD} by further assuming initially \eqref{be} has the same mass as
the Maxwellian $\mu$.

For the specular reflection condition \eqref{SBD}, in addition to the mass conservation \eqref{mass.c},
the energy conservation law also holds for $t\geq0$, that is
\begin{equation}\label{eng.c}
\int_{\Om\times\R^3}|v|^2f(t,x,v)\sqrt{\mu(v)}dxdv=0.
\end{equation}
Moreover, if the domain $\Omega $ has any axis of rotation symmetry
(\ref{axis}), then we further assume the corresponding conservation of
angular momentum is valid for all $t\geq 0$
\begin{equation}
\int_{\Omega \times \R^{3}}\{(x-x_{0})\times \varpi \}\cdot vf(t,x,v)
\sqrt{\mu }dxdv=0.  \label{axiscon}
\end{equation}

\subsection{Main results}
We introduce a weight function
\begin{equation}\label{wfun}
w_{q,\ta,\vth}=\exp\left\{\frac{q|v|^\ta}{8}+\frac{q|v|^\ta}{8(1+t)^\vth}\right\}, \ (q,\ta)\in \CA_{q,\ta},\ 0\leq\vth< -\frac{\ta}{\varrho},
\end{equation}
where
\begin{equation*}\label{qta}
\CA_{q,\ta}=\{(q,\ta)|q>0, \textrm{if}\ 0<\ta<2, \textrm{and}\ 0<q<1,\ \textrm{if}\ \ta=2\}.
\end{equation*}
For the sake of simplicity, we denote $w_{q,\ta,0}=w_{q,\ta}=\exp(\frac{q|v|^\ta}{4})$ throughout the paper.

We now state our main results as follows
\begin{theorem}\label{ms1}
Let $-3<\varrho<0$ and $(q,\ta)\in \CA_{q,\ta}$. Assume the mass conservation \eqref{mass.c} holds for $f_0(x,v)$. Then there exists small constant $\vps_0>0$ such that
if $F_0(x,v)=\mu+\sqrt{\mu}f_0(x,v)\geq0$ and $\|w_{q,\ta}f_0\|_{\infty}\leq\vps_0$, there exists a unique solution $F(t,x,v)=\mu+\sqrt{\mu}f(t,x,v)\geq0$ for the Boltzmann equation
\eqref{be} and \eqref{id} with the diffuse reflection boundary condition \eqref{dbd}. Moreover, there is some $C>0$ such that
\begin{equation}\label{u.bd}
\sup\limits_{0\leq t\leq+\infty}\|w_{q,\ta}f(t)\|_{\infty}\leq C\|w_{q,\ta}f_0\|_{\infty}.
\end{equation}
 Furthermore, we assume $\Omega $ is strictly convex and $f_0(x,v)$ is continuous away from the set $\gamma _{0}$ and
\begin{equation*}\label{id.BD}
f_0(x,v)|_{\ga_-}=\sqrt{\mu}\int_{n(x)\cdot v'>0}f_0(x,v')\sqrt{\mu(v')}n(x)\cdot v'dv'.
\end{equation*}
Then, $f(t,x,v)$ is continuous in $[0,+\infty)\times\{\overline{\Om}\times\R^3\setminus \ga_0\}$.
Moreover, let $\rho_0=\frac{\ta}{\ta-\varrho}$, there exist
$C>0$ and $\la_0>0$ independent of $t$ such that
\begin{equation}\label{decay}
\begin{split}
\|f(t)\|_\infty\leq& Ce^{-\la_0t^{\rho_0}}\|w_{q,\ta}f_0\|_\infty.
\end{split}
\end{equation}
\end{theorem}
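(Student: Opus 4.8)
The plan is to implement the $L^2$–$L^\infty$ bootstrap of \cite{Guo-2010}, adapted to the soft potential where $\nu(v)\sim (1+|v|^2)^{\varrho/2}\to 0$ as $|v|\to\infty$. First I would set up the local existence and uniqueness for \eqref{BE}–\eqref{DBD} in the weighted space $L^\infty_{w_{q,\ta}}$ by an iteration scheme, using the explicit mild (Duhamel) formulation along the backward characteristics and the stochastic diffuse cycles, together with the standard bounds on $\Ga$ and $K$ in weighted $L^\infty$. The weight $w_{q,\ta}$ absorbs the singularity of the kernel $b_0(\ta)|u-v|^\varrho$ and controls the gain term; the key preliminary estimates on $k_w(v,v')=w_{q,\ta}(v)k(v,v')w_{q,\ta}^{-1}(v')$ and on the nonlinear term $w_{q,\ta}\Ga(f,f)\lesssim \nu\,\|w_{q,\ta}f\|_\infty^2$ (with a small constant available from velocity truncation) are exactly the ingredients established in the ``Preliminaries'' section the excerpt refers to.

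The heart of the argument is the \emph{a priori} estimate \eqref{u.bd}. I would run the double Duhamel expansion: expand $f$ once along characteristics hitting the boundary, use the diffuse boundary operator, and expand a second time so that the twice-iterated $K$-term becomes an integral operator with a kernel that is (almost) a Hilbert–Schmidt-type average, which can be estimated by the $L^2$ norm of $f$ over a small time interval plus a small-in-$\eps$ remainder. Here the soft-potential novelty is that $\nu$ has \emph{zero lower bound}, so the exponential-in-time decay factor $e^{-\nu(v)(t-s)}$ degenerates for large $|v|$; this is precisely where the time-dependent part $\frac{q|v|^\ta}{8(1+t)^\vth}$ of the full weight $w_{q,\ta,\vth}$ in \eqref{wfun} is needed, and where the paper's ``new properties of the probability integrals along the stochastic cycles'' enter. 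The $L^2$ piece is then controlled by the linear $L^2$ decay/coercivity theory for $L$ (positivity of $L$ modulo its five-dimensional null space, combined with the conservation laws \eqref{mass.c}) plus the macroscopic estimate; on a bounded domain this yields $\|f(t)\|_{L^2}\lesssim \|f_0\|_{L^2}$ (and stretched-exponential decay after the nonlinear loop is closed). Combining the $L^\infty$-reduces-to-$L^2$ estimate with this $L^2$ bound and absorbing the nonlinear contribution via the smallness of $\|w_{q,\ta}f_0\|_\infty$ closes \eqref{u.bd} and, by the standard continuation criterion, upgrades local to global existence. Nonnegativity $F\ge 0$ follows from the iteration scheme being monotone/sign-preserving in the usual way.

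For the decay \eqref{decay}, the mechanism is that $\nu(v)\gtrsim (1+|v|)^{\varrho}$ only gives genuine dissipation on moderate velocities, so a pure exponential rate is impossible and one instead obtains the stretched-exponential $e^{-\la_0 t^{\rho_0}}$ with $\rho_0=\ta/(\ta-\varrho)$. I would prove this by splitting velocity space at $|v|\sim t^{1/(\ta-\varrho)}$: on $\{|v|\le t^{1/(\ta-\varrho)}\}$ the collision frequency gives $e^{-\nu(v)t}\le e^{-c t^{\varrho/(\ta-\varrho)}\cdot t}=e^{-c t^{\ta/(\ta-\varrho)}}$, matching $\rho_0$; on $\{|v|> t^{1/(\ta-\varrho)}\}$ the weight $w_{q,\ta}=e^{q|v|^\ta/4}$ pays $e^{-\frac{q}{4}|v|^\ta}\le e^{-\frac{q}{4}t^{\ta/(\ta-\varrho)}}$, again matching. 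Feeding this into the $L^2$–$L^\infty$ loop (now with a time-weighted norm $\sup_t e^{\la t^{\rho_0}}\|w_{q,\ta}f(t)\|_\infty$, choosing $\la_0<\la$ and using $\vps_0$ small to control the nonlinearity) yields \eqref{decay}. The continuity statement on $[0,+\infty)\times(\overline{\Om}\times\R^3\setminus\ga_0)$ follows from strict convexity \eqref{scon}, which makes $t_{\mathbf b}$ and $x_{\mathbf b}$ continuous off $\ga_0$, plus the compatibility condition on $f_0$, via the usual propagation-of-continuity argument along the diffuse cycles.

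The step I expect to be the main obstacle is the double-Duhamel estimate of the iterated kernel under the vanishing collision frequency: controlling the time integrals $\int_0^t e^{-\nu(v)(t-s_1)}\int_0^{s_1} e^{-\nu(v')(s_1-s_2)}(\cdots)$ where neither $\nu(v)$ nor $\nu(v')$ is bounded below, and doing so uniformly over the stochastic boundary cycles whose number must be truncated using a smallness estimate on the probability of many bounces in short time. Matching the velocity/time split so that the loss of the $\nu$-factor is exactly compensated by the exponential velocity weight — and verifying the admissible range $0\le\vth<-\ta/\varrho$ and $\rho_0=\ta/(\ta-\varrho)$ are the sharp exponents this balance produces — is the technically delicate core of the proof, and is presumably where the paper's claimed improvement of the $L^2$–$L^\infty$ method does its work.
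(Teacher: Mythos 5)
Your overall $L^2$--$L^\infty$ strategy is the right one, and your velocity-splitting mechanism for the decay (trading the factor $e^{-\nu(v)t}$ against the weight $e^{-\frac{q}{4}|v|^\ta}$ at the scale $|v|\sim t^{1/(\ta-\varrho)}$) is exactly the paper's inequality \eqref{you}. But there are two genuine gaps. First, the $L^2$ input to your double-Duhamel a priori estimate is too weak as stated. When the twice-iterated $K^\chi$ term is converted to an $L^2$ quantity by the change of variables $v'\mapsto y=x'_{l'}+(s_1-t'_{l'})v'_{l'}$ along the diffuse cycles, the Jacobian is only bounded below by $(k(s))^{-6}$ and the number of bounces itself must be taken as $k(s)=C_1'[\al(s)]^{5/4}$, both of which \emph{grow in time}; a uniform bound $\|f(t)\|_2\lesssim\|f_0\|_2$ therefore does not close the estimate. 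The paper needs the stretched-exponential decay $e^{-\la_1 s^{\rho_0}}$ of the \emph{linearized} $L^2$ solution (Proposition \ref{l2-lqn}, proved first with weighted-$L^2$ data and an interpolation in velocity) precisely to absorb the resulting $(k(s))^{7}$ factor, as in \eqref{ks-6}--\eqref{I634}; your parenthetical plan to obtain stretched-exponential decay only "after the nonlinear loop is closed" puts this in the wrong order and leaves the time-growing Jacobian/cycle factors uncontrolled. The other ingredient you are missing at this step is the paper's new cycle estimates (Lemma \ref{k}, in particular \eqref{ktildew1}--\eqref{ktildew2}, resting on the observation $\sum_{l}\int_{t_{l+1}}^{t_l}e^{\nu(v_m)(s-t_1)}\nu(v_m)\,ds\le C$), which is what replaces the hard-potential lower bound on $\nu$ in the diffuse case.

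Second, you invoke the time-dependent weight $w_{q,\ta,\vth}$ of \eqref{wfun} as the device that handles the degeneracy of $e^{-\nu(v)(t-s)}$ in the diffuse a priori estimate. In the paper that weight (with $\vth>0$) is the \emph{specular-case} device; the diffuse proof uses $\vth=0$ throughout. If you do run the whole argument with $\vth>0$, the modified frequency $\widetilde\nu\gtrsim(1+t)^{(1+\vth)\varrho/(\ta-\varrho)}$ gives decay only at the slower rate $\rho_1=\frac{\ta+\vth\varrho}{\ta-\varrho}<\rho_0$, and since $w_{q,\ta,\vth}(t,\cdot)\downarrow w_{q/2,\ta}$ as $t\to\infty$ you would only obtain $\sup_t\|w_{q/2,\ta}f(t)\|_\infty\lesssim\|w_{q,\ta}f_0\|_\infty$, which is weaker than the stated bound \eqref{u.bd}; so this hybrid does not yield the theorem as stated without further work. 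Relatedly, your closing norm $\sup_t e^{\la t^{\rho_0}}\|w_{q,\ta}f(t)\|_\infty$ cannot be bounded by $\|w_{q,\ta}f_0\|_\infty$: the rate $\rho_0$ is purchased only by giving up the weight on the solution, i.e.\ the correct closable quantity is $\sup_t e^{\frac{\la_0}{2}t^{\rho_0}}\|f(t)\|_\infty\lesssim\|w_{q,\ta}f_0\|_\infty$, exactly as in \eqref{decay}, with the nonlinearity handled through \eqref{Ga.lif1} and the already-established uniform bound \eqref{u.bd}.
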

\begin{theorem}
\label{specularnl}
Let $0<\vth< -\frac{\ta}{\varrho}$ with $-3<\varrho<0$ and $(q,\ta)\in\CA_{q,\ta}.$
Assume that $\xi $ is both strictly convex \eqref{scon} and analytic,
and the mass \eqref{mass.c} and energy \eqref{eng.c} are conserved for $f_{0}$%
. In the case of $\Omega $ has any rotational symmetry \eqref{axis}, we
further require the corresponding angular momentum \eqref{axiscon} is conserved
for $f_{0}$. Then there exists $\vps_0 >0$ such that if $F_{0}(x,v)=\mu +%
\sqrt{\mu }f_{0}(x,v)\geq 0$ and $\|w_{q,\ta,\vth}f_{0}\|_{\infty }\leq \vps_0 ,$ there
exists a unique solution $F(t,x,v)=\mu +\sqrt{\mu }f(t,x,v)\geq 0$ to the Boltzmann equation
\eqref{be} and \eqref{id} with the
specular reflection boundary condition \eqref{sbd}. Moreover, let $\rho_1=\frac{\ta+\vth\varrho}{\ta-\varrho}$, there exist $\la_0>0$ and $C>0$
  such that
\begin{equation*}
\|w_{q,\ta,\vth}f(t)\|_{\infty }\leq
Ce^{-\la_0t^{\rho_1}}\|w_{q,\ta,\vth}f_{0}\|_{\infty }.
\end{equation*}%
Furthermore, if $f_{0}(x,v)$ is continuous except on the set $\gamma _{0}$ and
\begin{equation*}
f_{0}(x,v)=f_{0}(x,R_xv)\text{ on }\partial \Omega,
\end{equation*}%
then $f(t,x,v)$ is continuous in $[0,\infty )\times \{\bar{\Omega}\times
\R^{3}\setminus \gamma _{0}\}.$
\end{theorem}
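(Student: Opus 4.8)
The plan is to run an $L^2$--$L^\infty$ bootstrap of the same flavor as in the diffuse case (Theorem~\ref{ms1}), but with the time-velocity weight $w_{q,\ta,\vth}$ carrying the extra decaying factor $\exp\{q|v|^\ta/(8(1+t)^\vth)\}$, which is exactly what compensates for the zero lower bound of $\nu(v)$ along the specular cycles. First I would set up the weighted equation for $h=w_{q,\ta,\vth}f$: conjugating \eqref{BE} by the weight produces $\pa_t h+v\cdot\na_x h+\nu_w h=K_w h+w_{q,\ta,\vth}\Ga(f,f)$, where $\nu_w=\nu-\pa_t\ln w_{q,\ta,\vth}=\nu+\frac{\vth q|v|^\ta}{8(1+t)^{\vth+1}}$; the crucial gain is that $\nu_w(v)\gtrsim (1+t)^{-\vth-1}\langle v\rangle^{\ta}+\langle v\rangle^{\varrho}$, so even though $\nu$ degenerates for large $|v|$, the weight-induced term restores a positive, albeit $t$-dependent, damping that dominates on the region $|v|\gtrsim (1+t)^{\vth/(\ta-\varrho)}$. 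Splitting velocity space at this threshold and optimizing the two competing bounds is what yields the stretched-exponential rate with exponent $\rho_1=\frac{\ta+\vth\varrho}{\ta-\varrho}$.

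The backbone is the specular characteristic (billiard) flow. I would invoke the analyticity and strict convexity hypotheses on $\xi$ to control the specular cycles: by the results available for the linearized problem (the new time-velocity weighted $L^\infty$ theory announced in the introduction, together with the $L^2$ theory and bootstrap), the linearized specular semigroup decays like $e^{-\la_0 t^{\rho_1}}$ in the $w_{q,\ta,\vth}$-weighted $L^\infty$ norm. Granting that, the nonlinear argument is a Duhamel iteration: write $f(t)=U(t)f_0+\int_0^t U(t-s)\Ga(f(s),f(s))\,ds$ where $U$ is the linear specular solution operator, estimate $\|w_{q,\ta,\vth}\Ga(f,f)(s)\|_\infty\lesssim \nu^{-1}\|w_{q,\ta,\vth}f(s)\|_\infty^2$ using the standard bilinear bound for $\Ga$ (here the weight is chosen so that $w_{q,\ta,\vth}\Ga(f,f)\lesssim \nu(v)\,(w_{q,\ta,\vth}f)(w_{q,\ta,\vth}f)$ up to the usual loss that $\nu^{-1}$ absorbs), and close the estimate in the norm $\sup_{t\ge0}e^{\la_0 t^{\rho_1}}\|w_{q,\ta,\vth}f(t)\|_\infty$ by smallness of $\vps_0$. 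Uniqueness and the non-negativity $F=\mu+\sqrt\mu f\ge0$ follow by the usual contraction/approximation scheme (nonnegativity is propagated along the billiard characteristics since specular reflection preserves $F(t,x,v)=F(t,x,R_xv)\ge0$ on the boundary).

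The continuity statement away from $\ga_0$ is handled last: under strict convexity the grazing set is negligible and the billiard map is continuous off $\ga_0$, so if $f_0$ is continuous off $\ga_0$ and satisfies the compatibility condition $f_0(x,v)=f_0(x,R_xv)$ on $\pa\Om$, then the Duhamel representation along specular characteristics propagates continuity, exactly as in the diffuse case but using the specular cycles. The main obstacle, and the place where the specular case genuinely departs from \cite{Guo-2010} and from Theorem~\ref{ms1}, is establishing the weighted $L^\infty$ decay for the \emph{linearized} specular problem: unlike diffuse reflection there is no averaging/smoothing at the boundary, the velocity $|v|$ is exactly conserved along a specular cycle, and with $\varrho<0$ the collision frequency provides no uniform-in-$|v|$ damping, so one cannot gain by iterating the Duhamel formula through many bounces in the naive way. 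The resolution is precisely the time-velocity weight: the extra factor decaying in $t$ supplies, cycle by cycle, the missing damping on high velocities, and the analyticity of $\pa\Om$ is needed to bound the number of bounces in a fixed time interval and to control the geometry of the specular reflection operator $R_x$ (so that $w_{q,\ta,\vth}(R_xv)=w_{q,\ta,\vth}(v)$ and the Jacobians stay bounded). Combining this with the $L^2$ decay (obtained from the conservation laws \eqref{mass.c}, \eqref{eng.c}, \eqref{axiscon} and a macroscopic coercivity estimate) via the $L^2$--$L^\infty$ bootstrap gives the linear rate $e^{-\la_0 t^{\rho_1}}$, after which the nonlinear step above is routine.
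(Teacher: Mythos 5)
Your overall architecture does track the paper's: pass to $h=w_{q,\ta,\vth}f$, note that the conjugated collision frequency $\widetilde{\nu}=\nu+\frac{\vth q|v|^{\ta}}{8(1+t)^{\vth+1}}$ admits a $v$-independent, time-dependent lower bound which yields the stretched-exponential rate with exponent $\rho_1=\frac{\ta+\vth\varrho}{\ta-\varrho}$, then close the nonlinear problem by a Duhamel iteration in the norm $\sup_{t}e^{\frac{\la_0}{2}t^{\rho_1}}\|h(t)\|_{\infty}$ (using $\|\nu^{-1}w_{q,\ta,\vth}\Ga(f,f)\|_{\infty}\lesssim\|w_{q,\ta,\vth}f\|_{\infty}^{2}$ and the boundedness of $\nu$ for soft potentials), with continuity and positivity handled along specular characteristics. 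That nonlinear step is indeed how the paper finishes. The genuine gap is that the decay of the linearized specular semigroup $U(t)$ in the weighted $L^{\infty}$ norm — Proposition \ref{specularbd}, which is the heart of the proof — is never established: you "grant" it by appeal to the theory "announced in the introduction," which is circular, since that estimate is precisely what has to be proved for this theorem and has no external source.

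Moreover, the sketch you offer for this linear estimate misses or misstates its key mechanisms. The rate $\rho_1$ does not come from splitting velocity space at a threshold and optimizing two competing bounds; it comes from Young's inequality applied pointwise to $\widetilde{\nu}$, giving the uniform-in-$v$ bound \eqref{IP.lbd1} and hence the decay \eqref{IP.lbd2} of the undamped flow $G(t)$. Analyticity of $\xi$ is not used to bound the number of bounces (that follows from the Velocity Lemma \ref{velocity} together with Lemma \ref{huang}, convexity sufficing), nor for the trivial identity $w_{q,\ta,\vth}(R_xv)=w_{q,\ta,\vth}(v)$; it is needed in Lemma \ref{specularlower} for a \emph{lower} bound on the Jacobian $\det\bigl(\partial\{x'_{k'}+(s_1-t'_{k'})v'_{k'}\}/\partial v'\bigr)$ off small sets — your phrase "the Jacobians stay bounded" points in the wrong direction, since an upper bound is useless for the change of variables $v'\to y$ that converts the twice-iterated Duhamel term (Vidav's trick with $K^{\chi}_{\overline{w}}$) into an $L^2$ quantity. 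Also absent is the actual bootstrap structure: one must first prove the finite-time estimate of Lemma \ref{finitelem}, $\|U(T_0)h_0\|_{\infty}\le e^{-\la_0T_0^{\rho_1}}\|h_0\|_{\infty}+C_{T_0}\int_0^{T_0}\|f(s)\|_2\,ds$, after splitting the phase space by the set $A_{\alpha}$ and the near-bounce times, and then iterate over intervals $[mT_0,(m+1)T_0]$, absorbing the $L^2$ contributions by the faster $L^2$ decay $e^{-\la t^{\rho_0}}$ with $\rho_0=\frac{\ta}{\ta-\varrho}>\rho_1$ from Lemma \ref{sl2.eng}; that $L^2$ decay in turn rests on the coercivity $\int_0^1(Lf,f)\,ds\ge\de_0\int_0^1\|f\|_{\nu}^2\,ds$ of Proposition \ref{smm}, proved by a compactness/contradiction argument in which the conservation laws \eqref{mass.c}, \eqref{eng.c}, \eqref{axiscon} force the limiting macroscopic field to vanish. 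None of these steps appears in your proposal, so the asserted linear decay — and with it the theorem — remains unproved.
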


\begin{remark}
It should be pointed out that the method developed in Theorem \ref{specularnl} can be applied to verify Theorem \ref{ms1}, and it can also be used to handle the other two kinds of boundary conditions: in flow and bounce back reflection. Moreover, one can see that the approach developed in the proof of Theorem \ref{ms1} is more direct and constructive while the method used in the proof of Theorem \ref{specularnl} is more simpler, both of them have their merits. 
In addition, it is straightforward to know that the decay exponents 
satisfy $\rho_0=\lim\limits_{\vth\rightarrow 0^+}\rho_1.$
It is quite interesting to improve $\rho_1$ to $\rho_0$ which coincides with the decay rate for the periodic boundary condition \cite{SG-08}.
\end{remark}

Let us now give some comments on the difficulties associated with Theorems \ref{ms1} and \ref{specularnl}. Compared with the previous works such as \cite{Guo-2010}, \cite{BG-2015}, \cite{EGKM-13} and \cite{Yu}, a remarkable feature of our problems is that the collision frequency $\nu$ has no positive lower bound so that the Boltzmann solution could not be expected to decay exponentially in $L^\infty$ immediately. However, an instead decay rate plays a key role to establish the global existence of the Botlzmann equation in bounded domain, see Lemma 19 in \cite[pp.761]{Guo-2010} and also \cite[pp.81]{Ukai-86}. This time decay rate is essentially applied to eliminate the possible growth created by the $k-$times bounce-back reflection ($k$ is large). Our strategies to overcome this difficulty are briefly stated as follows:

For the diffuse reflection boundary condition, one needs some careful estimates on the integrals along the stochastic cycles so as to obtain the global existence by using only the $L^2$ decay. One of the key points in this paper is to develop a direct and unified approach to establish the global existence of the linearized Botlzmann equation with the diffuse reflection boundary condition. More specifically, instead of applying the time decay in $L^\infty$  to obtain the global existence cf. \cite{Guo-2010, BG-2015, EGKM-13}, we first construct a local solution via an iteration method, then directly deduce the {\it a priori} estimate which is uniform in time by means of a
refined estimates on the integrals defined on the stochastic cycles and the $L^2$ time decay for linearized equation. Finally, the global solution is obtained with the aid of the standard continuity argument. Among those, the main step is to establish the following type of uniform estimates
\begin{equation}\label{key1}
\begin{split}
\int_{\prod_{j=1}^{k-1}
\mathcal{V}_{j}}&\sum_{l=1}^{k-1}\mathbf{1}_{\{t_{l+1}\leq
0<t_{l}\}}\int_{0}^{t_l}\iint dv^{\prime }dv^{\prime \prime }\int_{\prod_{j=1}^{k-1}\mathcal{V
}_{j}^{\prime }}\frac{e^{-\nu (v^{\prime })(s-t_{1}^{\prime })}}
{\tilde{w}_{q,\ta}(v^{\prime })} \\&\quad\times\sum_{l^{\prime }=1}^{k-1}\mathbf{1}_{\{t_{l^{\prime}+1}^{\prime }>0\}}
\left\{\int_{t_{l^{\prime}+1}^{\prime }}^{t_{l^{\prime }}^{\prime }-\frac{1}{k^2(s)}}+\int_{t_{l^{\prime }}^{\prime }-\frac{1}{k^2(s)}}^{t_{l^{\prime }}^{\prime }}\right\}\mathbf{k}^\chi_{w}(v_{l},v^{\prime })
\mathbf{k}^\chi_{w}(v_{l^{\prime }}^{\prime },v^{\prime \prime
})
 \\&\quad\times |h^j(s_{1,}x_{l^{\prime }}^{\prime
}+(s_{1}-t_{l^{\prime }}^{\prime })v_{l^{\prime }}^{\prime },v^{\prime
\prime })|d\Sigma^w_{l^{\prime }}(s_{1})ds_{1}d\Sigma^w_{l}(s)ds\\
\leq&C_{q,\ta}\left(\frac{1}{T_0^{5/4}}+\frac{1}{N}\right)\sup\limits_{0\leq s\leq t_1}\|h^j(s)\|_{\infty}+C_N\sup\limits_{0\leq s\leq t_1}\left\{e^{\frac{\la_0}{2}s^{\rho_0}}
\left\Vert \frac{h^j(s)}{w_{q,\ta }(v)}\right\Vert _{2}\right\},
\end{split}
\end{equation}
here $k(s)=k=C'_{1}[\al(s)]^{5/4}\geq C'_{1}T_0^{5/4}$ and $C'_{1}>0$ is a constant.
To derive \eqref{key1}, the following key observation is used
$$
\sum\limits_{l=1}^{k-1}\int_{t_{l+1}}^{t_l}e^{\nu(v_m)(s-t_1)}\nu(v_m)ds\leq \int_{t_{k}}^{t_1}e^{\nu(v_m)(s-t_1)}\nu(v_m)ds\leq C,
$$
where $v_m$ is defined to satisfy $|v_m|=\max\{|v_1|,|v_2|,\cdots,|v_{k-1}|\}$ for $\max\{|v_1|,|v_2|,$ $\cdots,|v_{k-1}|\}\leq k$.

In addition, a delicate Banach space $\FX_\de(t)$ is designed to capture the properties of the solution in $L^\infty\cap L^2$ space so that the global existence and the exponential decay in $L^2-$norm can be simultaneously obtained.
The rapid time decay $e^{-\frac{\la_0}{2}s^{\rho_0}}$ in $L^2-$norm is adopted to control the Jacobian determinate when we convert the $L^\infty-$norm to $L^2-$norm.

It is also interesting to note that the estimate \eqref{decay} is a consequence of the interpolation technique basing on the $L^2$ energy estimate and the weighted $L^\infty$ estimate for the global solution as well as Young's inequality
\begin{equation}\label{you}
e^{-\nu(v)t}w^{-1}_{q/2,\ta}(v)\leq e^{-\la_0t^{\rho_0}},\ \rho_0=\frac{\ta}{\ta-\varrho}.
\end{equation}
\eqref{you} means that
one has to trade between the exponential decay rates and the additional exponential momentum weight on the solution itself
in order to obtain the rapid time decay rates, this also reveals that the additional velocity weight imposed on the initial data in \eqref{decay} is seen as a compensation for the exponential decay rates.

As to the specular reflection boundary condition, we can not expect to obtain the similar estimate as \eqref{key1}. There are two mathematical difficulties: one is the times of bounce back reflection $k$ and $k'$ in this situation both grow exponentially in time according to Velocity Lemma \ref{velocity}, hence the summation of the integral is out of control. The other is that it is impractical to compute the Jacobian
    determinate
    \begin{equation*}
\det \left( \frac{%
\partial \{x_{k^{\prime }}^{\prime }+( s _1-t_{k^{\prime }}^{\prime
})v_{k^{\prime }}^{\prime }\}}{\partial v^{\prime }}\right),
\end{equation*}
which depends on $t,x,v$,$k$ and $k'$. In this sense, the method developed in the case of diffuse reflection boundary condition
can not be applied to the case of specular reflection boundary condition. Precisely speaking,
one can not first obtain the global existence of \eqref{BE}, \eqref{ID} and \eqref{SBD} in some higher weighted $L^\infty$ space and therefore is not able to deduce the time decay rates in lower weighted $L^\infty$ space. As a sequence, we are forced to resort the bootstrap argument as that of \cite[Lemma 19, pp.761]{Guo-2010}. As mentioned before, to apply the bootstrap argument, the key point is to obtain the rapid time decay rates without any growth. Nevertheless, it seems impossible to achieve it due to the zero lower bound of the collision frequency.
To deal with this difficulty, we introduce a time-velocity weight
$$
w_{q,\ta,\vth}=\exp\left\{\frac{q|v|^\ta}{8}+\frac{q|v|^\ta}{8(1+t)^\vth}\right\}
$$
which has been used in \cite{D-im, DLYZ-VMB, DYZ-h, DYZ-s} to handle the non-hard sphere Boltzmann equations with self-consistent forces, by using this weight, we are able to deduce a time-dependent lower bound for a revised collision frequency, say
\begin{equation*}
\widetilde{\nu}(v,t)=\nu +\frac{\vth q|v|^\ta}{8(1+t)^{\vth+1}}
\geq C_{\varrho,q,\vth}(1+t)^{\frac{(1+\vth)\varrho}{\ta-\varrho}}.
\end{equation*}
So, the desired time decay rates will be naturally obtained. This is another key contribution of the present paper.



Due to the singularity of the collision kernel, the integral operator $K$ raises another difficulty when we carry out $L^\infty$ estimates for the linearized equation. Similar to the study of the Cauchy problem of Boltzmann equation on torus \cite{G-soft,S,SG-08}, we introduce a cutoff function $\chi$ to split $K=K^{\chi}+K^{1-\chi}$. With this decomposition, we only need to iterate $K^\chi$ twice \cite{Vi} to obtain the desired estimates since $K^{1-\chi}$ is small and can be controlled directly.

The estimates of the nonlinear operator $\Ga(\cdot,\cdot)$ in terms of the exponential weighted norm $\|w_{q,\ta,\vth}\cdot\|_\infty$ are subtle.
To avoid additional weight,
we estimate $w_{q/2,\ta,\vth}(v)$ as
$$
w_{q/2,\ta,\vth}(v)\leq\frac{1}{2} (w_{q,\ta,\vth}(v')+w_{q,\ta,\vth}(u'))
$$
instead of $
w_{q/2,\ta,\vth}(v)\leq w_{q/2,\ta,\vth}(v')w_{q/2,\ta,\vth}(u').
$

\medskip
The organization of the paper is as follows. In Section \ref{pre}, we collect some significant estimates for the later use. Section \ref{DVP}
is devoted to the study of the Boltzmann equation with diffuse reflection boundary condition.
The global existence and exponential time decay for Boltzmann equation with specular reflection boundary condition are presented in Section \ref{SVP}.

\medskip
\subsection{Notations and Norms}
We now list some notations and norms used in the paper.
 \begin{itemize}
 \item
 Throughout this paper,  $C$ denotes some generic positive (generally large) constant and $\la,\la_1,\la_2$ as well as $\la_0$ denote some generic positive (generally small) constants, where $C$, $\la,\la_1,\la_2$ and $\la_0$  may take different values in different places. $D\lesssim E$ means that  there is a generic constant $C>0$
such that $D\leq CE$. $D\sim E$
means $D\lesssim E$ and $E\lesssim D$.
\item Let $1\leq p\leq \infty$, we denote $\Vert \,\cdot \,\Vert _{p }$ the $L^{p }(\Omega
\times \R^{3})-$norm or the $L^{p }(\Omega )-$norm or $L^{p }(\Omega\cup\ga )-$norm,
while $|\,\cdot \,|_{\infty }$ is either the $L^{\infty }(\partial \Omega
\times \R^{3})-$norm or the $L^{\infty }(\partial \Omega )-$norm at
the boundary. Moreover we denote $\|\cdot \|_{\nu}\equiv \|\nu^{1/2}\cdot
\|_2$, and
$(\cdot,\cdot)$ denotes the $L^{2}$ inner product in
$\Omega\times {\R}^{3}$  with
the $L^{2}$ norm $\|\cdot\|_2$.

\item As to
the phase boundary integration, we denote $d\gamma = |n(x)\cdot v|dS(x)dv$,
where $dS(x)$ is the surface element and for $1\leq p<+\infty$, we define $|f|_p^p = \int_{\gamma}
|f(x,v)|^p d\gamma \equiv\int_{\gamma} |f(x,v)|^p $ and the corresponding
space as $L^p(\partial\Omega\times\R^3;d\gamma)=L^p(\partial\Omega%
\times\R^3)$. Furthermore $|f|_{p,\pm}= |f \mathbf{1}_{\gamma_{\pm}}|_p$
and $|f|_{\infty,\pm}= |f \mathbf{1}_{\gamma_{\pm}}|_{\infty}$. For simplicity,  we use $%
|f|_p^p = \int_{\partial\Omega} |f(x)|^p dS(x)\equiv\int_{\partial\Omega}
|f(x)|^p $. We also denote $f_{\pm}=f_{\gamma_{\pm}}=f\mathbf{1}_{\gamma_{\pm}}$.

\item Finally, we define
\begin{equation*}
P_{\gamma }f(x,v)=\sqrt{\mu (v)}\int_{n(x)\cdot v^{\prime }>0}f(x,v^{\prime
})\sqrt{\mu (v^{\prime })}(n(x)\cdot v^{\prime })dv^{\prime }, \ \ x\in\pa\Om.
\label{pgamma}
\end{equation*}%
Thanks to \eqref{mu.n}, $P_{\gamma }f$ defined on $\pa\Om\times\R^3$, 
is an $L_{v}^{2}$-projection with respect to the measure $|n(x)\cdot v|$ for
any boundary function $f$ defined on $\gamma _{+}$. We also denote $\{I-P_\ga\}f=f-P_\ga f$.
\end{itemize}

\medskip

\section{Preliminary}\label{pre}
In this section, we collect some basic definitions and estimates for the later proof.
We start with the analysis of $K$, from \eqref{L.def}, a standard decomposition for $K$ is the following
\begin{equation}\label{def.k}
\begin{split}
Kf=&\int_{\R^3\times \S^2}|u-v|^{\varrho}b_0(\cos \ta)\sqrt{\mu(u)}\left\{f(u')\sqrt{\mu(v')}+f(v')\sqrt{\mu(u')}\right\}dud\omega\\
&-\sqrt{\mu(v)}\int_{\R^3\times \S^2}|u-v|^{\varrho}b_0(\cos \ta)\sqrt{\mu(u)}f(u)dud\omega\eqdef K_2-K_1.
\end{split}
\end{equation}
To treat the singularity in $K$, we introduce
a smooth cutoff function $0\leq\chi\leq1$ such that
\begin{eqnarray*}
\chi(s)=\left\{
\begin{array}{rll}
1,&\  \ s\geq2\eps,\\
0,&\ \ s\leq\eps.
\end{array}
\right.
\end{eqnarray*}
Use $\chi$ to split $K_2=K_2^\chi+K_2^{1-\chi}$ where
\begin{equation*}
\begin{split}
K_2^{\chi}f=&\int_{\R^3\times \S^2}\chi(|u-v|)|u-v|^{\varrho}b_0(\cos \ta)\sqrt{\mu(u)}\\&\times\left\{f(u')\sqrt{\mu(v')}+f(v')\sqrt{\mu(u')}\right\}dud\omega.
\end{split}
\end{equation*}
With this, it follows from \cite[pp.294]{SG-08} that
$$
K_2^{\chi}f=\int_{\R^3}{\bf k}_2^\chi(v,u)f(u)du,
$$
where
\begin{equation*}
|{\bf k}_2^\chi(v,u)|\leq C\eps^{\varrho-1}\frac{\exp\left(-\frac{1}{8}|u-v|^2-\frac{1}{8}\frac{(|v|^2-|u|^2)^2}{|v-u|^2}\right)}{|v-u|},
\end{equation*}
or
\begin{equation}\label{k2.ke}
|{\bf k}_2^\chi(v,u)|\leq C\frac{\exp\left(-\frac{s_2}{8}|u-v|^2-\frac{s_1}{8}\frac{(|v|^2-|u|^2)^2}{|v-u|^2}\right)}{|v-u|(1+|v|+|u|)^{1-\varrho}},
\end{equation}
for any $0<s_1<s_2<1.$
As to $K_1$, it is obvious to see
$$
K_1f=\int_{\R^3}{\bf k}_1(v,u)f(u)du,
$$
with ${\bf k}_1(v,u)=\int_{\S^2}|u-v|^{\varrho}b_0(\cos \ta)\sqrt{\mu(u)}\sqrt{\mu(v)}d\omega.$
Analogously, we also denote $K^\chi=K_2^\chi-K_1^\chi$ and  $K^{1-\chi}=K_2^{1-\chi}-K_1^{1-\chi}.$

Prior to the study of the property of the operators $K$ and $\Ga$, we present the following elementary inequality:
\begin{lemma}\label{el.ine}
If $0< p\leq1$, for any $x,y\geq0$, it holds that
\begin{equation}\label{p.ine}
(x+y)^p\leq x^p+y^p.
\end{equation}
If $p>1$, for any $x,y\geq0$, it holds
\begin{equation}\label{p.ine2}
(x+y)^p\leq 2^{p-1}(x^p+y^p).
\end{equation}
\end{lemma}
\begin{proof}
If $y=0$, \eqref{p.ine} is obviously true. If $y>0$, \eqref{p.ine} is then equivalent to
$$
\left(1+\frac{x}{y}\right)^p-\left(\frac{x}{y}\right)^p-1\leq0.
$$
It is easy to check the function $g(t)=(1+t)^p-t^p-1$ is monotonically decreasing for $0< p\leq1$, and moreover $g(0)=0$, therefore \eqref{p.ine} is also valid for $y>0$. If $p>1$, \eqref{p.ine2} directly follows from the convexity of $t^p$.
This completes the proof of Lemma \ref{el.ine}.

\end{proof}
We now summarize the properties of $K$ as follows:
\begin{lemma}\label{es.k}
Assume $-3<\varrho<0$, $(q,\ta)\in \CA(q,\ta)$ and $\vth\geq0$. It holds that for $\eta>0$
\begin{equation}\label{K.ip1}
(Kf_1,w^2_{q,\ta,\vth}f_2)\leq \left\{\eta\|w_{q,\ta,\vth}f_1\|_\nu+C(\eta)\|{\bf 1}_{|v|\leq C(\eta)}f_1\|\right\}\|w_{q,\ta,\vth}f_2\|_\nu,
\end{equation}
especially,
\begin{equation}\label{K.ip2}
(Kf_1,w^2_{q,\ta,\vth}f_2)\leq C\|w_{q,\ta,\vth}f_1\|_\nu\|w_{q,\ta,\vth}f_2\|_\nu,\ \ (Kf_1,f_2)\leq C\|f_1\|_\nu\|f_2\|_\nu.
\end{equation}
In addition, for any $l\geq0$, one has
\begin{equation}\label{K.ip3}
\langle v\rangle^lw_{q,\ta,\vth}K^{1-\chi}\left(\frac{|h|}{\langle v\rangle^lw_{q,\ta,\vth}}\right)\leq C(\mu(v))^{\min\{1/8q,\frac{|1-q|}{8}\}}\eps^{\varrho+3}\|h\|_{\infty},
\end{equation}
and
\begin{equation}\label{K.ip4}
\langle v\rangle^lw_{q,\ta,\vth}\int_{\R^3}{\bf k}^{\chi}(v,u)\left(\frac{e^{\varepsilon|v-u|^2}|h(u)|}{\langle u\rangle^lw_{q,\ta,\vth}(u)}\right)du\leq C_{q,\ta}\langle v\rangle^{\varrho-2}\|h\|_{\infty},
\end{equation}
where $\vps>0$ and sufficiently small and $\langle v\rangle=\sqrt{1+|v|^2}$. 

\end{lemma}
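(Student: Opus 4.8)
The plan is to prove Lemma~\ref{es.k} by treating the four estimates separately, since each rests on a different feature of the kernels. For \eqref{K.ip1} I would first recall the pointwise bounds \eqref{k2.ke} on ${\bf k}_2^\chi$ together with the obvious Gaussian bound on ${\bf k}_1$, and note that the full kernel ${\bf k}(v,u)$ of $K$ satisfies $|{\bf k}(v,u)|\lesssim |v-u|^{-1}(1+|v|+|u|)^{\varrho-1}\exp(-c|u-v|^2-c(|v|^2-|u|^2)^2/|v-u|^2)$ after summing the $\chi$ and $1-\chi$ pieces (the $1-\chi$ part being a bounded-support perturbation of an integrable singularity of order $|v-u|^{\varrho}$, which is locally integrable since $\varrho>-3$). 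The mechanism is the standard one: write $(Kf_1,w^2 f_2)=\iint {\bf k}(v,u)f_1(u)w^2(v)f_2(v)\,dudv$, insert $w(v)/w(u)$, and use that on the support of the Gaussian factor the weight ratio $w_{q,\ta,\vth}(v)/w_{q,\ta,\vth}(u)$ is bounded (this uses $|v|\approx|u|$ when $(|v|^2-|u|^2)^2/|v-u|^2$ is small, a computation already systematized in the torus literature \cite{SG-08,S}). Then one defines the weighted kernel ${\bf k}_w(v,u)={\bf k}(v,u)w(v)/w(u)$, checks $\sup_v \int {\bf k}_w(v,u)\,\nu(v)^{-1/2}\nu(u)^{-1/2}du<\infty$, and splits the $u$-integral into $|v-u|\le \delta$, $|v|\ge N$, and the complementary bounded region; Cauchy--Schwarz then yields \eqref{K.ip1}, with the small-$\eta$ part coming from the first two regions and the $C(\eta)\|{\bf 1}_{|v|\le C(\eta)}f_1\|$ term from the last. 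Estimate \eqref{K.ip2} is the special case $\eta=1$ (absorbing the compact-support term into $\|w f_1\|_\nu$ via $\nu(v)\gtrsim \langle v\rangle^\varrho$, which is bounded below on compacta).

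For \eqref{K.ip3} the point is that $K^{1-\chi}$ integrates against a kernel supported (in the relevant variable) near $|u-v|\le 2\eps$, so one extracts from the Maxwellian factors $\sqrt{\mu(u)}\sqrt{\mu(v)}$ (or $\mu(u)$) a factor $(\mu(v))^{\min\{q/8,\,|1-q|/8\}}$ — this exponent is exactly what is left of $\sqrt{\mu(v)}$ after the weight $w_{q,\ta,\vth}(v)$ (which grows like $e^{q|v|^\ta/4}$ at worst, $\ta\le2$, $q<1$ when $\ta=2$) is allowed to eat part of it; the constraint $q<1$ for $\ta=2$ is precisely what makes $|1-q|/8>0$. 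The remaining $|u-v|^\varrho$ singularity is integrated over the $\eps$-ball in $u$, producing $\eps^{\varrho+3}$ (again using $\varrho>-3$), and the $\langle v\rangle^l$ and the $w$'s on numerator and denominator cancel up to a harmless factor absorbed into the Gaussian; pulling $\|h\|_\infty$ out of the integral finishes it. For \eqref{K.ip4} one uses the \emph{second} form \eqref{k2.ke} of the kernel bound: choosing $0<s_1<s_2<1$ with $s_2/8>\eps$ lets the factor $e^{\eps|v-u|^2}$ be absorbed into $e^{-(s_2/8)|u-v|^2}$, leaving a kernel of the same type with slightly smaller Gaussian constants; then the weight ratio $w_{q,\ta,\vth}(v)/w_{q,\ta,\vth}(u)$ is again controlled on the Gaussian support, the $\langle u\rangle^l/\langle v\rangle^l$ ratio is controlled similarly, and one is left to bound $\langle v\rangle\int |v-u|^{-1}(1+|v|+|u|)^{\varrho-1}e^{-c|u-v|^2-c(\cdots)}du$; carrying out this integral (splitting $|u-v|\lessgtr 1$ and, in the far region, exploiting the $(1+|v|+|u|)^{\varrho-1}$ decay) gives the claimed $\langle v\rangle^{\varrho-2}$ — the extra gain of $\langle v\rangle^{-1}$ beyond $\nu(v)\sim\langle v\rangle^\varrho$ is the standard bonus from integrating the kernel, and multiplying out $\langle v\rangle^l w_{q,\ta,\vth}$ then extracting $\|h\|_\infty$ completes the estimate.

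The main obstacle I anticipate is the weight-transfer step that appears in all four parts: showing that $w_{q,\ta,\vth}(v)/w_{q,\ta,\vth}(u)$ (and the $\vth$-dependent time factor inside it) is bounded, uniformly in $t\ge0$, on the support of the exponential factor $\exp(-c|u-v|^2-c(|v|^2-|u|^2)^2/|v-u|^2)$. This is delicate because $\ta$ can be as large as $2$, so $|v|^\ta$ is not Lipschitz-controlled by $|v-u|$ for large velocities; one must use the \emph{second} exponential term $(|v|^2-|u|^2)^2/|v-u|^2$ to force $\big||v|-|u|\big|$ small relative to $\sqrt{|v|^2+|u|^2}$, and then a mean-value estimate $\big||v|^\ta-|u|^\ta\big|\lesssim \big||v|-|u|\big|\,(|v|+|u|)^{\ta-1}$ shows the difference of exponents stays bounded on the Gaussian support. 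The time factor $q|v|^\ta/[8(1+t)^\vth]$ only helps (it is decreasing in $t$ and its $v$-dependence is dominated by the $t=0$ term), so it contributes nothing new once the $t$-independent case is handled. Apart from this, everything reduces to the classical one-dimensional integral estimates for Grad-type kernels with a locally integrable $|v-u|^\varrho$ singularity, which are routine given $\varrho>-3$; I would cite \cite{SG-08,S} for the kernel computations and keep the write-up of those parts brief.
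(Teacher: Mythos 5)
Your proposal follows essentially the same strategy as the paper. For \eqref{K.ip1} the paper simply defers to Lemma 2 of Strain--Guo and \eqref{K.ip2} follows as a corollary; your sketch reproduces that standard argument (weighted kernel, Cauchy--Schwarz, region splitting). For \eqref{K.ip3} you correctly identify the two ingredients actually used: extract a power of $\mu(v)$ from $\langle v\rangle^l w_{q,\ta,\vth}\sqrt{\mu}$ (with the restriction $q<1$ when $\ta=2$ guaranteeing a strictly positive exponent) and integrate the $|u-v|^\varrho$ singularity over the ball $|v-u|\le 2\eps$ to produce $\eps^{\varrho+3}$; for the $K_2^{1-\chi}$ piece one also needs, as the paper notes, the elementary observation from \eqref{v.re2} that $|v'|, |u'| \ge |v|-4\eps$. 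For \eqref{K.ip4} your plan (use \eqref{k2.ke}, absorb $e^{\eps|v-u|^2}$ into the Gaussian with $s_0-8\eps>0$, control the weight ratio, split $\eta=\eta_\parallel+\eta_\perp$ and integrate to harvest the extra $\langle v\rangle^{-1}$) is exactly what the paper does.

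The one place where your outline is more heuristic than the paper's proof is the weight-transfer step in \eqref{K.ip4} for $\ta=2$. You assert the ratio $w_{q,\ta,\vth}(v)/w_{q,\ta,\vth}(u)$ is bounded on the Gaussian support, justified by a mean-value inequality. The paper does not use boundedness: it bounds the ratio by $C_l[1+|v-u|^2]^l e^{-\widetilde q\{|u|^2-|v|^2\}}$, which grows, and then computes the \emph{total} exponent of $\mathbf{k}_2^\chi(v,u)\,w(v)/w(u)$ in the variables $|\eta|$ and $v\cdot\eta/|\eta|$ with $\eta=v-u$. One must verify that the resulting quadratic form has negative discriminant $\Delta=\tfrac14(\widetilde q^{\,2}-s_0^2)<0$, which is precisely where $q<s_0<1$ (hence the hypothesis $q<1$ for $\ta=2$) enters, and that negativity survives the $e^{\eps|v-u|^2}$ perturbation. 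Your mean-value sketch points at the right phenomenon but does not actually close the estimate in the borderline case $\ta=2$; the discriminant computation (or some equivalent completion-of-squares) is needed. For $0<\ta<2$ your argument via $|v|^\ta-|u|^\ta\lesssim|\eta|^\ta$ is fine and is what the paper does in \eqref{k0ep2}. So: same approach, with one step that would need to be upgraded from a heuristic to a computation if you were to write it out for the endpoint $\ta=2$.
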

\begin{proof}
We only detail the proof for \eqref{K.ip3} and \eqref{K.ip4}, since the strategy to prove \eqref{K.ip1} is basically the same as Lemma 2 of \cite[pp.296]{SG-08}, and \eqref{K.ip2} directly follows from \eqref{K.ip1}. Notice that $K^{1-\chi}=K_2^{1-\chi}-K_1^{1-\chi}$, we first consider the estimates for $K_1^{1-\chi}$. Recall
\begin{equation*}
w_{q,\ta,\vth}=\exp\left\{\frac{q|v|^\ta}{8}+\frac{q|v|^\ta}{8(1+t)^\vth}\right\}
=\exp\left\{\frac{q}{8}(1+(1+t)^{-\vth})|v|^\ta\right\}, \ (q,\ta)\in \CA_{q,\ta}.
\end{equation*}
Let $\widetilde{q}=\frac{q}{2}(1+(1+t)^{-\vth})$, then $q/2< \widetilde{q}\leq q$. Direct calculation yields
$$\langle v\rangle^lw_{q,\ta,\vth}\sqrt{\mu(v)}\leq C_{q,\ta}(\mu(v))^{\min\{1/8q,\frac{|1-q|}{8}\}},$$
then it is easy to obtain
\begin{equation*}
\begin{split}
\langle v\rangle^l&w_{q,\ta,\vth}K_1^{1-\chi}\left(\frac{|h|}{\langle v\rangle^lw_{q,\ta,\vth}}\right)\\
=&\langle v\rangle^lw_{q,\ta,\vth}\sqrt{\mu(v)}\int_{\R^3\times \S^2}(1-\chi(|u-v|))|u-v|^{\varrho}b_0(\cos \ta)\sqrt{\mu(u)}\\&\quad\times\left(\frac{|h(u)|}{\langle u\rangle^lw_{q,\ta,\vth}(u)}\right)d\omega du\\
\leq& C_{q,\ta,\vth}(\mu(v))^{\min\{1/8q,\frac{|1-q|}{8}\}}\int_{|v-u|\leq 2\eps}|u-v|^{\varrho}du\|h\|_{\infty}\\
\leq& C(\mu(v))^{\min\{1/8q,\frac{|1-q|}{8}\}}\eps^{\varrho+3}\|h\|_{\infty}.
\end{split}
\end{equation*}
For the contribution of ${\bf k}_1^{\chi}$ in \eqref{K.ip4}, it follows that
\begin{equation*}
\begin{split}
\langle v\rangle^l&w_{q,\ta,\vth}\int_{\R^3}{\bf k}_1^{\chi}(v,u)\left(\frac{e^{\varepsilon|v-u|^2}|h(u)|}{\langle u\rangle^lw_{q,\ta,\vth}(u)}\right)du\\
=&\langle v\rangle^lw_{q,\ta,\vth}\sqrt{\mu(v)}\int_{\R^3\times \S^2}\chi(|u-v|)|u-v|^{\varrho}b_0(\cos \ta)\sqrt{\mu(u)}\\&\quad\times\left(\frac{e^{\varepsilon|v-u|^2}|h(u)|}{\langle u\rangle^lw_{q,\ta,\vth}(u)}\right)d\omega du\\
\leq& C_{q,\ta}\int_{\R^3}|u-v|^{\varrho}(\mu(v)\mu(u))^{\min\{1/8q,\frac{|1-q|}{8}\}}e^{\varepsilon|v-u|^2}du\|h\|_{\infty}\\
\leq& C\langle v\rangle^{\varrho}(\mu(v))^{\min\{1/16q,\frac{|1-q|}{16}\}}\|h\|_{\infty},
\end{split}
\end{equation*}
where the last inequality is due to $\int_{\R^3}|u-v|^{\varrho}(\mu(u))^{\min\{1/16q,\frac{|1-q|}{16}\}}du\leq C\langle v\rangle^{\varrho}.$

We now turn to derive the contributions of $K^{1-\chi}_2$ in \eqref{K.ip3}. In light of \eqref{def.k}, on the one hand, we have
\begin{equation}\label{K21}
\begin{split}
\langle v\rangle^l&w_{q,\ta,\vth}K_2^{1-\chi}\left(\frac{|h|}{\langle v\rangle^lw_{q,\ta,\vth}}\right)\\
=&\langle v\rangle^lw_{q,\ta,\vth}\int_{\R^3\times \S^2}(1-\chi)(|u-v|)|u-v|^{\varrho}b_0(\cos \ta)\sqrt{\mu(u)}\\
&\times\left\{\frac{|h(u')|}{\langle u'\rangle^lw_{q,\ta,\vth}(u')}\sqrt{\mu(v')}+\frac{|h(v')|}{\langle v'\rangle^lw_{q,\ta,\vth}(v')}\sqrt{\mu(u')}\right\}d\omega du.
\end{split}
\end{equation}
On the other hand, \eqref{v.re} and $|v-u|\leq 2\eps$ imply
\begin{eqnarray}\label{v.re2}
\left\{\begin{array}{rll}
|v'|=&|v+[(u-v)\cdot\omega]\om|\geq|v|-|v-u|\geq |v|-2\eps,\\[2mm]
|u'|=&|v+u-v-[(u-v)\cdot\omega]\om|\geq|v|-2|v-u|\geq|v|-4\eps.
\end{array}\right.
\end{eqnarray}
Using $\langle v\rangle^lw_{q,\ta,\vth}\sqrt{\mu(v)}\leq C_{q,\ta}(\mu(v))^{\min\{1/8q,\frac{|1-q|}{8}\}}$ again, we get from \eqref{K21} and \eqref{v.re2} that
\begin{equation*}
\begin{split}
\langle v\rangle^lw_{q,\ta,\vth}K_2^{1-\chi}\left(\frac{|h|}{\langle v\rangle^lw_{q,\ta,\vth}}\right)
\leq&  C_{q,\ta}(\mu(v))^{\min\{1/8q,\frac{|1-q|}{8}\}}\int_{|v-u|\leq 2\eps}|u-v|^{\varrho}du\|h\|_{\infty}\\
\leq& C(\mu(v))^{\min\{1/8q,\frac{|1-q|}{8}\}}\eps^{\varrho+3}\|h\|_{\infty}.
\end{split}
\end{equation*}
It remains now to deduce the contribution of ${\bf k}^\chi_2$ in \eqref{K.ip4}. Recall \eqref{k2.ke}, take $s_0=\min\{s_1,s_2\}$ to obtain
\begin{equation*}
\begin{split}
\langle v\rangle^l&w_{q,\ta,\vth}\int_{\R^3}{\bf k}_2^{\chi}(v,u)\left(\frac{e^{\varepsilon|v-u|^2}|h(u)|}{\langle u\rangle^lw_{q,\ta,\vth}(u)}\right)du\\
\leq&C\|h\|_{\infty}\langle v\rangle^{\varrho-1}\langle v\rangle^lw_{q,\ta,\vth}\int_{\R^3}\frac{\exp\left(-\frac{s_0}{8}|u-v|^2-\frac{s_0}{8}\frac{(|v|^2-|u|^2)^2}{|v-u|^2}\right)}{|v-u|}
\\&\quad\times\left(\frac{e^{\varepsilon|v-u|^2}}{\langle u\rangle^lw_{q,\ta,\vth}(u)}\right)du \eqdef\CK_0.
\end{split}
\end{equation*}
Next, from \eqref{wfun}, we notice that for some $C_{l}>0$ and $\ta=2$
\begin{equation*}
\left\vert \frac{w_{q,\ta,\vth}(v)}{w_{q,\ta,\vth}(u)}\right\vert \leq C_l[1+|v-u|^{2}]^{l}e^{-\widetilde{q} \{|u|^{2}-|v|^{2}\}}.
\end{equation*}%
Let $v-u=\eta $ and $u=v-\eta $ in the integral of $\CK_0$. We then compute the total exponent in
$\mathbf{k}_2^{\chi}(v,u)\frac{w_{q,2,\vth}(v)}{w_{q,2,\vth}(u)}$ as:
\begin{equation*}
\begin{split}
-\frac{s_0}{8}|\eta |^{2}&-\frac{s_0}{8}\frac{||\eta |^{2}-2v\cdot \eta |^{2}}{%
|\eta |^{2}}-\frac{\widetilde{q}}{4} \{|v-\eta |^{2}-|v|^{2}\} \\
=&-\frac{s_0}{4}|\eta |^{2}+\frac{s_0}{2}v\cdot \eta -\frac{s_0}{2}\frac{|v\cdot
\eta |^{2}}{|\eta |^{2}}-\frac{\widetilde{q}}{4} \{|\eta |^{2}-2v\cdot \eta \} \\
=&-\frac{1}{4}(\widetilde{q}+s_0)|\eta |^{2}+\frac{1}{2}(s_0+\widetilde{q})v\cdot \eta -%
\frac{s_0}{2}\frac{\{v\cdot \eta \}^{2}}{|\eta |^{2}}.
\end{split}
\end{equation*}%
Let $\widetilde{q}\leq q <s_0,$ the discriminant of the above quadratic form of
$|\eta |$ and $\frac{v\cdot \eta }{|\eta |}$ is
\begin{equation*}
\Delta =\frac{1}{4}(s_0+\widetilde{q})^2-(\widetilde{q}+s_0)\frac{s_0}{2}=\frac{1}{4}(\widetilde{q} ^{2}-s_0^2)<0.
\end{equation*}
Notice that $q/2< \widetilde{q}\leq q,$ we thus have, for $\varepsilon >0$ sufficiently small and $q<s_0$, there is $C_{q}>0$ independent of $\vth$ such that the
following perturbed quadratic form is still negative definite
\begin{equation}\label{k0ep1}
\begin{split}
-\frac{s_0-8\varepsilon }{8}|\eta |^{2}&-\frac{s_0-8\varepsilon }{8}\frac{||\eta
|^{2}-2v\cdot \eta |^{2}}{|\eta |^{2}}-\frac{\widetilde{q}}{4} \{|\eta |^{2}-2v\cdot \eta \}
\\
\leq &-C_{q}\left\{|\eta |^{2}+\frac{|v\cdot \eta |^{2}}{|\eta |^{2}}\right\}
=-C_{q}\left\{\frac{|\eta |^{2}}{2}
+\left(\frac{|\eta |^{2}}{2}+\frac{|v\cdot\eta |^{2}}{|\eta |^{2}}\right)\right\} \\
\leq &-C_{q}\left\{\frac{|\eta |^{2}}{2}+|v\cdot \eta |\right\}.
\end{split}
\end{equation}%
%
If $0<\ta<2$, Lemma \ref{el.ine} yields
$$
|v|^\ta-|u|^\ta\leq C_\ta|\eta|^\ta.
$$
Therefore, one also has
\begin{equation}\label{k0ep2}
\begin{split}
-\frac{s_0-8\vps}{8}|\eta|^2&-\frac{s_0}{8}
\frac{||\eta|^2-2v\cdot\eta|^2}{|\eta|^2}+\frac{\widetilde{q}C_\ta}{4}\eta^\ta\\
\leq&-\frac{s_0-9\vps}{8}|\eta|^2-\frac{s_0-9\vps}{8}
\frac{||\eta|^2-2v\cdot\eta|^2}{|\eta|^2}+C_{q,\ta}
\\
\leq&-C_{s_0}
\left\{|\eta |^{2}+\frac{|v\cdot \eta |^{2}}{|\eta |^{2}}\right\}+C_{q,\ta}
\leq-C_{s_0}
\left\{\frac{|\eta |^{2}}{2}+|v\cdot \eta |\right\}+C_{q,\ta}.
\end{split}
\end{equation}
Plugging \eqref{k0ep1} or \eqref{k0ep2} into $\CK_0$, we obtain
\begin{equation*}
\begin{split}
\CK_0
\leq&C_{q,\ta}\langle v\rangle^{\varrho-1}\|h\|_{\infty}\int_{\R^3}\frac{\langle\eta\rangle^l}{|\eta|}\exp\left\{-C_{q}
\left\{\frac{|\eta |^{2}}{2}+|v\cdot \eta |\right\}\right\}d\eta.
\end{split}
\end{equation*}
Next, we make another change of variable $\eta_\parallel=(\eta\cdot\frac{v}{|v|})\frac{v}{|v|}$ and $\eta_\perp=\eta-\eta_\parallel$
so that $v\cdot \eta=|v||\eta_\parallel|$, this leads us to
\begin{equation*}
\begin{split}
\CK_0
\leq&C_{q,\ta}\langle v\rangle^{\varrho-1}\|h\|_{\infty}\int_{\R^2}\frac{1}{|\eta_\perp|}\exp\left\{-\frac{C_{q}}{4}
|\eta_\perp|^2\right\}\int_\R\exp\left\{
-\frac{C_{q}}{4}|v||\eta_\parallel|\right\}d\eta_\parallel d\eta_\perp\\
\leq&C_{q,\ta}\langle v\rangle^{\varrho-2}\|h\|_{\infty}.
\end{split}
\end{equation*}
This finishes the proof of Lemma \ref{es.k}.
\end{proof}

The following lemma is concerned with the estimates on nonlinear operator $\Ga$.
\begin{lemma}\label{es.nop}
It holds that
\begin{equation}\label{Ga.lif}
\|\nu^{-1}w_{q,\ta,\vth}\Ga(f_1,f_2)\|_\infty\leq C\|w_{q,\ta,\vth}f_1\|_{\infty}\|w_{q,\ta,\vth}f_2\|_{\infty},
\end{equation}
\begin{equation}\label{Ga.lif1}
\|w_{q/2,\ta,\vth}\Ga(f_1,f_2)\|_\infty\leq C\left\{\|f_1\|_{\infty}\|w_{q,\ta,\vth}f_2\|_{\infty}+\|w_{q,\ta,\vth}f_1\|_{\infty}\|f_2\|_{\infty}\right\},
\end{equation}
\begin{equation}\label{Ga.l2}
\begin{split}
\|\nu^{-1/2}&w_{q/2,\ta,\vth}\Ga(f_1,f_2)\|^2_{2}\\ \leq & C\|w_{q,\ta,\vth}f_1\|^2_{\infty}\|w_{q/2,\ta,\vth}f_2\|^2_{\nu}+C\|w_{q,\ta,\vth}f_2\|^2_{\infty}\|w_{q/2,\ta,\vth}f_1\|^2_{\nu},
\end{split}
\end{equation}
and
\begin{equation}\label{Ga.l22}
\|\nu^{-1/2}\Ga(f_1,f_2)\|^2_{2}\leq C\|w_{q/2,\ta,\vth}f_1\|^2_{\infty}\|f_2\|^2_{\nu}
+C\|w_{q/2,\ta,\vth}f_2\|^2_{\infty}\|f_1\|^2_{\nu}.
\end{equation}

\end{lemma}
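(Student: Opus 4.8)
The plan is to prove Lemma~\ref{es.nop} by reducing all four estimates to pointwise bounds on $\Ga(f_1,f_2)$ obtained from the classical Grad estimate on the collision kernel, combined with the weight-splitting device announced in the introduction. First I would recall the standard decomposition $\Ga(f_1,f_2)=\Ga^{\text{gain}}(f_1,f_2)-\Ga^{\text{loss}}(f_1,f_2)$, where the loss term is $f_1\cdot(\text{convolution of }f_2\text{ against }|u-v|^\varrho\sqrt{\mu})$ and hence is controlled by $\nu(v)$ times the product of the functions, and the gain term has the kernel representation $\Ga^{\text{gain}}(f_1,f_2)(v)=\int_{\R^3\times\S^2}|u-v|^\varrho b_0(\ta)\sqrt{\mu(u)}f_1(u')f_2(v')\,du\,d\omega$ (plus the symmetric $u'\leftrightarrow v'$ piece). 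The key algebraic input, valid because of energy conservation $|u|^2+|v|^2=|u'|^2+|v'|^2$ in \eqref{v.re}, is the inequality
\begin{equation*}
w_{q/2,\ta,\vth}(v)\leq \tfrac12\bigl(w_{q,\ta,\vth}(v')+w_{q,\ta,\vth}(u')\bigr),
\end{equation*}
which follows from $|v|^\ta\leq |v'|^\ta+|u'|^\ta$ (an application of Lemma~\ref{el.ine} since $0<\ta/2\leq 1$ when $\ta\le 2$, applied to $x=|v'|^2$, $y=|u'|^2$ after noting $|v|^2\le|v'|^2+|u'|^2$) together with convexity of the exponential; this is precisely the step flagged in the introduction as the way to avoid picking up an extra velocity weight. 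Using it, $w_{q/2,\ta,\vth}(v)|f_1(u')f_2(v')|\le \tfrac12\bigl(w_{q,\ta,\vth}(v')+w_{q,\ta,\vth}(u')\bigr)|f_1(u')||f_2(v')|$, and each resulting product is bounded by $\|w_{q,\ta,\vth}f_1\|_\infty\|w_{q,\ta,\vth}f_2\|_\infty$ after discarding one remaining Gaussian factor.

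For \eqref{Ga.lif} I would not even need the splitting: bounding $|f_i(\cdot)|\le \|w_{q,\ta,\vth}f_i\|_\infty\, w_{q,\ta,\vth}^{-1}(\cdot)$ and using $w_{q,\ta,\vth}^{-1}(u')w_{q,\ta,\vth}^{-1}(v')\le w_{q,\ta,\vth}^{-1}(v)\cdot C$ (again via $|v|^\ta\le|u'|^\ta+|v'|^\ta$ and monotonicity), the gain term contributes $\le C w_{q,\ta,\vth}^{-1}(v)\|w_{q,\ta,\vth}f_1\|_\infty\|w_{q,\ta,\vth}f_2\|_\infty\int|u-v|^\varrho\sqrt{\mu(u)}\,du$, and the last integral is exactly $\nu(v)$ up to a constant by its definition; the loss term is handled identically. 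This gives the $\nu^{-1}$-weighted $L^\infty$ bound. For \eqref{Ga.lif1}, the point is to distribute the weight asymmetrically: in the gain term, on the piece where we keep $w_{q,\ta,\vth}(v')$ the partner $f_1(u')$ is estimated plainly by $\|f_1\|_\infty$ while $w_{q,\ta,\vth}(v')|f_2(v')|\le\|w_{q,\ta,\vth}f_2\|_\infty$; on the piece where we keep $w_{q,\ta,\vth}(u')$ we instead pair it with $f_1(u')$ and estimate $f_2(v')$ by $\|f_2\|_\infty$. This reproduces the two-term right-hand side of \eqref{Ga.lif1}, and the leftover integral $\int|u-v|^\varrho\sqrt{\mu(u)}\,du\sim\nu(v)$ is absorbed since the stated norm carries no $\nu$ weight on the left but $\nu\le C$ (as $\varrho<0$ and $\nu$ is bounded above by \eqref{confre}).

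For the $L^2$ estimates \eqref{Ga.l2} and \eqref{Ga.l22} the standard route is to combine the pointwise bound from \eqref{Ga.lif} type arguments with a Cauchy--Schwarz in the collision variables so as to keep one factor in $L^\infty$ and the other in $L^2_v$. Concretely, take the pointwise bound $\nu^{-1/2}(v)w_{q/2,\ta,\vth}(v)|\Ga^{\text{gain}}(f_1,f_2)(v)|\le C\nu^{-1/2}(v)\int|u-v|^\varrho\sqrt{\mu(u)}\bigl(w_{q,\ta,\vth}(v')+w_{q,\ta,\vth}(u')\bigr)|f_1(u')||f_2(v')|\,du\,d\omega$; on the term carrying $w_{q,\ta,\vth}(v')$ bound $w_{q,\ta,\vth}(u')|f_1(u')|^{1/2}$-type factors crudely, pull out $\|w_{q,\ta,\vth}f_1\|_\infty$, and apply Cauchy--Schwarz in $(u,\omega)$ to leave $\bigl(\int|u-v|^\varrho\sqrt{\mu(u)}\,w_{q/2,\ta,\vth}^2(v')|f_2(v')|^2\,du\,d\omega\bigr)^{1/2}$ times $\nu^{-1/2}(v)\bigl(\int|u-v|^\varrho\sqrt{\mu(u)}\,du\bigr)^{1/2}=C$. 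Squaring, integrating in $v$, and using the change of variables $(u,v)\mapsto(u',v')$ (whose Jacobian is $1$) together with the collisional measure identity turns $\int_v\int|u-v|^\varrho\sqrt{\mu(u)}w_{q/2,\ta,\vth}^2(v')|f_2(v')|^2$ into $C\|w_{q/2,\ta,\vth}f_2\|_\nu^2$, exactly as in Grad's lemma; the symmetric term produces $\|w_{q,\ta,\vth}f_2\|_\infty^2\|w_{q/2,\ta,\vth}f_1\|_\nu^2$, and the loss term is easier. Estimate \eqref{Ga.l22} is the same computation with $w_{q/2,\ta,\vth}$ deleted on the left and the weights redistributed so the $L^\infty$-factor absorbs all the exponential weight, as in \eqref{Ga.lif1}. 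The main obstacle, and the only genuinely delicate point, is the $w_{q/2,\ta,\vth}(v)\le\tfrac12(w_{q,\ta,\vth}(v')+w_{q,\ta,\vth}(u'))$ inequality and making sure it is uniform in $t\ge 0$ and in the admissible range $(q,\ta)\in\CA_{q,\ta}$, $0\le\vth<-\ta/\varrho$: one must check that the subadditivity $|v|^\ta\le|v'|^\ta+|u'|^\ta$ survives for $\ta=2$ (where it is an equality from energy conservation) and for $0<\ta<2$ (where Lemma~\ref{el.ine} with exponent $\ta/2\le1$ applies to $|v|^2\le|v'|^2+|u'|^2$), and that the time-dependent part $\frac{q|v|^\ta}{8(1+t)^\vth}$ obeys the same splitting with the same constant since $(1+t)^{-\vth}$ is just a fixed factor $\le 1$ at each time. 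Everything else is bookkeeping on Gaussian integrals of the form $\int|u-v|^\varrho\sqrt{\mu(u)}\,du\le C\langle v\rangle^\varrho\le C$.
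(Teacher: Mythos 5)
Your treatment of the two $L^\infty$ bounds \eqref{Ga.lif} and \eqref{Ga.lif1} is essentially the paper's: the same gain/loss splitting, the weight inequality $w_{q/2,\ta,\vth}(v)\le w_{q/2,\ta,\vth}(u')w_{q/2,\ta,\vth}(v')\le\frac12\{w_{q,\ta,\vth}(u')+w_{q,\ta,\vth}(v')\}$ obtained from $|v|^\ta\le|u'|^\ta+|v'|^\ta$ (Lemma \ref{el.ine}) plus the arithmetic--geometric mean inequality, and the fact that $\int_{\R^3\times\S^2}|u-v|^\varrho b_0\sqrt{\mu(u)}\,du\,d\om\sim\nu(v)\le C$. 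No objection there.

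The genuine gap is in the $L^2$ estimates \eqref{Ga.l2}--\eqref{Ga.l22}. After you take $f_1$ out in $L^\infty$ and apply Cauchy--Schwarz with the full weight $|u-v|^\varrho b_0\sqrt{\mu(u)}\,du\,d\om$, the squared factor you must integrate in $(x,v)$ is $\int|u-v|^\varrho b_0\sqrt{\mu(u)}\,w_{q,\ta,\vth}(v')f_2^2(v')\,du\,d\om$, which carries a single Maxwellian, evaluated at $u$ only. Your claim that the change of variables $(u,v)\mapsto(u',v')$ then produces $C\|w_{q/2,\ta,\vth}f_2\|_\nu^2$ ``exactly as in Grad's lemma'' amounts to the pointwise bound $\int_{\R^3\times\S^2}|u'-v'|^\varrho b_0\,\sqrt{\mu(u)}\,du'\,d\om\lesssim\nu(v')$, where now $u$ is the velocity conjugate to the integration variables, i.e. $u=u'-[(u'-v')\cdot\om]\om=v'+\eta_\perp$ with $\eta=u'-v'$ and $\eta_\perp$ its component orthogonal to $\om$. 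The Maxwellian therefore gives no decay in $s=\eta\cdot\om$, and since $b_0\le C|s|/|\eta|$, the $s$-integral behaves like $\int^{\infty}|s|\,(s^2+|\eta_\perp|^2)^{(\varrho-1)/2}\,ds$, which diverges for $-1\le\varrho<0$ (take $b_0(\ta)=\cos\ta$ and $\varrho=-1/2$, both admissible under \eqref{soft}), and even for $\varrho<-1$ yields only $\langle v'\rangle^{\varrho+1}$, a full power worse than $\nu(v')\sim\langle v'\rangle^{\varrho}$, so the $\Vert\cdot\Vert_\nu$ norm on the right of \eqref{Ga.l2} is not reached. Grad-type kernel bounds such as \eqref{k2.ke} need two Maxwellian square roots; you discarded one when you estimated $f_1$ in $L^\infty$. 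The paper closes exactly this point with a case decomposition: when $|u|\ge|v|/2$ it writes $\sqrt{\mu(u)}\le\mu^{1/4}(u)\mu^{1/16}(v)$ and uses the collision invariance $\mu^{1/16}(u)\mu^{1/16}(v)=\mu^{1/16}(u')\mu^{1/16}(v')$, so that after the change of variables there is Gaussian decay in both new variables; when $|u|\le|v|/2$ it uses $|u-v|\ge|v|/2$ and $\max\{|u'|,|v'|\}\lesssim|v|$, hence $\nu(v)\lesssim\nu(u')+\nu(v')$, with a further split into $|v|\le1$ and $|v|\ge1$. Your sketch needs this (or an equivalent) ingredient; as written the decisive step fails, and the same objection applies verbatim to your outline for \eqref{Ga.l22}.
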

\begin{proof}
The proof of \eqref{Ga.lif} is the same as that of Lemma 5 in \cite[pp.730]{Guo-2010}, we omit the details for brevity.
To prove \eqref{Ga.lif1}, we rewrite
\begin{equation}\label{Ga.sp}
\begin{split}
\Ga(f_1,f_2)=&\frac{1}{\sqrt{\mu}}Q(\sqrt{\mu}f_1,\sqrt{\mu}f_2)\\
=&\int_{\R^3\times\S^2}|v-u|^{\varrho}
b_0(\cos \ta)\sqrt{\mu(u)}f_1(u')f_2(v')dud\om\\&-\int_{\R^3\times\S^2}|v-u|^{\varrho}
b_0(\cos \ta)\sqrt{\mu(u)}f_1(u)f_2(v)dud\om\\
=&\Ga^{\textrm{gain}}(f_1,f_2)-\Ga^{\textrm{loss}}(f_1,f_2).
\end{split}
\end{equation}
For the loss term, a simple calculation directly gives
\begin{equation*}
\begin{split}
\|w_{q/2,\ta,\vth}\Ga^{\textrm{loss}}(f_1,f_2)\|_{\infty}\leq C\|w_{q/2,\ta,\vth}f_2\nu\|_{\infty}\| f_1\|_{\infty}
\leq C\|w_{q/2,\ta,\vth}f_2\|_{\infty}\|f_1\|_{\infty}.
\end{split}
\end{equation*}
Next, since $|v|^2\leq |v'|^2+|u'|^2$, by virtue of \eqref{p.ine}, 
one has
$$
w_{q/2,\ta,\vth}(v)\leq w_{q/2,\ta,\vth}(u')w_{q/2,\ta,\vth}(v')\leq \frac{1}{2}(w_{q,\ta,\vth}(u')+w_{q,\ta,\vth}(v')).
$$
With this, we present the corresponding computation for the gain term as follows
\begin{equation*}
\begin{split}
|&w_{q/2,\ta,\vth}\Ga^{\textrm{gain}}(f_1,f_2)|\\
\leq& \frac{1}{2}(w_{q,\ta,\vth}(u')+w_{q,\ta,\vth}(v'))\int_{\R^3\times\S^2}|v-u|^{\varrho}
b_0(\cos \ta)\sqrt{\mu(u)}|f_1(u')f_2(v')|dud\om\\
\leq&C\left\{\|f_1\|_{\infty}\|w_{q,\ta,\vth}f_2\|_{\infty}+\|w_{q,\ta,\vth}f_1\|_{\infty}\|f_2\|_{\infty}\right\}.
\end{split}
\end{equation*}
This ends the proof for \eqref{Ga.lif1}.
In what follows, we only prove \eqref{Ga.l2}, since \eqref{Ga.l22} can be obtained in a similar fashion. Recall \eqref{Ga.sp},
for the loss term, one has
\begin{equation*}
\begin{split}
&\|\nu^{-1/2}w_{q/2,\ta,\vth}\Ga^{\textrm{loss}}(f_1,f_2)\|^2_{2}\\
=&\int_{\R^3\times\Omega}\nu^{-1}(v)w^2_{q/2,\ta,\vth}\left\{\int_{\R^3\times\S^2}|v-u|^{\varrho}
b_0(\cos \ta)\sqrt{\mu(u)}f_1(u)f_2(v)dud\om\right\}^2dvdx\\
\leq&\|f_1\|^2_{\infty}\int_{\R^3\times\Omega}\nu^{-1}(v)w^2_{q/2,\ta,\vth}|f_2(v)|^2\left\{\int_{\R^3\times\S^2}|v-u|^{\varrho}
b_0(\cos \ta)\sqrt{\mu(u)}d\om\right\}^2dvdx\\
\leq& C\|f_1\|^2_{\infty}\|w_{q/2,\ta,\vth}f_2\|^2_{\nu}.
\end{split}
\end{equation*}
As to the gain term, let us denote
\begin{equation*}
\begin{split}
\CI_0=&\|\nu^{-1/2}w_{q/2,\ta,\vth}\Ga^{\textrm{gain}}(f_1,f_2)\|^2_{2}\\=&\int_{\R^3\times\Omega}\nu^{-1}(v)
w^2_{q/2,\ta,\vth}\left\{\int_{\R^3\times\S^2}|v-u|^{\varrho}
b_0(\cos \ta)\sqrt{\mu(u)}f_1(u')f_2(v')du\om\right\}^2dvdx.
\end{split}
\end{equation*}
The calculation for $\CI_0$ is a little more delicate, we divide it into following three cases:

\noindent {\it Case 1, $|u|\geq|v|/2.$} In this case, $\mu^{1/2}(u)\leq\mu^{1/4}(u)\mu^{1/16}(v)$.
By H\"{o}lder's inequality and a change of variable $(u,v)\rightarrow (u',v')$, we have
\begin{equation*}
\begin{split}
\CI_0
\leq &C\int_{\R^3\times\Omega}\nu^{-1}(v)w^2_{q/2,\ta,\vth}(v)\int_{\R^3}|v-u|^{\varrho}
\sqrt{\mu(u)}f^2_1(u')f^2_2(v')du\\
\times&\int_{\R^3}|v-u|^{\varrho}
\sqrt{\mu(u)}dudvdx\\
\leq &C\int_{\R^3\times\R^3\times\Omega}w^2_{q/2,\ta,\vth}(u)w^2_{q/2,\ta,\vth}(v)|v-u|^{\varrho}
\mu^{1/16}(u)\mu^{1/16}(v)f^2_1(u)f^2_2(v)dudvdx\\
\leq& C\|w_{q/2,\ta,\vth}f_1\|^2_{\infty}\|w_{q/2,\ta,\vth}f_2\|^2_{\nu},
\end{split}
\end{equation*}
where we also used the fact that $\max\{|v|, |u|\}\leq |u'|+|v'|$.

\noindent {\it Case 2, $|u|\leq|v|/2$ and $|v|\leq1$.} In this situation, $|u-v|\geq |v|-|u|\geq |v|/2$ and $|u|\leq 1/2,$ moreover
$|u'|+|v'|\leq 2(|u|+|v|)\leq 3|v|\leq 3,$ consequently, when $(u,v)\in\{(u,v)||u|\geq|v|/2,|v|\leq1\}$,
we have by H\"{o}lder's inequality and a change of variable $(u,v)\rightarrow (u',v')$ that
\begin{equation*}
\begin{split}
\CI_0
\leq &C\int_{\{|v|\leq1\}\times\Omega}|v|^{\varrho}\int_{\{|u|\leq1/2\}}|v-u|^{\varrho}
\mu(u)f^2_1(u')f^2_2(v')dudvdx\\
\leq &C\int_{\{|v|\leq 1, |u|\leq1/2\}\times\Omega}\min\{|u'|^\varrho,|v'|^\varrho\}f^2_1(u')f^2_2(v')dudvdx\\
\leq& C\int_{\{|v|\leq 3, |u|\leq 3\}\times\Omega}\min\{|u|^\varrho,|v|^\varrho\}f^2_1(u)f^2_2(v)dudvdx\leq C\|f_1\|^2_{\infty}\|f_2\|^2_{\nu}.
\end{split}
\end{equation*}
\noindent {\it Case 3, $|u|\leq|v|/2$ and $|v|\geq1$.} One has $\max\{|u'|,|v'|\}\leq 5|v|/2$
on this occasion, hence $\nu(v)\lesssim \nu(v')+\nu(u')$, moreover, it follows $|u-v|\geq|v|-|u|\geq|v|/2\geq 1/2$.
Notice that $w^2_{q/2,\ta,\vth}(v)\leq w^2_{q/2,\ta,\vth}(u')w^2_{q/2,\ta,\vth}(v')$,
apply H\"{o}lder's inequality and a change of variable $(u,v)\rightarrow (u',v')$ again to obtain
\begin{equation*}
\begin{split}
\CI_0
\leq &\int_{\R^3\times\R^3\times\Omega}\nu^{-1}(v)w^2_{q/2,\ta,\vth}(1+|v|)^{2\vho}f^2_1(u')f^2_2(v')dudvdx\\
\leq &C\int_{\R^3\times\R^3\times\Omega}w^2_{q/2,\ta,\vth}(u)w^2_{q/2,\ta,\vth}(v)(\nu(v)+\nu(u))f^2_1(u)f^2_2(v)dudvdx\\
\leq& C\|w_{q,\ta,\vth}f_1\|^2_{\infty}\|w_{q/2,\ta,\vth}f_2\|^2_{\nu}+C\|w_{q,\ta,\vth}f_2\|^2_{\infty}\|w_{q/2,\ta,\vth}f_1\|^2_{\nu},
\end{split}
\end{equation*}
where the fact $\int_{\R^3}w_{-q/2,\ta,\vth} dv<+\infty$ was used.

Combing all the estimates above, we see that \eqref{Ga.l2} holds true, this ends the proof of Lemma \ref{es.nop}.

Next, we address the following Ukai's trace theorem whose proof can be found in Lemma 2.1 of \cite[pp. 187]{EGKM-13}.
\begin{lemma}
\label{ukai}Let $\vps>0$, define the near-grazing set of $\gamma_+$ or $\gamma_-$ as
\begin{equation*}
\gamma ^{\varepsilon }_{\pm}\ \equiv \ \left\{(x,v)\in \gamma_{\pm} : |n(x)\cdot
v|\leq \varepsilon \ \text{or} \ |v|\geq \frac{1}{\varepsilon } \ \text{or}
\ |v|\leq \varepsilon\right\}.
\end{equation*}%
There exists constant $C_{\varepsilon ,\Omega }>0$ depends only on $\vps$ and $\Omega$ such that
\begin{equation}
\begin{split}
\int_{s}^{t}|f\mathbf{1}_{\gamma_+\backslash\gamma ^{\varepsilon }_+ }(\tau
)|_{1}d\tau &\leq  C_{\varepsilon ,\Omega } \left\{|| f(s) ||_1 +\int_{s}^{t}\Big[%
\Vert f(\tau )\Vert _{1}+\Vert \{\partial _{t}+v\cdot \nabla _{x}\}f(\tau
)\Vert _{1}\Big]d\tau\right\},
\end{split}\notag
\end{equation}
for any $0\leq s\leq t.$
\end{lemma}

\end{proof}

The following lemma quoted from \cite[pp.723]{Guo-2010} is concerned with property of the kinetic distance function.
\begin{lemma}
\label{velocity}Let $\Omega $ be strictly convex defined in \eqref{scon}. Define the functional along the trajectories $\frac{dX(s)}{ds}=V(s),\frac{%
dV(s)}{ds}=0$ in \eqref{ch.eq} as:
\begin{equation}
\alpha (s)\equiv \xi ^{2}(X(s))+[V(s)\cdot \nabla \xi
(X(s))]^{2}-2\{V(s)\cdot \nabla ^{2}\xi (X(s))\cdot V(s)\}\xi (X(s)).
\label{alpha}
\end{equation}%
Let $X(s)\in \bar{\Omega}$ for $t_{1}\leq s\leq t_{2}$. Then there exists
constant $C_{\xi }>0$ such that
\begin{eqnarray*}
e^{C_{\xi }(|V(t_{1})|+1)t_{1}}\alpha (t_{1}) &\leq &e^{C_{\xi
}(|V(t_{1})|+1)t_{2}}\alpha (t_{2}),  \label{velocitybound} \\
e^{-C_{\xi }(|V(t_{1})|+1)t_{1}}\alpha (t_{1}) &\geq &e^{-C_{\xi
}(|V(t_{1})|+1)t_{2}}\alpha (t_{2}).  \notag
\end{eqnarray*}
\end{lemma}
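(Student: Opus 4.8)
The plan is to show that the functional $\alpha(s)$ from \eqref{alpha} satisfies a Gronwall-type differential inequality along the free-transport trajectories $X(s)=X(t_1)+(s-t_1)V(t_1)$, $V(s)\equiv V(t_1)$, and then integrate. First I would compute $\frac{d}{ds}\alpha(s)$ directly from \eqref{alpha}. Writing $\xi=\xi(X(s))$, $\nabla\xi=\nabla\xi(X(s))$, and using $\dot X(s)=V(s)$, $\dot V(s)=0$, the chain rule gives $\frac{d}{ds}\xi = V\cdot\nabla\xi$, $\frac{d}{ds}(V\cdot\nabla\xi) = V\cdot\nabla^2\xi\cdot V$, and $\frac{d}{ds}(V\cdot\nabla^2\xi\cdot V) = V\cdot\nabla^3\xi\cdot V\cdot V$ (a third-derivative term contracted three times with $V$). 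Carrying out the differentiation of the three summands, the terms $2\xi(V\cdot\nabla\xi)$ from the first summand and $2(V\cdot\nabla\xi)(V\cdot\nabla^2\xi\cdot V)$ from the second cancel against pieces coming from differentiating $-2\{V\cdot\nabla^2\xi\cdot V\}\xi$, leaving
\begin{equation*}
\frac{d}{ds}\alpha(s) = -2\{V\cdot\nabla^3\xi(X(s))\cdot V\cdot V\}\,\xi(X(s)).
\end{equation*}

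Next I would bound the right-hand side in terms of $\alpha(s)$ itself. Since $X(s)\in\bar\Omega$ and $\xi$ is smooth, all derivatives of $\xi$ up to third order are bounded on $\bar\Omega$, so $|V\cdot\nabla^3\xi\cdot V\cdot V|\le C|V|^3$. The key point is the strict convexity \eqref{scon}: because $\nabla^2\xi\ge c_\xi I$, one has the elementary inequality $\xi^2 + (V\cdot\nabla\xi)^2 - 2\{V\cdot\nabla^2\xi\cdot V\}\xi \le \xi^2 + (V\cdot\nabla\xi)^2 + 2c_\xi^{-1}|V\cdot\nabla^2\xi\cdot V|\cdot|\{V\cdot\nabla^2\xi\cdot V\}\xi|$... more directly, strict convexity guarantees $\alpha(s)\ge 0$ and in fact $\alpha(s)$ controls $\xi^2(X(s))$ and $|V\cdot\nabla\xi(X(s))|^2$ up to constants depending on $c_\xi$ and the $C^2$ bound of $\xi$; this is where one shows $|\xi(X(s))|\le C_\xi^{1/2}(|V|+1)^{-1}\sqrt{\alpha(s)}$ or a similar estimate. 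Combining, $\left|\frac{d}{ds}\alpha(s)\right|\le C_\xi(|V|+1)\,\alpha(s)$, where I absorb the cubic growth in $|V|$ against one power of $|V|$ from the $\xi$-estimate and the remaining factor into the constant (using $|V(s)|=|V(t_1)|$ throughout).

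Finally, the differential inequality $\left|\frac{d}{ds}\alpha(s)\right|\le C_\xi(|V(t_1)|+1)\alpha(s)$ integrates via Gronwall to
\begin{equation*}
\alpha(t_1)e^{-C_\xi(|V(t_1)|+1)|t_2-t_1|}\le \alpha(t_2)\le \alpha(t_1)e^{C_\xi(|V(t_1)|+1)|t_2-t_1|},
\end{equation*}
and rewriting the exponents in the form $e^{C_\xi(|V(t_1)|+1)t_i}$ on each side (moving the $t_1$-factor to one side) yields exactly the two stated inequalities. The main obstacle I anticipate is the second step: obtaining the clean bound $|\xi(X(s))|\lesssim (|V(t_1)|+1)^{-1}\alpha(s)^{1/2}$ (or whatever precise form is needed to close the Gronwall argument with the stated constant $C_\xi(|V(t_1)|+1)$) purely from strict convexity — this requires carefully exploiting \eqref{scon} to see that $\alpha$ is comparable to $\xi^2 + (V\cdot\nabla\xi)^2$ near the boundary and does not degenerate, and tracking how powers of $|V|$ distribute so that only a single factor $(|V(t_1)|+1)$ appears in the exponent rather than $(|V(t_1)|+1)^2$.
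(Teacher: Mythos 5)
Your Gronwall strategy is indeed the one Guo uses in \cite[Lemma 1]{Guo-2010}, which is all this paper cites for Lemma \ref{velocity}; the paper itself gives no proof. However, your execution has two concrete errors.

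First, your derivative formula is wrong. Differentiating the three summands of $\alpha$ along the trajectory gives
\begin{equation*}
\frac{d}{ds}\alpha(s) \;=\; 2\xi\,(V\cdot\nabla\xi) \;+\; 2(V\cdot\nabla\xi)(V\cdot\nabla^2\xi\cdot V) \;-\; 2\{V\cdot\nabla^3\xi\cdot V\cdot V\}\xi \;-\; 2\{V\cdot\nabla^2\xi\cdot V\}(V\cdot\nabla\xi),
\end{equation*}
and only the second and fourth terms cancel. The cross term $2\xi(V\cdot\nabla\xi)$ from $\frac{d}{ds}\xi^2$ has no partner and survives, so the correct identity is $\frac{d}{ds}\alpha = 2\xi(V\cdot\nabla\xi)-2\{V\cdot\nabla^3\xi\cdot V\cdot V\}\xi$, not just the third-derivative term. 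This is harmless but you have to carry it: $|2\xi(V\cdot\nabla\xi)|\le \xi^2+(V\cdot\nabla\xi)^2\le \alpha$, where the second inequality holds because $\xi\le 0$ on $\bar\Omega$ and convexity forces $-2\{V\cdot\nabla^2\xi\cdot V\}\xi\ge 0$, so every summand of $\alpha$ is non-negative.

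Second, the bound you propose, $|\xi(X(s))|\lesssim (|V|+1)^{-1}\sqrt{\alpha(s)}$, is the wrong estimate and would not close the Gronwall loop (one would need $|V|^2\sqrt{\alpha}\lesssim|V|\alpha$, i.e.\ $|V|\lesssim\sqrt{\alpha}$, which is false in general). The correct exploitation of \eqref{scon} is: since $\xi\le 0$ and $V\cdot\nabla^2\xi\cdot V\ge c_\xi|V|^2$, the third summand gives $\alpha\ge 2c_\xi|V|^2|\xi|$, hence $|V|^2|\xi|\le \alpha/(2c_\xi)$. Then $|2\{V\cdot\nabla^3\xi\cdot V\cdot V\}\xi|\le C\|\nabla^3\xi\|_{L^\infty(\bar\Omega)}|V|\cdot|V|^2|\xi|\le C'|V|\,\alpha$, and combining with the $2\xi(V\cdot\nabla\xi)$ term yields $\left|\frac{d}{ds}\alpha\right|\le C_\xi(|V|+1)\alpha$, after which Gronwall gives the two asserted inequalities exactly as you describe. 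You flagged the second issue yourself as the weak point; the first one, the missing term in $\frac{d}{ds}\alpha$, is not flagged and is a genuine algebra mistake, even though it does not ultimately break the argument once repaired.
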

Finally, we state the following significant lemma which gives a lower bound of the backward exist time $t_{\mathbf{b}}(x,v)$.
\begin{lemma}\label{huang}\cite[pp.724]{Guo-2010}
Let $x_{i}\in \partial \Omega ,$ for $i=1,2,$ and let $(t_{1},x_{1},v)$
and $(t_{2},x_{2},v)$ be connected with the trajectory $\frac{dX(s)}{ds}%
=V(s),\frac{dV(s)}{ds}=0$ which lies inside $\bar{\Omega}$. Then there
exists a constant $C_{\xi }>0$ such that
\begin{equation}
|t_{1}-t_{2}|\geq \frac{|n(x_{1})\cdot v|}{C_{\xi }|v|^{2}}.  \label{tlower}
\end{equation}

\end{lemma}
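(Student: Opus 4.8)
The plan is to reduce the claim to a short one-variable Taylor estimate for the scalar function $g(s)\equiv \xi(X(s))=\xi(x_1+(s-t_1)v)$ evaluated along the straight characteristic. Without loss of generality take $t_1<t_2$ (the case $t_1>t_2$ is identical after relabelling, and $t_1=t_2$ is trivial). First I would record three facts. (i) Since $x_1,x_2\in\partial\Omega$ and $x_2=x_1+(t_2-t_1)v$, we have $g(t_1)=g(t_2)=0$. (ii) Since the segment $\{X(s):t_1\le s\le t_2\}$ lies in $\bar{\Omega}=\{\xi\le 0\}$, we have $g(s)\le 0$ for $s\in[t_1,t_2]$; comparing with $g(t_1)=0$ and letting $s\to t_1^+$ gives $g'(t_1)\le 0$. (iii) Since $\xi$ is smooth and $\bar{\Omega}$ is compact, $g''(s)=v\cdot\nabla^2\xi(X(s))\cdot v\le C_\xi|v|^2$ for all $s\in[t_1,t_2]$, where $C_\xi$ depends only on $\xi$ (one may take $C_\xi=\|\nabla^2\xi\|_{L^\infty(\bar{\Omega})}$).

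Next, Taylor-expanding $g$ about $t_1$ with Lagrange remainder and using $g(t_1)=g(t_2)=0$, there is $\tau\in(t_1,t_2)$ with
\[
0=g(t_2)=g'(t_1)(t_2-t_1)+\frac{1}{2}g''(\tau)(t_2-t_1)^2\le g'(t_1)(t_2-t_1)+\frac{1}{2}C_\xi|v|^2(t_2-t_1)^2,
\]
where the inequality uses (iii) together with $X(\tau)\in\bar{\Omega}$. Dividing by $t_2-t_1>0$ yields $-g'(t_1)\le \frac{1}{2}C_\xi|v|^2(t_2-t_1)$.

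Finally I would convert the left-hand side into the desired boundary quantity: since $n(x_1)=\nabla\xi(x_1)/|\nabla\xi(x_1)|$ and $|\nabla\xi|\ge c_0>0$ on the compact boundary $\partial\Omega$, and since $g'(t_1)=v\cdot\nabla\xi(x_1)\le 0$ by (ii), we have $-g'(t_1)=|v\cdot\nabla\xi(x_1)|=|\nabla\xi(x_1)|\,|n(x_1)\cdot v|\ge c_0|n(x_1)\cdot v|$. Combining with the previous display gives $|t_1-t_2|=t_2-t_1\ge \frac{2c_0}{C_\xi}\frac{|n(x_1)\cdot v|}{|v|^2}$, and renaming the constant yields \eqref{tlower}. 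The only point requiring care is matching the sign of $g'$ at the endpoint with the ordering of $t_1$ and $t_2$, and isolating the positive lower bound $c_0$ for $|\nabla\xi|$ on $\partial\Omega$; everything else is the two-term Taylor bound above. Note that, unlike the companion Lemma~\ref{velocity}, strict convexity \eqref{scon} of $\Omega$ is not actually used here — only smoothness of $\xi$ together with the hypothesis that the trajectory stays inside $\bar{\Omega}$.
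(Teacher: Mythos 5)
Your proof is correct. Note that the paper itself offers no proof of Lemma~\ref{huang} (it is quoted directly from \cite{Guo-2010}), and the argument there is essentially the one you give: a second-order Taylor expansion of $\xi$ along the straight segment joining $x_1$ and $x_2$, using $\xi(x_1)=\xi(x_2)=0$, the boundedness of $\nabla^2\xi$ on $\bar\Omega$, and $|\nabla\xi|\geq c_0>0$ on the compact boundary. One small remark: for $t_1>t_2$, literal relabelling would yield the bound with $n(x_2)\cdot v$ instead of $n(x_1)\cdot v$, so one should (as your sign comment implicitly does) expand about $t_1$ also when it is the right endpoint, where $g'(t_1)\geq 0$; alternatively, taking absolute values in the exact Taylor identity $-(t_2-t_1)\,v\cdot\nabla\xi(x_1)=\tfrac{1}{2}(t_2-t_1)^2\,v\cdot\nabla^2\xi(\bar x)\cdot v$ gives the estimate at once, with no sign or ordering discussion and without using $\xi\leq 0$ along the segment.
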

\section{Diffuse reflection boundary value problem}\label{DVP}

\subsection{$L^2$ existence and decay for the linearized equation}\label{L2th}
As mentioned in Section \ref{In}, we mainly employ the $L^2\cap L^\infty$ argument to solve the initial boundary value problem of \eqref{BE}, \eqref{ID} and \eqref{DBD}. To obtain the time decay rates in $L^\infty$ space, an $L^2-$ time decay theory must be established at first cf. \cite{Guo-2010}.
However, one can not directly obtain the time decay of \eqref{BE}, \eqref{ID} and \eqref{DBD} by an $L^2-$ energy method, since the positive operator $L$ is degenerated in the sense that the inner product $(Lf,f)$ has no positive lower bound in the large velocity domain. To overcome this difficulty, we first construct the global existence in some weighted $L^2$ space, then tend to deduce the time decay rates in lower order weighed energy space via an interpolation technique. We remark that the main idea used here is similar as treating the Cauchy problem of Boltzmann equation with soft potential \cite{SG-06, SG-08}. And it should be pointed out that it is necessary to derive
the $L^2$ time decay rates even only considering the global existence of the initial boundary value problem of \eqref{BE}, \eqref{ID} and \eqref{DBD} in the case of soft potential.

Notice that the null space of the linear operator $L$ is generated by $\{1,v,\frac{1}{2}(|v^2|-3)\}\sqrt{\mu}$, we define
$$
\FP f=\left\{a+b\cdot v+\frac{1}{2}(|v^2|-3)c\right\}\sqrt{\mu}, \ (t,x,v)\in[0,+\infty)\times\Omega\times\R^3,
$$
which is called the macroscopic part of $f$. The microscopic part of $f$ is further denoted by $\{\FI-\FP\}f=f-\FP f$.
It is well-known that  there exists $\de_0>0$ such that
$$
(Lf,f)\geq\de_0\|\{\FI-\FP\}f\|^2_\nu.
$$
We consider the following initial boundary value problem of the linearized Boltzmann equation with soft potential
\begin{equation}
\partial _{t}f+v\cdot \nabla _{x}f+Lf=g,\text{ \ \ }f(0)=f_{0},\quad \text{
in }(0,+\infty)\times \Omega \times \R^{3},  \label{dlinear}
\end{equation}%
with
\begin{equation}\label{lbd}
f_{-}=P_{\gamma }f,\ \ \textrm{on}\  \R_{+}\times\gamma _{-},
\end{equation}
and $g$ is given.

In that follows in this subsection we will prove the following
\begin{proposition}\label{l2-lqn}
Let $-3<\varrho<0$ and $(q,\ta)\in\CA_{q,\ta}$.
Assume that for all $t>0$
\begin{equation}
\int_{\Omega \times \R^{3}}g(t,x,v)\sqrt{\mu }dvdx=0, \ \ \FP g=0. \label{dlinearcondition}
\end{equation}%
There exists $\varepsilon_0>$ such that if
$$\|w_{q/2,\ta}f_0\|^2_2+|f_0|_{2,+}^2+\int_{0}^t\left\|\nu^{-1/2}w_{q/2,\ta}g(s)\right\|_2^2ds\leq \varepsilon^2_0,$$
then there exists a unique solution to the problem
\eqref{dlinear} and \eqref{lbd} such that
for all $t\geq \>0$,
\begin{equation}
\int_{\Omega \times \R^{3}}f(t,x,v)\sqrt{\mu }dxdv=0,
\label{dlinearcondition1}
\end{equation}
\begin{equation*}
\begin{split}
\sup\limits_{0\leq s\leq t}&\|f(s)\|^2_2+\int_{0}^t\|f(s)\|^2_{\nu}ds
\leq C\|f_0\|^2_2+C\int_{0}^t\left\|\nu^{-1/2}g(s)\right\|_2^2ds,
\end{split}
\end{equation*}%
and
\begin{equation}\label{wl2}
\begin{split}
\sup\limits_{0\leq s\leq t}&\|w_{q/2,\ta}f(s)\|^2_2+\int_{0}^t\|w_{q/2,\ta}f(s)\|^2_{\nu}ds\\
\leq& C\|w_{q/2,\ta}f_0\|^2_2+C\int_{0}^t\left\|\nu^{-1/2}w_{q/2,\ta}g(s)\right\|_2^2ds.
\end{split}
\end{equation}%
Moreover, let $\rho_0=\frac{\ta}{\ta-\varrho}$, there exists $\lambda_1 >0$ depends on $q$ and $\rho_0$ such that
\begin{equation}\label{l-decay}
\begin{split}
\Vert& f(t)\Vert _{2}^{2}+e^{-\lambda_1 t^{\rho_0}}\int_0^te^{\la_1s^{\rho_0}}\|f(s)\|_2^2ds\\
\lesssim& e^{-\lambda_1 t^{\rho_0}}\left\{\|w_{q/2,\ta}f_0\|^2_2+\int_{0}^{t}e^{\lambda_1 s^{\rho_0}}\Vert \nu^{-1/2}g(s)\Vert
_{2}^{2}ds+\int_{0}^{t}\Vert\nu^{-1/2} w_{q/2,\ta}g(s)\Vert
_{2}^{2}ds\right\}.
\end{split}
\end{equation}
\end{proposition}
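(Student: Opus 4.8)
\textbf{Proof strategy for Proposition \ref{l2-lqn}.} The plan is to build the solution in two weighted layers and then extract the stretched-exponential decay by an interpolation argument, following the philosophy of the Cauchy-problem treatments \cite{SG-06,SG-08} but adapted to the boundary setting of \cite{Guo-2010}. First I would establish the \emph{basic} energy estimate without any velocity weight. Testing \eqref{dlinear} against $f$ and integrating over $\Omega\times\R^3$, the transport term produces the boundary flux $\frac12|f|_{2,+}^2-\frac12|f|_{2,-}^2$; under the diffuse condition \eqref{lbd}, $f_-=P_\gamma f$ and the fact that $P_\gamma$ is an $L^2_v$-projection with respect to $|n(x)\cdot v|\,dv$ gives $|f_-|_{2,-}^2=|P_\gamma f|_{2,-}^2\le|f|_{2,+}^2$, so the boundary contribution is nonnegative and controls $|\{I-P_\gamma\}f|_{2,+}^2$. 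Using $(Lf,f)\ge\delta_0\|\{\FI-\FP\}f\|_\nu^2$ and bounding $(g,f)=(g,\{\FI-\FP\}f)$ by $\eta\|\{\FI-\FP\}f\|_\nu^2+C_\eta\|\nu^{-1/2}g\|_2^2$ (using $\FP g=0$), one obtains control of $\sup_s\|f(s)\|_2^2+\int_0^t\|\{\FI-\FP\}f\|_\nu^2$ in terms of $\|f_0\|_2^2+\int_0^t\|\nu^{-1/2}g\|_2^2$. The missing piece — control of the macroscopic part $\|\FP f\|_\nu^2$ — is recovered by the standard macroscopic estimates (elliptic estimates for $a,b,c$ built from the local conservation laws), which in the bounded-domain diffuse case is exactly the mechanism of \cite[Section 4]{Guo-2010}; here \eqref{dlinearcondition} and \eqref{dlinearcondition1} (mass conservation, which propagates from $f_0$ because $\FP g=0$ and the diffuse boundary conserves mass) are what make the Poincaré-type inequalities applicable. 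This yields the first unweighted estimate.

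Next I would run the \emph{weighted} $L^2$ estimate. Multiply \eqref{dlinear} by $w_{q/2,\ta}^2 f$ and integrate. The weight is time-independent here ($\vth=0$), so no good commutator term is generated, and the delicate point is the weighted collision term $(Lf,w_{q/2,\ta}^2 f)=(\nu f,w_{q/2,\ta}^2 f)-(Kf,w_{q/2,\ta}^2 f)$. The first piece gives the full dissipation $\|w_{q/2,\ta}f\|_\nu^2$, and for the second I would invoke \eqref{K.ip1} of Lemma \ref{es.k} with a small $\eta$: $(Kf,w_{q/2,\ta}^2 f)\le\eta\|w_{q/2,\ta}f\|_\nu^2+C(\eta)\|\mathbf{1}_{|v|\le C(\eta)}f\|\,\|w_{q/2,\ta}f\|_\nu$, so that the local-in-velocity remainder is absorbed using the already-established unweighted bound on $\|f\|_\nu$ via Young's inequality. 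The boundary term is handled as before since $w_{q/2,\ta}$ only depends on $v$ and $P_\gamma$ commutes with the estimate in the sense that $|w_{q/2,\ta}P_\gamma f|_{2,-}\le|w_{q/2,\ta}f|_{2,+}$ (because $\sqrt\mu$ times any polynomial-growth weight is still integrable; more precisely one bounds $w_{q/2,\ta}(v)\sqrt{\mu(v)}$ uniformly and uses the projection property). The nonlinear source is absent at this linearized level, so \eqref{wl2} follows from Gr\"onwall.

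For the stretched-exponential decay \eqref{l-decay}, the key is the elementary pointwise inequality that is already recorded in \eqref{you}: for $\rho_0=\ta/(\ta-\varrho)$ and a suitable $\lambda_1>0$ one has $\lambda_1\rho_0 s^{\rho_0-1}\le \tfrac{\delta_0}{2}\nu(v)+\tfrac{q}{8}|v|^\ta\cdot(\text{small})$, i.e. the time-derivative of the exponential rate is dominated by the dissipation $\nu(v)$ plus a small multiple of the extra weight $|v|^\ta$ carried by $w_{q/2,\ta}$ relative to the lower target $w_{q/4,\ta}$ (or equivalently against the unweighted energy). Concretely I would set $\mathcal{E}(s)=\|f(s)\|_2^2$ and prove the differential inequality $\frac{d}{ds}\big(e^{\lambda_1 s^{\rho_0}}\mathcal{E}(s)\big)+e^{\lambda_1 s^{\rho_0}}\|f(s)\|_\nu^2 \lesssim e^{\lambda_1 s^{\rho_0}}\|\nu^{-1/2}g\|_2^2 + \lambda_1\rho_0 s^{\rho_0-1}e^{\lambda_1 s^{\rho_0}}\mathcal{E}(s)$; then split $\mathcal{E}(s)=\|f\mathbf{1}_{|v|\le R(s)}\|_2^2+\|f\mathbf{1}_{|v|>R(s)}\|_2^2$ with $R(s)\sim s^{1/(\ta-\varrho)}$, bound the low-velocity part by $\nu(R(s))^{-1}\|f\|_\nu^2$ which beats $\lambda_1\rho_0 s^{\rho_0-1}$ for $\lambda_1$ small, and bound the high-velocity part by $w_{q/2,\ta}(R(s))^{-2}\|w_{q/2,\ta}f\|_2^2$, which is $\le e^{-\lambda_1 s^{\rho_0}}\|w_{q/2,\ta}f\|_2^2$ by choosing constants as in \eqref{you}; the latter is then absorbed into the right-hand side by \eqref{wl2} and produces the $\int_0^t\|\nu^{-1/2}w_{q/2,\ta}g\|_2^2$ term. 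Integrating in time and using $e^{\lambda_1 t^{\rho_0}}\gtrsim 1$ gives \eqref{l-decay}, including the weighted-in-time integral on the left.

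\textbf{Main obstacle.} The genuinely delicate step is not the energy bookkeeping but the macroscopic (positivity) estimate that upgrades control of $\|\{\FI-\FP\}f\|_\nu$ to control of $\|f\|_\nu$ in the bounded domain with the diffuse boundary condition: because $\nu(v)$ has \emph{no positive lower bound} for soft potentials, the elliptic/Poincaré estimates for $a,b,c$ from \cite{Guo-2010} must be re-examined so that the weak velocity weight $\nu(v)\sim\langle v\rangle^\varrho$ in the dissipation norm is enough — this is exactly where one pays by propagating the weighted norm \eqref{wl2} and where the interplay between \eqref{K.ip1} and the unweighted estimate is essential. Once that is in place, the decay argument is a self-contained splitting-and-Young's-inequality computation driven entirely by \eqref{you}.
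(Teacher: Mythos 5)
Your a priori scheme (unweighted energy plus the macroscopic lemma, then a weighted estimate with $w_{q/2,\ta}$, then multiplication by $e^{\la_1 t^{\rho_0}}$ and a velocity splitting at $|v|\sim s^{1/(\ta-\varrho)}$ to trade the factor $s^{\rho_0-1}$ against either $\nu(v)$ or the extra weight) is essentially the paper's argument; your high/low split is exactly the paper's decomposition into $E$ and $E^c$ in \eqref{E.es}--\eqref{Ec.es}. However, there are two genuine gaps. First, the proposition asserts existence and uniqueness, and your proposal never constructs the solution: a direct iteration with $f^{\ell+1}_-=P_\gamma f^{\ell}$ is not obviously convergent, because $P_\gamma$ has operator norm exactly $1$ on $L^2(\gamma_+)$ and the basic energy identity only controls $|\{I-P_\gamma\}f|_{2,+}$, not the full outgoing trace. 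The paper's Lemma \ref{lg.ex} resolves this by solving the damped problems $f_-=(1-\tfrac1j)P_\gamma f$ (whose a priori estimate gains the extra term $(\tfrac2j-\tfrac1{j^2})\int_0^t|P_\gamma f^j|_{2,+}^2$, see \eqref{ape1}) and then passing $j\to\infty$; that limit is itself nontrivial, since $f^j-f$ satisfies the full diffuse condition up to an $O(1/j)$ boundary source, and controlling $|P_\gamma[f^j-f]|_{2,+}$ requires Ukai's trace theorem (Lemma \ref{ukai}) on the non-grazing set together with a smallness argument on the grazing set, as in \eqref{firstgamma-}--\eqref{gamma-}. None of this machinery appears in your outline, and without it the claimed unique solvability is unproved.

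Second, your treatment of the boundary term in the weighted estimate rests on the inequality $|w_{q/2,\ta}P_\gamma f|_{2,-}\le|w_{q/2,\ta}f|_{2,+}$, which is false for a non-constant weight. Indeed, since $P_\gamma f=\sqrt\mu\int_{n\cdot v'>0}f\sqrt\mu\,(n\cdot v')dv'$, Cauchy--Schwarz gives only
\begin{equation*}
|w_{q/2,\ta}P_\gamma f|_{2,-}^2\le \Bigl(\int_{n\cdot v>0}w_{q/2,\ta}^2\mu\,(n\cdot v)\,dv\Bigr)\Bigl(\int_{n\cdot v>0}w_{q/2,\ta}^{-2}\mu\,(n\cdot v)\,dv\Bigr)\,|w_{q/2,\ta}f|_{2,+}^2,
\end{equation*}
and the product of the two integrals is strictly larger than $1$ (it equals $1$ only if the weight is constant, by \eqref{mu.n}); so the weighted boundary flux difference does not have a favorable sign for free, and your Gr\"onwall step for \eqref{wl2} would not close as written. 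The paper avoids this by rewriting the boundary condition for $h=w_{q/2,\ta}f$ as $h_-=P_\gamma^w h$ (see \eqref{h.bd}), using the exact identity $|P_\gamma^w h|_{2,+}=|P_\gamma^w h|_{2,-}$ coming from the evenness of $w_{q/2,\ta}^2\mu\,|n\cdot v|$ in $v$, and, crucially, coupling the weighted inequality \eqref{h.inner1} to the already established unweighted estimate \eqref{g.nw} (this is what the term $C_\eta\|f\|_\nu^2$ is for) before integrating in time. Your idea of absorbing the local-in-velocity remainder from $K$ via the unweighted bound is correct and is indeed what the paper does, but the boundary step needs the $P_\gamma^w$ reformulation (or an equivalent device), not the naive weighted projection bound. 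The remaining ingredients — the macroscopic estimate taken from \cite{EGKM-13} and the decay computation driven by \eqref{you} — match the paper and are fine.
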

In order to construct the global existence of \eqref{dlinear} and \eqref{lbd}, we first deduce the global solvability of the equation \eqref{dlinear}
with an approximation boundary condition and then we show that such an approximate solution sequence converges in $L^2$
for any $t\geq0$.
Once the global existence is obtained, the time-decay estimate \eqref{l-decay} follows from an $L^2$ energy estimate and an interpolation technique. Along this line,
Proposition \ref{l2-lqn} is a easy consequence of the following two lemmas, the first one is concerned with
 {\it a priori} estimates for
the macroscopic part of the solution of \eqref{dlinear} and \eqref{lbd}.
\begin{lemma}\label{dabc}
Assume that $g$ satisfies \eqref{dlinearcondition} and $f$
satisfies \eqref{dlinear}, \eqref{lbd} and \eqref{dlinearcondition1}. Then there exists
a function $G(t)$ such that, for all $t\geq0$, $G(t)\lesssim
\|f(t)\|_{2}^{2}$ and
\begin{equation}\label{mm}
\begin{split}
\|\mathbf{P}f\|_{\nu }^{2} \lesssim
\frac{d}{dt}G(t)+\left\|g\right\|_{2}^{2}+\|\{\mathbf{I}-\mathbf{P}\}f\|_{\nu }^{2}+|\{1-P_{\gamma }\}f|_{2,+}^{2}.
\end{split}
\end{equation}
\end{lemma}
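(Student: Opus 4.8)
\textbf{Proof proposal for Lemma \ref{dabc}.}
The plan is to recover the dissipation of the macroscopic quantities $a(t,x)$, $b(t,x)$, $c(t,x)$ through the standard macro-micro decomposition / local conservation law machinery adapted to the boundary value problem, as in \cite{Guo-2010}. First I would plug $f = \mathbf{P}f + \{\mathbf{I}-\mathbf{P}\}f$ into \eqref{dlinear} and expand $\mathbf{P}f = \{a + b\cdot v + \frac12(|v|^2-3)c\}\sqrt{\mu}$; testing the equation against the collision invariants $\sqrt\mu$, $v_i\sqrt\mu$, $\frac12(|v|^2-3)\sqrt\mu$ (and using $\mathbf{P}g = 0$) yields the local conservation laws for $a$, $b$, $c$, while testing against the higher Hermite-type moments $v_iv_j\sqrt\mu$ and $\frac12 v_i(|v|^2-5)\sqrt\mu$ yields a closed system in which $\na_x a$, $\na_x b$, $\na_x c$ appear together with time derivatives of certain moment functions of $\{\mathbf{I}-\mathbf{P}\}f$ and quadratic error terms in $\{\mathbf{I}-\mathbf{P}\}f$ and $g$. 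This is exactly the structure used by Guo; since our $L$ still satisfies $(Lf,f)\gtrsim \|\{\mathbf{I}-\mathbf{P}\}f\|_\nu^2$ and the null space is unchanged, the algebraic part goes through verbatim, the only change being that $\nu$ is bounded (not bounded below), so every $L^2$-norm of a moment of $\{\mathbf{I}-\mathbf{P}\}f$ is controlled by $\|\{\mathbf{I}-\mathbf{P}\}f\|_\nu$ exactly as in the hard-potential case because the moment functions decay fast in $v$.

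Next I would construct the functional $G(t)$. Following \cite{Guo-2010}, $G(t)$ is a linear combination $\int_\Omega \na\phi_a\cdot(\text{moment of }\{\mathbf{I}-\mathbf{P}\}f)\,dx$ plus analogous terms for $b$ and $c$, where $\phi_a$, $\phi_b$, $\phi_c$ solve auxiliary Poisson/elliptic problems on $\Omega$ with $a$, $b$, $c$ (minus their spatial averages) as data; elliptic estimates give $\|\na\phi\|_{2} \lesssim \|(a,b,c)\|_2 \lesssim \|\mathbf{P}f\|_2$, hence $|G(t)| \lesssim \|\{\mathbf{I}-\mathbf{P}\}f\|_2 \|\mathbf{P}f\|_2 \lesssim \|f(t)\|_2^2$, which is the claimed bound on $G$. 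Differentiating $G(t)$ in $t$, substituting the equation for the time derivatives, and using the macroscopic equations derived above, one obtains $\|\mathbf{P}f\|_\nu^2 \lesssim \frac{d}{dt}G(t) + \|\{\mathbf{I}-\mathbf{P}\}f\|_\nu^2 + \|g\|_2^2 + (\text{boundary contributions})$. The boundary contributions arise when integrating by parts in $x$ against $\na\phi$, and they are of the form $\int_{\pa\Omega}(\cdots)f\,dS$; using the diffuse boundary condition $f_- = P_\gamma f$ together with the fact that $P_\gamma f$ has no contribution transverse to $n(x)$ in the relevant moment (the usual cancellation: $P_\gamma f$ is even in the normal velocity component after the projection is accounted for, so the leading boundary term vanishes and the remainder is absorbed by $|\{1-P_\gamma\}f|_{2,+}^2$), exactly as in \cite[Lemma 6]{Guo-2010}. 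This produces \eqref{mm}.

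The step I expect to be the main obstacle is the treatment of the boundary terms: one must choose the test functions $\phi_a$, $\phi_b$, $\phi_c$ and arrange the integration by parts so that the trace of $\mathbf{P}f$ on $\pa\Omega$ enters only through quantities that are annihilated (or nearly annihilated) by the diffuse reflection operator $P_\gamma$, leaving a controllable remainder $|\{1-P_\gamma\}f|_{2,+}^2$; getting the right Neumann/Dirichlet data and verifying the needed cancellation at the boundary is the delicate part, and it is where the bounded domain with diffuse reflection genuinely differs from the torus case. I would also need to double-check that the softness of the potential causes no trouble here — it does not, because $G(t)$ and all the moment identities involve only rapidly decaying weights in $v$, so the absence of a positive lower bound for $\nu$ is irrelevant at this stage (it will only matter later, for the large-velocity part of the energy estimate, not for the macroscopic coercivity in Lemma \ref{dabc}).
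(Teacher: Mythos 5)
Your sketch is essentially the paper's own route: the paper does not spell out a proof but invokes Lemma 6.1 of \cite{EGKM-13}, which is precisely this constructive argument — local conservation laws from the macro--micro decomposition, test functions built from auxiliary elliptic problems with data $a,b,c$ and suitably chosen Neumann/Dirichlet conditions and constants so that the $P_\gamma f$ boundary contribution cancels, the remainder being absorbed into $|\{1-P_\gamma\}f|_{2,+}^2$, with the soft potential harmless since all velocity moments carry Gaussian weights and are thus controlled by $\|\cdot\|_\nu$. Your identification of the boundary cancellation as the delicate step, and of the functional $G(t)$ with the bound $|G(t)|\lesssim\|f(t)\|_2^2$ via elliptic estimates, matches that proof, so no genuine gap.
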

\begin{proof}
The proof of Lemma \ref{dabc} is much similar as that of Lemma 6.1 in \cite[pp.221]{EGKM-13}, we omit the details for brevity.
\end{proof}

\begin{lemma}\label{lg.ex}
Assume $g$ satisfies \eqref{dlinearcondition}.
There is a constant $\varepsilon_0>0$ such that for any $t>0$ if
$$\Vert f_{0}\Vert_2^{2}+|f_0|_{2,+}^2+\int_{0}^t\left\|\nu^{-1/2}g(s)\right\|_2^2ds\leq \varepsilon^2_0,$$
then \eqref{dlinear} and \eqref{lbd} admits a strong solution $f(t,x,v)$ in $[0,+\infty)\times\Omega\times\R^3$ satisfying
\begin{equation}\label{g.nw}
\begin{split}
\Vert f(t)\Vert _{2}^{2}&+\int_{0}^{t}\Vert
f(s)\Vert _{\nu }^{2}ds+\int_{0}^{t}|(I-P_{\gamma
})f(s)|_{2,+}^{2}ds\\
\leq& C\int_{0}^{t}\Vert \nu^{-1/2}g(s)\Vert_2^{2}ds+C\Vert
f_{0}\Vert _{2}^{2}.
\end{split}
\end{equation}
\end{lemma}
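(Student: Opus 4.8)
The plan is to obtain the a priori estimate \eqref{g.nw} by the standard $L^2$ energy method adapted to the boundary value problem, and then to upgrade it to a genuine existence statement via an approximation scheme. First I would multiply \eqref{dlinear} by $f$ and integrate over $\Omega\times\R^3$. The transport term $(v\cdot\nabla_x f,f)$ produces the boundary flux $\tfrac12\int_\gamma |f|^2 (n(x)\cdot v)\,dS_x\,dv = \tfrac12(|f_+|_{2,+}^2-|f_-|_{2,-}^2)$. Using the diffuse boundary condition \eqref{lbd}, $f_- = P_\gamma f$, together with the fact that $P_\gamma$ is an $L^2_v$-projection with respect to $|n(x)\cdot v|$ and the normalization \eqref{mu.n}, one gets $|f_-|_{2,-}^2 = |P_\gamma f|_{2,-}^2 \le |f_+|_{2,+}^2$, so the boundary flux is nonnegative and in fact controls $|\{I-P_\gamma\}f|_{2,+}^2$ from above by a constant times $|f_+|_{2,+}^2 - |f_-|_{2,-}^2$. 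The dissipation term gives $(Lf,f)\ge \delta_0\|\{\FI-\FP\}f\|_\nu^2$, and the source term is absorbed by $(g,f)\le \eta\|f\|_\nu^2 + C_\eta\|\nu^{-1/2}g\|_2^2$ after noting $\FP g=0$ so that $(g,f)=(g,\{\FI-\FP\}f)$. This yields
\begin{equation*}
\frac{d}{dt}\|f(t)\|_2^2 + c\|\{\FI-\FP\}f\|_\nu^2 + c|\{I-P_\gamma\}f|_{2,+}^2 \lesssim \|\nu^{-1/2}g\|_2^2 .
\end{equation*}

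The missing piece is control of the macroscopic part $\|\FP f\|_\nu^2$, and this is exactly what Lemma \ref{dabc} supplies: there is $G(t)$ with $|G(t)|\lesssim \|f(t)\|_2^2$ and $\|\FP f\|_\nu^2 \lesssim \tfrac{d}{dt}G(t) + \|g\|_2^2 + \|\{\FI-\FP\}f\|_\nu^2 + |\{1-P_\gamma\}f|_{2,+}^2$. Next I would form the combined functional $\mathcal{E}(t) = \|f(t)\|_2^2 + \kappa G(t)$ for a small constant $\kappa>0$; since $|G|\lesssim\|f\|_2^2$, for $\kappa$ small $\mathcal{E}(t)\sim\|f(t)\|_2^2$. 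Adding $\kappa$ times the inequality of Lemma \ref{dabc} to the basic energy inequality, the terms $\kappa\|\{\FI-\FP\}f\|_\nu^2$ and $\kappa|\{1-P_\gamma\}f|_{2,+}^2$ are absorbed by the large-coefficient dissipation from the basic estimate, and $\kappa\|g\|_2^2$ is harmless after noting $\|g\|_2\lesssim\|\nu^{-1/2}g\|_2$ on the relevant velocity region (or simply treating it as part of the source); we obtain $\tfrac{d}{dt}\mathcal{E}(t) + c\|f(t)\|_\nu^2 + c|\{I-P_\gamma\}f|_{2,+}^2 \lesssim \|\nu^{-1/2}g(t)\|_2^2$. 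Integrating in time and using $\mathcal{E}(t)\sim\|f(t)\|_2^2$ gives \eqref{g.nw} as an a priori bound (the smallness $\varepsilon_0$ is not really needed for the \emph{linear} estimate but is carried along for consistency with the nonlinear use).

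To turn this into existence I would introduce the approximate boundary condition $f_-^\epsilon = (1-\epsilon)P_\gamma f^\epsilon$ with $0<\epsilon<1$, which is dissipative with a strict sign (the boundary flux now dominates $\epsilon|f_+^\epsilon|_{2,+}^2$), so that for each fixed $\epsilon$ the linear problem \eqref{dlinear} with this boundary condition is solvable globally by a routine argument — e.g. semigroup theory or an $L^2$ Galerkin/iteration scheme, using that the strictly dissipative boundary operator makes $-v\cdot\nabla_x - L$ (with this boundary condition) generate a contraction-type semigroup on the relevant weighted space, combined with Duhamel for the source $g$. Crucially, the a priori estimate above holds uniformly in $\epsilon$ (the loss from replacing $P_\gamma$ by $(1-\epsilon)P_\gamma$ only helps), so $\{f^\epsilon\}$ is bounded in $L^\infty_t L^2_{x,v}\cap L^2_t L^2_\nu$ and the traces are bounded in $L^2(\gamma_+;d\gamma)$. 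Then I would take $\epsilon\to0$: writing the equation for the difference $f^{\epsilon_1}-f^{\epsilon_2}$ and running the same energy estimate shows $\{f^\epsilon\}$ is Cauchy in $C_t L^2_{x,v}$ for each finite time horizon (the boundary terms from the mismatch $(\epsilon_1-\epsilon_2)P_\gamma$ are small and controlled by the uniform trace bounds via Ukai's trace Lemma \ref{ukai} to handle the near-grazing part, or directly since $P_\gamma f$ is smooth in $v$), so the limit $f$ solves \eqref{dlinear}, \eqref{lbd}, satisfies \eqref{dlinearcondition1} (which is preserved under the limit because $\int g\sqrt\mu=0$ and the diffuse condition conserves mass), and inherits \eqref{g.nw} by lower semicontinuity of the norms. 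Uniqueness for the limit problem follows from the same difference estimate with $\epsilon_1=\epsilon_2=0$.

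The main obstacle is the boundary analysis rather than the interior energy estimate: one must carefully check that the diffuse reflection flux term is both nonnegative and strong enough to absorb $|\{1-P_\gamma\}f|_{2,+}^2$ coming out of Lemma \ref{dabc}, and — in the limit $\epsilon\to0$ — that the near-grazing boundary contributions do not blow up, which is precisely where Ukai's trace theorem (Lemma \ref{ukai}) enters to bound $\int_s^t |f\mathbf 1_{\gamma_+\setminus\gamma_+^\epsilon}|_1$ in terms of interior norms. The degeneracy of $\nu$ (no positive lower bound) does not obstruct this lemma because here we only claim boundedness of $\int_0^t\|f\|_\nu^2$, not exponential decay; the decay is postponed to the interpolation argument leading to \eqref{l-decay}, which is handled separately in Proposition \ref{l2-lqn}.
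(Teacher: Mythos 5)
Your proposal is correct and follows essentially the same route as the paper: the paper also derives \eqref{g.nw} by combining the basic Green's-identity estimate with a small multiple of Lemma \ref{dabc}, and obtains existence through the penalized boundary condition $f_-=(1-\tfrac1j)P_\gamma f$ (your $(1-\epsilon)P_\gamma$), solving each approximate problem by an iteration with $K$ lagged and then passing to the limit with the difference energy estimate plus Ukai's trace theorem (Lemma \ref{ukai}) to control the $P_\gamma$ boundary terms. The only cosmetic differences are that the paper takes a weak limit of $f^j$ and shows strong convergence of $f^j-f$ rather than a Cauchy argument in $\epsilon$, and that the uniform control of the outgoing trace comes from the $j$-weighted bound in \eqref{ape1} together with the grazing/non-grazing splitting, exactly as you indicate.
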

\begin{proof}
We establish a solution of \eqref{dlinear} and \eqref{lbd} via the following approximate boundary value problem
\begin{equation}
\partial _{t}f+v\cdot \nabla _{x}f+Lf=g,\text{ \ \ }f(0,x,v)=f_{0}, \label{j.eq}
\end{equation}%
with
\begin{equation}\label{j.bd}
f_{-}=(1-\frac{1}{j})P_{\gamma }f,\ j=2,3,\cdots.
\end{equation}
The proof is then divided into two steps.

\vskip.3cm \noindent \textit{Step 1.}\textit{Global existence of} \eqref{j.eq} \textit{and} \eqref{j.bd}.
We start with constructing the local existence of \eqref{j.eq} and \eqref{j.bd} through the
following sequence of iterating approximate solutions:
\begin{equation}
\partial _{t}f^{\ell +1}+v\cdot \nabla _{x}f^{\ell +1}+\nu f^{\ell
+1}-Kf^{\ell }=g,\text{ \ \ }f^{\ell+1}(0)=f_{0}, \ \ \ell\geq0,\label{daproximatenew}
\end{equation}%
with
\begin{equation}\label{fl.bd}
f_{-}^{\ell +1}=(1-\frac{1}{j})P_{\gamma }f^{\ell },\ j=2,3,\cdots,
\end{equation}
and $f^{0}\equiv f_{0}$.
Let us now define
$$
M(f)(t)=\Vert f(t)\Vert _{2}^{2}+\int_{0}^{t}|f(s)|_{2,+}^{2}ds.
$$
We {\it claim} that there exists a small $T_\ast>0$ such that if $\sum\limits_{0\leq t\leq T\ast}M(f^{\ell})(t)\leq M_1$ for $M_1>0$  then $\sum\limits_{0\leq t\leq T\ast}M(f^{\ell+1})(t)\leq M_1.$
Take an inner product
of (\ref{daproximatenew}) with $f^{\ell +1}$ and use Green's identity as well as Lemma \ref{es.k}, to deduce
\begin{equation}\label{fll2}
\begin{split}
\Vert &f^{\ell+1}(t)\Vert _{2}^{2}+(1-\varepsilon )\int_{0}^{t}\Vert
f^{\ell +1}(s)\Vert _{\nu }^{2}+\int_{0}^{t}|f^{\ell +1}(s)|_{2,+}^{2}ds
\\
\leq &
(1-\frac{1}{j})^{2} \int_{0}^{t}|P_{\gamma }f^{\ell
}|_{2,-}^{2}ds+C_{\varepsilon }\int_{0}^{t}\Vert f^{\ell}(s)\Vert _{\nu}ds+\int_{0}^{t}\Vert\nu^{-1/2} g(s)\Vert_2
^{2}ds+\Vert f_{0}\Vert _{2}^{2}.
\end{split}
\end{equation}%
Since
$$\Vert f_{0}\Vert_2^{2}+|f_0|_{2,+}^2+\int_{0}^{t}\Vert\nu^{-1/2} g(s)\Vert_2^{2}ds<\vps_0,\ \ |P_{\gamma }f^{\ell
}|_{2,-}^{2}\leq |f^{\ell
}|_{2,+}^{2},$$
and
$$\nu(v)\sim (1+|v|^2)^{\varrho/2}, -3<\varrho<0,$$
we see that
$$
M(f^{\ell+1})(t)\leq \max\{1,C_\varepsilon\} tM_1+\varepsilon_0.
$$
Taking $T_\ast>0$ suitably small and letting $\vps_0<M_1$, one obtains $\sum\limits_{0\leq t\leq T\ast}M(f^{\ell+1})(t)\leq M_1$.
This completes the proof of the {\it claim}.

Next we get from the difference of the equation \eqref{daproximatenew} for $\ell+1$ and $\ell$ that
\begin{equation*}
\partial _{t}[f^{\ell +1}-f^{\ell }]+v\cdot \nabla _{x}[f^{\ell +1}-f^{\ell
}]+\nu \lbrack f^{\ell +1}-f^{\ell }]=K[f^{\ell }-f^{\ell -1}], \ \ell\geq1,
\end{equation*}%
with \ $[f^{\ell +1}-f^{\ell }](0)\equiv 0$ and $f_{-}^{\ell +1}-f_{-}^{\ell
}=(1-\frac{1}{j})P_{\gamma }[f^{\ell }-f^{\ell -1}]$. Performing the similar calculations as for obtaining \eqref{fll2},
one has
\begin{equation*}
\begin{split}
\Vert f^{\ell +1}(t)&-f^{\ell }(t)\Vert _{2}+\int_{0}^{t}\Vert f^{\ell
+1}(s)-f^{\ell}(s)\Vert _{\nu }^{2}+\int_{0}^{t}|f^{\ell +1}(s)-f^{\ell
}(s)|_{2,+}^{2}ds \\
&\leq (1-\frac{1}{j})^2\int_{0}^{t}|f^{\ell}-f^{\ell-1}|_{2,+}^{2}+C_\vps\int_{0}^{t}\Vert
f^{\ell}(s)-f^{\ell-1}(s)\Vert _{\nu }^{2}ds,
\end{split}
\end{equation*}%
from which, we obtain
\begin{equation*}
\begin{split}
&\sup\limits_{0\leq t\leq T_\ast}\Vert f^{\ell +1}(t)-f^{\ell }(t)\Vert _{2}+\int_{0}^{T_\ast}|f^{\ell +1}(s)-f^{\ell
}(s)|_{2,+}^{2}ds \\
&\leq \max\left\{(1-\frac{1}{j})^2,T_\ast C_\vps\right\}\left\{\sup\limits_{0\leq t\leq T_\ast}\Vert
f^{\ell}(t)-f^{\ell-1}(t)\Vert_2^{2}+\int_{0}^{T_\ast}|f^{\ell}-f^{\ell-1}|_{2,+}^{2}\right\},
\end{split}
\end{equation*}
for $\ell\geq1$.
Thus, if $T_\ast C_\vps<1$, we also show that $f^{\ell }(t)$ is a Cauchy sequence in $L^2$ for $t\in[0,T_\ast]$ and $j\geq2$. That is $f^{\ell }\rightarrow f^{j}$
and $f^{j}$ is a strong solution of
\begin{equation}
\partial _{t}f+v\cdot \nabla _{x}f+Lf=g,\text{ \ \ }f(0)=f_{0},\text{ \ \ \
\ \ }f_{-}=(1-\frac{1}{j})P_{\gamma }f.  \label{dlinearj}
\end{equation}
Furthermore, for any given $j\geq2$, assume $f^j$ is a strong solution of \eqref{daproximatenew} and \eqref{fl.bd}, by using Green's identity and $\FP g=0$, one obtains the following the {\it a priori} estimate:
\begin{equation}\label{ape1}
\begin{split}
\Vert f^{j}(t)\Vert _{2}^{2}&+\la\int_{0}^{t}\Vert (\mathbf{I}-\mathbf{P}%
)f^{j}(s)\Vert _{\nu }^{2}ds+\int_{0}^{t}|(1-P_{\gamma
})f^{j}(s)|_{2,+}^{2}ds\\& +\left(\frac{2}{j}-\frac{1}{j^2}\right)\int_{0}^{t}|P_{\gamma
}f^{j}(s)|_{2,+}^{2}ds
\leq \int_{0}^{t}\Vert \nu^{-1/2}g(s)\Vert_2^{2}ds+\Vert
f_{0}\Vert _{2}^{2}.
\end{split}
\end{equation}
Then the global existence of \eqref{j.eq} and \eqref{j.bd} follows from the standard continuation argument.

\vskip.3cm \noindent \textit{Step 2.} \textit{For any} $t>0$, $\{f^j\}_{j=2}^{+\infty}$ \textit{is convergent} \textit{in} $L^2$.
Notice that $f^j$ enjoys the bound \eqref{ape1},
by taking a weak limit, we obtain a weak solution $f$ to (\ref{dlinear}) and \eqref{lbd}.
Taking difference, we further have
\begin{equation}\label{fj-f.eq}
\partial _{t}[f^{j}-f]+v\cdot \nabla _{x}[f^{j}-f]+L[f^{j}-f]=0,\text{ \ \ \
\ }[f^{j}-f]_{-}=P_{\gamma }[f^{j}-f]+\frac{1}{j}P_{\gamma }f^{j},
\end{equation}%
with $[f^{j}-f](0)=0$. Utilizing a standard $L^2$ energy estimates as for deriving \eqref{ape1}, we obtain for $\eta>0$
\begin{equation}\label{fj-f}
\begin{split}
\Vert f^{j}(t)-f(t)\Vert _{2}^{2}&+\int_{0}^{t}\Vert\{\FI-\FP\} [f^{j}(s)-f(s)]\Vert _{\nu
}^{2}ds\\&+\int_{0}^{t}|\{I-P_\ga\}[f^{j}(s)-f(s)]|_{2,+}^{2}ds
\\ \lesssim& \eta \int_{0}^{t}|P_\ga[f^{j}(s)-f(s)]|_{2,+}^{2}ds+
\frac{C_\eta}{j^2}
\int_{0}^{t}|P_{\gamma }f^{j}|_{2,+}^{2}ds.
\end{split}
\end{equation}%
Since $\left(\frac{2}{j}-\frac{1}{j^2}\right)\int_{0}^{t}|P_{\gamma
}f^{j}(s)|_{2,+}^{2}ds$ is bounded by \eqref{ape1}, one can see that
$$
\frac{C_\eta}{j^2}
\int_{0}^{t}|P_{\gamma }f^{j}|_{2,+}^{2}ds\rightarrow0,\ \textrm{as}\ j\rightarrow\infty.
$$
To handle the small term $\eta \int_{0}^{t}|P_\ga[f^{j}(s)-f(s)]|_{2,+}^{2}ds$, we resort to Ukai's trace theorem. Recalling the boundary norm
\begin{equation}
\int_{0}^{t}|P_{\gamma }[f^{j}-f](s)|_{2,\pm }^{2} =\int_{0}^{t}\int_{\gamma
_{\pm }}\left[ \int_{\{u:n\cdot u>0\}}[f^{j}-f](s,x,u)\sqrt{\mu }\{n\cdot u\}du%
\right] ^{2}\mu (v) d\gamma ds.
\notag
\end{equation}%
Now we split the domain of inner integration as
\begin{equation}\nonumber
\begin{split}
\{u\in \R^{3}:n(x)\cdot u>0\}& =\{u\in \R^{3}:0<n(x)\cdot
u<\varepsilon \ \text{or}\ |u|>1/\varepsilon \ \text{or}\ |u|<\varepsilon \}
\\
& \ \ \ \cup \{u\in \R^{3}:\varepsilon \leq n(x)\cdot u\ \text{and}\
|u|\leq 1/\varepsilon \ \text{and}\ |u|\geq \varepsilon \}.
\end{split}
\end{equation}%
The first set's contribution(grazing part) of $\int_{0}^{t}|P_{\gamma
}f^{j}(s)|_{2,\pm }^{2}ds$ is bounded by the H\"{o}lder inequality,
\begin{equation}
\begin{split}
 C\Bigg(\int_{\/_{\substack{ 0<n\cdot u<\varepsilon  \\ \text{or}%
|u|>1/\varepsilon  \\ \text{or}|u|<\varepsilon }}}\mu (u)\{n\cdot u\}du\Bigg)&
\int_{0}^{t}\int_{\partial \Omega }\int_{\{u:n\cdot
u>0\}}|[f^{j}-f](s)|^{2}\{n\cdot u\}dS_{x}duds \\
& \lesssim \ \varepsilon\int_{0}^{t}\int_{\gamma
_{+}}|[f^{j}-f](s)|^{2}d\gamma ds.
\end{split}
\label{firstgamma-}
\end{equation}%
For the second term, we use Lemma \ref{ukai} and \eqref{fj-f.eq} to bound
the second set's contribution(non-grazing part) of $\int_{0}^{t}|P_{\gamma
}[f^{j}-f](s)|_{2,\pm }^{2}ds$ by
\begin{equation}
\begin{split}
C\int_{0}^{t}&|[f^{j}-f](s)\mathbf{1}_{\gamma _{+}\backslash \gamma
_{+}^{\varepsilon }}|_{2}^{2}ds\\
 \lesssim C&\int_{0}^{t}\Vert
[f^{j}-f](s)\Vert _{2}^{2}ds+C\int_{0}^{t}\Vert \partial _{t}[f^{j}-f]^{2}+v\cdot
\nabla _{x}[f^{j}-f]^{2}\Vert _{1}ds \\
 \lesssim C&\int_{0}^{t}\Vert
[f^{j}-f](s)\Vert _{2}^{2}ds+C\int_{0}^{t}|(L[f^{j}-f],[f^{j}-f])|ds\\
 \lesssim C&\int_{0}^{t}\Vert
[f^{j}-f](s)\Vert _{2}^{2}ds+C\int_{0}^{t}\Vert \{\mathbf{I}-\mathbf{P}\}[f^{j}-f](s)\Vert _{\nu }^{2}ds.
\end{split}
\label{dukai}
\end{equation}%
From \eqref{firstgamma-} and \eqref{dukai}, we have on the one hand
\begin{equation}
\begin{split}
\int_{0}^{t}|P_{\gamma }[f^{j}-f](s)|_{2,\pm }^{2}ds\leq& \varepsilon
\int_{0}^{t}\int_{\gamma _{+}}|[f^{j}-f](s)|^{2}d\gamma ds
\\&+C_{\varepsilon}\left\{\int_{0}^{t}\Big[
\|[f^{j}-f](s)\|_{2}^{2}+\|(\mathbf{I}-\mathbf{P})[f^{j}-f](s)\|_{\nu
}^{2}\Big]ds\right\} .
\end{split}
\label{gamma-}
\end{equation}%
On the other hand,
we get by integrating (\ref{fj-f}) from $0$ to $t$
\begin{equation}
\int_{0}^{t}||\mathbf{P}[f^{j}-f](s)||_{2}^{2}ds\lesssim \eta C_t\int_{0}^{t}|P_\ga[f^{j}(s)-f(s)]|_{2,+}^{2}ds+
\frac{C_tC_\eta}{j^2}
\int_{0}^{t}|P_{\gamma }f^{j}|_{2,+}^{2}ds.
\label{Pf}
\end{equation}
Letting
$\varepsilon >0$ and $\eta>0$ suitably small and taking a appropriate linear combination of \eqref{fj-f}, \eqref{gamma-} and \eqref{Pf}, we improve \eqref{fj-f} as
\begin{equation*}
\Vert f^{j}(t)-f(t)\Vert _{2}^{2}+\int_{0}^{t}\Vert f^{j}(s)-f(s)\Vert _{\nu
}^{2}ds+\int_{0}^{t}|f^{j}(s)-f(s)|_{2}^{2}ds\lesssim \frac{C_{t}}{j^2}%
\int_{0}^{t}|P_{\gamma }f^{j}|^{2}ds \rightarrow 0,
\end{equation*}%
which implies $f^j\rightarrow f$ strongly in $L^2$ for any given $t\geq0$. Moreover we can also show that such a solution is unique by a similar $L^2$ energy estimates used above.
As a consequent, we construct $f(t,x,v)$ as an $L^{2}$ strong solution to (\ref{dlinear}) and \eqref{lbd} for any $t\geq0$.
Finally, by taking the inner product of \eqref{dlinear} with $f$ over $\Om\times\R^3$ and applying Green's identity again, one has
\begin{equation}\label{en.1}
\begin{split}
\frac{d}{dt}\Vert f\Vert _{2}^{2}+\la\Vert\{\FI-\FP\}
f\Vert _{\nu }^{2}+|(I-P_{\gamma
})f|_{2,+}^{2}
\leq \Vert \nu^{-1/2}g\Vert^{2}.
\end{split}
\end{equation}
Letting $0<\ka_1\ll 1$, taking the summation of \eqref{en.1} and $\ka_1\times\eqref{mm}$, we obtain
\begin{equation}\label{en.2}
\begin{split}
\frac{d}{dt}\left\{\Vert f\Vert _{2}^{2}-\ka_1G(t)\right\}+\la\Vert
f\Vert _{\nu }^{2}+\la|(I-P_{\gamma
})f|_{2,+}^{2}
\leq \Vert \nu^{-1/2}g\Vert^{2}.
\end{split}
\end{equation}
\eqref{g.nw} follows from \eqref{en.2}.
 This completes the proof of Lemma \ref{lg.ex}.
\end{proof}

With Lemmas \ref{lg.ex} and \ref{dabc} in hand, we now turn to complete

\begin{proof}[The proof of Proposition \ref{l2-lqn}]
Let $h=w_{q/2,\ta}f$, 
then \eqref{dlinear} and \eqref{lbd} is equivalent to
\begin{equation}\label{h.eqn}
\pa_th+v\cdot \na h+\nu h-w_{q/2,\ta} K\left(\frac{h}{w_{q/2,\ta}}\right)=w_{q/2,\ta}g,
h(0,x,v)=h_0(x,v)=w_{q/2,\ta}f_0(x,v),
\end{equation}
and
\begin{equation}\label{h.bd}
h_-=w_{q/2,\ta}\sqrt{\mu}\int_{\mathcal {V}(x)}h(t,x,v')\frac{1}{w_{q/2,\ta}(v')\sqrt{\mu}(v')}d\si\eqdef P_\ga^w h,
\end{equation}
where
$$
\mathcal {V}(x)=\{v'\in \R^3, v'\cdot n(x)>0\},\ \ d\si=\mu(v')|n(x)\cdot v'|dv'.$$
Proceeding similarly to obtain the global existence of \eqref{dlinear} and \eqref{lbd}, one can show that \eqref{h.eqn}
and \eqref{h.bd} possesses a unique solution $h(t,x,v)$. We now turn to prove \eqref{wl2} and \eqref{l-decay}.
Taking the inner product of \eqref{h.eqn} with $h$ over $\Om\times\R^3$ and applying Lemma \ref{es.k}, one has
\begin{equation}\label{h.inner1}
\frac{d}{dt}\|h\|_2^2+|\{I-P_\ga^w\}h|_{2,+}^2+\|h\|^2_\nu\leq \eta\|h\|^2_\nu+C_\eta\|f\|_\nu^2+C\|\nu^{-1/2}w_{q/2,\ta}g\|_2^2,
\end{equation}
where we have also used the fact that $|P_\ga^wh|_{2,+}^2=|P_\ga^wh|_{2,-}^2.$
Integrating \eqref{h.inner1} with respect to the time variable over $[0,t]$ and combing \eqref{g.nw}, we obtain
\begin{equation*}\label{h.inner2}
\begin{split}
\|h(t)\|_2^2&+\int_0^t|\{I-P_\ga^w\}h|_{2,+}^2dt+\int_0^t\|h\|^2_\nu ds
+\Vert f(t)\Vert _{2}^{2}\\&+\int_{0}^{t}\Vert
f(s)\Vert _{\nu }^{2}ds+\int_{0}^{t}|(I-P_{\gamma})f(s)|_{2,+}^{2}ds\\
\leq& C\Vert
w_{q/2,\ta}f_{0}\Vert _{2}^{2}+C\int_0^t\|\nu^{-1/2}w_{q/2,\ta}g(s)\|_2^2ds,
\end{split}
\end{equation*}
which implies \eqref{wl2}. It remains now to prove the time decay \eqref{l-decay}.
Take constants $\la_1>0$ and $0<\rho_0<1$ whose specific values will be determined later on,
multiply $e^{\la_1t^{\rho_0}}$ to \eqref{en.2} and integrate the resulting inequality with respect to time variable over $[0,t]$ to obtain
\begin{equation}\label{en.3}
\begin{split}
e^{\la_1t^\rho}\Vert f\Vert _{2}^{2}&+\int_0^te^{\la_1s^\rho}\Vert
f\Vert_{\nu }^{2}ds+\int_0^t e^{\la_1s^{\rho_0}}|(I-P_{\gamma
})f|_{2,+}^{2}ds\\
\leq& C\|f_0\|_2^2+C\la_1{\rho_0} \int_0^t s^{\rho-1}e^{\la_1s^{\rho_0}}\Vert f\Vert_{2}^{2}ds+C \int_0^te^{\la_1s^{\rho_0}}\Vert \nu^{-1/2}g\Vert_2^{2}ds.
\end{split}
\end{equation}
To take care of the delicate term $s^{{\rho_0}-1}e^{\la_1s^{\rho_0}}\Vert f\Vert_{2}^{2}$, we decompose the $v$ integration domain into
two parts:
$$
E:\{v|s^{\rho-1}\leq \ka_0(1+|v|^2)^{\varrho/2}\},\ \ E^c:\{v|s^{{\rho_0}-1}\geq \ka_0(1+|v|^2)^{\varrho/2}\},
$$
where $\ka_0>0$ and small enough. On $E$, it is straightforward to see that
\begin{equation}\label{E.es}
s^{\rho_0-1}e^{\la_1s^{\rho_0}}\Vert f{\bf 1}_E\Vert_{2}^{2}\leq C_\varrho\ka_0e^{\la_1s^{\rho_0}}\Vert
f\Vert_{\nu }^{2},
\end{equation}
here
$$
{\bf 1}_E=\left\{\begin{array}{rll}
1,&\ \ v\in E,\\
0,&\ \ v\notin E.\end{array}\right.
$$
and $C_\varrho$ is determined by \eqref{confre}.
While on $E^c$, notice that $0<\rho_0<1$, one obtains
$$
2\la_1 s^{\rho_0}\leq 2\la_1 \ka_0^{\frac{\rho_0}{\rho_0-1}}(1+|v|^2)^{\frac{\varrho\rho_0}{2(\rho_0-1)}}.
$$
With this, we further have by letting $\la_1=\frac{q}{8} \ka_0^{\frac{\rho_0}{1-\rho_0}}$ and $\rho_0=\frac{\ta}{\ta-\varrho}$
\begin{equation}\label{Ec.es}
\begin{split}
\int_0^t s^{\rho_0-1}e^{\la_1s^{\rho_0}}\Vert f{\bf 1}_{E^c}\Vert_{2}^{2}ds=&\int_0^t s^{\rho_0-1}e^{-\la_1s^{\rho_0}}e^{2\la_1s^{\rho_0}}\Vert f{\bf 1}_{E^c}\Vert_{2}^{2}ds\\
\leq& C_\ta\int_0^t s^{\rho_0-1}e^{-\la_1s^{\rho_0}}\Vert w_{q/2,\ta}f\Vert_{2}^{2}ds\\
\leq& C\Vert w_{q/2,\ta}f_0\Vert_{2}^{2}
+C\int_0^t\|w_{q/2,\ta}\nu^{-1/2}g(s)\|_2^2ds,
\end{split}
\end{equation}
here we have used \eqref{wl2} to derive the last inequality.
Plugging \eqref{E.es} and \eqref{Ec.es} into \eqref{en.3} and dividing the resulting inequality by $e^{\la_1t^{\rho_0}}$, we then show that
\eqref{l-decay} holds true.
Thereby concluding the proof of
Proposition \ref{l2-lqn}.

\end{proof}

\subsection{$L^\infty$ existence for the linearized equation}\label{lifth}
In this subsection, we still consider the following initial boundary value problem
\begin{equation}
\partial_{t}f+v\cdot \nabla _{x}f+Lf=g,\ f(0)=f_{0},\ \text{
in }(0,+\infty)\times \Omega \times \R^{3},  \label{lineq}
\end{equation}%
with
\begin{equation}\label{linbd}
f_{-}=P_{\gamma }f,\ \ \textrm{on}\  \R_{+}\times\gamma _{-},
\end{equation}
and $g$ is given.
Our purpose is to establish the global existence for \eqref{lineq} and \eqref{linbd} in weighted $L^\infty$ space.
A key point is that we develop some new iterated integral estimates so that one can construct the $L^\infty$ existence without using the
time-decay of the solution in $L^\infty-$norm. We stress that it is very difficult to obtain the global existence and the time-decay of the solution in $L^\infty$ space at the same time due to the fact that the collision frequency $\nu$ has zero lower bound. The main result of this subsection is the following
\begin{proposition}\label{lLif}
Let $(q,\ta)\in\CA_{q,\ta}$, assume \eqref{dlinearcondition} holds true.
Then the initial boundary value problem \eqref{dlinear} and \eqref{lbd}
admits a unique solution satisfying
\begin{equation}\label{es.fif}
\begin{split}
\Vert w_{q,\ta}f\Vert _{\infty }+|w_{q,\ta}f|_{\infty }\lesssim& \Vert w_{q,\ta} f_0\Vert _{\infty }+\sup_{0\leq s\leq t}\Vert \nu^{-1} w_{q,\ta}g(s)\Vert
_{\infty }\\
&+\sqrt{\int_{0}^{t}e^{\lambda_1 s^{\rho_0}}\Vert \nu^{-1/2}g(s)\Vert
_{2}^{2}ds}\\&+\sqrt{\int_{0}^{t}\Vert\nu^{-1/2} w_{q/2,\ta}g(s)\Vert
_{2}^{2}ds}.
\end{split}
\end{equation}
\end{proposition}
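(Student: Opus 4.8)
\textbf{Proof strategy for Proposition \ref{lLif}.} The plan is to pass to the weighted unknown $h=w_{q,\ta}f$, which solves the equation
\begin{equation*}
\pa_t h+v\cdot\na_x h+\nu h-w_{q,\ta}K\left(\tfrac{h}{w_{q,\ta}}\right)=w_{q,\ta}g,\qquad h_-=P^w_\ga h,
\end{equation*}
and to build the solution by the same iteration scheme used in Lemma \ref{lg.ex}, now carried out at the $L^\infty$ level. The existence and uniqueness part is routine once the \emph{a priori} bound \eqref{es.fif} is in hand, so the core of the argument is to close that bound. I would represent $h$ along the backward characteristics, using the diffuse boundary condition to unfold the trajectory into \emph{stochastic cycles} $(t_1,x_1,v_1),(t_2,x_2,v_2),\dots$ exactly as in \cite{Guo-2010}, with the key structural difference that $\nu$ has no positive lower bound. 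The Duhamel formula along these cycles produces: (i) a boundary/initial term controlled by $\Vert w_{q,\ta}f_0\Vert_\infty$; (ii) a source term handled by $\sup_s\Vert\nu^{-1}w_{q,\ta}g(s)\Vert_\infty$ after using $\sum_{l}\int_{t_{l+1}}^{t_l}e^{\nu(v_m)(s-t_1)}\nu(v_m)\,ds\le C$ (the key observation advertised in the introduction); and (iii) the genuinely hard term coming from two successive applications of the integral kernel $K$.

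The main technical step is the treatment of the double-$K$ term. Following the split $K=K^\chi+K^{1-\chi}$ from Lemma \ref{es.k}, the $K^{1-\chi}$ contribution is small by \eqref{K.ip3} and can be absorbed, so I focus on $K^\chi$. Iterating the mild formulation once more inside $K^\chi$ gives a triple integral over two velocity variables $v',v''$ and over the two families of stochastic cycles; on this quantity I would invoke precisely the estimate \eqref{key1}, which is the heart of the whole section. The role of \eqref{key1} is to dominate this double-$K$ term by
\begin{equation*}
C\left(\tfrac{1}{T_0^{5/4}}+\tfrac1N\right)\sup_{0\le s\le t_1}\Vert h(s)\Vert_\infty
+C_N\sup_{0\le s\le t_1}\left\{e^{\frac{\la_0}{2}s^{\rho_0}}\left\Vert\tfrac{h(s)}{w_{q,\ta}(v)}\right\Vert_2\right\},
\end{equation*}
i.e. a small multiple of $\Vert h\Vert_\infty$ plus a weighted $L^2$ norm of $f$. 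The small constant is what allows the $\Vert h\Vert_\infty$ term on the right to be absorbed into the left (this is why one needs the iteration \emph{twice} rather than once, and why the $L^2$ theory cannot be bypassed). Here I would split the $s_1$-integral near the boundary of the $s$-interval into $[t'_{l'+1},t'_{l'}-k^{-2}(s)]$ and $[t'_{l'}-k^{-2}(s),t'_{l'}]$: on the short piece one uses the smallness of the time interval together with $\Vert h\Vert_\infty$, while on the long piece one makes the change of variables $v'\mapsto x'_{k'}+(s_1-t'_{k'})v'_{k'}$, whose Jacobian is non-degenerate precisely because $s_1$ stays away from $t'_{l'}$ by $k^{-2}(s)$, and converts the $L^\infty$ bound into an $L^2$ bound. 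The choice $k(s)=C'_1[\al(s)]^{5/4}$ and the polynomial-in-$\al$ bookkeeping control the number of cycles, and the factor $\tilde w_{q,\ta}^{-1}(v')$ together with the rapid $L^2$ decay $e^{-\frac{\la_0}{2}s^{\rho_0}}$ from Proposition \ref{l2-lqn} tame the growth of the Jacobian determinant in the large-velocity regime.

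Once the double-$K$ term is bounded as above, I collect all contributions and absorb the $\Vert h\Vert_\infty$ piece by choosing $T_0$ large and $N$ large; this yields
\begin{equation*}
\Vert w_{q,\ta}f(t)\Vert_\infty\lesssim \Vert w_{q,\ta}f_0\Vert_\infty+\sup_{0\le s\le t}\Vert\nu^{-1}w_{q,\ta}g(s)\Vert_\infty+\sup_{0\le s\le t}e^{\frac{\la_0}{2}s^{\rho_0}}\Vert f(s)/w_{q,\ta}\Vert_2,
\end{equation*}
and the last term is controlled by the right-hand side of \eqref{es.fif} through the $L^2$ decay estimate \eqref{l-decay} of Proposition \ref{l2-lqn} (applied with the weight $w_{q/2,\ta}$ and with $\Vert f/w_{q,\ta}\Vert_2\le\Vert f\Vert_2$, so that the two integral terms $\int_0^t e^{\la_1 s^{\rho_0}}\Vert\nu^{-1/2}g\Vert_2^2\,ds$ and $\int_0^t\Vert\nu^{-1/2}w_{q/2,\ta}g\Vert_2^2\,ds$ appear). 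The boundary norm $|w_{q,\ta}f|_\infty$ is estimated by the same characteristic representation evaluated on $\gamma_+$. Finally, to make this \emph{a priori} argument rigorous one runs it on the iterates $f^{\ell+1}$ defined by $\pa_t f^{\ell+1}+v\cdot\na_x f^{\ell+1}+\nu f^{\ell+1}-Kf^\ell=g$ with $f^{\ell+1}_-=P_\ga f^\ell$: the scheme is contractive in the weighted $L^\infty$ norm on a short time interval, the bound above is uniform in time, and the standard continuation argument then delivers the global solution; uniqueness follows by the same energy/characteristic estimate applied to the difference of two solutions. \textbf{The main obstacle} is unquestionably establishing \eqref{key1}: correctly organizing the stochastic-cycle bookkeeping, choosing $k(s)\sim[\al(s)]^{5/4}$, performing the near-boundary $s_1$-splitting, and controlling the Jacobian of the change of variables by the exponential $L^2$ decay — all while keeping the $\Vert h\Vert_\infty$ coefficient genuinely small.
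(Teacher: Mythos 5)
Your plan for the \emph{a priori} bound coincides with the paper's proof: pass to $h=w_{q,\ta}f$, represent $h$ along the stochastic cycles, split $K=K^{\chi}+K^{1-\chi}$, absorb the small $K^{1-\chi}$ part, iterate Duhamel once more inside $K^{\chi}$, split the inner time integral at $t_{l'}'-k^{-2}(s)$ with $k(s)\sim[\al(s)]^{5/4}$, change variables $v'\mapsto x_{k'}'+(s_1-t_{k'}')v_{k'}'$ on the long piece to convert to the weighted $L^2$ norm, and close by choosing $T_0,N$ large and feeding in the exponential $L^2$ decay \eqref{l-decay}; this is exactly the chain \eqref{iteration}--\eqref{I634} leading to \eqref{es.hlf1}--\eqref{fj2,bd}, and you correctly single out \eqref{key1} (i.e.\ \eqref{J3}) as the heart of the matter.

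The gap is in your last paragraph, where you construct the solution by the iteration $\pa_tf^{\ell+1}+v\cdot\na_xf^{\ell+1}+\nu f^{\ell+1}-Kf^{\ell}=g$ with the \emph{full} diffuse boundary condition $f^{\ell+1}_-=P_\gamma f^{\ell}$ and assert that "the scheme is contractive in the weighted $L^\infty$ norm on a short time interval." That contraction claim is unjustified: shrinking the time interval only makes the interior ($K$ and $\nu$) contributions small, but backward characteristics with nearly grazing velocities hit $\pa\Om$ within an arbitrarily short time, and the difference $f^{\ell+1}-f^{\ell}$ then picks up the boundary term $P_\gamma[f^{\ell}-f^{\ell-1}]$ with a coefficient that carries no small factor (in the weighted sup norm the diffuse-reflection operator is merely bounded, with constant of order one or larger), uniformly in the length of the interval. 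This is precisely why the paper does not iterate with $P_\gamma$ directly: it works with the damped boundary condition $f_-=(1-\tfrac1j)P_\gamma f$ (so the iteration gains the factor $1-\tfrac1j$, cf.\ \eqref{fl.bd}, \eqref{hj.bd}), first proves the unweighted $L^\infty$ solvability (Lemma \ref{lex.ow}) in order to control $\max_\ell\sup_s|f^\ell|_{\infty,+}$ when constructing the weighted iterates, establishes the uniform-in-$j$ bound \eqref{es.hlp1} by the very argument you describe, and only then removes the approximation by letting $j\to\infty$, estimating the error term $\tfrac1jP_\gamma f^{j}$ as in \eqref{I61}. Your strategy becomes a complete proof once you replace the claimed short-time contraction by this (or an equivalent) approximation-and-limit device; as written, the existence/uniqueness step does not go through.
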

Our proof for Proposition \ref{lLif} relies heavily upon the estimates for the iterated integral defined on stochastic cycles.
The stochastic cycles are defined as follows
\begin{definition}[Stochastic Cycles]
\label{diffusecycles}Fixed any point $(t,x,v)$ with $(x,v)\notin \gamma _{0}$%
, let $(t_{0},x_{0},v_{0})$ $=(t,x,v)$. For $v_{k+1}$such
that $v_{k+1}\cdot n(x_{k+1})>0$, define the $(k+1)$-component of the
back-time cycle as
\begin{equation}
(t_{k+1},x_{k+1},v_{k+1})=(t_{k}-t_{\mathbf{b}}(x_{k},v_{k}),x_{%
\mathbf{b}}(x_{k},v_{k}),v_{k+1}).  \label{diffusecycle}
\end{equation}%
Set
\begin{eqnarray*}
X_{\mathbf{cl}}(s;t,x,v) &=&\sum_{k}\mathbf{1}_{[t_{k+1},t_{k})}(s)%
\{x_{k}+(s-t_{k})v_{k}\}, \\
V_{\mathbf{cl}}(s;t,x,v) &=&\sum_{k}\mathbf{1}%
_{[t_{k+1},t_{k})}(s)v_{k}.
\end{eqnarray*}%
Define $\mathcal{V}_{k+1}=\{v\in \R^{3}\ |\ v\cdot
n(x_{k+1})>0\}$, and let the iterated integral for $k\geq 2$ be defined as
\begin{equation*}
\int_{\Pi_{j=1}^{k-1}\mathcal{V}_{j}}\Pi _{j=1}^{k-1}d\sigma
_{j}\equiv \int_{\mathcal{V}_{1}}\cdots\left\{ \int_{\mathcal{V}%
_{k-1}}d\sigma_{k-1}\right\} d\sigma_{1},
\end{equation*}%
where $d\sigma _{j}=\mu (v)(n(x_{j})\cdot v)dv$ is a probability
measure.
\end{definition}

\begin{lemma}\label{k}
Let $T_{0}>0$ and large enough, denote $\al(t)=\max\{t,T_0\}$, there exist constants $%
C_{1},C_{2}>0$ independent of $\al(t)$, such that for $k=C_{1}[\al(t)]^{5/4}$,
and $(t,x,v)\in \lbrack 0,\infty)\times \overline{\Omega }\times \R^{3},$
\begin{equation}
\int_{\Pi _{j=1}^{k-1}\mathcal{V}_{j}}\mathbf{1}_{\{t_{k}(t,x,v,v%
_{1},v_{2},\cdots ,v_{k-1})>0\}}\Pi _{j=1}^{k-1}d\sigma _{j}\leq
\left\{ \frac{1}{2}\right\} ^{C_{2}[\al(t)]^{5/4}}.  \label{largek}
\end{equation}%
We also have, for $(q,\ta)\in\CA_{q,\ta}$, there exist constants $C_3,\ C_4>0$ independent of $k$ such that
\begin{equation}\label{ktildew1}
\begin{split}
\int_{\Pi _{j=1}^{k-1}\mathcal{V}_{j}}\sum_{l=1}^{k-1}\mathbf{1}_{\{t_{l+1}\leq 0<t_{l}\}}
\int_0^{t_l} d\Sigma^w_l(s)ds\leq C_3,
\end{split}
\end{equation}
and
\begin{equation}\label{ktildew2}
\begin{split}
\int_{\Pi _{j=1}^{k-1}\mathcal{V}_{j}}\sum_{l=1}^{k-1}\mathbf{1}_{\{t_{l+1}>0\}}\int_{t_{l+1}}^{t_l} d\Sigma^w_l(s)ds\leq
C_4,
\end{split}
\end{equation}
where
\begin{equation}\label{measure}
d\Sigma^w_{l}(s) =\{\Pi _{j=l+1}^{k-1}d\sigma_{j}\}\times\{e^{\nu
(v_{l})(s-t_{l})}\tilde{w}_{q,\ta}(v_{l})d\sigma_{l}\}\times \Pi _{j=1}^{l-1}\{{{%
e^{\nu (v_{j})(t_{j+1}-t_{j})} d\sigma_{j}}}\},
\end{equation}
and
$
\widetilde{w}_{q,\ta}=\frac{1}{w_{q,\ta}\sqrt{\mu}}.
$
\end{lemma}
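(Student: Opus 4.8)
The plan is to treat the three assertions of Lemma~\ref{k} in turn, noting that \eqref{largek} is essentially the classical estimate on the measure of the set of velocity sequences whose $k$-fold stochastic trajectory survives in the domain for all time up to $0$; the novelty is \eqref{ktildew1}--\eqref{ktildew2}, where the extra weight $\widetilde{w}_{q,\ta}(v_l)$ and the exponential factor $e^{\nu(v_l)(s-t_l)}$ appear inside the iterated integral. For \eqref{largek} I would follow \cite{Guo-2010}: since $\Omega$ is bounded with diameter $D$, for a velocity $v_j$ that is not too small and not too close to grazing, the backward exit time $t_{\mathbf b}(x_j,v_j)$ satisfies a quantitative lower bound, so a trajectory can only remain in $\bar\Omega$ on a long time interval if a definite proportion of the $v_j$'s lie in a small ``bad'' set $\{v : |v|\le \delta \text{ or } |v|\ge 1/\delta \text{ or } |n(x_j)\cdot v|\le\delta\}$, whose $d\sigma_j$-measure is $O(\delta)$ uniformly in $x_j$. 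Choosing $\delta$ small and estimating by a binomial-type bound gives $\{1/2\}^{C_2[\alpha(t)]^{5/4}}$ once $k=C_1[\alpha(t)]^{5/4}$; the power $5/4$ is dictated by the interplay between the number of reflections needed to gain a fixed decay and the time length $\alpha(t)$.

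For \eqref{ktildew1} and \eqref{ktildew2} the key structural observation, already flagged in the introduction, is that along any fixed cycle the exponents telescope:
\begin{equation*}
\sum_{l=1}^{k-1}\int_{t_{l+1}}^{t_l}e^{\nu(v_l)(s-t_l)}\nu(v_l)\,ds \;\le\; \int_{t_k}^{t_1} e^{\nu(v_m)(s-t_1)}\nu(v_m)\,ds \;\le\; C,
\end{equation*}
where $v_m$ realises $\max\{|v_1|,\dots,|v_{k-1}|\}$ on the event $\{\max_j|v_j|\le k\}$, using monotonicity of $\nu$ in $|v|$ together with $t_{l+1}<t_l$ and $\nu>0$. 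Dividing the displayed integrand by $\nu(v_l)$ only costs a factor $\nu(v_l)^{-1}\sim\langle v_l\rangle^{-\varrho}\le C$ on the relevant range, and on the complementary event $\{\max_j|v_j|>k\}$ one exploits the Gaussian factor hidden in $d\sigma_j=\mu(v)(n(x_j)\cdot v)\,dv$ to produce a superexponentially small contribution that more than absorbs the weight $\widetilde{w}_{q,\ta}(v_l)=(w_{q,\ta}\sqrt\mu)^{-1}(v_l)=e^{-q|v_l|^\ta/4}\,(2\pi)^{1/2}e^{|v_l|^2/4}$; here one uses crucially that $\ta\le2$ and, when $\ta=2$, that $q<1$, so that $e^{-q|v|^\ta/4}e^{|v|^2/4}\mu(v)^{1/2}=e^{-(q|v|^\ta)/4}\cdot(2\pi)^{1/2}$ has a finite $d\sigma$-integral, i.e. $(q,\ta)\in\CA_{q,\ta}$ is exactly the condition guaranteeing $\int_{\mathcal V(x)}\widetilde{w}_{q,\ta}(v)\,d\sigma<\infty$ uniformly in $x$. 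After integrating the telescoped bound against $\Pi_{j=1}^{l-1}e^{\nu(v_j)(t_{j+1}-t_j)}d\sigma_j$ (each factor $\le1$ since $t_{j+1}<t_j$ and in fact a product bounded by the single exponential above) and $\Pi_{j=l+1}^{k-1}d\sigma_j$ (probability measures, total mass $1$), one is left with a single factor $\int \widetilde{w}_{q,\ta}(v_l)\,d\sigma_l\le C_{q,\ta}$, which yields the constants $C_3,C_4$ independent of $k$. The sum over $l$ is harmless because the indicator $\mathbf 1_{\{t_{l+1}\le 0<t_l\}}$ in \eqref{ktildew1} selects a single $l$ for each trajectory, while in \eqref{ktildew2} the sum over $l$ of the intervals $[t_{l+1},t_l]$ reassembles into $[t_k,t_1]$ and is controlled by the same telescoping.

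The main obstacle I anticipate is the careful bookkeeping of the weight versus the Gaussian on the large-velocity event: one must split $\prod_{j=1}^{k-1}\mathcal V_j$ according to whether all $|v_j|\le k$, and on the bad event show that $\mathbf 1_{\{|v_{l_0}|>k\text{ for some }l_0\}}\widetilde{w}_{q,\ta}(v_l)\prod_j d\sigma_j$ is summable with a bound independent of $k$ — this needs a quantitative version of ``$\mu^{1/2}$ beats $\widetilde w_{q,\ta}$ for large $|v|$'' together with the fact that $k\to\infty$ only helps. The secondary technical point is justifying that the products $\prod_{j=1}^{l-1}e^{\nu(v_j)(t_{j+1}-t_j)}$ are genuinely bounded by a single exponential $e^{\nu(v_{m})(t_l-t_1)}$ (not merely $\le1$ termwise), since it is this combined exponential, paired with the factor $e^{\nu(v_l)(s-t_l)}$, that telescopes to $e^{\nu(v_m)(s-t_1)}$ and is integrable in $s$; this requires $\nu(v_j)\le\nu(v_m)$, i.e. $|v_j|\le|v_m|$, which is exactly how $v_m$ was chosen. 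Everything else — boundedness of $\Omega$, the lower bound on $t_{\mathbf b}$ from Lemma~\ref{huang}, and the probability-measure normalisation $\int_{\mathcal V(x)}d\sigma=1$ from \eqref{mu.n} — is quoted directly from the preliminaries.
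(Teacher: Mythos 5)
Your overall plan is the same as the paper's: \eqref{largek} is quoted from Guo's Lemma 23 (with $T_0$ replaced by $t$ when $t>T_0$); for \eqref{ktildew1}--\eqref{ktildew2} the paper likewise splits the iterated integral according to $\max_j|v_j|\le k$ or $>k$, telescopes the exponential factors onto the velocity $v_m$ of maximal modulus, reduces to a single integral of the form $\int \tilde{w}_{q,\ta}(v)\nu^{-1}(v)\,d\sigma\le C_{q,\ta}$, and on the large-velocity event uses the Gaussian in $d\sigma$ to absorb both the weight and the factor $k-1$ produced by the sum over $l$; the observation that the indicator in \eqref{ktildew1} selects at most one $l$ per cycle is also exactly the paper's.

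Two steps, however, are wrong as written and must be repaired (the repairs are precisely what the paper does). First, your displayed inequality $\sum_{l}\int_{t_{l+1}}^{t_l}e^{\nu(v_l)(s-t_l)}\nu(v_l)\,ds\le\int_{t_k}^{t_1}e^{\nu(v_m)(s-t_1)}\nu(v_m)\,ds$ is false as stated: taking all $v_j$ equal and bounce times long compared with $\nu^{-1}$ (small velocities), each summand is close to $1$, so the left-hand side is of order $k$, while the right-hand side is at most $1$. The telescoping is only valid after each term is multiplied by the factors $\prod_{j=1}^{l-1}e^{\nu(v_j)(t_{j+1}-t_j)}$ that are built into $d\Sigma^w_l(s)$ in \eqref{measure}; it is the combined exponent that is bounded by $\nu(v_m)(s-t_1)$. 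You do acknowledge this in prose, but the justification you give is backwards: by \eqref{confre} with $\varrho<0$, $\nu$ is \emph{decreasing} in $|v|$, so $|v_j|\le|v_m|$ yields $\nu(v_j)\ge\nu(v_m)$, and since all increments $s-t_l$ and $t_{j+1}-t_j$ are nonpositive this is exactly the inequality needed; the condition $\nu(v_j)\le\nu(v_m)$ you state (hard-potential monotonicity) would send each factor the wrong way. Second, the claim that $\nu(v_l)^{-1}\sim\langle v_l\rangle^{-\varrho}\le C$ ``on the relevant range'' is not true uniformly in $k$: on $\{|v_l|\le k\}$ it is of size $k^{-\varrho}$. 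You cannot discard it as a constant; instead, carry $\nu^{-1}$ (evaluated at $v_m$, as in the paper, after cancelling the inserted $\nu(v_l)$) into the remaining $d\sigma$-integral, where $e^{-|v|^2/4-q|v|^\ta/4}\nu^{-1}(v)$ is integrable uniformly in $k$; this is how the constants $C_3,C_4$ come out independent of $k$. With these two corrections your argument coincides with the paper's proof.
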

\begin{proof}
If $0<t\leq T_0$, then $\al(t)=T_0$, the proof of \eqref{largek} has already been given by Lemma 23 in \cite[pp. 781]{Guo-2010}.
For the case that $T_0<t<+\infty$, setting $T_0=t$ in Lemma 23 of \cite[pp.781]{Guo-2010} and performing the same computations as its proof,
one sees that
\eqref{largek} is also true for $\al(t)=t$. 
In what follows,
we mainly prove \eqref{ktildew2}, the proof for \eqref{ktildew1} will only be briefly sketched.
For any $k>0$, we first split the the left hand side of \eqref{ktildew2} as
\begin{equation}\label{sp.k1}
\begin{split}
\int_{\Pi_{j=1}^{k-1}\mathcal{V}_j}& \sum_{l=1}^{k-1}\mathbf{1}_{\{t_{l+1}>0\}}\int_{t_{l+1}}^{t_l}\tilde{w}_{q,\ta}(v_l)\nu^{-1}(v_l)
\{\Pi _{j=l+1}^{k-1}d\sigma_{j}\}\{e^{\nu
(v_{l})(s-t_{l})}\nu(v_l)d\sigma_{l}\}\\
&\times \Pi _{j=1}^{l-1}\{{{%
e^{\nu (v_{j})(t_{j+1}-t_{j})} d\sigma_{j}}}\}ds\\
=&\int_{\Pi_{j=1}^{k-1}\mathcal{V}_j\atop{\max\{|v_1|,|v_2|,\cdots,|v_{k-1}|\}\leq k}}\sum_{l=1}^{k-1}\mathbf{1}_{\{t_{l+1}>0\}}\int_{t_{l+1}}^{t_l}
\tilde{w}_{q,\ta}(v_l)\nu^{-1}(v_l)
\{\Pi _{j=l+1}^{k-1}d\sigma_{j}\}\\[2mm]
&\quad\times\{e^{\nu
(v_{l})(s-t_{l})}\nu(v_l)d\sigma_{l}\} \Pi _{j=1}^{l-1}\{{{%
e^{\nu (v_{j})(t_{j+1}-t_{j})} d\sigma_{j}}}\}ds\\
&+\int_{\Pi_{j=1}^{k-1}\mathcal{V}_j\atop{\max\{|v_1|,|v_2|,\cdots,|v_{k-1}|\}>k}}\sum_{l=1}^{k-1}\mathbf{1}_{\{t_{l+1}>0\}}\int_{t_{l+1}}^{t_l}
\tilde{w}_{q,\ta}(v_l)\nu^{-1}(v_l)
\{\Pi _{j=l+1}^{k-1}d\sigma_{j}\}\\[2mm]
&\quad\times\{e^{\nu
(v_{l})(s-t_{l})}\nu(v_l)d\sigma_{l}\} \Pi _{j=1}^{l-1}\{{{%
e^{\nu (v_{j})(t_{j+1}-t_{j})} d\sigma_{j}}}\}ds=\mathcal {K}_1+\mathcal {K}_2.
\end{split}
\end{equation}
For $\mathcal {K}_1$, denote $\max\{|v_1|,|v_2|,\cdots,|v_{k-1}|\}=|v_m|$ , one has
\begin{equation*}
\begin{split}
\mathcal {K}_1\leq&C_{q,\ta}\int_{\Pi_{j=1}^{k-1}\mathcal{V}_j}\sum\limits_{l=1}^{k-1}\mathbf{1}_{\{t_{l+1}>0\}}\int_{t_{l+1}}^{t_l}
e^{\nu(v_m)(s-t_1)}\nu(v_m) ds\tilde{w}_{q,\ta}(v_m)\nu^{-1}(v_m)\Pi_{j=1}^{k-1} d\sigma_j\\
\leq&
C_{q,\ta}\int_{\Pi_{j=1}^{k-1}\mathcal{V}_j}\int_{t_{k}}^{t_1}e^{\nu(v_m)(s-t_1)}\nu(v_m)ds \tilde{w}_{q,\ta}(v_m)\nu^{-1}(v_m)\Pi_{j=1}^{k-1} d\sigma_j
\\
\leq&\frac{C_{q,\ta}}{\sqrt{2\pi}} \int_{n(x_m)\cdot v_{m}>0}(n(x_m)\cdot v_{m}) e^{-\frac{1}{4}|v_m|^2-\frac{q}{4}|v_m|^{\ta}}\nu^{-1}(v_m)dv_m\\
\leq& \frac{C_{q,\ta}}{\sqrt{2\pi}} \int_{u_{m1}>0}u_{m1} e^{-\frac{1}{4}|u_m|^2}du_m
\leq C_{q,\ta},
\end{split}
\end{equation*}
here we have used the key observation
$$
\sum\limits_{l=1}^{k-1}\int_{t_{l+1}}^{t_l}
e^{\nu(v_m)(s-t_1)}\nu(v_m) ds\leq \int_{t_{k}}^{t_1}
e^{\nu(v_m)(s-t_1)}\nu(v_m) ds\leq2.
$$

As to $\mathcal {K}_2$, without loss of generality, we may assume $|v_i|> k$ for some $i\in\{1,2,\cdots,k-1\}$, then
\begin{eqnarray*}
\mathcal {K}_2\leq&&C\sum\limits_{l=1}^{k-1}\int_{\Pi_{j=1}^{k-1}\mathcal{V}_j}\tilde{w}_{q,\ta}(v_l)\nu^{-1}(v_l)\Pi_{j=1}^{k-1} d\sigma_j\\
 \leq&&C\int_{\Pi_{j=1}^{i-1}\mathcal{V}_j}\Pi_{j=1}^{i-1} d\sigma_j\int_{n(x_i)\cdot v_{i}>0\atop{|v_i|> k}}(n(x_i)\cdot v_{i}) e^{-\frac{1}{4}|v_i|^2-\frac{q}{4}|v_i|^{\ta}}\nu^{-1}(v_i)dv_i
 \\&&+C\sum\limits_{l=1}^{i-1}\int_{\Pi_{j=1}^{l-1}\mathcal{V}_j}\Pi_{j=1}^{l-1} d\sigma_j\int_{n(x_l)\cdot v_{l}>0}(n(x_l)\cdot v_{l}) e^{-\frac{1}{4}|v_l|^2-\frac{q}{4}|v_l|^{\ta}}\nu^{-1}(v_l)dv_l
  \\&&\quad\times\int_{\Pi_{j=l+1}^{i-1}\mathcal{V}_j}\Pi_{j=l+1}^{i-1} d\sigma_j
\int_{n(x_i)\cdot v_{i}>0\atop{|v_i|> k}}e^{-\frac{|v_i|^2}{2}}(n(x_i)\cdot v_i)dv_i\\
 \\&&+C\sum\limits_{l=i+1}^{k-1}\int_{\Pi_{j=1}^{i-1}\mathcal{V}_j}\Pi_{j=1}^{i-1} d\sigma_j\int_{n(x_i)\cdot v_{i}>0\atop{|v_i|> k}}e^{-\frac{|v_i|^2}{2}}(n(x_i)\cdot v_i)dv_i
 \\&&\quad\times\int_{\Pi_{j=i+1}^{l-1}\mathcal{V}_j}\Pi_{j=i+1}^{l-1} d\sigma_j\int_{n(x_l)\cdot v_{l}>0}(n(x_l)\cdot v_{l}) e^{-\frac{1}{4}|v_l|^2-\frac{q}{4}|v_l|^{\ta}}\nu^{-1}(v_l)dv_l
 \\
\leq&& C_{q,\ta}(k-1)e^{-\frac{k^2}{8}}\leq C_{q,\ta}.
\end{eqnarray*}
Substituting the above estimates for $\mathcal {K}_1$ and $\mathcal {K}_2$ into \eqref{sp.k1}, we see that \eqref{ktildew2} is true.

The proof for \eqref{ktildew1} is much similar as that of \eqref{ktildew2}, the only difference is the following
\begin{equation*}
\begin{split}
\int_{\Pi_{j=1}^{k-1}\mathcal{V}_j\atop{\max\{|v_1|,|v_2|,\cdots,|v_{k-1}|\}\leq k}}&\sum_{l=1}^{k-1}\mathbf{1}_{\{t_{l+1}\leq0<t_l\}}\int_{0}^{t_l} \tilde{w}_{q,\ta}(v_l)\nu^{-1}(v_l)
\{\Pi _{j=l+1}^{k-1}d\sigma_{j}\}\\
&\times \{e^{\nu
(v_{l})(s-t_{l})}\nu(v_l)d\sigma_{l}\}\Pi _{j=1}^{l-1}\{{{e^{\nu (v_{j})(t_{j+1}-t_{j})} d\sigma_{j}}}\}ds\\
\leq&\int_{\Pi_{j=1}^{k-1}\mathcal{V}_j}\sum_{l=1}^{k-1}\mathbf{1}
_{\{t_{l+1}\leq 0<t_{l}\}}\tilde{w}_{q,\ta}(v_m)\nu^{-1}(v_m)\Pi_{j=1}^{k-1} d\sigma_j\\
\leq&
\int_{\Pi_{j=1}^{k-1}\mathcal{V}_j}\tilde{w}_{q,\ta}(v_m)\nu^{-1}(v_m)\Pi_{j=1}^{k-1} d\sigma_j
\leq C_{q,\ta},
\end{split}
\end{equation*}
here the second inequality follows due to $\sum_{l=1}^{k-1}\mathbf{1}_{\{t_{l+1}\leq 0<t_{l}\}}=\mathbf{1}_{\{t_{l+1}\leq 0\}}$.
This finishes the proof of Lemma \ref{k}.
\end{proof}

\begin{remark}
Since $\al(t)\leq T_0$, the upper bound on the right hand side of \eqref{largek} can be relaxed to $\left\{ \frac{1}{2}\right\} ^{C_{2}T_0^{5/4}}$.
\end{remark}
Priori to proving Proposition \ref{lLif}, we first show the following global solvability of \eqref{dlinear} and \eqref{lbd} in $L^\infty$ space without weight.
\begin{lemma}\label{lex.ow}
There exists $\vps_0>0$ such that if $\FP g=0$ and
\begin{equation*}
\begin{split}
\|f_0&\|_{L^\infty(\Om\cup \ga_+)}+\|w_{q/2,\ta}f_0\|_2+\sup\limits_{0\leq s\leq t}\|\nu^{-1}g(s)\|_{\infty}
\\&+\sqrt{\int_{0}^{t}e^{\lambda_1 s^{\rho_0}}\Vert \nu^{-1/2}g(s)\Vert_{2}^{2}ds}+\sqrt{\int_{0}^{t}\Vert\nu^{-1/2} w_{q/2,\ta}g(s)\Vert_{2}}\leq \varepsilon_0,
\end{split}
\end{equation*}
then \eqref{dlinear} and \eqref{lbd} admits a unique solution $f(t,x,v)$ for which it holds that
\begin{equation}\label{fif.nw}
\begin{split}
\sup\limits_{0\leq s\leq t}\Vert f\Vert _{\infty }+|f|_{\infty }\lesssim& \|f_0\|_{L^\infty(\Om\cup \ga_+)}
+\|w_{q/2,\ta}f_0\|_2+\sup\limits_{0\leq s\leq t}\|\nu^{-1}g(s)\|_{\infty}\\&+\sqrt{\int_{0}^{t}e^{\lambda_1 s^{\rho_0}}\Vert \nu^{-1/2}g(s)\Vert
_{2}^{2}ds}+\sqrt{\int_{0}^{t}\Vert\nu^{-1/2} w_{q/2,\ta}g(s)\Vert
_{2}^{2}ds}.
\end{split}
\end{equation}
\end{lemma}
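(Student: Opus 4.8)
The plan is to first build a local-in-time solution of \eqref{dlinear}, \eqref{lbd} in $L^\infty$ by a linear iteration scheme, then to prove the \emph{a priori} bound \eqref{fif.nw} \emph{uniformly in the length of the time interval}, and finally to pass from local to global by the standard continuity argument; uniqueness then follows at once by applying the $L^2$ energy estimate already contained in Proposition~\ref{l2-lqn} to the difference of two solutions (which solves the homogeneous equation with zero data and zero source). Writing $L=\nu-K$, the equation becomes $\partial_t f+v\cdot\na_x f+\nu f=Kf+g$, and I would represent $f(t,x,v)$ by unfolding the back-time stochastic cycles of Definition~\ref{diffusecycles}: $f$ is the sum of a transport term carrying the initial datum along the leg of the cycle that reaches $\{t=0\}$ (when it does), of the diffuse boundary terms carrying the factors $\sqrt{\mu}$ against the probability measures $d\sigma_j$, of the Duhamel terms $\int e^{-\nu(\cdot)(\cdot-s)}(Kf+g)\,ds$ along each leg, and of a residual term carrying $f$ after $k$ bounces with the indicator $\mathbf{1}_{\{t_k>0\}}$ (cf.\ \cite{Guo-2010}). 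The $g$-contribution is dominated by $\sup_{0\leq s\leq t}\|\nu^{-1}g(s)\|_\infty$ together with the summability of the measures $d\Sigma^w_l$ from Lemma~\ref{k}; the residual term is bounded by $(1/2)^{C_2T_0^{5/4}}\sup_{0\leq s\leq t}\|f(s)\|_\infty$ via \eqref{largek}. The real work is the $Kf$-term.

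For that term I would substitute the same cycle representation once more into the inner occurrence of $f$ inside $Kf$ — the double Duhamel / double-cycle expansion. On each application of $K$ I split $K=K^\chi+K^{1-\chi}$: by \eqref{K.ip3} the piece $K^{1-\chi}$ gains $\eps^{\varrho+3}$ and a Gaussian in $v$, hence is absorbed with room to spare, while for $K^\chi$ I use the pointwise kernel bound \eqref{k2.ke} and, crucially, the $e^{\vps|v-u|^2}$-absorption inequality \eqref{K.ip4}, which is exactly the mechanism that will later let the Gaussian of the kernel defeat a growing Jacobian. After the two expansions, the only term that still carries an undifferentiated $f$ is a double integral over two stochastic cycles of the shape of the left-hand side of \eqref{key1}, namely $\iint \mathbf{k}^\chi_w(v_l,v')\,\mathbf{k}^\chi_w(v'_{l'},v'')\,|f(s_1,x'_{l'}+(s_1-t'_{l'})v'_{l'},v'')|\,d\Sigma^w_{l'}(s_1)\,ds_1\,d\Sigma^w_l(s)\,ds$ summed over the two bounce indices $l,l'$.

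Estimating this double integral is the heart of the matter and is where I expect the main obstacle. I would split the inner $s_1$-integral near each bounce time $t'_{l'}$ into $\int_{t'_{l'}-1/k^2(s)}^{t'_{l'}}$ and $\int_{t'_{l'+1}}^{t'_{l'}-1/k^2(s)}$, with $k=k(s)\sim[\al(s)]^{5/4}\geq C_1T_0^{5/4}$. On the short interval of length $1/k^2(s)$, the kernel bounds together with the telescoping identity from Lemma~\ref{k} — the key observation $\sum_l\int_{t_{l+1}}^{t_l}e^{\nu(v_m)(s-t_1)}\nu(v_m)\,ds\leq C$, which is precisely the device compensating for $\nu$ having no positive lower bound — give a contribution $\leq C\,k^{-1}\sup\|f\|_\infty\leq C\,T_0^{-5/4}\sup\|f\|_\infty$. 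On the remaining interval, $s_1$ is bounded away from the bounce time, so by the exit-time lower bound \eqref{tlower} of Lemma~\ref{huang} the map $v'\mapsto x'_{l'}+(s_1-t'_{l'})v'_{l'}$ is a change of variables whose Jacobian is bounded below; this lets me replace $\int|f(s_1,\cdot,v'')|\,dv'$ by a constant times $\|f(s_1)\|_2$ times the square root of the integral of the Jacobian bound, whose growth in $|v'|$ and in $s_1$ is swallowed by the Gaussian produced through \eqref{K.ip4}. Treating the $\mathbf{1}_{|v'|\leq N}$ and $\mathbf{1}_{|v'|\geq N}$ parts separately produces the $C_N$ versus $C/N$ split, and after inserting $e^{\pm\la_1 s_1^{\rho_0}}$ the $L^2$-in-time factor is controlled by the stretched-exponential decay \eqref{l-decay} of Proposition~\ref{l2-lqn} — which is why the terms $\int_0^t e^{\la_1 s^{\rho_0}}\|\nu^{-1/2}g(s)\|_2^2\,ds$ and $\int_0^t\|\nu^{-1/2}w_{q/2,\ta}g(s)\|_2^2\,ds$ appear on the right-hand side of \eqref{fif.nw}. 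Collecting everything yields \eqref{key1}; choosing $T_0$ large and then $N$ large makes the coefficient in front of $\sup_{0\leq s\leq t}\|f(s)\|_\infty$ strictly less than $1$, so it is absorbed into the left side, while the $L^2$ part is bounded by the data via Proposition~\ref{l2-lqn}. This gives the interior bound; the trace bound $|f|_\infty$ then follows from the boundary relation \eqref{linbd} and the transport representation.

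It remains to feed this into the local/global scheme. For the local theory I would run the iteration $\partial_t f^{\ell+1}+v\cdot\na_x f^{\ell+1}+\nu f^{\ell+1}=Kf^\ell+g$, $f^{\ell+1}_-=P_\gamma f^\ell$, $f^0\equiv f_0$; since $K$ now acts on the known $f^\ell$, the cycle representation gives an explicit formula for $f^{\ell+1}$ from which one reads off that $\sup_{[0,T_*]}\|f^{\ell+1}\|_\infty$ stays bounded and that $\{f^\ell\}$ is Cauchy in $L^\infty$ on a short interval $[0,T_*]$ with $T_*$ independent of $\ell$; the limit solves \eqref{dlinear}, \eqref{lbd}. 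Because the \emph{a priori} bound just derived does not deteriorate as the time interval grows, the standard continuation argument upgrades the local solution to a global one satisfying \eqref{fif.nw}. The step I expect to be genuinely delicate is \eqref{key1}, specifically making the $L^\infty\to L^2$ change of variables rigorous: one needs a quantitative nondegeneracy of the Jacobian of $v'\mapsto x'_{l'}+(s_1-t'_{l'})v'_{l'}$ away from the bounce times, and a careful bookkeeping — through the Gaussian gained in \eqref{K.ip4}, the telescoping identity of Lemma~\ref{k}, and the decay \eqref{l-decay} — of all the $k$-dependent and velocity-dependent growth, which in the soft-potential regime cannot be controlled by the transport factor $e^{-\nu(v)(t-s)}$ as it can in the hard-potential case.
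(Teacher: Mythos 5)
Your \emph{a priori} estimate follows essentially the paper's route (double Duhamel along the stochastic cycles, $K=K^\chi+K^{1-\chi}$, the $1/k^2(s)$ splitting near the bounce times, conversion to $L^2$ and absorption via Proposition~\ref{l2-lqn}, i.e.\ the analogue of \eqref{key1}/\eqref{J3}), but your construction of the solution has a genuine gap. You iterate the \emph{exact} boundary condition $f^{\ell+1}_-=P_\gamma f^\ell$ and assert that $\{f^\ell\}$ is Cauchy in $L^\infty$ on a short interval $[0,T_*]$. This cannot be read off the characteristic formula: the difference $d^{\ell+1}=f^{\ell+1}-f^\ell$ solves a transport equation with zero initial data, source $Kd^\ell$ (which does carry a factor $T_*$), and in-flow boundary datum $P_\gamma d^\ell$, which carries no smallness whatsoever, since the prefactor $e^{-\nu(v)(t-t_1)}\le 1$ and $\nu$ has no positive lower bound. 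In plain $L^\infty$ the diffuse operator is not a contraction: $|P_\gamma d|_{\infty,-}\le \bigl(\sup_v\sqrt{\mu(v)}\int_{n\cdot v'>0}\sqrt{\mu(v')}(n\cdot v')\,dv'\bigr)\,|d|_{\infty,+}$, and with the normalization \eqref{mu.n} this constant is about $4$; even with the optimal bookkeeping in which the dangling $\sqrt{\mu}$ factors telescope against the probability measures $d\sigma_j$, the per-bounce coefficient is exactly $1$, so the error recursion does not contract no matter how small $T_*$ is. This is precisely why the paper never iterates the exact boundary condition: it first solves the damped problem $f_-=(1-\tfrac1j)P_\gamma f$ as in \eqref{j0.eq}, where the factor $1-\tfrac1j$ provides the strict contraction (and makes the unfolded bounces geometrically summable for each fixed $j$), proves the bound \eqref{lif.esow} uniformly in $j$ via the cycle expansion \eqref{iteration1}, and only then removes the damping by showing $f^j\to f$ strongly in $L^\infty$ through the representation \eqref{iteration2}, in which the damping mismatch appears as the $O(1/j)$ term $I_6$ controlled by $|f^j|_{\infty,+}$, cf.\ \eqref{I61}. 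Your proposal contains no substitute for this two-level approximation, so the existence part is not established; uniqueness via the $L^2$ energy identity is fine.

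Two smaller inaccuracies in the hard estimate are worth flagging, though they are fixable. First, the nondegeneracy of the change of variables $v'\mapsto x'_{l'}+(s_1-t'_{l'})v'_{l'}$ is not a consequence of Lemma~\ref{huang}/\eqref{tlower}; it comes from the splitting itself, $t'_{l'}-s_1\ge 1/k^2(s)$ forcing $|\det(\partial y/\partial v')|\gtrsim (k(s))^{-6}$ (Lemma~\ref{huang} is used elsewhere, e.g.\ in the proof of Lemma~\ref{k.d} and in the specular section). Second, the resulting growth $(k(s))^6$ (plus one more power from summing over the cycles) is not swallowed by the Gaussian of \eqref{K.ip4} but by the stretched-exponential factor $e^{-\frac{\la_0}{2}s^{\rho_0}}$ supplied by the $L^2$ decay \eqref{l-decay}, as in \eqref{ks-6}--\eqref{I634}; this is exactly the mechanism that puts the weighted time-integrated norms of $g$ on the right-hand side of \eqref{fif.nw}, as you correctly anticipate. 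The substantive missing ingredient remains the damped-boundary approximation and the $j\to\infty$ limit.
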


\begin{proof}
As in the proof of Lemma \ref{lg.ex},  we use the approximate form
\begin{eqnarray}\label{j0.eq}
\left\{
\begin{array}{rll}
&\partial_{t}f+v\cdot \nabla _{x}f+Lf=g,\text{ \ \ }f(0,x,v)=f_{0}, \\[2mm]
&f_{-}=(1-\frac{1}{j})P_{\gamma }f,\ j=2,3,\cdots
\end{array}\right.
\end{eqnarray}
to construct the global existence of \eqref{dlinear} and \eqref{lbd}, while
the global solution (denoted by $f^j$) of \eqref{j0.eq} is further established by the following iteration scheme
\begin{eqnarray*}
\left\{
\begin{array}{rll}
&\partial _{t}f^{\ell +1}+v\cdot \nabla _{x}f^{\ell +1}+\nu f^{\ell
+1}-Kf^{\ell }=g,\text{ \ \ }f^{\ell+1}(0)=f_{0}, \ \ \ell\geq0,\ f^{0}\equiv f_{0},\\[2mm]
&f_{-}^{\ell +1}=(1-\frac{1}{j})P_{\gamma }f^{\ell },\ j=2,3,\cdots.
\end{array}\right.
\end{eqnarray*}
To do so,
performing a similar calculation as for deriving (199) in Lemma 24 of \cite[pp.783]{Guo-2010}, 
we find
\begin{equation*}
\begin{split}
  |f^{\ell+1}(t,x,v)| \leq&
\underbrace{\left\{\mathbf{1}_{t_{1}\leq 0}
\int_{0}^{t}+\mathbf{1}_{t_{1}>0}
\int_{t_1}^{t}\right\}
e^{-\nu
(v)(t-s)}|Kf^{\ell}(s,x-(t-s){v},v)|ds}_{I_1}
\\
&
\underbrace{+\left\{\mathbf{1}_{t_{1}\leq 0}
\int_{0}^{t}+\mathbf{1}_{t_{1}>0}
\int_{t_1}^{t}\right\}e^{-\nu(v)(t-s)}|g(s,x-(t-s){v},v)|ds}_{I_3}\\
&+\mathbf{1}_{t_{1}\leq 0}e^{-\nu
(v)t}|f^{\ell+1}(0,x-t{v},v)|
\\&+\mathbf{1}_{t_{1}> 0}\left(1-\frac{1}{j}\right)e^{-\nu (v)(t-t_{1})}\int_{\mathcal {V}_1}|f^{\ell}|d\si_1,
\end{split}
\end{equation*}
where the last line follows from the boundary condition. 
A direct calculation leads us to
\begin{equation*}
\begin{split}
\|f^{\ell+1}\|_{L^\infty(\Om\cup \ga_+)}\leq& tC\|f^{\ell}\|_{\infty}+\left(1-\frac{1}{j}\right)|f^{\ell}|_{\infty,+}
+\|f_0\|_\infty+\sup\limits_{0\leq s\leq t}\|\nu^{-1}g(s)\|_{\infty}\\
\leq&tC\|f^{\ell}\|_{\infty}+\left(1-\frac{1}{j}\right)|f^{\ell}|_{\infty,+}
+\vps_0.
\end{split}
\end{equation*}
With this, one can show that
there exists $T^{\ast\ast}>0$ $(CT^{\ast\ast}<1)$ such that if $\sup\limits_{0\leq t\leq T^{\ast\ast}}\|f^{\ell}\|_{L^\infty(\Om\cup \ga_+)}\leq 2\vps_0$ then
$$\sup\limits_{0\leq t\leq T^{\ast\ast}}\|f^{\ell+1}\|_{L^\infty(\Om\cup \ga_+)}\leq 2\vps_0,$$
thus $\{\|f^{\ell}\|_{L^\infty(\Om\cup \ga_+)}\}$ is uniformly bounded with respect to $\ell$ in a short time interval $[0,T^{\ast\ast}]$. In fact, we can further prove that $\{f^\ell\}$ is also a Cauchy sequence in $L^\infty(\Om\cup \ga_+)$ provided $CT^{\ast\ast}<1$, thus we obtain a local solution $f^j$ for \eqref{dlinearj}. To construct the global existence, it suffices to obtain the following {\it a priori estimates}
\begin{equation}\label{lif.esow}
\begin{split}
\sup\limits_{0\leq s\leq t}&\{\Vert f^j\Vert _{\infty}+|f^j|_{\infty,+}\}\\
\lesssim& \|f_0\|_{L^\infty(\Om\cup \ga_+)}
+\sup\limits_{0\leq s\leq t}\|\nu^{-1}g(s)\|_{\infty}+\|w_{q/2,\ta}f_0\|_2\\
&+\sqrt{\int_{0}^{t}e^{\lambda_1 s^{\rho_0}}\Vert \nu^{-1/2}g(s)\Vert
_{2}^{2}ds}+\sqrt{\int_{0}^{t}\Vert\nu^{-1/2} w_{q/2,\ta}g(s)\Vert
_{2}^{2}ds},
\end{split}
\end{equation}
for all $j\geq2.$
In fact, \eqref{lif.esow} follows from a tedious calculation for the following inequality
\begin{equation}\label{iteration1}
\begin{split}
  |f^j(t,x,v)| \leq&
\left\{\mathbf{1}_{t_{1}\leq 0}
\int_{0}^{t}+\mathbf{1}_{t_{1}>0}
\int_{t_1}^{t}\right\}
e^{-\nu
(v)(t-s)}|K^{1-\chi}f^j(s,x-(t-s){v},v)|ds
\\
&+\left\{\mathbf{1}_{t_{1}\leq 0}
\int_{0}^{t}+\mathbf{1}_{t_{1}>0}
\int_{t_1}^{t}\right\}
e^{-\nu
(v)(t-s)}|K^{\chi}f^j(s,x-(t-s){v},v)|ds
\\
&
+\left\{\mathbf{1}_{t_{1}\leq 0}
\int_{0}^{t}+\mathbf{1}_{t_{1}>0}
\int_{t_1}^{t}\right\}e^{-\nu
(v)(t-s)}|g(s,x-(t-s){v},v)|ds
+\sum\limits_{n=1}^{5}I_n,
\end{split}
\end{equation}%
with
\begin{equation*}
\begin{split}
I_1=&\mathbf{1}_{t_{1}\leq 0}e^{-\nu
(v)t}|f(0,x-t{v},v)|
\\&+e^{-\nu (v)(t-t_{1})}\int_{\prod_{j=1}^{k-1}%
\mathcal{V}_{j}}\sum_{l=1}^{k-1}\mathbf{1}_{\{t_{l+1}\leq
0<t_{l}\}}|f(0,x_{l}-t_{l}{v}_{l},v_{l})|d\Sigma _{l}(0),
\end{split}
\end{equation*}
\begin{equation*}
\begin{split}
I_2=&e^{-\nu (v)(t-t_{1})}\sqrt{\mu}\bigg\{\int_{\prod_{j=1}^{k-1}%
\mathcal{V}_{j}}\mathbf{1}_{\{t_{l+1}\leq
0<t_{l}\}}\sum_{l=1}^{k-1}\int_{0}^{t_l}\\&\quad\times|[K^{1-\chi} f^j](s,x_{l}-(t_{l}-s){v}_{l},v_{l})|d\Sigma _{l}(s)ds
\\&+\int_{\prod_{j=1}^{k-1}%
\mathcal{V}_{j}}\sum_{l=1}^{k-1}\mathbf{1}_{\{0<t_{l+1}\}}\int_{t_{l+1}}^{t_{l}}
|[K^{1-\chi} f^j](s,x_{l}-(t_{l}-s){v}_{l},v_{l})|d\Sigma _{l}(s)ds\bigg\},
\end{split}
\end{equation*}
\begin{equation*}
\begin{split}
I_3=&e^{-\nu (v)(t-t_{1})}\sqrt{\mu}\bigg\{\int_{\prod_{j=1}^{k-1}%
\mathcal{V}_{j}}\mathbf{1}_{\{t_{l+1}\leq
0<t_{l}\}}\sum_{l=1}^{k-1}\int_{0}^{t_l}\\&\quad\times|[K^\chi f^j](s,x_{l}-(t_{l}-s){v}_{l},v_{l})|d\Sigma _{l}(s)ds
\\&+\int_{\prod_{j=1}^{k-1}%
\mathcal{V}_{j}}\sum_{l=1}^{k-1}\mathbf{1}_{\{0<t_{l+1}\}}\int_{t_{l+1}}^{t_{l}}
|[K^\chi f^j](s,x_{l}-(t_{l}-s){v}_{l},v_{l})|d\Sigma _{l}(s)ds\bigg\},
\end{split}
\end{equation*}
\begin{equation*}
\begin{split}
I_4=&e^{-\nu (v)(t-t_{1})}\sqrt{\mu}\bigg\{\int_{\prod_{j=1}^{k-1}%
\mathcal{V}_{j}}\sum_{l=1}^{k-1}\mathbf{1}_{\{t_{l+1}\leq
0<t_{l}\}}\int_{0}^{t_l}|g(s,x_{l}-(t_{l}-s){v}_{l},v_{l})|d\Sigma _{l}(s)ds \\&+\int_{\prod_{j=1}^{k-1}%
\mathcal{V}_{j}}\sum_{l=1}^{k-1}\mathbf{1}_{\{0<t_{l+1}\}}\int_{t_{l+1}}^{t_{l}}|g(s,x_{l}-(t_{l}-s){v}_{l},v_{l})|d\Sigma _{l}(s)ds\bigg\},
\end{split}
\end{equation*}
\begin{equation*}
I_5=e^{-\nu (v)(t-t_{1})}\sqrt{\mu}\int_{\prod_{j=1}^{k-1}%
\mathcal{V}_{j}}\mathbf{1}_{\{0<t_{k}\}}|f^j(t_{k},x_{k},v_{k-1})|d\Sigma
_{k-1}(t_{k}),\ \ k\geq2,
\end{equation*}%
and
\begin{equation}\label{measure1}
d\Sigma _{l}(s)=\{\Pi _{j=l+1}^{k-1}d\sigma _{j}\}\times\{e^{\nu
(v_{l})(s-t_{l})}\mu^{-1/2}(v_l)d\sigma _{l}\}\times \Pi _{j=1}^{l-1}\{{{%
e^{\nu (v_{j})(t_{j+1}-t_{j})} d\sigma _{j}}}\}.
\end{equation}
We point out that \eqref{iteration1} is deduced from \eqref{dlinearj} by means of a similar argument as for obtaining (199) in \cite[pp.783]{Guo-2010}.
The estimates for the corresponding terms on the right hand side of \eqref{iteration1} are much similar as that of $\CI_{n}$ $(1\leq n\leq 8)$ in \eqref{iteration}.
To avoid needless repetition, we are not going to detail the computations here.
When \eqref{lif.esow} is derived, the global existence of (\ref{daproximatenew}) and \eqref{fl.bd} follows from a standard continuation argument. 
Notice that \eqref{lif.esow} is unform in $j$, $\{f^j\}_{j=1}^\infty$ possesses (up to a subsequence) a weak$-\ast$ limit $f$ which satisfies
(\ref{dlinear}) and \eqref{lbd} in the weak sense.
Again, by taking a difference one has
\begin{eqnarray*}
\left\{
\begin{array}{rll}
&\partial _{t}[f^{j}-f]+v\cdot \nabla _{x}[f^{j}-f]+L[f^{j}-f]=0,\ \ [f^{j}-f](0)=0,\\[2mm]
&[f^{j}-f]_{-}=P_{\gamma }[f^{j}-f]+\frac{1}{j}P_{\gamma }f^{j},
\end{array}\right.
\end{eqnarray*}
from which, we have by a similar argument as for obtaining \eqref{iteration1}
\begin{equation}\label{iteration2}
\begin{split}
|&[f^{j}-f](t,x,v)|\\ \leq&
\left\{\mathbf{1}_{t_{1}\leq 0}
\int_{0}^{t}+\mathbf{1}_{t_{1}>0}
\int_{t_1}^{t}\right\}
e^{-\nu
(v)(t-s)}|K^{1-\chi}[f^{j}-f](s,x-(t-s){v},v)|ds
\\
&+\left\{\mathbf{1}_{t_{1}\leq 0}
\int_{0}^{t}+\mathbf{1}_{t_{1}>0}
\int_{t_1}^{t}\right\}
e^{-\nu
(v)(t-s)}|K^{\chi}[f^{j}-f](s,x-(t-s){v},v)|ds
+\sum\limits_{n=6}^{9}I_n,
\end{split}
\end{equation}%
with
\begin{equation*}
\begin{split}
I_6=&\frac{1}{j}\mathbf{1}_{t_{1}> 0}e^{-\nu
(v)(t-t_1)}|(P_\ga f^j)(t_1,x_1,v)|
\\&+\frac{1}{j}e^{-\nu (v)(t-t_{1})}\sqrt{\mu}\int_{\prod_{j=1}^{k-1}%
\mathcal{V}_{j}}\sum_{l=1}^{k-1}\mathbf{1}_{\{t_{l+1}>0\}}|(P_\ga f^j)(t_{l+1},x_{l+1},v_{l})|d\Sigma _{l}(t_{l+1}),
\end{split}
\end{equation*}
\begin{equation*}
\begin{split}
I_7=&e^{-\nu (v)(t-t_{1})}\sqrt{\mu}\bigg\{\int_{\prod_{j=1}^{k-1}%
\mathcal{V}_{j}}\sum_{l=1}^{k-1}\mathbf{1}_{\{t_{l+1}\leq
0<t_{l}\}}\\&\times\int_{0}^{t_l}|[K^{1-\chi} [f^{j}-f]](s,x_{l}-(t_{l}-s){v}_{l},v_{l})|d\Sigma _{l}(s)ds
\\&+\int_{\prod_{j=1}^{k-1}%
\mathcal{V}_{j}}\sum_{l=1}^{k-1}\mathbf{1}_{\{0<t_{l+1}\}}\int_{t_{l+1}}^{t_{l}}
|[K^{1-\chi} [f^{j}-f]](s,x_{l}-(t_{l}-s){v}_{l},v_{l})|d\Sigma _{l}(s)ds\bigg\},
\end{split}
\end{equation*}
\begin{equation*}
\begin{split}
I_8=&e^{-\nu (v)(t-t_{1})}\sqrt{\mu}\bigg\{\int_{\prod_{j=1}^{k-1}%
\mathcal{V}_{j}}\sum_{l=1}^{k-1}\mathbf{1}_{\{t_{l+1}\leq
0<t_{l}\}}\\&\times\int_{0}^{t_l}|[K^\chi [f^{j}-f]](s,x_{l}-(t_{l}-s){v}_{l},v_{l})|d\Sigma _{l}(s)ds
\\&+\int_{\prod_{j=1}^{k-1}%
\mathcal{V}_{j}}\sum_{l=1}^{k-1}\mathbf{1}_{\{0<t_{l+1}\}}\int_{t_{l+1}}^{t_{l}}
|[K^\chi [f^{j}-f]](s,x_{l}-(t_{l}-s){v}_{l},v_{l})|d\Sigma _{l}(s)ds\bigg\},
\end{split}
\end{equation*}
\begin{equation*}
I_{9}=e^{-\nu (v)(t-t_{1})}\sqrt{\mu}\int_{\prod_{j=1}^{k-1}%
\mathcal{V}_{j}}\mathbf{1}_{\{0<t_{k}\}}|[f^{j}-f](t_{k},x_{k},v_{k-1})|d\Sigma
_{k-1}(t_{k}),\ \ k\geq2.
\end{equation*}
Comparing \eqref{iteration2} with \eqref{iteration1}, one obtains
\begin{equation}\label{fj-f.es}
\sup\limits_{0\leq s\leq t}\{\Vert [f^{j}-f]\Vert _{\infty}(s)+|[f^{j}-f]|_{\infty,+}(s)\}\lesssim C\sup\limits_{0\leq s\leq t}|I_6|.
\end{equation}
On the other hand, from Lemma \ref{k}, it follows
\begin{equation}\label{I61}
|I_6|\leq \frac{C}{j}|P_\ga f^j|_{\infty,-}\leq \frac{C}{j}|f^j|_{\infty,+}.
\end{equation}
\eqref{fj-f.es} and \eqref {I61} then lead us to
\begin{equation*}
\sup\limits_{0\leq s\leq t}\{\Vert [f^{j}-f](s) _{\infty}+|[f^{j}-f](s)|_{\infty,+}\}\lesssim \frac{C}{j}\sup\limits_{0\leq s\leq t}|f^j|_{\infty,+},
\end{equation*}
from which and the bound \eqref{lif.esow}, we see that $f^j$ converges to $f$ strongly in $L^\infty$ and $f$ satisfies \eqref{fif.nw}, this completes the proof of Lemma \ref{lex.ow}.
\end{proof}
With Lemma \ref{lex.ow} in hand, we are now ready to tackle
\begin{proof}[The proof of Proposition \ref{lLif}]

Similar to the analysis in Section \ref{L2th}, denote
$$h^{\ell}=w_{q,\ta}f^{\ell },\ \ \ell\geq0,\ \ \textrm{and}\ \ K_w(\cdot)=w_{q,\ta} K\left(\frac{\cdot}{w_{q,\ta}}\right),$$
where $f^{\ell }$ is determined by (\ref{daproximatenew}) and \eqref{fl.bd}.
The solution $h^j(t,x,v)=w_{q,\ta}f^j$ of the problem
\begin{eqnarray}\label{hj.eqn}
\pa_th+v\cdot \na_x h+\nu h-K_wh=w_{q,\ta}g,\ \
h(0,x,v)=h_0(x,v)=w_{q,\ta}f_0(x,v),
\end{eqnarray}
and
\begin{equation}\label{hj.bd}
h_-=\frac{1-\frac{1}{j}}{\widetilde{w}_{q,\ta}}\int_{\mathcal {V}(x)}h(t,x,v')\widetilde{w}_{q,\ta}(v')d\si
\end{equation}
will be constructed with the help of an \textit{abstract iteration}
scheme defined in the following way
\begin{eqnarray}\label{h.it}
\left\{
\begin{array}{rll}
&\pa_th^{\ell+1}+v\cdot \na_x h^{\ell+1}+\nu h^{\ell+1}-K_wh^{\ell}=w_{q,\ta}g,\\[2mm]
&h^{\ell+1}(0,x,v)=h^{\ell+1}_0(x,v)=w_{q,\ta}f_0(x,v), \ell\geq0,
\end{array}\right.
\end{eqnarray}
with $h^0=h_0=w_{q,\ta}f_0(x,v)$ and
\begin{equation}\label{h.bdit}
h^{\ell+1}_-=\frac{1-\frac{1}{j}}{\widetilde{w}_{q,\ta}}\int_{\mathcal {V}(x)}h^{\ell}(t,x,v')\widetilde{w}_{q,\ta}(v')d\si.
\end{equation}
From \eqref{h.it} and \eqref{h.bdit}, it is straightforward to check
\begin{equation}\label{iteration.s}
\begin{split}
  |h^{\ell+1}(t,x,v)| \leq&
\left\{\mathbf{1}_{t_{1}\leq 0}
\int_{0}^{t}+\mathbf{1}_{t_{1}>0}
\int_{t_1}^{t}\right\}
e^{-\nu
(v)(t-s)}|K_{w}h^{\ell}(s,x-(t-s){v},v)|ds
\\
&
+\left\{\mathbf{1}_{t_{1}\leq 0}
\int_{0}^{t}+\mathbf{1}_{t_{1}>0}
\int_{t_1}^{t}\right\}e^{-\nu
(v)(t-s)}|w_{q,\ta}g(s,x-(t-s){v},v)|ds\\
&+\mathbf{1}_{t_{1}\leq 0}e^{-\nu
(v)t}|h^{\ell+1}(0,x-t{v},v)|
\\&+\mathbf{1}_{t_{1}> 0}\left(1-\frac{1}{j}\right)\frac{e^{-\nu (v)(t-t_{1})}}{\tilde{w}_{q,\ta}(v)}\int_{\mathcal {V}_1}|h^{\ell}(t_1,x_1,v_1)|\tilde{w}_{q,\ta}(v_1)d\si_1.
\end{split}
\end{equation}
Since
$\frac{1}{\tilde{w}_{q,\ta}(v)}\leq C_{q,\ta},$ and $\int_{n\cdot v>0}\sqrt{\mu}(n\cdot v) dv<\infty,$
we get from \eqref{iteration.s} and Lemma \ref{es.k}
that
\begin{equation}\label{hlb1}
\begin{split}
  \|h^{\ell+1}(t)\|_{\infty} \leq&
Ct\|h^{\ell}(t,x,v)\|_{\infty}+C\|h_0\|_{\infty}+C\sup\limits_{0\leq s\leq t}\|\nu^{-1}w_{q,\ta}g(s)\|_\infty\\&+C_{q,\ta}\max\limits_{\ell}\sup\limits_{0\leq s\leq t}|f^\ell|_{\infty,+}.
\end{split}
\end{equation}
Recalling Lemma \ref{lex.ow}, we have shown that $f^\ell\rightarrow f^j$ and $f^j$ bears the bound \eqref{lif.esow},
therefore $\max\limits_{\ell}\sup\limits_{0\leq s\leq t}|f^\ell|_{\infty,+}<\infty$.
From this and \eqref{hlb1}, for any given $j\geq2,$ the existence of a local solution $h^j$ to
\eqref{hj.eqn} and \eqref{hj.bd} is guaranteed  by a similar argument as the proof of Lemma \ref{lex.ow}.

To obtain the global existence of \eqref{hj.eqn} and \eqref{hj.bd}, a central part of deduction is the following {\it a priori} estimates
\begin{equation}\label{es.hlp1}
\begin{split}
\sup_{0\leq s\leq t}\Vert h^j(s)\Vert
_{\infty } \leq&
C\Vert h_0\Vert _{\infty }+C\sup_{0\leq s\leq t}\Vert \nu^{-1} w_{q,\ta}g(s)\Vert
_{\infty }+C\|w_{q/2,\ta}f_0\|_2\\
&+C\sqrt{\int_{0}^{t}e^{\lambda_1 s^{\rho_0}}\Vert \nu^{-1/2}g(s)\Vert
_{2}^{2}ds}+C\sqrt{\int_{0}^{t}\Vert\nu^{-1/2} w_{q/2,\ta}g(s)\Vert
_{2}^{2}ds}.
\end{split}
\end{equation}
Upon using \eqref{hj.eqn} and \eqref{hj.bd}, once again, we proceed like for deducing (199) in \cite[pp. 783]{Guo-2010} to obtain
\begin{equation}\label{iteration}
\begin{split}
  |h^j(t,x,v)| \leq&
\underbrace{\left\{\mathbf{1}_{t_{1}\leq 0}
\int_{0}^{t}+\mathbf{1}_{t_{1}>0}
\int_{t_1}^{t}\right\}
e^{-\nu
(v)(t-s)}|K^{1-\chi}_{w}h^j(s,x-(t-s){v},v)|ds}_{\CI_1}
\\
&\underbrace{+\left\{\mathbf{1}_{t_{1}\leq 0}
\int_{0}^{t}+\mathbf{1}_{t_{1}>0}
\int_{t_1}^{t}\right\}
e^{-\nu
(v)(t-s)}|K^{\chi}_{w}h^j(s,x-(t-s){v},v)|ds}_{\CI_2}
\\
&
\underbrace{+\left\{\mathbf{1}_{t_{1}\leq 0}
\int_{0}^{t}+\mathbf{1}_{t_{1}>0}
\int_{t_1}^{t}\right\}e^{-\nu
(v)(t-s)}|w_{q,\ta}g(s,x-(t-s){v},v)|ds}_{\CI_3}
\\&+\sum\limits_{n=4}^{8}\CI_n,
\end{split}
\end{equation}%
with
\begin{equation*}
\begin{split}
\CI_4=&\mathbf{1}_{t_{1}\leq 0}e^{-\nu
(v)t}|h(0,x-t{v},v)|
\\&+\frac{e^{-\nu (v)(t-t_{1})}}{\tilde{w}_{q,\ta}(v)}\int_{\prod_{j=1}^{k-1}%
\mathcal{V}_{j}}\sum_{l=1}^{k-1}\mathbf{1}_{\{t_{l+1}\leq
0<t_{l}\}}|h(0,x_{l}-t_{l}{v}_{l},v_{l})|d\Sigma^w_{l}(0),
\end{split}
\end{equation*}
\begin{equation*}
\begin{split}
\CI_5=&\frac{e^{-\nu (v)(t-t_{1})}}{\tilde{w}_{q,\ta}(v)}\bigg\{\int_{\prod_{j=1}^{k-1}%
\mathcal{V}_{j}}\sum_{l=1}^{k-1}\mathbf{1}_{\{t_{l+1}\leq0<t_{l}\}}\int_{0}^{t_l}
\\&\quad\times|[K^{1-\chi}_{w}h^j](s,x_{l}-(t_{l}-s){v}_{l},v_{l})|d\Sigma^w_{l}(s)ds
\\&+\int_{\prod_{j=1}^{k-1}%
\mathcal{V}_{j}}\sum_{l=1}^{k-1}\mathbf{1}_{\{0<t_{l+1}\}}\int_{t_{l+1}}^{t_{l}}
|[K^{1-\chi}_{w}h^j](s,x_{l}-(t_{l}-s){v}_{l},v_{l})|d\Sigma^w_{l}(s)ds\bigg\},
\end{split}
\end{equation*}
\begin{equation*}
\begin{split}
\CI_6=&\frac{e^{-\nu (v)(t-t_{1})}}{\tilde{w}_{q,\ta}(v)}\bigg\{\int_{\prod_{j=1}^{k-1}%
\mathcal{V}_{j}}\sum_{l=1}^{k-1}\mathbf{1}_{\{t_{l+1}\leq0<t_{l}\}}\int_{0}^{t_l}
\\&\quad\times|[K^\chi_{w}h^j](s,x_{l}-(t_{l}-s){v}_{l},v_{l})|d\Sigma^w_{l}(s)ds
\\&+\int_{\prod_{j=1}^{k-1}%
\mathcal{V}_{j}}\sum_{l=1}^{k-1}\mathbf{1}_{\{0<t_{l+1}\}}\int_{t_{l+1}}^{t_{l}}
|[K^\chi_{w}h^j](s,x_{l}-(t_{l}-s){v}_{l},v_{l})|d\Sigma^w_{l}(s)ds\bigg\},
\end{split}
\end{equation*}
\begin{equation*}
\begin{split}
\CI_7=&\frac{e^{-\nu (v)(t-t_{1})}}{\tilde{w}_{q,\ta}(v)}\bigg\{\int_{\prod_{j=1}^{k-1}%
\mathcal{V}_{j}}\sum_{l=1}^{k-1}\mathbf{1}_{\{t_{l+1}\leq0<t_{l}\}}
\int_{0}^{t_l}\\&\quad\times|w_{q,\ta}g(s,x_{l}-(t_{l}-s){v}_{l},v_{l})|d\Sigma^w_{l}(s)ds \\&+\int_{\prod_{j=1}^{k-1}%
\mathcal{V}_{j}}\sum_{l=1}^{k-1}\mathbf{1}_{\{0<t_{l+1}\}}\int_{t_{l+1}}^{t_{l}}|w_{q,\ta}g(s,x_{l}-(t_{l}-s){v}_{l},v_{l})|
d\Sigma^w_{l}(s)ds\bigg\},
\end{split}
\end{equation*}
\begin{equation*}
\CI_8=\frac{e^{-\nu (v)(t-t_{1})}}{\tilde{w}_{q,\ta}(v)}\int_{\prod_{j=1}^{k-1}%
\mathcal{V}_{j}}\mathbf{1}_{\{0<t_{k}\}}|h^j(t_{k},x_{k},v_{k-1})|d\Sigma^w_{k-1}(t_{k}),\ \ k\geq2,
\end{equation*}%
and $d\Sigma^w_{l}(s)$ is given by \eqref{measure}.
The main difference between this proof and that of Lemma \ref{lex.ow}
is that we now have an additional velocity weight $w_{q,\ta}$.
We now turn to compute $\CI_{n}$ $(1\leq n\leq 8)$ in \eqref{iteration} term by term. 

\noindent{\it Estimates for $\CI_1$ and $\CI_5$.} Notice that
\begin{equation}\label{nu.int}
\int_0^te^{-\nu(v)(t-s)}\nu(v)ds<+\infty.
\end{equation}
From Lemma \ref{es.k}, it follows that
$$
\CI_1\leq C\eps^{\varrho+3}\sup_{0\leq s\leq t}\| h^j(s)\|_{\infty}.
$$
Since $\tilde{w}^{-1}_{q,\ta}(v)\leq C_{q,\ta}$, Lemma \ref{es.k} and \eqref{ktildew1} imply the first term in $\CI_5$ can be bounded by
\begin{equation*}
\begin{split}
C_{q,\ta}\eps^{\varrho+3}&\int_{\prod_{j=1}^{k-1}%
\mathcal{V}_{j}}\sum_{l=1}^{k-1}\mathbf{1}_{\{t_{l+1}\leq
0<t_{l}\}}\int_0^{t_l}\left\|h^j(s)\right\|_{\infty}d\Sigma _{l}(s)ds\\
 \leq&
C_{q,\ta}\eps^{\varrho+3}\sup_{0\leq s\leq t}\left\|h^j(s)\right\|_{\infty}.
\end{split}
\end{equation*}
As to the second term in $\CI_5$, by Lemma \ref{es.k} and \eqref{ktildew2}, we get its upper bound
\begin{equation*}
\begin{split}
C_{q,\ta}\eps^{\varrho+3}&\int_{\prod_{j=1}^{k-1}%
\mathcal{V}_{j}}\sum\limits_{l=1}^{k-1}{\bf 1}_{\{0<t_{l+1}\}}\int_{t_{l+1}}^{t_l}\left\|h^j(s)\right\|_{\infty}
d\Sigma _{l}(s)ds\\
\leq&
C_{q,\ta}\eps^{\varrho+3}\sup_{0\leq s\leq t}\left\|h^j(s)\right\|_{\infty}.
\end{split}
\end{equation*}

\noindent{\it Estimates for $\CI_3$ and $\CI_7$.}
From \eqref{nu.int}, it is straightforward to check
\begin{equation*}
\CI_3\leq C\sup_{0\leq s\leq t}\left\|\nu^{-1}(v)w_{q,\ta}g(s)\right\|_{\infty}.
\end{equation*}
In view of \eqref{ktildew1}, one sees that the first term in $\CI_7$ can be dominated by
\begin{equation}\label{I51}
\begin{split}
C_{q,\ta}\int_{\prod_{j=1}^{k-1}%
\mathcal{V}_{j}}\sum_{l=1}^{k-1}\mathbf{1}_{\{t_{l+1}\leq
0<t_{l}\}}\int_0^{t_l}\left\|w_{q,\ta}g(s)\right\|_{\infty} d\Sigma _{l}(s)ds
 \leq
C_{q,\ta}\sup_{0\leq s\leq t}\left\|w_{q,\ta}g(s)\right\|_{\infty}.
\end{split}
\end{equation}
Thanks to \eqref{nu.int} and \eqref{ktildew1}, we bound the second term in $\CI_7$ by
\begin{equation*}
\begin{split}
C_{q,\ta}&\int_{\prod_{j=1}^{k-1}%
\mathcal{V}_{j}}\mathbf{1}_{\{0<t_{l+1}\}}
\int_{t_{l+1}}^{t_l}\left\|w_{q,\ta}g(s)\right\|_{\infty}
d\Sigma _{l}(s)ds
\leq
C_{q,\ta}\sup_{0\leq s\leq t}\left\|w_{q,\ta}g(s)\right\|_{\infty}.
\end{split}
\end{equation*}
\noindent{\it Estimates for $\CI_4$.}
By a similar manner as for obtaining \eqref{I51}, we have
\begin{equation*}
\begin{split}
\CI_4\leq&\|h(0)\|_{\infty }+C_{q,\ta}\|h(0)\|_{\infty
}\int \sum_{l=1}^{k-1}\mathbf{1}_{\{t_{l+1}\leq 0<t_{l}\}}d\Sigma _{l}(0)
\leq C_{q,\ta}\|h(0)\|_{\infty }.
\end{split}
\end{equation*}

\noindent{\it Estimates for $\CI_8$.}
Since
\begin{equation*}
\begin{split}
\int_{\mathcal{V}_{k-1}}&\tilde{w}_{q,\ta}(v_{k-1})d\sigma _{k-1}\\
\leq&
\frac{1}{\sqrt{2\pi}} \int_{n(x_{k-1})\cdot v_{k-1}>0}(n(x_{k-1})\cdot v_{k-1})
e^{-\frac{1}{4}|v_{k-1}|^2-\frac{q}{4}|v_{k-1}|^{\ta}}dv_{k-1}\leq C_{q,\ta},
\end{split}
\end{equation*}
by applying \eqref{largek} in Lemma \ref{k}, we have
\begin{equation*}
\begin{split}
\CI_8\leq& C_{q,\ta}\int_{\prod_{j=1}^{k-2}%
\mathcal{V}_{j}}\mathbf{1}_{\{0<t_{k-1}\}}\Pi _{j=1}^{k-2} d\sigma _{j}
\sup_{0\leq s\leq t}\Vert h^j(s)\Vert _{\infty }\\
\leq&
C_{q,\ta}\left\{ \frac{1}{2}\right\} ^{C_{2}T_0^{5/4}}\sup_{0\leq s\leq t}\Vert h(s)\Vert _{\infty}.
\end{split}
\end{equation*}

We can not obtain the desired estimates for $\CI_2$ and $\CI_6$ for the time being and they will be treated by using iteration \eqref{iteration} for $h^j$ again.
To illustrate this more clearly, we first combine the above estimates for $\CI_1$, $\CI_3$, $\CI_4$, $\CI_5$, $\CI_7$ and $\CI_8$ to conclude that
\begin{equation}\label{hmain}
\begin{split}
|h^j(t,x,v)|
\leq& \left\{\mathbf{1}_{t_{1}\leq 0}
\int_{0}^{t}+\mathbf{1}_{t_{1}>0}
\int_{t_1}^{t}\right\}e^{-\nu
(v)(t-s)}|K^\chi_{w}h^j(s,x-(t-s)v,v)|ds \\
&+\frac{e^{-\nu (v)(t-t_{1})}}{\tilde{w}_{q,\ta}(v)}\times
\int_{\prod_{j=1}^{k-1}\mathcal{V}_{j}}\sum_{l=1}^{k-1}\bigg\{%
\int_{0}^{t_{l}}\mathbf{1}_{\{t_{l+1}\leq 0<t_{l}\}}|K^\chi_{w}h^j(s,X_{\mathbf{cl}}(s),v_{l})|   \\
&+\int_{t_{l+1}}^{t_{l}}\mathbf{1}_{\{0<t_{l+1}\}}|K^\chi_{w}h^j(s,X_{\mathbf{cl}}(s),v_{l})|\bigg\}d\Sigma _{l}(s)ds+A_{1 }(t)
\\&=\CI_2+\CI_6+A_{1 }(t),
\end{split}
\end{equation}%
where $A_{1 }(t)$ denotes
\begin{equation*}
\begin{split}
A_{1 }(t)=&
C_{q,\ta}\sup_{0\leq s\leq t}\left\Vert \nu^{-1}w_{q,\ta}g(s)\right\Vert
_{\infty }+C_{q,\ta}\Vert h(0)\Vert _{\infty}
\\&+C_{q,\ta}\left(\frac{1}{2}\right)^{C_2T_0^{5/4}}\sup_{0\leq s\leq t}\Vert h^j(s)\Vert _{\infty}
+C_{q,\ta}\eps^{3+\varrho}\sup_{0\leq s\leq t}\left\|h^j(s)\right\|_{\infty}.
\end{split}
\end{equation*}%
Recall the back-time cycles: $X_{\mathbf{cl}}(s)=\sum\limits_{l}{\bf 1}_{[t_{l+1},t_l)}(s)\{x_l-(t_l-s)v_l\}.$
Let $(t_{0}',x_{0}',v_{0}')=(s,X_{\mathbf{cl}}(s),v')$, for $v_{l'+1}'\in \mathcal {V}'_{l'+1}=\{v_{l'+1}'\cdot n(x_{l'+1}')>0\},$
we define a new back-time cycle as
$$
(t_{l'+1}',x_{l'+1}',v_{l'+1}')=(t_{l'}'-t_{\mathbf{b}}(x_{l'}',v_{l'}'),x_{\mathbf{b}}(x_{l'}',v_{l'}'),v_{l'+1}').
$$
We now
iterate (\ref{hmain}) to  get the representation for $K^\chi_{w}h^j(s,X_{\mathbf{cl}%
}(s),v_{l})$ as
\begin{equation}\label{diff3}
\begin{split}
K&^\chi_{w}h^j(s,X_{\mathbf{cl}}(s),v_{l})\\
\leq& \int_{
\mathbf{R}^{3}}\mathbf{k}^\chi_{w}(v_{l},v^{\prime })|h^j(s,X_{
\mathbf{cl}}(s),v^{\prime })|dv^{\prime }  \\
\leq& \iint \left\{\mathbf{1}_{t_{1}^{\prime }\leq 0}\int_{0}^{s}+\mathbf{1}_{t_{1}^{\prime }>0}\int_{t_{1}^{\prime }}^{s}\right\}e^{-\nu
(v^{\prime })(s-s_{1})} \mathbf{k}^\chi_{w}(v_{l},v^{\prime })
\mathbf{k}^\chi_{w}(v^{\prime },v^{\prime \prime })
\\&\qquad\qquad\times|h^j(s_{1},X_{\mathbf{cl}}(s)-(s-s_{1})v^{\prime },v^{\prime \prime
})|ds_{1}dv^{\prime }dv^{\prime \prime }\\
&+\iint dv^{\prime }dv^{\prime \prime }\int_{\prod_{j=1}^{k-1}\mathcal{V
}_{j}^{\prime }}\frac{e^{-\nu (v^{\prime })(s-t_{1}^{\prime })}}
{\tilde{w}_{q,\ta}(v^{\prime })}
\\&\quad\times\sum_{l^{\prime }=1}^{k-1}\int_{0}^{t_{l^{\prime
}}^{\prime }}ds_{1}\mathbf{1}_{\{t_{l^{\prime }+1}^{\prime }\leq
0<t_{l^{\prime }}^{\prime }\}}  \mathbf{k}^\chi_{w}(v_{l},v^{\prime })\\&\quad\times
\mathbf{k}^\chi_{w}(v_{l^{\prime }}^{\prime },v^{\prime \prime
})|h^j(s_{1,}x_{l^{\prime }}^{\prime
}+(s_{1}-t_{l^{\prime }}^{\prime })v_{l^{\prime }}^{\prime },v^{\prime
\prime })|d\Sigma^w_{l^{\prime }}(s_{1})  \\
&+\iint dv^{\prime }dv^{\prime \prime }\int_{\prod_{j=1}^{k-1}\mathcal{V
}_{j}^{\prime }}\frac{e^{-\nu (v^{\prime })(s-t_{1}^{\prime })}}
{\tilde{w}_{q,\ta}(v^{\prime })}
\\&\quad\times\sum_{l^{\prime }=1}^{k-1}\int_{t_{l^{\prime}+1}^{\prime }}^{t_{l^{\prime }}^{\prime }}ds_{1}
\mathbf{1}_{\{t_{l^{\prime}+1}^{\prime }>0\}}  \mathbf{k}^\chi_{w}(v_{l},v^{\prime })\\&\quad\times
\mathbf{k}^\chi_{w}(v_{l^{\prime }}^{\prime },v^{\prime \prime
})|h^j(s_{1},x_{l^{\prime }}^{\prime }+(s_{1}-t_{l^{\prime}}^{\prime })v_{l^{\prime }}^{\prime },v^{\prime \prime })|d\Sigma^w_{l^{\prime }}(s_{1}) \\
&+\int_{\R^{3}}
\mathbf{k}^\chi_{w}(v_{l},v^{\prime })dv^{\prime}A_{1}(s)=\sum\limits_{n=1}^4J_n,
\end{split}
\end{equation}
where $\mathbf{k}^\chi_{w}(\cdot)=w_{q,\ta}\mathbf{k}^\chi(\frac{\cdot}{w_{q,\ta}})$ and
$J_n$ $(1\leq n\leq4)$ denote the corresponding four terms on the right hand side of the last inequality.

In what follows, we only give an explicit computation for the delicate term $\CI_6$, the appropriate estimates for $\CI_2$ is similar and much easier and will be omitted for brevity.

\noindent{\it Estimates for $\CI_6$.} Substituting \eqref{diff3} into $\CI_6$, one has
\begin{equation}\label{I6}
\begin{split}
\CI_6\leq&C_{q,\ta}\int_{\prod_{j=1}^{k-1}
\mathcal{V}_{j}}\sum_{l=1}^{k-1}\left\{\mathbf{1}_{\{t_{l+1}\leq
0<t_{l}\}}\int_{0}^{t_l}+\mathbf{1}_{\{0<t_{l+1}\}}\int_{t_{l+1}}^{t_{l}}
\right\}\sum\limits_{n=1}^4~J_n~d\Sigma^w_{l}(s)ds.
\end{split}
\end{equation}
Continuing, we first consider the simpler terms involving $A_{1}(s)$ in $\CI_6$, that is, the terms containing $J_4$.
Since
$\int \mathbf{k}^\chi_{w}(v_{l},v ^{\prime })dv ^{\prime }<\infty,$
the
summation of all contributions of $A_{1}$'s leads to the bound
\begin{equation*}
\begin{split}
\int_{\prod_{j=1}^{k-1}\mathcal{V}_j}&
\left\{\sum_{l=1}^{k-1}\mathbf{1}_{\{t_{l+1}\leq
0<t_{l}\}} \int_0^{t_l}
A_{1}(s)
+\mathbf{1}_{\{0<t_{l}\}}\int_{t_{l+1}}^{t_l} A_{1}(s)
\right\}
d\Sigma^w_{l} ds
\leq CA_{1}(t),
\end{split}
\end{equation*}
according to Lemma \ref{k}.


Next we only compute the terms containing $J_2$ and $J_3$, because the estimates for the terms involving $J_1$ is similar and easier.
Let us first show that there exists constant $N>0$ such that
\begin{equation}\label{J3}
\begin{split}
&\int_{\prod_{j=1}^{k-1}
\mathcal{V}_{j}}\sum_{l=1}^{k-1}\int_{0}^{t_l}\mathbf{1}_{\{t_{l+1}\leq
0<t_{l}\}}~J_3~d\Sigma^w_{l}(s)ds\\
=&\int_{\prod_{j=1}^{k-1}
\mathcal{V}_{j}}\sum_{l=1}^{k-1}\mathbf{1}_{\{t_{l+1}\leq
0<t_{l}\}}\int_{0}^{t_l}\iint dv^{\prime }dv^{\prime \prime }\\&\quad\times\int_{\prod_{j=1}^{k-1}\mathcal{V
}_{j}^{\prime }}\frac{e^{-\nu (v^{\prime })(s-t_{1}^{\prime })}}
{\tilde{w}_{q,\ta}(v^{\prime })}\sum_{l^{\prime }=1}^{k-1}\int_{t_{l^{\prime}+1}^{\prime }}^{t_{l^{\prime }}^{\prime }}
\mathbf{1}_{\{t_{l^{\prime}+1}^{\prime }>0\}} \\&\quad\times \mathbf{k}^\chi_{w}(v_{l},v^{\prime })
\mathbf{k}^\chi_{w}(v_{l^{\prime }}^{\prime },v^{\prime \prime
})|h^j(s_{1,}x_{l^{\prime }}^{\prime
}+(s_{1}-t_{l^{\prime }}^{\prime })v_{l^{\prime }}^{\prime },v^{\prime
\prime })|d\Sigma^w_{l^{\prime }}(s_{1})ds_{1}d\Sigma^w_{l}(s)ds
\\ \leq&C_{q,\ta}\left(\frac{1}{T_0^{5/4}}+\frac{1}{N}\right)\sup\limits_{0\leq s\leq t}\|h^j(s)\|_\infty+C_N\sup\limits_{0\leq s\leq t_1}\left\{e^{\frac{\la_0}{2}s^{\rho_0}}
\left\Vert \frac{h^j(s)}{w_{q,\ta }(v)}\right\Vert _{2}\right\}.
\end{split}
\end{equation}
To prove \eqref{J3}, we decompose the velocity-time integration into several regions and treat them independently.
Recalling that $\{(t_{l'}', x_{l'}',v_{l'}')\}_{l'=1}^{k}$ start from $(s,X_{\mathbf{cl}},v')$, in order to avoid confusion, let us denote
\begin{equation}\label{ks}
k(s)=k=C'_{1}[\al(s)]^{5/4}.
\end{equation}
For any $1\leq l'\leq k-1$, we consider the following splitting
$$
\int_{t_{l^{\prime}+1}}^{t_{l'}^{\prime }}
=\int_{t_{l^{\prime}+1}}^{t_{l'}^{\prime }-\frac{1}{k^2(s)}}+\int_{t_{l'}^{\prime }-\frac{1}{k^2(s)}}^{t_{l'}^{\prime }},
$$
and
treat the second integral first, specifically, we intend to obtain
\begin{equation}\label{J33}
\begin{split}
\int_{\prod_{j=1}^{k-1}
\mathcal{V}_{j}}&\sum_{l=1}^{k-1}\mathbf{1}_{\{t_{l+1}\leq0<t_{l}\}}
\int_{0}^{t_l}\iint dv^{\prime }dv^{\prime \prime }\int_{\prod_{j=1}^{k-1}\mathcal{V
}_{j}^{\prime }}\frac{e^{-\nu (v^{\prime })(s-t_{1}^{\prime })}}
{\tilde{w}_{q,\ta}(v^{\prime })}\sum_{l^{\prime }=1}^{k-1}\int_{t_{l^{\prime}}^{\prime }-\frac{1}{k^2(s)}}^{t_{l^{\prime }}^{\prime }}
\\&\quad\times \mathbf{1}_{\{t_{l^{\prime}+1}^{\prime }>0\}} \mathbf{k}^\chi_{w}(v_{l},v^{\prime })
\mathbf{k}^\chi_{w}(v_{l^{\prime }}^{\prime },v^{\prime \prime
})
|h^j(s_{1,}x_{l^{\prime }}^{\prime
}+(s_{1}-t_{l^{\prime }}^{\prime })v_{l^{\prime }}^{\prime },v^{\prime
\prime })|\\&\quad\times  d\Sigma^w_{l^{\prime }}(s_{1})ds_{1}d\Sigma^w_{l}(s)ds\\
\leq &\frac{C_{q,\ta}}{T_0^{5/4}}\sup\limits_{0\leq s\leq t}\|h^j(s)\|_{L^\infty}.
\end{split}
\end{equation}
Indeed, since $t_{l'}'-s_1\leq 1/k^2(s)$, and $k(s)\geq C'_1T_0^{5/4}$, it follows from Lemma \ref{es.k} and \eqref{ktildew2} that the right hand side of \eqref{J33} is bounded by
\begin{equation*}
\begin{split}
C_{q,\ta}&\int_{\prod_{j=1}^{k-1}
\mathcal{V}_{j}}\sum_{l=1}^{k-1}\mathbf{1}_{\{t_{l+1}\leq
0<t_{l}\}}\int_{0}^{t_l}d\Sigma^w_{l}(s)\frac{1}{k^2(s)}\\&\times\sup_{0\leq s_1\leq s}
\int_{\prod_{j=1}^{k-1}\mathcal{V
}_{j}^{\prime }}\sum_{l^{\prime }=1}^{k-1}\mathbf{1}_{\{t_{l^{\prime}+1}^{\prime }>0\}}
d\Sigma^w_{l'}(s_1)ds\sup\limits_{0\leq s_1\leq t_1}
\|h^j(s_{1})\|_{L^\infty}\\
\leq&C_{q,\ta}\int_{\prod_{j=1}^{k-1}
\mathcal{V}_{j}}\sum_{l=1}^{k-1}\mathbf{1}_{\{t_{l+1}\leq
0<t_{l}\}}\int_{0}^{t_l}d\Sigma^w_{l}(s)\frac{1}{k(s)}ds\sup\limits_{0\leq s_1\leq t_1}
\|h^j(s_{1})\|_{L^\infty}\\
\leq&\frac{C_{q,\ta}}{T_0^{5/4}}\sup\limits_{0\leq s\leq t_1}
\|h^j(s)\|_{\infty},
\end{split}
\end{equation*}
where we also used the following significant estimate
\begin{equation*}
\begin{split}
\sup_{0\leq s_1\leq s}\int_{\prod_{j=1}^{k-1}\mathcal{V
}_{j}^{\prime }}\sum_{l^{\prime }=1}^{k-1}\mathbf{1}_{\{t_{l^{\prime}+1}^{\prime }>0\}}
d\Sigma^w_{l'}(s_1)
\leq&C_{q,\ta}k(s).
\end{split}
\end{equation*}
As to the first integral
\begin{equation}\label{J34}
\begin{split}
\int_{\prod_{j=1}^{k-1}
\mathcal{V}_{j}}&\sum_{l=1}^{k-1}\mathbf{1}_{\{t_{l+1}\leq
0<t_{l}\}}\int_{0}^{t_l}\iint dv^{\prime }dv^{\prime \prime }\int_{\prod_{j=1}^{k-1}\mathcal{V
}_{j}^{\prime }}\frac{e^{-\nu (v^{\prime })(s-t_{1}^{\prime })}}
{\tilde{w}_{q,\ta}(v^{\prime })}\sum_{l^{\prime }=1}^{k-1}\mathbf{1}_{\{t_{l^{\prime}+1}^{\prime }>0\}}\\&\quad\times\int_{t_{l^{\prime}+1}^{\prime }}^{t_{l^{\prime }}^{\prime }-\frac{1}{k^2(s)}}
  \mathbf{k}^\chi_{w}(v_{l},v^{\prime })
\mathbf{k}^\chi_{w}(v_{l^{\prime }}^{\prime },v^{\prime \prime
})|h^j(s_{1,}x_{l^{\prime }}^{\prime
}+(s_{1}-t_{l^{\prime }}^{\prime })v_{l^{\prime }}^{\prime },v^{\prime
\prime })|\\&\quad\times d\Sigma^w_{l^{\prime }}(s_{1})ds_{1}d\Sigma^w_{l}(s)ds,
\end{split}
\end{equation}
we divide our computations into following three cases:

\noindent{\bf Case 1.} $|v_l|\geq N$ or $|v_{l'}'|\geq N$ with $N$ suitably large.
From Lemma \ref{es.k}, it follows that
\begin{equation*}
\begin{split}
&\int\mathbf{k}^\chi_{w}(v_{l},v^{\prime })dv^{\prime}\leq \frac{C_\eps}{(1+|v_l|)^{-\varrho}}
\leq \frac{C_\eps}{N},\\
&\nu^{-1}(v'_{l'})\int\mathbf{k}^\chi_{w}(v_{l^{\prime }}^{\prime },v^{\prime \prime})dv^{\prime \prime }\leq \frac{C_\eps}{(1+|v_{l'}'|)^{-\varrho}}\leq \frac{C_\eps}{N}.
\end{split}
\end{equation*}
Using this, for $|v_l|\geq N$ or $|v_{l'}'|\geq N$, we know thanks to \eqref{ktildew1} and \eqref{ktildew2} that
\begin{equation}\label{J31}
\begin{split}
\eqref{J34}\leq& \frac{C_\eps C_{q,\ta}}{N}\int_{\prod_{j=1}^{k-1}
\mathcal{V}_{j}}\sum_{l=1}^{k-1}\mathbf{1}_{\{t_{l+1}\leq
0<t_{l}\}}\int_0^{t_l}d\Sigma^w_l(s)
\\&\times\int_{\prod_{j=1}^{k-1}\mathcal{V
}_{j}^{\prime }}\sum_{l^{\prime }=1}^{k-1}\mathbf{1}_{\{t_{l^{\prime}+1}^{\prime }>0\}}\int_{t_{l^{\prime}+1}^{\prime }}^{t_{l^{\prime }}^{\prime }}d\Sigma^w_{l'}(s_1)ds_{1}ds
\sup\limits_{0\leq s\leq t_1}
\|h^j(s)\|_{\infty}
\\ \leq&\frac{C_{\eps,q,\ta}}{N}\sup\limits_{0\leq s\leq t}
\|h^j(s)\|_{\infty}.
\end{split}
\end{equation}

\noindent{\bf Case 2.} $|v_l|\leq N$ and $|v'|\geq 2N$, or $|v'_{l'}|\leq N$ and $|v''|\geq 2N$. Notice that we have either
$|v_l-v'|\geq N$ or $|v'_{l'}-v''|\geq N$, and either of the following holds correspondingly
\begin{equation*}
\begin{split}
&\mathbf{k}^\chi_{w}(v_{l},v^{\prime })
\leq Ce^{-\frac{\vps N^2}{16}}\mathbf{k}^\chi_{w}(v_{l},v^{\prime })e^{\frac{\vps |v_l-v'|^2}{16}},\\
&\mathbf{k}^\chi_{w}(v_{l^{\prime }}^{\prime },v'')
\leq Ce^{-\frac{\vps N^2}{16}}\mathbf{k}^\chi_{w}(v'_{l'},v^{\prime\prime })e^{\frac{\vps |v'_{l'}-v''|^2}{16}}.
\end{split}
\end{equation*}
By virtue of Lemma \ref{es.k}, one sees that both of
$$\int\mathbf{k}^\chi_{w}(v_{l},v^{\prime })e^{\frac{\vps |v_l-v'|^2}{16}}\ \textrm{and}\ \int\mathbf{k}^\chi_{w}(v'_{l'},v^{\prime\prime })e^{\frac{\vps |v'_{l'}-v''|^2}{16}}$$
are still bounded. In this situation,
we have by a similar argument as for obtaining \eqref{J31} that
\begin{equation}\label{J32}
\begin{split}
\eqref{J34}\leq C_{q,\ta}e^{-\frac{\vps N^2}{16}}\sup\limits_{0\leq s\leq t}
\|h^j(s)\|_{\infty}.
\end{split}
\end{equation}
To obtain the final bound for \eqref{J34}, we are now in a position to handle the last case:

\noindent{\bf Case 3.} $|v_l|\leq N$, $|v'|\leq 2N$, $|v_{l'}'|\leq N$ and $|v''|\leq 2N$. 
Recall there is a lower bound $t_{l'}'-s_1\geq 1/k^2$, so that one can convert the bound in $L^\infty-$norm to the one in $L^2-$norm which has been well-established in Section \ref{L2th}. To do so, for any large $N>0$,
we choose a number $m(N)$ to define
\begin{equation}
\mathbf{k}^\chi_{m,w}(p,v')\equiv \mathbf{1}
_{|p-v^{\prime }|\geq \frac{1}{m},|v^{\prime}|\leq m}\mathbf{k}^\chi_{w}(p,v'),
\label{km}
\end{equation}%
such that $\sup_{p}\int_{\mathbf{R}^{3}}|\mathbf{k}^\chi
_{m}(p,v^{\prime})
-\mathbf{k}^\chi_{w}(p ,v^{\prime})|dv^{\prime}\leq
\frac{1}{N}.$ We split
\begin{equation*}
\begin{split}
\mathbf{k}^\chi_{w}(v_l,v')
\mathbf{k}_{w}^\chi(v_{l^{\prime }}^{\prime },v^{\prime \prime})=&\{\mathbf{k}^\chi_{w}(v_{l},v^{\prime})
-\mathbf{k}^\chi_{m,w}(v_{l},v^{\prime})\}\mathbf{k}^\chi_{w}(v_{l^{\prime }}^{\prime },v^{\prime \prime })
\\&+\{\mathbf{k}^\chi_{w}(v'_{l'},v'')
-\mathbf{k}^\chi_{m,w}(v'_{l'},v'')\}\mathbf{k}^\chi_{m,w}(v_{l},v^{\prime})
\\&+\mathbf{k}^\chi_{m,w}(v_{l},v^{\prime})\mathbf{k}^\chi_{m,w}(v_{l^{\prime }}^{\prime },v^{\prime \prime }),
\end{split}
\end{equation*}
and from Lemma \ref{k}, the first two difference leads to a small
contribution in \eqref{J34}
\begin{equation}
\frac{C_{q,\ta }}{N}\sup_{0\leq s\leq t}\Vert h^j(s)\Vert _{\infty }.  \label{epsilon2}
\end{equation}
For the remaining main contribution of $\mathbf{k}^\chi_{m,w}(v_{l},v^{\prime})\mathbf{k}^\chi_{m,w}(v_{l^{\prime }}^{\prime },v^{\prime \prime })$,
by a change of
variable $y=x_{l^{\prime }}^{\prime }+(s_{1}-t_{l^{\prime }}^{\prime })v_{l^{\prime }}$
and notice that $x_{l'}'$ is independent of
$v_{l}^{\prime }$, we see that $\left\vert \frac{dy}{dv_{l}^{\prime }}%
\right\vert \geq (k(s))^{-6}$. Consequently, as in Case 4 in the proof of Theorem 6 in \cite[pp. 754]{Guo-2010}, we obtain
\begin{equation}\label{ks-6}
\begin{split}
\eqref{J34}\leq&\frac{C_{q,\ta}}{N}\sup\limits_{0\leq s\leq t}
\|h^j(s)\|_{\infty}\\&+C_{N}\int_{\prod_{j=1}^{k-1}
\mathcal{V}_{j}}\sum_{l=1}^{k-1}\mathbf{1}_{\{t_{l+1}\leq
0<t_{l}\}}\int_{0}^{t_l}d\Sigma^w_l(s)(k(s))^6
\\&\quad\times\int_{\prod_{j=1}^{l'-1}\mathcal{V
}_{j}^{\prime }\prod_{j=l'+1}^{k-1}\mathcal{V
}_{j}^{\prime }}\sum_{l^{\prime }=1}^{k-1}\mathbf{1}_{\{t_{l^{\prime}+1}^{\prime }>0\}}\int_{t_{l^{\prime}+1}^{\prime }}^{t_{l^{\prime }}^{\prime }}\int_{\Omega\times \{|v''|\leq 2N
\}}
\left| \frac{h^j(s_1)}{w_{q,\ta }(v)}\right|dydv''\\&\quad\times e^{-\la_0(t'_{l'}-s_1)^{\rho_0}}\{\Pi _{j'=l'+1}^{k-1}d\sigma _{j'}\}\times \Pi _{j'=1}^{l'-1}\{{{%
e^{\nu (v'_{j'})(t'_{j'+1}-t'_{j'})} d\sigma _{j'}}}\}ds_1ds.
\end{split}
\end{equation}
In light of Lemma \ref{k.d} in Section \ref{non.de}, we see that
\eqref{ks-6} can be further dominated by
\begin{equation}\label{I634}
\begin{split}
\eqref{J34}\leq&\frac{C_{q,\ta}}{N}\sup\limits_{0\leq s\leq t}
\|h^j(s)\|_{\infty}\\&+C_{N}\int_{\prod_{j=1}^{k-1}
\mathcal{V}_{j}}\sum_{l=1}^{k-1}\mathbf{1}_{\{t_{l+1}\leq
0<t_{l}\}}\int_{0}^{t_l}d\Sigma^w_l(s)
(k(s))^7e^{-\frac{\la_0}{2}s^{\rho_0}}ds\\
&\quad\times\sup\limits_{0\leq s\leq t_1}\left\{e^{\frac{\la_0}{2}s^{\rho_0}}
\left\Vert \frac{h^j(s)}{w_{q,\ta }(v)}\right\Vert _{2}\right\}\\
\leq&\frac{C_{q,\ta}}{N}\sup\limits_{0\leq s\leq t}\|h^j(s)\|_{\infty}+C_N\sup\limits_{0\leq s\leq t_1}\left\{e^{\frac{\la_0}{2}s^{\rho_0}}
\left\Vert \frac{h^j(s)}{w_{q,\ta }(v)}\right\Vert _{2}\right\}.
\end{split}
\end{equation}
Putting all the above estimates \eqref{J33}, \eqref{J31}, \eqref{J32}, \eqref{epsilon2} and \eqref{I634} together, one sees that \eqref{J3} is valid.
Furthermore, by a similar argument as proving \eqref{J3}, we can also show that the remaining terms in \eqref{I6} and $\CI_2$ share the same bound as \eqref{J3},
we thus arrive at
\begin{equation*}
\begin{split}
\CI_{2},\ \CI_{6}\leq& C_{q,\ta}\left(\frac{1}{T_0^{5/4}}+\frac{1}{N}\right)\sup\limits_{0\leq s\leq t}
\|h^j(s)\|_{\infty}\\&+C_N\sup\limits_{0\leq s\leq t}\left\{e^{\frac{\la_0}{2}s^{\rho_0}}
\left\Vert \frac{h^j(s)}{w_{q,\ta }(v)}\right\Vert _{2}\right\}+CA_{1}(t).
\end{split}
\end{equation*}
Now choose $T_0, N>0$ large, and plug the estimates for $\CI_2$, $\CI_6$ and $A_{1}(t)$ into \eqref{hmain} to obtain
\begin{equation}\label{es.hlf1}
\begin{split}
\sup_{0\leq s\leq t}\Vert h^j(s)\Vert
_{\infty } \leq&
C\Vert h_0\Vert _{\infty }+C\sup_{0\leq s\leq t}\Vert \nu^{-1} w_{q,\ta}g(s)\Vert
_{\infty }\\&+C\sup\limits_{0\leq s\leq t}\left\{e^{\frac{\la_0}{2}s^{\rho_0}}
\left\Vert f^j(s)\right\Vert _{2}\right\}.
\end{split}
\end{equation}
On the other hand, from \eqref{l-decay} in Proposition \eqref{l2-lqn}, one has by taking $\la_0\leq\la_1$
\begin{equation}\label{fj2,bd}
\begin{split}
\sup\limits_{0\leq s\leq t}&\left\{e^{\frac{\la_0}{2}s^{\rho_0}}
\left\Vert f^j(s)\right\Vert _{2}\right\}\\
\leq&C\|w_{q/2,\ta}f_0\|_2+C\sqrt{\int_{0}^{t}e^{\lambda_1 s^{\rho_0}}\Vert \nu^{-1/2}g(s)\Vert
_{2}^{2}ds}\\&+C\sqrt{\int_{0}^{t}\Vert\nu^{-1/2} w_{q/2,\ta}g(s)\Vert
_{2}^{2}ds}.
\end{split}
\end{equation}
\eqref{es.hlp1} then follows from \eqref{es.hlf1}, \eqref{fj2,bd}
and \eqref{ape1}.
This allows us to construct a global solution $h^j(t,x,v)$ to \eqref{hj.eqn} and \eqref{hj.bd}.
Since \eqref{es.hlp1} is uniform in $j$, one can further show that $\{h^j\}_{j=2}^\infty$ converges to $h$ strongly in $L^\infty$ via the similar argument used in the end of the proof for Lemma \ref{lex.ow}. Finally, by \eqref{es.hlp1}, we also have
\begin{equation}\label{es.whif}
\begin{split}
\sup_{0\leq s\leq t}\left\{\Vert h(s)\Vert _{\infty }+|h(s)|_{\infty }\right\}\lesssim& \Vert w_{q,\ta} f_0\Vert _{\infty }+\sup_{0\leq s\leq t}\Vert \nu^{-1} w_{q,\ta}g(s)\Vert
_{\infty }\\
&+\|w_{q/2,\ta}f_0\|_2+\sqrt{\int_{0}^{t}e^{\lambda_1 s^{\rho_0}}\Vert \nu^{-1/2}g(s)\Vert
_{2}^{2}ds}\\&+\sqrt{\int_{0}^{t}\Vert\nu^{-1/2} w_{q/2,\ta}g(s)\Vert
_{2}^{2}ds}.
\end{split}
\end{equation}
\eqref{es.whif} and the inequality $\|w_{q/2,\ta}f_0\|_2\lesssim \Vert w_{q,\ta} f_0\Vert _{\infty }$ imply \eqref{es.fif}.
This completes the proof of Proposition
\ref{lLif}.
\end{proof}

\subsection{Nonlinear existence}\label{non.ex}
Our aim in this subsection is to prove
\begin{proof}[The global existence of \eqref{BE}, \eqref{ID} and \eqref{DBD}.]
Recall the initial boundary value problem for the linearized equation \eqref{dlinear} and \eqref{lbd}, we design
the following iteration sequence
\begin{equation}\label{nn.it}
\partial _{t}f^{\ell +1}+v\cdot \nabla _{x}f^{\ell +1}+Lf^{\ell +1}=\Gamma (f^{\ell },f^{\ell }),\ f^{\ell+1}(0,x,v)=f_0(x,v),
\end{equation}%
with $f_{{-}}^{\ell +1}=P_{\gamma }f^{\ell}$ and $f^{0}=f_0(x,v)$.
Clearly $\FP\{\Gamma (f^{\ell },f^{\ell })\}=0 $.

Note that the iteration scheme \eqref{nn.it} does not provide us the positivity of the solution of the original equation \eqref{be}, however it coincides with the linearized equation \eqref{dlinear} so that Propositions \ref{l2-lqn}
and \ref{lLif} can be directly used. Our strategy to prove the global existence \eqref{BE}, \eqref{ID} and \eqref{DBD} can be outlined as follows:
we first show that the sequence $\{f^{\ell}\}_{\ell=0}^{\infty}$ determined by \eqref{nn.it} is well-defined in a suitable Banach space via Propositions \ref{l2-lqn}
and \ref{lLif}, then we prove that such a sequence is in fact a Cauchy sequence and the limit is a desired global solution.
Let us now define the following energy functional
$$
\CE(f)(t)=\|w_{q,\ta}f\|^2_{\infty}+|w_{q,\ta}f|^2_{\infty,+}+e^{\la_1t^{\rho_0}}\|f\|_2^2+\|w_{q/2,\ta}f\|^2_{2},
$$
and dissipation rate
$$
\mathcal {D}(f)(t)=\|w_{q/2,\ta}f\|_{\nu}^2+e^{\la_1t^{\rho_0}}\|f\|_{\nu}^2.
$$
For later use, we also define a Banach space
$$
\FX_\de=\left\{f~|~\sup\limits_{0\leq s\leq t}\CE(f)(s)+\int_0^t\mathcal {D}(f)(s)ds<\de,\ \ \de>0\right\},
$$
associated with the norm
$$
\FX_\de(f)(t)=\sup\limits_{0\leq s\leq t}\CE(f)(s)+\int_0^t\mathcal {D}(f)(s)ds.
$$
We now show that $f^{\ell+1}\in\FX_\de$ if $f^{\ell}\in\FX_\de$. For this,
on the one hand, we know from \eqref{es.fif}, 
\eqref{wl2} and \eqref{l-decay} 
with $f=f^{\ell+1}$ and
$g=\Gamma (f^{\ell },f^{\ell })$ that \eqref{nn.it} admits a unique solution $f^{\ell+1}$ satisfying
\begin{equation}\label{pro24}
\begin{split}
\sup\limits_{0\leq s\leq t}&\CE(f^{\ell+1})(s)+\int_0^t\mathcal {D}(f^{\ell+1})(s)ds\\
\leq& C\CE(f)(0)+ C\sup\limits_{0\leq s\leq t}\|\nu^{-1}w_{q,\ta}\Gamma (f^{\ell },f^{\ell })(s)\|^2_\infty
\\
&+C\int_{0}^t\left\|\nu^{-1/2}w_{q/2,\ta}\Gamma (f^{\ell },f^{\ell })(s)\right\|_2^2ds+C\int_{0}^{t}e^{\lambda_1 s^{\rho_0}}\Vert \nu^{-1/2}\Gamma (f^{\ell },f^{\ell })(s)\Vert
_{2}^{2}ds,
\end{split}
\end{equation}
provided the right hand side is finite.
On the another hand, thanks to Lemma \ref{es.nop}, it follows
\begin{equation}\label{es.nopl1}
\begin{split}
\int_{0}^t\left\|\nu^{-1/2}w_{q/2,\ta}\Gamma (f^{\ell },f^{\ell })\right\|_2^2ds
\leq& C\sup\limits_{0\leq s\leq t}\|w_{q,\ta}f(s)\|^2_{\infty}\int_{0}^t\|w_{q/2,\ta}f(s)\|^2_{\nu}ds\\
\leq& C\sup\limits_{0\leq s\leq t}\CE(f^\ell)(s)\int_{0}^t\mathcal {D}(f^\ell)(s)ds,
\end{split}
\end{equation}
\begin{equation}\label{es.nopl2}
\begin{split}
\int_{0}^{t}e^{\lambda_1 s^{\rho_0}}\Vert \nu^{-1/2}\Gamma (f^{\ell },f^{\ell })(s)\Vert
_{2}^{2}ds \leq& C\sup\limits_{0\leq s\leq t}\|w_{q/2,\ta}f(s)\|^2_{\infty}\int_{0}^te^{\lambda_1 s^{\rho_0}}\|f(s)\|^2_{\nu}ds\\
\leq& C\sup\limits_{0\leq s\leq t}\CE(f^\ell)(s)\int_{0}^t\mathcal {D}(f^\ell)(s)ds,
\end{split}
\end{equation}
and
\begin{equation}\label{es.nopl3}
\sup\limits_{0\leq s\leq t}\left\|\nu^{-1}w_{q,\ta}\Gamma (f^{\ell },f^{\ell })\right\|_{\infty}\leq C\sup\limits_{0\leq s\leq t}\CE(f^\ell)(s).
\end{equation}
As a consequence, one has from \eqref{pro24}, \eqref{es.nopl1}, \eqref{es.nopl2} and \eqref{es.nopl3} that
\begin{equation}\label{X1}
\FX_\de(f^{\ell+1})(t)\leq C\CE(f)(0)+C\FX_\de^2(f^{\ell})(t),
\end{equation}
which further yields $\FX_\de(f^{\ell+1})(t)<\de$ supposing $f^{\ell}\in\FX_\de$ with $\de$ and $\CE(f)(0)$ small enough.

We now prove the strong convergence of the iteration sequence $\{f^{\ell}\}_{\ell=0}^{\infty}$ constructed above. To do this,
by taking difference of the equations that $f^{\ell +1}$ and $f^{\ell }$ satisfy, we deduce that%
\begin{eqnarray*}
\left\{\begin{array}{rll}
&\partial _{t}[f^{\ell +1}-f^{\ell }]+v\cdot \nabla _{x}[f^{\ell
+1}-f^{\ell }]+L[f^{\ell +1}-f^{\ell }] \\[2mm]&\qquad=\Gamma (f^{\ell
}-f^{\ell -1},f^{\ell })+\Gamma (f^{\ell -1},f^{\ell }-f^{\ell -1}), \\[2mm]
&[f^{\ell +1}-f^{\ell }]_{-}=P_{\gamma }[f^{\ell +1}-f^{\ell }],
\end{array}\right.
\end{eqnarray*}%
with $f^{\ell +1}-f^{\ell }=0$ initially. Repeating the same argument as for obtaining \eqref{X1}, we
obtain
\begin{equation*}\label{Xfmin}
\FX_\de(f^{\ell+1}-f^{\ell})(t)\leq C\left\{\FX_\de(f^\ell)+\FX_\de(f^{\ell-1})\right\}\FX_\de(f^{\ell}-f^{\ell-1})(t).
\end{equation*}
This implies that $\{f^{\ell}\}_{\ell=0}^{\infty}$ is a Cauchy sequence in $\FX_\de$ for $\de$ suitably small. Moreover, take $f$ as the limit of the sequence $\{f^{\ell}\}_{\ell=0}^{\infty}$ in $\FX_\de$, then $f$ satisfies
\begin{equation}\label{sol.es}
\sup\limits_{0\leq s\leq t}\CE(f)(s)+\int_0^t\mathcal {D}(f)(s)ds
\leq C\CE(f)(0)\leq C\|w_{q,\ta}f_0\|^2_{\infty}.
\end{equation}

Since we have $L^\infty$ convergence at each step, as \cite[pp.788]{Guo-2010}, we deduce that $w_{q,\ta}f$ is continuous away from $\ga_0$
when $\Omega$ is strictly convex.
The uniqueness is
standard. We now turn to prove the positivity of $\mu+\sqrt{\mu}f$. As mentioned at the beginning of this subsection, we need to design a different iterative sequence. We
use the following one:
\begin{eqnarray*}
\left\{\begin{array}{rll}
&\left\{\partial_t+v\cdot\nabla_x \right\}F^{\ell+1}+F^{\ell+1}(v)\nu(F^{\ell})\\&\qquad\qquad\qquad=\int_{\R^3\times \S^2}|v-u|^\varrho b_0(\theta)
F^\ell
(u')F^\ell(v')\,du d\omega=\Ga^{\textrm{gain}}(F^\ell,F^\ell),\\[2mm]
&F_{{-}}^{\ell +1}=\mu\int_{n(x)\cdot v>0}F^{\ell }(v)n(x)\cdot vdv,\\[2mm]
&F^{\ell+1}(0,x,v)=F_0(x,v),
\end{array}\right.
\end{eqnarray*}
starting with $F^0(t,x,v)=F_0(x,v)$, here $\nu(F^{\ell})=\int_{\R^3\times \S^2}|v-u|^\varrho b_0(\theta)
F^\ell(u)dud\omega$. By a similar procedure as the proof of Theorem 4 in \cite[pp.806-807]{Guo-2010}, one can easily verify that
such an iteration preserves the non-negativity. We now need to prove $F^{\ell }$
is convergent to conclude the non negativity of the limit $F(t)\geq 0$.
Noticing that $F^{\ell+1}=\mu+\mu^{1/2}f^{\ell+1}$, equivalently we need to solve $f^{\ell+1}$ such that
\begin{equation}\label{iterate.f}
\begin{split}
&\left\{\partial_t+v\cdot\nabla_x +\nu\right\}f^{\ell+1}-Kf^{\ell}=\Gamma^{\text{gain}}(f^{\ell},f^{\ell})-f^{\ell+1}(v)\nu(\sqrt{\mu}f^\ell),\\
&f_{{-}}^{\ell +1}=P_\ga f^\ell,\ \
f^{\ell+1}(0,x,v)=f_0(x,v).
\end{split}
\end{equation}
In fact, since $|\nu(\sqrt{\mu}f^\ell)|\leq C\vps_0 \nu$ for $\|w_{q,\ta}f^\ell\|_{\infty}\leq\vps_0$, one can rewrite \eqref{iterate.f} as
\begin{equation*}
\begin{split}
&\left\{\partial_t+v\cdot\nabla_x +\overline{\nu}\right\}f^{\ell+1}=Kf^{\ell}+\Gamma^{\text{gain}}(f^{\ell},f^{\ell}),\\
&f_{{-}}^{\ell +1}=P_\ga f^\ell,\ \
f^{\ell+1}(0,x,v)=f_0(x,v),
\end{split}
\end{equation*}
with $\overline{\nu}=\nu+\nu(\sqrt{\mu}f^\ell)$. 
As the proof of Lemma \ref{lg.ex},
it follows from a routine procedure to show that $\{h^{\ell+1}=w_{q,\ta}f^{\ell+1}\}_{\ell=0}^{\infty}$ is indeed convergent in $L^\infty$ local in time $[0,T_{\ast}]$.
This ends the proof of the first part of Theorem \ref{ms1}. We leave the second part to the next subsection.
\end{proof}

\subsection{Nonlinear $L^\infty$ exponential decay}\label{non.de}
In this subsection, we are going to deduce the $L^\infty$ exponential time decay rates for the initial boundary value problem \eqref{BE}, \eqref{ID} and \eqref{DBD} based on the global existence constructed in Section \ref{non.ex}. For this, let us first present the following refined estimates for integrals on the stochastic cycles given by Definition \ref{diffusecycles}.
\begin{lemma}\label{k.d}
Denote $\|\cdot\|_{\FY}=\|\cdot\|_{2}\ \textrm{or}\ \|\cdot\|_{\infty}$. Assume $(q,\ta)\in\CA_{q,\ta}$.
There exists constant $\la_0>0$ such that for $\rho_0=\frac{\ta}{\ta-\varrho}$
\begin{equation}\label{kd1}
\begin{split}
\int_{\Pi_{j=1}^{k-1}\mathcal{V}_{j}}&\sum_{l=1}^{k-1}\int_0^{t_l}
\mathbf{1}_{\{t_{l+1}\leq 0<t_{l}\}}\|f(s,\cdot,v_l)\|_{\FY}d\Sigma _{l}(s)ds\\
\leq& Ce^{-\frac{\la_0}{2} t_1^{\rho_0}}\sup\limits_{0\leq s\leq t_1}e^{\frac{\la_0}{2} s^{\rho_0}}\|f(s)\|_{\FY},
\end{split}
\end{equation}
and
\begin{equation}\label{kd2}
\begin{split}
\int_{\Pi _{j=1}^{k-1}\mathcal{V}_{j}}\sum_{l=1}^{k-1}\int_{t_{l+1}}^{t_l}\mathbf{1}
_{\{t_{l+1}>0\}}\|f(s,\cdot,v_l)\|_{\FY}d\Sigma _{l}(s)ds
\leq
Ce^{-\frac{\la_0}{2} t_1^{\rho_0}}\sup\limits_{0\leq s\leq t_1}e^{\frac{\la_0}{2} s^{\rho_0}}\|f(s)\|_\FY,
\end{split}
\end{equation}
where $C>0$ and independent of $k$.

Moreover, for any $\eps_0>0$, it holds that
\begin{equation}\label{kd11}
\begin{split}
\int_{\Pi _{j=1}^{k-1}\mathcal{V}_{j}}\int_{t_l-\eps_0}^{t_l}
\|f(s,\cdot,v_l)\|_{\FY}d\Sigma _{l}(s)ds
\leq C\eps_0e^{-\frac{\la_0}{2} t_1^{\rho_0}}\sup\limits_{0\leq s\leq t_1}e^{\frac{\la_0}{2} s^{\rho_0}}\|f(s,\cdot,v_l)\|_{\FY},
\end{split}
\end{equation}
\begin{equation}\label{kd21}
\begin{split}
\int_{\prod_{j=1}^{l-1}\mathcal{V
}_{j}\prod_{j=l+1}^{k-1}\mathcal{V
}_{j}}&\mathbf{1}_{\{t_{l+1}>0\}}\int_{t_{l+1}}^{t_{l}}\|f(s,\cdot,v_l)\|_{\FY}e^{-\la_0(t_{l}-s_1)^{\rho_0}}\{\Pi _{j=l+1}^{k-1}d\sigma _{j}\}\\&\times \Pi _{j=1}^{l-1}\{{{%
e^{\nu (v_{j})(t_{j+1}-t_{j})} d\sigma _{j}}}\}ds_1\\
\leq&
Ce^{-\frac{\la_0}{2} t_1^{\rho_0}}\sup\limits_{0\leq s\leq t_1}e^{\frac{\la_0}{2} s^{\rho_0}}\|f(s,\cdot,v_l)\|_{\FY},
\end{split}
\end{equation}
and
\begin{equation}\label{kd3}
\begin{split}
\int_{\prod_{j=1}^{k-1}%
\mathcal{V}_{j}}\mathbf{1}_{\{0<t_{k}\}}|f(t_k,\cdot,v_{k-1})|d\Sigma
_{k-1}(t_{k})\leq C\eps_0 e^{-\frac{\la_0}{2} t_1^{\rho_0}}\sup\limits_{0\leq s\leq t_1}e^{\frac{\la_0}{2} s^{\rho_0}}\|f(s)\|_\infty.
\end{split}
\end{equation}

\end{lemma}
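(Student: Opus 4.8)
The plan is to prove all five estimates of Lemma \ref{k.d} simultaneously by reducing each one to the probability-measure estimates already established in Lemma \ref{k} (namely \eqref{largek}, \eqref{ktildew1}, \eqref{ktildew2}), after pulling the norm $\|f(s,\cdot,v_l)\|_{\FY}$ out of the iterated integral via the trivial bound $\|f(s,\cdot,v_l)\|_{\FY}\le e^{-\frac{\la_0}{2}s^{\rho_0}}\sup_{0\le\tau\le t_1}e^{\frac{\la_0}{2}\tau^{\rho_0}}\|f(\tau)\|_{\FY}$, and then controlling the residual weight $e^{-\frac{\la_0}{2}s^{\rho_0}}$ along the cycle. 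The decisive point is to convert that $s$-dependent decay factor into the $t_1$-dependent decay factor $e^{-\frac{\la_0}{2}t_1^{\rho_0}}$ that appears on the right-hand side. First I would record the elementary observation that on the interval of integration one always has $s\le t_l\le t_1$, hence $e^{-\frac{\la_0}{2}s^{\rho_0}}$ is \emph{not} pointwise comparable to $e^{-\frac{\la_0}{2}t_1^{\rho_0}}$ — it is larger — so one cannot simply estimate $s$ from below by $t_1$. Instead one must genuinely use the measure decay: split $t_1=(t_1-s)+s$, and use that either $s$ is comparable to $t_1$ (good: the exponential in $s$ already gives what we want up to a constant from $\rho_0<1$ and the subadditivity $t_1^{\rho_0}\le s^{\rho_0}+(t_1-s)^{\rho_0}$), or $t_1-s$ is comparable to $t_1$, in which case one absorbs the large time gap into the decaying exponential factor $e^{\nu(v_l)(t_{j+1}-t_j)}$ or $e^{\nu(v_l)(s-t_l)}$ sitting inside $d\Sigma_l$, together with the large-$k$ smallness \eqref{largek} when many bounces have occurred.

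More concretely, for \eqref{kd1} and \eqref{kd2} I would argue as follows. Fix $k=C_1[\alpha(t_1)]^{5/4}$ as in Lemma \ref{k}. On the event $\{t_{l+1}\le 0<t_l\}$ (resp. $\{t_{l+1}>0\}$) we have traversed $l$ (resp. $l$) bounces to reach time $s\in(0,t_l)$ (resp. $(t_{l+1},t_l)$); since each leg has length at least a fixed fraction depending on the geometry by Lemma \ref{huang}, a moderate number of bounces already forces $t_1$ to be moderate, and a large number of bounces triggers \eqref{largek}. Quantitatively: either $s\ge t_1/2$, in which case $e^{-\frac{\la_0}{2}s^{\rho_0}}\le e^{-\frac{\la_0}{2}(t_1/2)^{\rho_0}}\le e^{-\frac{\la_0}{2}2^{-\rho_0}t_1^{\rho_0}}$ and, after renaming $\la_0$, we conclude using \eqref{ktildew1}/\eqref{ktildew2} to integrate out the remaining probability measure; or $s<t_1/2$, so $t_1-s>t_1/2$, and then I bound $\|f(s,\cdot,v_l)\|_{\FY}$ by a constant times $\sup_{0\le\tau\le t_1}\|f(\tau)\|_{\FY}$ and use the product of the exponential factors $e^{\nu(v_j)(t_{j+1}-t_j)}$ telescoped over the legs between $s$ and $t_1$ — which is $e^{-\nu(v_{\min})(t_1-s)/\text{const}}$ type decay when $|v|$ is bounded, and is handled by a large-velocity cutoff exactly as in the proof of \eqref{ktildew2} when some $|v_j|$ is large — to produce $e^{-c(t_1-s)^{\rho_0}}\le e^{-c(t_1/2)^{\rho_0}}$; combining via $t_1^{\rho_0}\le s^{\rho_0}+(t_1-s)^{\rho_0}$ gives the claim. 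I would then absorb all the generic constants into a single $\la_0$ and $C$ independent of $k$.

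For the localized estimates \eqref{kd11}, \eqref{kd21}, \eqref{kd3}: \eqref{kd11} follows from \eqref{kd2}/\eqref{kd1} together with the trivial bound that the $s_1$-integral over an interval of length $\eps_0$ contributes a factor $\eps_0$ (this is exactly the mechanism used for \eqref{J33} in the proof of Proposition \ref{lLif}, where $\eps_0=1/k^2(s)$); \eqref{kd21} is the same computation as \eqref{kd2} but with the bounce-back exponential weights written out explicitly and the extra genuine decaying factor $e^{-\la_0(t_l-s_1)^{\rho_0}}$ only \emph{helping}; and \eqref{kd3}, which concerns the final leg terminating at $t_k$ with $k$ large, combines \eqref{largek} — giving a factor $\{1/2\}^{C_2[\alpha(t_1)]^{5/4}}$, which is itself bounded by $\eps_0 e^{-\frac{\la_0}{2}t_1^{\rho_0}}$ for $\la_0$ small since $[\alpha(t_1)]^{5/4}\gg t_1^{\rho_0}$ as $\rho_0<1$ — with the single-velocity Gaussian integral $\int_{\mathcal V_{k-1}}\tilde w_{q,\ta}(v_{k-1})\,d\sigma_{k-1}\le C_{q,\ta}$ already used in the estimate of $\CI_8$. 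The main obstacle is the bookkeeping in the case $s<t_1/2$: one must track how the product of the $l$ exponential leg-weights, combined with the velocity cutoff at level $k$, yields a decay of the form $e^{-c(t_1-s)^{\rho_0}}$ uniformly in $k$ and in which bounce the large velocity occurs — this is the soft-potential analogue of the hard-potential exponential-decay-along-cycles argument and is where the exponent $\rho_0=\ta/(\ta-\varrho)$ is forced, exactly as in \eqref{you}; everything else is a direct citation of Lemma \ref{k}.
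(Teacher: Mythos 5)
Your overall architecture coincides with the paper's: pull the weighted supremum out via $\|f(s)\|_{\FY}\le e^{-\frac{\la_0}{2}s^{\rho_0}}\sup_\tau e^{\frac{\la_0}{2}\tau^{\rho_0}}\|f(\tau)\|_{\FY}$, cut the velocities at the level $k=C_1[\al(t)]^{5/4}$, control the complement by Gaussian tails beating $e^{-k^2/16}$ against $e^{-\frac{\la_0}{2}t_1^{\rho_0}}$, and telescope the leg exponentials in $d\Sigma_l$. Your treatment of \eqref{kd3} is in fact a legitimate simplification of the paper's: after bounding $|f(t_k,\cdot,v_{k-1})|$ by the supremum, using $\mathbf{1}_{\{0<t_k\}}\le\mathbf{1}_{\{0<t_{k-1}\}}$ and integrating the single weighted variable $v_{k-1}$ first, you extract both the $\eps_0$ and the $e^{-\frac{\la_0}{2}t_1^{\rho_0}}$ from \eqref{largek} itself (since $[\al(t)]^{5/4}\gtrsim \tfrac12 T_0^{5/4}+\tfrac12 t_1^{5/4}$ and $\rho_0<1$), which avoids the paper's weighted re-run of the grazing/non-grazing counting argument \eqref{sp.k15}; just note that the weight to be integrated there is $\mu^{-1/2}(v_{k-1})$, not $\tilde{w}_{q,\ta}(v_{k-1})$, and that $T_0$ must then be chosen large depending on $\eps_0$, exactly as in the paper.

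The genuine gap is in the central step for \eqref{kd1}--\eqref{kd2}, the branch $s<t_1/2$. You claim that, for bounded velocities, the telescoped factor $e^{\nu(v_m)(s-t_1)}$ ``produces $e^{-c(t_1-s)^{\rho_0}}$''. Read literally (a frequency lower bound $\nu\gtrsim k(t)^{\varrho}$ on the truncated region), this is false for soft potentials: with $k(t)\sim[\al(t)]^{5/4}$ and $t_1-s\sim t_1\sim t$ the exponent is of size $t^{1+\frac{5\varrho}{4}}$, which dominates $t^{\rho_0}$ only if $\ta-\varrho\le 4/5$, and for $\varrho\le -4/5$ it even tends to zero, so no decay at all is produced. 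The correct step --- the whole point of the lemma --- is the pointwise Young trade \eqref{in.de} (i.e.\ \eqref{you}): $e^{-\nu(v_m)(t_1-s)}\le e^{-\la_0(t_1-s)^{\rho_0}}w_{q/2,\ta}(v_m)$, after which the extra weight $w_{q/2,\ta}(v_m)\mu^{-1/2}(v_m)$ (using $\mu^{-1/2}(v_l)\le\mu^{-1/2}(v_m)$) must be absorbed by the Gaussian in $d\sigma_m$ --- this is precisely where $(q,\ta)\in\CA_{q,\ta}$ enters and why the split $\max_j|v_j|\le k$ versus the Gaussian tail is organized as it is --- and the uniformity in $k$ of the sum over $l$ comes from telescoping the disjoint $s$-intervals, as in \eqref{sp.k13}. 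You name \eqref{you} only in your closing sentence as ``where $\rho_0$ is forced'' but never perform this trade-and-absorb computation, and the mechanism you do spell out would fail quantitatively. Two smaller points: the dichotomy $s\ge t_1/2$ versus $s<t_1/2$ is unnecessary once \eqref{in.de} and the subadditivity $t_1^{\rho_0}\le (t_1-s)^{\rho_0}+s^{\rho_0}$ are used; and in the $s\ge t_1/2$ branch you cannot literally cite \eqref{ktildew1}--\eqref{ktildew2}, which concern the measure $d\Sigma^w_l$ with weight $\tilde{w}_{q,\ta}(v_l)$, not the larger measure $d\Sigma_l$ with weight $\mu^{-1/2}(v_l)$, although the analogous bound does hold by the same proof.
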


\begin{proof}
We first prove \eqref{kd2}. Recall the decomposition \eqref{sp.k1}, we also
rewrite
\begin{equation*}
\begin{split}
\int_{\Pi_{j=1}^{k-1}\mathcal{V}_j}& \sum_{l=1}^{k-1}\int_{t_{l+1}}^{t_l}\mathbf{1}
_{\{t_{l+1}>0\}}\|f(s)\|_{\FY}
e^{\nu(v_{l})(s-t_{l})}\mu^{-1/2}(v_l)d\sigma _{l}ds\\
&\times\Pi _{j=1}^{l-1}\{e^{\nu (v_{j})(t_{j+1}-t_{j})} d\sigma _{j}\}\\
=&\int_{\Pi_{j=1}^{k-1}\mathcal{V}_j\atop{\max\{|v_1|,|v_2|,\cdots,|v_{k-1}|\}\leq k}} \sum_{l=1}^{k-1}\int_{t_{l+1}}^{t_l}\mathbf{1}
_{\{t_{l+1}>0\}}\|f(s)\|_{\FY}\\
&\quad\times\mu^{-1/2}(v_l)
e^{\nu(v_{l})(s-t_{l})}d\sigma _{l}ds\Pi _{j=1}^{l-1}\{e^{\nu (v_{j})(t_{j+1}-t_{j})} d\sigma _{j}\}\\
&+\int_{\Pi_{j=1}^{k-1}\mathcal{V}_j\atop{\max\{|v_1|,|v_2|,\cdots,|v_{k-1}|\}>k}} \sum_{l=1}^{k-1}\int_{t_{l+1}}^{t_l}\mathbf{1}
_{\{t_{l+1}>0\}}\|f(s)\|_{\FY}\\
&\quad\times\mu^{-1/2}(v_l)
e^{\nu(v_{l})(s-t_{l})}d\sigma _{l}ds\Pi _{j=1}^{l-1}\{e^{\nu (v_{j})(t_{j+1}-t_{j})} d\sigma _{j}\}\\
\eqdef&\mathcal {K}_3+\mathcal {K}_4.
\end{split}
\end{equation*}
To estimate $\CK_3$, as in the proof for \eqref{ktildew2}, we denote $\max\{|v_1|,|v_2|,\cdots,|v_{{k-1}}|\}=|v_m|$ again,
then it follows that
\begin{equation*}
\begin{split}
\CK_3\leq& \int_{\Pi_{j=1}^{k-1}\mathcal{V}_j\atop{\max\{|v_1|,|v_2|,\cdots,|v_{k-1}|\}\leq k}} \sum_{l=1}^{k-1}\int_{t_{l+1}}^{t_l}\mathbf{1}
_{\{t_{l+1}>0\}}\|f(s)\|_{\FY}\\&\times
e^{\nu(v_{m})(s-t_{1})}\mu^{-1/2}(v_m)d\sigma _{l}ds\Pi _{j=1}^{l-1} d\sigma _{j}.
\end{split}
\end{equation*}
Meanwhile, by Young's inequality, we find
\begin{equation}\label{in.de}
e^{-\nu(v)t}w^{-1}_{q/2,\ta}(v)\leq e^{-\la_0t^{\rho_0}},\ \rho_0=\frac{\ta}{\ta-\varrho},
\end{equation}
where $\la_0$ is given by
$$
0<\la_0\leq(C_\varrho\rho_0)^{-\rho_0}\left(\frac{q}{8(1-\rho_0)}\right)^{1-\rho_0}>0.
$$ 
Using \eqref{in.de}, we obtain for $\rho_0=\frac{\ta}{\ta-\varrho}$
\begin{equation}\label{sp.k13}
\begin{split}
\CK_3
\leq &\int_{\Pi_{j=1}^{k-1}\mathcal{V}_j\atop{\max\{|v_1|,|v_2|,\cdots,|v_{k-1}|\}\leq k}} \sum_{l=1}^{k-1}\int_{t_{l+1}}^{t_l}\mathbf{1}
_{\{t_{l+1}>0\}}e^{\frac{\la_0}{2}s^{\rho_0}}\|f(s)\|_{\FY}
e^{-\la_0(t_{1}-s)^{\rho_0}}e^{-\frac{\la_0}{2}s^{\rho_0}}\\[2mm]
&\times w_{q/2,\ta}(v_m)\mu^{-1/2}(v_m)d\sigma _{l}ds\Pi _{j=1}^{l-1} d\sigma _{j}
\\[2mm]
\leq&\sqrt{2\pi} e^{-\frac{\la_0}{2}t_1^{\rho_0}}
\int_{\Pi_{j=1}^{k-1}\mathcal{V}_j\atop{\max\{|v_1|,|v_2|,\cdots,|v_{k-1}|\}\leq k}} e^{\frac{q}{8}|v_m|^{\ta}}e^{\frac{|v_m|^2}{4}}\\[2mm]
&\times \left\{\sum_{l=1}^{k-1}
\mathbf{1}
_{\{t_{l+1}>0\}}\int_{t_{l+1}}^{t_l}e^{-\frac{\la_0}{2}(t_{1}-s)^{\rho_0}}ds\right\}\sup\limits_{0\leq s\leq t_1}\left\{e^{\frac{\la_0}{2}s^{\rho_0}}\|f(s)\|_{\FY}\right\}\Pi _{j=1}^{l-1} d\sigma _{j}\\
\leq&
e^{-\frac{\la_0}{2}t_1^{\rho_0}}\sup\limits_{0\leq s\leq t_1}\left\{e^{\frac{\la_0}{2}s^{\rho_0}}\|f(s)\|_{\FY}\right\}
\\&\times \frac{1}{\sqrt{2\pi}}\int_{n(x_m)\cdot v_{m}>0}(n(x_m)\cdot v_{m}) e^{-\frac{1}{4}|v_m|^2+\frac{q}{8}|v_m|^{\ta}}dv_m\\
\leq&
Ce^{-\frac{\la_0}{2}t_1^{\rho_0}}\sup\limits_{0\leq s\leq t_1}\left\{e^{\frac{\la_0}{2}s^{\rho_0}}\|f(s)\|_{\FY}\right\}.
\end{split}
\end{equation}
Here Lemma \ref{el.ine} is also used to guarantee $e^{-\frac{\la_0}{2}(t_{1}-s)^{\rho_0}}e^{-\frac{\la_0}{2}s^{\rho_0}}\leq e^{-\frac{\la_0}{2}t_1^{\rho_0}}$ for $0<\rho_0<1$. 

As to $\CK_4$, assume with no loss of generality $|v_i|\geq k$, following the calculations for $\CK_2$ in the proof of Lemma \ref{k}, one has
\begin{eqnarray}\label{sp.k14}
\mathcal {K}_4\leq&&\int_{\Pi_{j=1}^{k-1}\mathcal{V}_j}\sum\limits_{l=1}^{k-1}\int_{t_{l+1}}^{t_l}
e^{-\la_0(t_l-s)^\rho}ds\mathbf{1}
_{\{t_{l+1}>0\}}\sup\limits_{t_{l+1}\leq s\leq t_l}\|f(s)\|_{\FY}\notag \\&&\times w_{q/2,\ta}(v_l)\mu^{-1/2}(v_l)\Pi_{j=1}^{k-1} d\sigma_j\notag\\
\leq&&C\sum\limits_{l=1}^{k-1}\int_{\Pi_{j=1}^{k-1}\mathcal{V}_j}w_{q/2,\ta}(v_l)\mu^{-1/2}(v_l)\Pi_{j=1}^{k-1} d\sigma_j\sup\limits_{0\leq s\leq t_1}\|f(s)\|_{\FY} \notag\\
 \leq&&C\int_{\Pi_{j=1}^{i-1}\mathcal{V}_j}\Pi_{j=1}^{i-1} d\sigma_j\int_{n(x_i)\cdot v_{i}>0\atop{|v_i|> k}}(n(x_i)\cdot v_{i}) e^{-\frac{1}{4}|v_i|^2+\frac{q}{8}|v_i|^{\ta}}dv_i\sup\limits_{0\leq s\leq t_1}\|f(s)\|_{\FY}
 \notag\\&&+C\sum\limits_{l=1}^{i-1}\int_{\Pi_{j=1}^{l-1}\mathcal{V}_j}\Pi_{j=1}^{l-1} d\sigma_j\int_{n(x_l)\cdot v_{l}>0}(n(x_l)\cdot v_{l}) e^{-\frac{1}{4}|v_l|^2+\frac{q}{8}|v_l|^{\ta}}dv_l\\&&\quad\times\int_{\Pi_{j=l+1}^{i-1}\mathcal{V}_j}\Pi_{j=l+1}^{i-1} d\sigma_j
 \int_{n(x_i)\cdot v_{i}>0\atop{|v_i|> k}}e^{-\frac{|v_i|^2}{2}}(n(x_i)\cdot v_i)dv_i\sup\limits_{0\leq s\leq t_1}\|f(s)\|_{\FY}\notag\\
 &&+C\sum\limits_{l=i+1}^{k-1}\int_{\Pi_{j=1}^{i-1}\mathcal{V}_j}\Pi_{j=1}^{i-1} d\sigma_j\int_{n(x_i)\cdot v_{i}>0\atop{|v_i|> k}}e^{-\frac{|v_i|^2}{2}}(n(x_i)\cdot v_i)dv_i\notag\\&&\quad\times\int_{\Pi_{j=i+1}^{l-1}\mathcal{V}_j}\Pi_{j=i+1}^{l-1} d\sigma_j
\int_{n(x_l)\cdot v_{l}>0}n(x_l)\cdot v_{l} e^{-\frac{1}{4}|v_l|^2+\frac{q}{8}|v_l|^{\ta}}dv_l\sup\limits_{0\leq s\leq t_1}\|f(s)\|_{\FY}
 \notag\\
\leq&& C_{q,\ta}(k-1)e^{-\frac{k^2}{8}}\sup\limits_{0\leq s\leq t_1}\|f(s)\|_{\FY}\leq
C_{q,\ta}e^{-\frac{k^2}{16}}\sup\limits_{0\leq s\leq t_1}\|f(s)\|_{\FY}.\notag
\end{eqnarray}
Notice that $k=C_{1}[\al(t)]^{5/4}$, \eqref{kd2} then follows from \eqref{sp.k13} and \eqref{sp.k14}.
Just like the proof for Lemma \ref{k}, \eqref{kd1} can be handled in a similar way as \eqref{kd2}, and the proofs for \eqref{kd11} and \eqref{kd21} being similar and easier, we omit the details for brevity.
It remains now to prove \eqref{kd3}. To do that, we have, using decomposition as \eqref{sp.k1} again
\begin{equation*}
\begin{split}
\int_{\prod_{j=1}^{k-1}
\mathcal{V}_{j}}&\mathbf{1}_{\{0<t_{k}\}}|f(t_k,\cdot,v_{k-1})|\mu^{-1/2}(v_{k-1})d\Sigma
_{k-1}(t_{k})\\
=&\int_{\Pi_{j=1}^{k-1}\mathcal{V}_j\atop{\max\{|v_1|,|v_2|,\cdots,|v_{k-1}|\}\leq k}} \mathbf{1}_{\{0<t_{k}\}}|f(t_k,\cdot,v_{k-1})|\mu^{-1/2}(v_{k-1})
\\&\quad\times\Pi _{j=1}^{k-1}\{e^{\nu (v_{j})(t_{j+1}-t_{j})} d\sigma _{j}\}\\
&+\int_{\Pi_{j=1}^{k-1}\mathcal{V}_j\atop{\max\{|v_1|,|v_2|,\cdots,|v_{k-1}|\}>k}} \mathbf{1}_{\{0<t_{k}\}}|f(t_k,\cdot,v_{k-1})|\mu^{-1/2}(v_{k-1})
\\&\quad\times\Pi _{j=1}^{k-1}\{e^{\nu (v_{j})(t_{j+1}-t_{j})} d\sigma _{j}\}\\
\eqdef&\mathcal {K}_5+\mathcal {K}_6.
\end{split}
\end{equation*}
To compute $\CK_5$, let us denote $\max\{|v_1|,|v_2|,\cdots,|v_{k-1}|\}=|v_m|$ again, we first prove that there exists a constant $C>0$ independent of $t$  such that for all $1\leq m\leq k-1$ and small $\eps_0>0$
\begin{equation}\label{sp.k15}
\begin{split}
\int_{\Pi_{j=1}^{k-1}\mathcal{V}_j} \mathbf{1}_{\{0<t_{k}\}}w_{q/2,\ta}(v_m)\mu^{-1/2}(v_m)\Pi _{j=1}^{k-1} d\sigma _{j}
\leq C\eps_0.
\end{split}
\end{equation}
For this, we
define non-grazing
sets for $1\leq j\leq k-1$ as $\mathcal{V}_{j}^{\mathfrak{z}}=\{v_{j}\in
\mathcal{V}_{j}:$ $v_{j}\cdot n(x_{j})\geq \mathfrak{z}\}\cap
\{v_{j}\in \mathcal{V}_{j}:$ $|v_{j}|\leq \frac{1}{\mathfrak{z}}\}$ with
$\mathfrak{z}>0$ and sufficiently small.
Notice that $(q,\ta)\in\CA_{q,\ta}$, we obtain by a direct calculation
\begin{equation}
\begin{split}
\int_{\mathcal{V}_{j}\setminus \mathcal{V}_{j}^{\mathfrak{z}}}&w_{q/2,\ta}(v_j)\mu^{-1/2}(v_j)d\sigma
_{j}\\
\leq& \int_{0<v_{j}\cdot n(x_{j})\leq \mathfrak{z}}w_{q/2,\ta}(v_j)\mu^{-1/2}(v_j)d\sigma
_{j}+\int_{|v_{j}|\geq \frac{1}{\mathfrak{z}}}w_{q/2,\ta}(v_j)\mu^{-1/2}(v_j)d\sigma _{j}\\
\leq& C_{q,\ta}\int_{0<v_{j}\cdot n(x_{j})\leq \mathfrak{z}}\mu^{1/4}(v_j)v_{j}\cdot n(x_{j})dv
_{j}\\&+C_{q,\ta}\int_{|v_{j}|\geq \frac{1}{\mathfrak{z}}}\mu^{1/4}(v_j)v_{j}\cdot n(x_{j})dv
_{j}
\leq C\mathfrak{z},
\label{v-v}
\end{split}
\end{equation}
and
\begin{equation}
\begin{split}
\int_{\mathcal{V}_{j}}w_{q/2,\ta}(v_j)\mu^{-1/2}(v_j)d\sigma_j
\leq C,
\label{v-v2}
\end{split}
\end{equation}
where $C$ is independent of $j$. On the other hand, if $v_{j}\in \mathcal{V}%
_{j}^{\mathfrak{z}}$, we know from the definition of diffusive back-time cycle (%
\ref{diffusecycle}) that $x_{j}-x_{j+1}=(t_{j}-t_{j+1})v_{j}$.
Since $|v_{j}|\leq \frac{1}{\mathfrak{z}}$, and $v_{j}\cdot
n(x_{j})\geq \mathfrak{z}$, thanks to Lemma \ref{huang}, it follows that $(t_{j}-t_{j+1})\geq \frac{\mathfrak{z}^{3}}{%
C_{\xi }}.$ Hence, when $t_{k}(t,x,v,v_{1},v_{2}...,%
v_{k-1})>0$, there can be at most $\left[ \frac{C_{\xi }\al(t)}{%
\mathfrak{z}^{3}}\right] +1$ number of $v_{j}$ $\in \mathcal{V}_{j}^{%
\mathfrak{z}}$ for $1\leq j\leq k-1$. We therefore compute
\begin{equation*}
\begin{split}
\int_{\Pi_{j=1}^{k-1}\mathcal{V}_j}& \mathbf{1}_{\{0<t_{k}\}}w_{q/2,\ta}(v_m)\mu^{-1/2}(v_m)\Pi _{j=1}^{k-1} d\sigma _{j}\\
\leq& \sum_{l=1}^{\left[ \frac{C_{\xi }\al(t)}{\mathfrak{z}^{3}}\right]
+1}\int_{\mathcal{V}_1^{\dagger }}\Pi _{j=1}^{k-1}w_{q/2,\ta}(v_m)\mu^{-1/2}(v_m)d\sigma _{j}
\\&+\sum_{l=1}^{\left[ \frac{C_{\xi }\al(t)}{\mathfrak{z}^{3}}\right]+1}
\int_{\mathcal{V}_2^{\dagger }}\Pi _{j=1}^{k-1}w_{q/2,\ta}(v_m)\mu^{-1/2}(v_m)d\sigma _{j}\\
\leq &\sum_{l=1}^{\left[ \frac{C_{\xi }\al(t)}{\mathfrak{z}^{3}}\right]
+1}\binom{k-1}{l}\left|\sup_{j}\int_{\mathcal{V}_{j}^{\mathfrak{z}}}d\sigma
_{j}\right|^{l-1}\int_{\mathcal {V}^\mathfrak{z}_m}w_{q,\ta}(v_m)\mu^{-1/2}(v_m)d\sigma
_{m}\\&\times\left\{ \sup_{j}\int_{\mathcal{V}_{j}\setminus \mathcal{V}_{j}^{
\mathfrak{z}}}d\sigma _{j}\right\} ^{k-l-1}\\&+\sum_{l=1}^{\left[ \frac{C_{\xi }\al(t)}{\mathfrak{z}^{3}}\right]
+1}\binom{k-1}{l}\left|\sup_{j}\int_{\mathcal{V}_{j}^{\mathfrak{z}}}d\sigma
_{j}\right|^{l}\int_{\mathcal {V}_m/\mathcal {V}^\mathfrak{z}_m}w_{q,\ta}(v_m)\mu^{-1/2}(v_m)d\sigma
_{m}\\&\times\left\{ \sup_{j}\int_{\mathcal{V}_{j}\setminus \mathcal{V}_{j}^{
\mathfrak{z}}}d\sigma _{j}\right\} ^{k-l-2},
\end{split}
\end{equation*}%
where $\mathcal{V}_1^{\dagger }$ is the set where $\text{there are exactly }l
\text{ of }v_{j_{i}}\in \mathcal{V}_{j_{i}}^{\mathfrak{z}}$ including $v_m \in \mathcal{V}_{m}^{\mathfrak{z}}$, and $k-1-l
\text{ of }v_{j_{i}}\notin \mathcal{V}_{j_{i}}^{\mathfrak{z}}$, 
while $\mathcal{V}_2^{\dagger }$ is the set where $\text{there are exactly }l
\text{ of }v_{j_{i}}\in \mathcal{V}_{j_{i}}^{\mathfrak{z}},\text{ and }k-1-l
\text{ of }v_{j_{i}}\notin \mathcal{V}_{j_{i}}^{\mathfrak{z}}$ and also $v_m \notin \mathcal{V}_{m}^{\mathfrak{z}}$. Since $
d\sigma $ is a probability measure, $\int_{\mathcal{V}_{j}^{\mathfrak{z}
}}d\sigma _{j}\leq 1$, and
\begin{equation*}
\left\{ \int_{\mathcal{V}_{j}\setminus \mathcal{V}_{j}^{\mathfrak{z}
}}d\sigma _{j}\right\} ^{k-l-1}\leq \left\{ \int_{\mathcal{V}_{j}\setminus
\mathcal{V}_{j}^{\mathfrak{z}}}d\sigma _{j}\right\} ^{k-2-\left[ \frac{%
C_{\xi }\al(t)}{\mathfrak{z}^{3}}\right] }\leq \{C\mathfrak{z}\}^{k-2-%
\left[ \frac{C_{\xi }\al(t)}{\mathfrak{z}^{3}}\right] }.
\end{equation*}%
With this, from \eqref{v-v}, \eqref{v-v2} and $\binom{k-1}{l}\leq \{k-1\}^{l}\leq \{k-1\}^{\left[ \frac{%
C_{\xi }\al(t)}{\mathfrak{z}^{3}}\right] +1}$, we deduce that
\begin{equation*}
\begin{split}
\int &\mathbf{1}_{\{t_{k}>0\}}w_{q/2,\ta}(v_m)\mu^{-1/2}(v_m))\Pi _{l=1}^{k-1}d\sigma _{l}\\
\leq& C{\left(\left[
\frac{C_{\xi }\al(t)}{\mathfrak{z}^{3}}\right]+1\right) }(k-1)^{\left[
\frac{C_{\xi }\al(t)}{\mathfrak{z}^{3}}\right] +1}(C\mathfrak{z})^{k-2-\left[
\frac{C_{\xi }\al(t)}{\mathfrak{z}^{3}}\right] }.
\end{split}
\end{equation*}
For $\eps_0>0$, \eqref{sp.k15} follows for $C\mathfrak{z}<1$, and $k>>\left[
\frac{C_{\xi }\al(t)}{\mathfrak{z}^{3}}\right]+2.$
We now go back to
$\CK_5$, from \eqref{sp.k15} and \eqref{in.de}, it follows that
\begin{equation*}
\begin{split}
\CK_5
\leq &\int_{\Pi_{j=1}^{k-1}\mathcal{V}_j\atop{\max\{|v_1|,|v_2|,\cdots,|v_{k-1}|\}\leq k}} \mathbf{1}_{\{0<t_{k}\}}|f(t_k,\cdot,v_{k-1})|
e^{-\nu(v_m)(t_1-t_{k})}\mu^{-1/2}(v_m)\Pi _{j=1}^{k-1} d\sigma _{j}
\\
\leq&\int_{\Pi_{j=1}^{k-1}\mathcal{V}_j\atop{\max\{|v_1|,|v_2|,\cdots,|v_{k-1}|\}\leq k}} \mathbf{1}_{\{0<t_{k}\}}e^{-\la_0t_k^{\rho_0}}
e^{-\la_0(t_1-t_k)^{\rho_0}}w_{q/2,\ta}(v_m)\mu^{-1/2}(v_m)\\
&\times\Pi _{j=1}^{k-1} d\sigma _{j}\sup\limits_{0\leq t_k\leq t_1}\left\{e^{\frac{\la_0}{2}t_k^{\rho_0}}\|f(t_k)\|_\infty\right\}\\
\leq& C\eps_0 e^{-\frac{\la_0}{2}t_1^{\rho_0}}\sup\limits_{0\leq s\leq t_1}\left\{e^{\frac{\la_0}{2}s^{\rho_0}}\|f(s)\|_\infty\right\}.
\end{split}
\end{equation*}
As to $\CK_6$, assume with no loss of generality $|v_i|\geq k$, apply \eqref{in.de} to obtain
\begin{equation}\label{sp.k16}
\begin{split}
\CK_6
\leq &\int_{\Pi_{j=1}^{k-1}\mathcal{V}_j\atop{\max\{|v_1|,|v_2|,\cdots,|v_{k-1}|\}\geq k}} \mathbf{1}_{\{0<t_{k}\}}|f(t_k,\cdot,v_{k-1})|
\{\Pi _{j=l}^{k-1}e^{-\nu(v_l)(t_l-t_{l+1})}\}\\&\times\mu^{-1/2}(v_k)\Pi _{j=1}^{k-1} d\sigma _{j}
\\
\leq&\int_{\Pi_{j=1}^{k-1}\mathcal{V}_j\atop{\max\{|v_1|,|v_2|,\cdots,|v_{k-1}|\}\geq k}} \mathbf{1}_{\{0<t_{k}\}}e^{-\la_0t_k^{\rho_0}}
e^{-\la_0(t_1-t_k)^{\rho_0}}\left\{\Pi _{l=1}^{k-1}w_{q/2,\ta}(v_l)\right\}\mu^{-1/2}(v_k)\\
&\times\Pi _{j=1}^{k-1} d\sigma _{j}\sup\limits_{0\leq t_k\leq t_1}\left\{e^{\frac{\la_0}{2}t_k^{\rho_0}}\|f(t_k)\|_\infty\right\}\\
\leq& C e^{-\frac{\la_0}{2}t_1^{\rho_0}}\sup\limits_{0\leq s\leq t_1}\left\{e^{\frac{\la_0}{2}s^{\rho_0}}\|f(s)\|_\infty\right\}
\left(\int_{\mathcal {V}_l}w_{q/2,\ta}(v_l)\mu^{-1/2}(v_l)d\si_l\right)^{k-2}
\\&\times\int_{\mathcal {V}_i}w_{q/2,\ta}(v_i)\mu^{-1/2}(v_i)d\si_i\\
\leq& C e^{-\frac{\la_0}{2}t_1^{\rho_0}}\sup\limits_{0\leq s\leq t_1}\left\{e^{\frac{\la_0}{2}s^{\rho_0}}\|f(s)\|_\infty\right\}C_{q,\ta}^{k-1}e^{-k^2/16}.
\end{split}
\end{equation}
Choosing $k$ suitable large so that $C_{q,\ta}^{k-1}e^{-k^2/16}<\eps_0$, one sees that \eqref{sp.k16} also enjoys the bound \eqref{kd3}.
This completes the proof of Lemma \ref{k.d}.

\end{proof}

We now turn to prove exponential decay using Lemma \ref{k.d} and the uniform bound \eqref{u.bd}. The main difficulty with proving rapid decay
\eqref{decay} is created by the fact that the collision frequency has no positive lower bound in the case of soft potential. However, as it is shown in \eqref{in.de}, one can trade between exponential decay rates and the additional exponential momentum weight on the initial data and the solution itself.

\begin{proof}[The proof of \eqref{decay}]
Recall that $f(t,x,v)$ satisfies
\begin{eqnarray*}
\left\{
\begin{array}{rll}
&\partial _{t}f+v\cdot \nabla _{x}f+\nu f=Kf+\Ga(f,f),\text{ \ \ }f(0,x,v)=f_{0}, \\[2mm]
&f_{-}=P_{\gamma }f.
\end{array}\right.
\end{eqnarray*}
With this, by a same kind of computation as for obtaining \eqref{iteration1}, one has
\begin{equation}\label{iteration.fn}
\begin{split}
  |f(t,x,v)| \leq&
\underbrace{\left\{\mathbf{1}_{t_{1}\leq 0}
\int_{0}^{t}+\mathbf{1}_{t_{1}>0}
\int_{t_1}^{t}\right\}
e^{-\nu
(v)(t-s)}|K^{1-\chi}f(s,x-(t-s){v},v)|ds}_{\CJ_1}
\\
&\underbrace{+\left\{\mathbf{1}_{t_{1}\leq 0}
\int_{0}^{t}+\mathbf{1}_{t_{1}>0}
\int_{t_1}^{t}\right\}
e^{-\nu
(v)(t-s)}|K^{\chi}f(s,x-(t-s){v},v)|ds}_{\CJ_2}
\\
&
\underbrace{+\left\{\mathbf{1}_{t_{1}\leq 0}
\int_{0}^{t}+\mathbf{1}_{t_{1}>0}
\int_{t_1}^{t}\right\}e^{-\nu
(v)(t-s)}|g_f(s,x-(t-s){v},v)|ds}_{\CJ_3}
+\sum\limits_{n=4}^{8}\CJ_n,
\end{split}
\end{equation}%
with
\begin{equation*}
\begin{split}
\CJ_4=&\mathbf{1}_{t_{1}\leq 0}e^{-\nu
(v)t}|f(0,x-t{v},v)|
\\&+e^{-\nu (v)(t-t_{1})}\sqrt{\mu}\int_{\prod_{j=1}^{k-1}%
\mathcal{V}_{j}}\sum_{l=1}^{k-1}\mathbf{1}_{\{t_{l+1}\leq
0<t_{l}\}}|f(0,x_{l}-t_{l}{v}_{l},v_{l})|d\Sigma _{l}(0),
\end{split}
\end{equation*}
\begin{equation*}
\begin{split}
\CJ_5=&e^{-\nu (v)(t-t_{1})}\sqrt{\mu}\bigg\{\int_{\prod_{j=1}^{k-1}%
\mathcal{V}_{j}}\sum_{l=1}^{k-1}\int_{0}^{t_l}\mathbf{1}_{\{t_{l+1}\leq
0<t_{l}\}}\\
&\quad\times|[K^{1-\chi} f](s,x_{l}-(t_{l}-s){v}_{l},v_{l})|d\Sigma _{l}(s)ds
\\&+\int_{\prod_{j=1}^{k-1}%
\mathcal{V}_{j}}\sum_{l=1}^{k-1}\int_{t_{l+1}}^{t_{l}}
\mathbf{1}_{\{0<t_{l+1}\}}|[K^{1-\chi} f](s,x_{l}-(t_{l}-s){v}_{l},v_{l})|d\Sigma _{l}(s)ds\bigg\},
\end{split}
\end{equation*}
\begin{equation*}
\begin{split}
\CJ_6=&e^{-\nu (v)(t-t_{1})}\sqrt{\mu}\bigg\{\int_{\prod_{j=1}^{k-1}%
\mathcal{V}_{j}}\sum_{l=1}^{k-1}\int_{0}^{t_l}\mathbf{1}_{\{t_{l+1}\leq
0<t_{l}\}}\\
&\quad\times|[K^\chi f](s,x_{l}-(t_{l}-s){v}_{l},v_{l})|d\Sigma _{l}(s)ds
\\&+\int_{\prod_{j=1}^{k-1}%
\mathcal{V}_{j}}\sum_{l=1}^{k-1}\int_{t_{l+1}}^{t_{l}}
\mathbf{1}_{\{0<t_{l+1}\}}|[K^\chi f](s,x_{l}-(t_{l}-s){v}_{l},v_{l})|d\Sigma _{l}(s)ds\bigg\},
\end{split}
\end{equation*}
\begin{equation*}
\begin{split}
\CJ_7=&e^{-\nu (v)(t-t_{1})}\sqrt{\mu}\bigg\{\int_{\prod_{j=1}^{k-1}%
\mathcal{V}_{j}}\sum_{l=1}^{k-1}\int_{0}^{t_l}\mathbf{1}_{\{t_{l+1}\leq
0<t_{l}\}}\\
&\quad\times|g_f(s,x_{l}-(t_{l}-s){v}_{l},v_{l})|d\Sigma _{l}(s)ds \\&+\int_{\prod_{j=1}^{k-1}%
\mathcal{V}_{j}}\sum_{l=1}^{k-1}\int_{t_{l+1}}^{t_{l}}\mathbf{1}_{\{0<t_{l+1}\}}|g_f(s,x_{l}-(t_{l}-s){v}_{l},v_{l})|d\Sigma _{l}(s)ds\bigg\},
\end{split}
\end{equation*}
\begin{equation*}
\CJ_8=e^{-\nu (v)(t-t_{1})}\sqrt{\mu}\int_{\prod_{j=1}^{k-1}%
\mathcal{V}_{j}}\mathbf{1}_{\{0<t_{k}\}}|f(t_{k},x_{k},v_{k-1})|d\Sigma
_{k-1}(t_{k}),\ \ k\geq2,
\end{equation*}%
where $g_f=\Ga(f,f)$ and $\Sigma _{l}(s)$ $(l=1,2,\cdots,)$ is given by \eqref{measure1}.
We now turn to compute $\CJ_n$ $(n=1,2,\cdots,8)$ term by term. As the way to deal with \eqref{iteration}, let us first
compute $\CJ_1$, $\CJ_3$, $\CJ_4$, $\CJ_5$, $\CJ_7$ and $\CJ_8$, the estimates for the delicate terms $\CJ_2$ and $\CJ_6$ will be postponed
to a later step when the estimation like \eqref{hmain} is derived.

\noindent{\it Estimates on $\CJ_1$ and $\CJ_5$.} It follows from Lemma \ref{es.k} and \eqref{in.de} that
\begin{equation*}
\begin{split}
\CJ_1\leq& C\sup\limits_{0\leq s\leq t}\left\{e^{\frac{\la_0}{2}s^{\rho_0}}\|f(s)\|_\infty\right\}
\int_{0}^te^{-\frac{\la_0}{2}(t-s)^{\rho_0}}e^{-\frac{\la_0}{2}(t-s)^{\rho_0}}\\
&\times e^{-\frac{\la_0}{2}s^{\rho_0}}ds
w_{q/2,\ta}\int_{\R^3}K^{1-\chi}dv\\
\leq&C\eps^{\varrho+3}e^{-\frac{\la_0}{2}t^{\rho_0}}\sup\limits_{0\leq s\leq t}\left\{e^{\frac{\la_0}{2}s^{\rho_0}}\|f(s)\|_\infty\right\}.
\end{split}
\end{equation*}
Likewise, Lemmas \ref{es.k} and \ref{k.d} and inequality \eqref{in.de} imply
\begin{equation*}
\begin{split}
\CJ_5\leq& C\eps^{\varrho+3}e^{-\frac{\la_0}{2}(t-t_1)^{\rho_0}}e^{-\frac{\la_0}{2}t_1^{\rho_0}}w_{q/2,\ta}(v)\sqrt{\mu}(v)
\left\{e^{\frac{\la_0}{2}s^{\rho_0}}\|f(s)\|_\infty\right\}\\
\leq&C\eps^{\varrho+3}e^{-\frac{\la_0}{2}t^{\rho_0}}\sup\limits_{0\leq s\leq t}\left\{e^{\frac{\la_0}{2}s^{\rho_0}}\|f(s)\|_\infty\right\}.
\end{split}
\end{equation*}

\noindent{\it Estimates on $\CJ_3$ and $\CJ_7$.} We have, using \eqref{in.de}
\begin{equation*}
\begin{split}
\CJ_3\leq& C\sup\limits_{0\leq s\leq t}\left\{e^{\frac{\la_0}{2}s^{\rho_0}}\|w_{q/2,\ta}g_f(s)\|_\infty\right\}
\int_{0}^te^{-\frac{\la_0}{2}(t-s)^{\rho_0}}e^{-\frac{\la_0}{2}(t-s)^{\rho_0}}e^{-\frac{\la_0}{2}s^{\rho_0}}ds
\\
\leq&Ce^{-\frac{\la_0}{2}t^{\rho_0}}\sup\limits_{0\leq s\leq t}\left\{e^{\frac{\la_0}{2}s^{\rho_0}}\|w_{q/2,\ta}g_f(s)\|_\infty\right\}.
\end{split}
\end{equation*}
Similarly, applying Lemma \ref{k.d} and the inequality \eqref{in.de} again leads to
\begin{equation*}
\begin{split}
\CJ_7\leq& C\eps^{\varrho+3}e^{-\frac{\la_0}{2}(t-t_1)^{\rho_0}}e^{-\frac{\la_0}{2}t_1^{\rho_0}}w_{q/2,\ta}\sqrt{\mu}\sup\limits_{0\leq s\leq t}\left\{e^{\frac{\la_0}{2}s^{\rho_0}}\|w_{q/2,\ta}g_f(s)\|_\infty\right\}
\\
\leq&Ce^{-\frac{\la_0}{2}t^{\rho_0}}\sup\limits_{0\leq s\leq t}\left\{e^{\frac{\la_0}{2}s^{\rho_0}}\|w_{q/2,\ta}g_f(s)\|_\infty\right\}.
\end{split}
\end{equation*}

\noindent{\it Estimates on $\CJ_4$.} For the first term in $\CJ_4$, one directly has from \eqref{in.de} that
\begin{equation*}
\begin{split}
\mathbf{1}_{t_{1}\leq 0}e^{-\nu
(v)t}|f(0,x-t{v},v)|\leq e^{-\frac{\la_0}{2}t^{\rho_0}}\|w_{q/2,\ta}f_0\|_\infty.
\end{split}
\end{equation*}
As to the second term, applying the similar calculations as in the proof of Lemma \ref{k.d}, we obtain
\begin{equation*}
\begin{split}
e^{-\nu (v)(t-t_{1})}&\sqrt{\mu}\int_{\prod_{j=1}^{k-1}%
\mathcal{V}_{j}}\sum_{l=1}^{k-1}\mathbf{1}_{\{t_{l+1}\leq
0<t_{l}\}}|f(0,x_{l}-t_{l}{v}_{l},v_{l})|d\Sigma _{l}(0)\\
\leq& Ce^{-\frac{\la_0}{2}(t-t_1)^{\rho_0}}e^{-\frac{\la_0}{2}t_1^{\rho_0}}w_{q/2,\ta}\sqrt{\mu}\|f_0\|_\infty
\leq Ce^{-\frac{\la_0}{2}t^{\rho_0}}\|f_0\|_\infty.
\end{split}
\end{equation*}
Gathering the above two kind of estimates, we have
\begin{equation*}
\begin{split}
\CJ_4\leq Ce^{-\frac{\la_0}{2}t^{\rho_0}}\|w_{q/2,\ta}f_0\|_\infty.
\end{split}
\end{equation*}
\noindent{\it Estimates on $\CJ_8$.} \eqref{kd3} in Lemma \ref{k.d} directly yields
\begin{equation*}
\begin{split}
\CJ_8\leq& C\eps_0 e^{-\frac{\la_0}{2}(t-t_1)^{\rho_0}}w_{q/2,\ta}(v)\sqrt{\mu(v)}e^{-\frac{\la_0}{2}t_1^{\rho_0}}\sup\limits_{0\leq s\leq t_1}\left\{e^{\frac{\la_0}{2}s^{\rho_0}}\|f(s)\|_\infty\right\}\\
\leq& C\eps_0 e^{-\frac{\la_0}{2}t^{\rho_0}}\sup\limits_{0\leq s\leq t}\left\{e^{\frac{\la_0}{2}s^{\rho_0}}\|f(s)\|_\infty\right\}.
\end{split}
\end{equation*}
Substituting all the above estimates into \eqref{iteration.fn}, we arrive at
\begin{equation}\label{f26}
\begin{split}
  |f(t,x,v)| \leq&
\CJ_2+\CJ_6+A_2(t),
\end{split}
\end{equation}
with
\begin{equation*}
\begin{split}
A_2(t)=&Ce^{-\frac{\la_0}{2}t^{\rho_0}}\Big\{\|w_{q/2,\ta}f_0\|_\infty+(\eps_0+\eps^{\varrho+3}) \sup\limits_{0\leq s\leq t}e^{\frac{\la_0}{2}s^{\rho_0}}\|f(s)\|_\infty\\&+
\sup\limits_{0\leq s\leq t}e^{\frac{\la_0}{2}s^{\rho_0}}\|w_{q/2,\ta}g_f(s)\|_\infty\Big\}.
\end{split}
\end{equation*}
Next, plug \eqref{f26} into $K^{\chi}f$  and perform the similar calculation as \eqref{diff3} to obtain
\begin{equation}\label{diff4}
\begin{split}
K^\chi&f(s,X_{\mathbf{cl}}(s),v_{l})\\
\leq& \int_{
\mathbf{R}^{3}}\mathbf{k}^\chi(v_{l},v^{\prime })|f(s,X_{
\mathbf{cl}}(s),v^{\prime })|dv^{\prime }  \\
\leq& \iint \left\{\mathbf{1}_{t_{1}^{\prime }\leq 0}\int_{0}^{s}+\mathbf{1}_{t_{1}^{\prime }>0}\int_{t_{1}^{\prime }}^{s}\right\}e^{-\nu
(v^{\prime })(s-s_{1})} \mathbf{k}^\chi(v_{l},v^{\prime })
\mathbf{k}^\chi(v^{\prime },v^{\prime \prime })
\\&\quad\times|f(s_{1},X_{\mathbf{cl}}(s)-(s-s_{1})v^{\prime },v^{\prime \prime
})|ds_{1}dv^{\prime }dv^{\prime \prime }\\
&+\iint dv^{\prime }dv^{\prime \prime }\int_{\prod_{j=1}^{k-1}\mathcal{V
}_{j}^{\prime }}e^{-\nu (v^{\prime })(s-t_{1}^{\prime })}
\sqrt{\mu}(v^{\prime })
\\&\quad\times\sum_{l^{\prime }=1}^{k-1}\int_{0}^{t_{l^{\prime
}}^{\prime }}ds_{1}\mathbf{1}_{\{t_{l^{\prime }+1}^{\prime }\leq
0<t_{l^{\prime }}^{\prime }\}}  \mathbf{k}^\chi(v_{l},v^{\prime })
\mathbf{k}^\chi(v_{l^{\prime }}^{\prime },v^{\prime \prime
})\\&\quad\times|f(s_{1,}x_{l^{\prime }}^{\prime
}+(s_{1}-t_{l^{\prime }}^{\prime })v_{l^{\prime }}^{\prime },v^{\prime
\prime })|d\Sigma _{l^{\prime }}(s_{1})  \\
&+\iint dv^{\prime }dv^{\prime \prime }\int_{\prod_{j=1}^{k-1}\mathcal{V
}_{j}^{\prime }}e^{-\nu (v^{\prime })(s-t_{1}^{\prime })}
\sqrt{\mu}(v^{\prime })
\\&\quad\times\sum_{l^{\prime }=1}^{k-1}\int_{t_{l^{\prime}+1}^{\prime }}^{t_{l^{\prime }}^{\prime }}ds_{1}
\mathbf{1}_{\{t_{l^{\prime}+1}^{\prime }>0\}}  \mathbf{k}^\chi(v_{l},v^{\prime })
\mathbf{k}^\chi(v_{l^{\prime }}^{\prime },v^{\prime \prime
})\\&\quad\times|f(s_{1},x_{l^{\prime }}^{\prime }+(s_{1}-t_{l^{\prime}}^{\prime })v_{l^{\prime }}^{\prime },v^{\prime \prime })|d\Sigma
_{l^{\prime }}(s_{1}) \\
&+\int_{\R^{3}}
\mathbf{k}^\chi(v_{l},v^{\prime })dv^{\prime}A_{2}(s)\eqdef\sum\limits_{n=1}^4\CL_n.
\end{split}
\end{equation}
We now estimate $\CJ_6$ with the aid of \eqref{diff4}. Substituting \eqref{diff4} into $\CJ_6$ and applying \eqref{in.de} leads us to
\begin{equation}\label{CI6}
\begin{split}
\CJ_6\leq&C_{q,\ta}e^{-\frac{\la_0}{2}(t-t_1)^{\rho_0}}\int_{\prod_{j=1}^{k-1}
\mathcal{V}_{j}}\sum_{l=1}^{k-1}\left\{\int_{0}^{t_l}\mathbf{1}_{\{t_{l+1}\leq
0<t_{l}\}}+\int_{t_{l+1}}^{t_{l}}
\mathbf{1}_{\{0<t_{l+1}\}}\right\}\\&\times\sum\limits_{n=1}^4\CL_nd\Sigma_{l}(s)ds
= \sum\limits_{n=1}^4\CJ_{6,n},
\end{split}
\end{equation}
where $\CJ_{6,n}$ $(1\leq n\leq4)$ denote four terms on the right hand side of \eqref{CI6} containing $\CL_{n}$ $(1\leq n\leq4)$, respectively. We now estimate $\CJ_{6,n}$ $(1\leq n\leq4)$ term by term. We first consider the simple term $\CJ_{6,4}$, since $\int_{\R^{3}}
\mathbf{k}^\chi(v_{l},v^{\prime })dv^{\prime}<\infty$, in light of Lemma \ref{k.d}, it is straightforward to check
\begin{equation*}
\begin{split}
\CJ_{6,4}\leq& C_{q,\ta}e^{-\frac{\la_0}{2}(t-t_1)^{\rho_0}}e^{-\frac{\la_0}{2}t_1^{\rho_0}}\sup\limits_{0\leq s\leq t}\left\{e^{\frac{\la_0}{2}s^{\rho_0}}A_2(s)\right\}\\
\leq& C_{q,\ta}e^{-\frac{\la_0}{2}t^{\rho_0}}\Big\{\|w_{q/2,\ta}f_0\|_\infty+(\eps_0+\eps^{\varrho+3}) \sup\limits_{0\leq s\leq t}e^{\frac{\la_0}{2}s^{\rho_0}}\|f(s)\|_\infty\\&+
\sup\limits_{0\leq s\leq t}e^{\frac{\la_0}{2}s^{\rho_0}}\|w_{q/2,\ta}g_f(s)\|_\infty\Big\}.
\end{split}
\end{equation*}
For $\CJ_{6,2}$, we first show that there exists a sufficiently large $N>0$ such that
\begin{equation}\label{CJ62.es}
\begin{split}
\CJ^1_{6,2}=&C_{q,\ta}e^{-\frac{\la_0}{2}(t-t_1)^{\rho_0}}\int_{\prod_{j=1}^{k-1}
\mathcal{V}_{j}}\sum_{l=1}^{k-1}\int_{0}^{t_l}\mathbf{1}_{\{t_{l+1}\leq
0<t_{l}\}}\iint dv^{\prime }dv^{\prime \prime }
\\&\quad\times\int_{\prod_{j=1}^{k-1}\mathcal{V
}_{j}^{\prime }}e^{-\nu (v^{\prime })(s-t_{1}^{\prime })}
\sqrt{\mu}(v^{\prime })\sum_{l^{\prime }=1}^{k-1}\int_{0}^{t_{l^{\prime
}}^{\prime }}ds_{1}\mathbf{1}_{\{t_{l^{\prime }+1}^{\prime }\leq
0<t_{l^{\prime }}^{\prime }\}}  \\&\quad\times\mathbf{k}^\chi(v_{l},v^{\prime })
\mathbf{k}^\chi(v_{l^{\prime }}^{\prime },v^{\prime \prime
})|f(s_{1},x_{l^{\prime }}^{\prime }+(s_{1}-t_{l^{\prime}}^{\prime })v_{l^{\prime }}^{\prime },v^{\prime \prime })|d\Sigma
_{l^{\prime }}(s_{1})\Sigma_{l}(s)ds\\
\leq&C_{q,\ta}\left(T_0^{5/4}+\frac{1}{N}\right)e^{-\frac{\la_0}{2}t^{\rho_0}}\sup\limits_{0\leq s\leq t}e^{\frac{\la_0}{2} s^{\rho_0}}\|f(s)\|_\infty\\&+C_{q,\ta}e^{-\frac{\la_0}{2}t^{\rho_0}}\sup\limits_{0\leq s\leq t}e^{\frac{\la_0}{2} s^{\rho_0}}\|f(s)\|_2.
\end{split}
\end{equation}
As the proof for \eqref{J3}, our computation for $\CJ^1_{6,2}$ is divided into following several cases:

\noindent{\bf Case I:} $s_1>t_{l'}'-\frac{1}{k^2(s)}$, $k(s)$ is given by \eqref{ks}. From Lemma \ref{es.k}, we see that
$$
\iint\mathbf{k}^\chi(v_{l},v^{\prime })
\mathbf{k}^\chi(v_{l^{\prime }}^{\prime },v^{\prime \prime})<\infty.
$$
\eqref{in.de} implies that
$$
e^{-\nu (v^{\prime })(s-t_{1}^{\prime })}
\sqrt{\mu}(v^{\prime })\leq C_{q,\ta}e^{-\la_0(s-t_{1}^{\prime })^{\rho_0}}.
$$
And we get from \eqref{kd11} in Lemma \ref{k.d} that
\begin{equation*}
\begin{split}
\int_{\prod_{j=1}^{k-1}\mathcal{V}_{j}^{\prime }}&\sum_{l^{\prime }=1}^{k-1}\int_{t_{l^{\prime}}^{\prime}-\frac{1}{k^2(s)}}^{t_{l^{\prime}}^{\prime}}ds_{1}\mathbf{1}_{\{t_{l^{\prime }+1}^{\prime }\leq
0<t_{l^{\prime }}^{\prime }\}}
|f(s_{1},x_{l^{\prime }}^{\prime }+(s_{1}-t_{l^{\prime}}^{\prime })v_{l^{\prime }}^{\prime },v^{\prime \prime })|d\Sigma_{l^{\prime }}(s_{1})\\
\leq& \frac{C}{k(s)}e^{-\frac{\la_0}{2} (t'_1)^{\rho_0}}\sup\limits_{0\leq s_1\leq t_1'}e^{\frac{\la_0}{2} s_1^{\rho_0}}\|f(s_1)\|_\infty.
\end{split}
\end{equation*}
Substituting the above estimates into $\CJ^1_{6,2}$ and applying \eqref{kd1}, one has
\begin{equation*}
\begin{split}
\CJ^1_{6,2}\leq&\frac{C_{q,\ta}}{T_0^{5/4}}e^{-\frac{\la_0}{2}(t-t_1)^{\rho_0}}\int_{\prod_{j=1}^{k-1}
\mathcal{V}_{j}}\sum_{l=1}^{k-1}\int_{0}^{t_l}\mathbf{1}_{\{t_{l+1}\leq
0<t_{l}\}}e^{-\la_0(s-t_{1}^{\prime })^{\rho_0}}e^{-\la_0 (t'_1)^{\rho_0}}\\
&\times\sup\limits_{0\leq s_1\leq t_1'}e^{\la_0 s_1^{\rho_0}}\|f(s_1)\|_\infty\Sigma_{l}(s)ds\\
\leq&\frac{C_{q,\ta}}{T_0^{5/4}}e^{-\frac{\la_0}{2}(t-t_1)^{\rho_0}}\int_{\prod_{j=1}^{k-1}
\mathcal{V}_{j}}\sum_{l=1}^{k-1}\int_{0}^{t_l}\mathbf{1}_{\{t_{l+1}\leq
0<t_{l}\}}\\&\times\left\{e^{-\frac{\la_0}{2}s^{\rho_0}}\sup\limits_{0\leq s_1\leq s}e^{\frac{\la_0}{2} s_1^{\rho_0}}\|f(s_1)\|_\infty\right\}\Sigma_{l}(s)ds
\\
\leq&\frac{C_{q,\ta}}{T_0^{5/4}}e^{-\frac{\la_0}{2}(t-t_1)^{\rho_0}}e^{-\frac{\la_0}{2}t_1^{\rho_0}}\sup\limits_{0\leq s\leq t_1}e^{\frac{\la_0}{2} s^{\rho_0}}
\left\{e^{-\frac{\la_0}{2}s^{\rho_0}}\sup\limits_{0\leq s_1\leq s}e^{\frac{\la_0}{2} s_1^{\rho_0}}\|f(s_1)\|_\infty\right\}\\
\leq&\frac{C_{q,\ta}}{T_0^{5/4}}e^{-\frac{\la_0}{2}t^{\rho_0}}\sup\limits_{0\leq s\leq t}e^{\frac{\la_0}{2} s^{\rho_0}}\|f(s)\|_\infty.
\end{split}
\end{equation*}
\noindent{\bf Case II:} $s_1\leq t_{l'}'-\frac{1}{k^2(s)}$, by similar argument as in Case 1, Case 2 in the proof of \eqref{J3}, one can show that
if $|v_l|\geq N$ or $|v_{l'}'|\geq N$ or $|v_l|\leq N$ and $|v'|\geq 2N$, or $|v'_{l'}|\leq N$ and $|v''|\geq 2N$ with $N$ large enough, $\CJ^1_{6,2}$ bears the bound
$$
\frac{C_{q,\ta}}{N}e^{-\frac{\la_0}{2}t^{\rho_0}}\sup\limits_{0\leq s\leq t}e^{\frac{\la_0}{2} s^{\rho_0}}\|f(s)\|_\infty.
$$
Therefore, we need only to treat the case $|v_l|\leq N$, $|v'|\leq 2N$, $|v_{l'}'|\leq N$ and $|v''|\leq 2N$. As in Case 3 in the proof of \eqref{J3}, in this situation, one may also use the similar approximation \eqref{km} to obtain
\begin{equation*}
\begin{split}
\CJ^1_{6,2}\leq&\frac{C_{q,\ta}}{N}e^{-\frac{\la_0}{2}t^{\rho_0}}\sup\limits_{0\leq s\leq t}e^{\frac{\la_0}{2} s^{\rho_0}}\|f(s)\|_\infty\\&+C_{q,\ta}e^{-\frac{\la_0}{2}(t-t_1)^{\rho_0}}\int_{\prod_{j=1}^{k-1}
\mathcal{V}_{j}}\sum_{l=1}^{k-1}\int_{0}^{t_l}\mathbf{1}_{\{t_{l+1}\leq0<t_{l}\}}\iint dv^{\prime }dv^{\prime\prime}
\int_{\prod_{j=1}^{k-1}\mathcal{V}_{j}^{\prime }}\\&\quad\times e^{-\la_0(s-t_{1}^{\prime })^{\rho_0}}
\sum_{l^{\prime }=1}^{k-1}\int_{0}^{t_{l^{\prime}}^{\prime }}ds_{1}\mathbf{1}_{\{t_{l^{\prime }+1}^{\prime }\leq
0<t_{l^{\prime }}^{\prime }\}}|f(s_{1})|d\Sigma_{l^{\prime }}(s_{1})\Sigma_{l}(s)ds\\
\leq&\frac{C_{q,\ta}}{N}e^{-\frac{\la_0}{2}t^{\rho_0}}\sup\limits_{0\leq s\leq t}e^{\frac{\la_0}{2} s^{\rho_0}}\|f(s)\|_\infty\\&+C_{q,\ta}e^{-\frac{\la_0}{2}(t-t_1)^{\rho_0}}\int_{\prod_{j=1}^{k-1}
\mathcal{V}_{j}}\sum_{l=1}^{k-1}\int_{0}^{t_l}\mathbf{1}_{\{t_{l+1}\leq
0<t_{l}\}}\\&\times\left\{e^{-\frac{\la_1}{2}s^{\rho_0}}(k(s))^{7}\sup\limits_{0\leq s_1\leq s}e^{\frac{\la_1}{2} s_1^{\rho_0}}\|f(s_1)\|_2\right\}\Sigma_{l}(s)ds\\
\leq&\frac{C_{q,\ta}}{N}e^{-\frac{\la_0}{2}t^{\rho_0}}\sup\limits_{0\leq s\leq t}e^{\frac{\la_0}{2} s^{\rho_0}}\|f(s)\|_\infty
+C_{q,\ta}e^{-\frac{\la_0}{2}t^{\rho_0}}\sup\limits_{0\leq s\leq t}e^{\frac{\la_1}{2} s^{\rho_0}}\|f(s)\|_2.
\end{split}
\end{equation*}
Here $\la_0$ is chosen to be smaller then $\la_1$ so that $e^{-\frac{\la_1}{2}s^{\rho_0}}(k(s))^{7}\leq Ce^{-\frac{\la_0}{2}s^{\rho_0}}.$

Gathering the above estimates for $\CJ^1_{6,2}$, we see that \eqref{CJ62.es} is true. Once \eqref{CJ62.es} is obtained, the other terms in $\CJ_6$ and $\CJ_2$ can be treated in a similar fashion and after a tedious calculations it turns out that they share the same bound as \eqref{CJ62.es}. Namely, we obtain
\begin{equation}\label{CJ26}
\begin{split}
\CJ_2,\ \CJ_6
\leq&C_{q,\ta}\left(\frac{1}{T_0^{5/4}}+\frac{1}{N}\right)e^{-\frac{\la_0}{2}t^{\rho_0}}\sup\limits_{0\leq s\leq t}e^{\frac{\la_0}{2} s^{\rho_0}}\|f(s)\|_\infty\\&+C_{q,\ta}e^{-\frac{\la_0}{2}t^{\rho_0}}\sup\limits_{0\leq s\leq t}e^{\frac{\la_1}{2} s^{\rho_0}}\|f(s)\|_2.
\end{split}
\end{equation}
Now, substituting \eqref{CJ26} into \eqref{f26} and choosing $\eps, \eps_0>0$ suitably small and $N, T_0>0$ sufficiently large, we have
\begin{equation}\label{fif.es1}
\begin{split}
e^{\frac{\la_0}{2}t^{\rho_0}}\|f(t)\|_\infty\leq& C\|w_{q/2,\ta}f_0\|_\infty+C\sup\limits_{0\leq s\leq t}e^{\frac{\la_0}{2}s^{\rho_0}}\|w_{q/2,\ta}g_f(s)\|_\infty\\&+C\sup\limits_{0\leq s\leq t}e^{\frac{\la_1}{2} s^{\rho_0}}\|f(s)\|_2.
\end{split}
\end{equation}
Next, from \eqref{Ga.lif1} and \eqref{sol.es}, it follows that
\begin{equation}\label{gf.es}
\begin{split}
\|w_{q/2,\ta}g_f(s)\|_\infty=\|w_{q/2,\ta}\Ga(f,f)(s)\|_\infty\leq C\|w_{q,\ta}f(s)\|_\infty\|f(s)\|_\infty\leq C\vps_0\|f(s)\|_\infty.
\end{split}
\end{equation}
To control the last term in \eqref{fif.es1}, we appeal to deduce the exponential decay of $f$ in $L^2$. Notice that $f(t,x,v)$ as a global solution to \eqref{BE}, \eqref{ID} and \eqref{DBD} satisfies \eqref{sol.es}, we know thanks to \eqref{l-decay} in Proposition \ref{l2-lqn} that $f(t,x,v)$
also satisfies
\begin{equation}\label{f2d}
\begin{split}
\Vert f(t)\Vert _{2}\lesssim& e^{-\frac{\lambda_1}{2} t^{\rho_0}}\Bigg\{\|w_{q/2,\ta}f_0\|_2
+\sqrt{\int_{0}^{t}e^{\lambda_1 s^{\rho_0}}\Vert \nu^{-1/2}\Ga(f,f)(s)\Vert_{2}^{2}ds}
\\&+\sqrt{\int_{0}^{t}\Vert\nu^{-1/2} w_{q/2,\ta}\Ga(f,f)(s)\Vert_{2}^{2}ds}\Bigg\}.
\end{split}
\end{equation}
On the other hand, from Lemma \ref{es.nop} and the bound \eqref{sol.es}, it follows that
\begin{equation}\label{Ga21}
\begin{split}
\int_{0}^{t}e^{\lambda_1 s^{\rho_0}}\Vert \nu^{-1/2}\Ga(f,f)(s)\Vert
_{2}^{2}ds\leq& C\int_{0}^{t}e^{\lambda_1 s^{\rho_0}}\|w_{q/2,\ta}f(s)\|^2_{\infty}\|f(s)\|^2_{\nu}ds\\
\leq& C\sup\limits_{0\leq s\leq t}\|w_{q/2,\ta}f(s)\|^2_{\infty}\int_{0}^{t}e^{\lambda_1 s^{\rho_0}}\|f(s)\|^2_{\nu}ds\\
\leq& C\vps^2_0\sup\limits_{0\leq s\leq t}\|w_{q/2,\ta}f(s)\|^2_{\infty},
\end{split}
\end{equation}
and similarly
\begin{equation}\label{Ga22}
\begin{split}
\int_{0}^{t}\Vert \nu^{-1/2}w_{q/2,\ta}\Ga(f,f)(s)\Vert
_{2}^{2}ds
\leq& C\int_{0}^{t}\|w_{q,\ta}f(s)\|^2_{\infty}\|w_{q/2,\ta}f(s)\|_\nu^2ds
\\ \leq& C\vps_0^2\int_{0}^{t}\|w_{q/2,\ta}f(s)\|_\nu^2ds
\leq C\|w_{q,\ta}f_0\|^2_\infty.
\end{split}
\end{equation}

Consequently, \eqref{f2d}, \eqref{Ga21} and \eqref{Ga22} give rise to
\begin{equation}\label{f2dc}
\begin{split}
e^{\frac{\lambda_1}{2} t^{\rho_0}}\Vert f(t)\Vert _{2}\leq C\|w_{q,\ta}f_0\|_\infty
+C\vps_0\sup\limits_{0\leq s\leq t}\|f(s)\|_{\infty}.
\end{split}
\end{equation}
Now plugging \eqref{f2dc}  and \eqref{gf.es} into \eqref{fif.es1} 
leads us to
\begin{equation*}
\begin{split}
e^{\frac{\la_0}{2}t^{\rho_0}}\|f(t)\|_\infty\leq& C\|w_{q,\ta}f_0\|_\infty.
\end{split}
\end{equation*}
This completes the proof of the second part of Theorem \ref{ms1}. Therefore we conclude the proof of Theorem \ref{ms1}.

\end{proof}

\section{Specular reflection boundary value problem}\label{SVP}

\subsection{$L^2$ theory for the linearized equation}
Let us look at the boundary value problem for the linearized homogeneous equation
\begin{equation}
\partial _{t}f+v\cdot \nabla _{x}f+Lf=0,\text{ \ \ }f(0)=f_{0},\quad \text{
in }(0,\infty)\times \Omega \times \R^{3},  \label{sleq}
\end{equation}
\begin{equation}\label{slbd}
f(t,x,v)|_{\ga_-}=f(t,x,R_x v),\ \ \textrm{on}\  [0,\infty)\times\gamma _{-}.
\end{equation}
We first show that the macroscopic part of the solution of \eqref{sleq} and \eqref{slbd} can be dominated by the microscopic
part on the time interval $[0,1]$.
\begin{proposition}\label{smm}
Let $f(t,x,v)\in L^\infty([0,1],L^2(\Omega\times\R^3))$ be a solution to \eqref{sleq} and \eqref{slbd}, and $f_\ga\in L^2([0,1],L^2(\pa\Omega\times\R^3))$, then there exists $\de_0>0$ such that
\begin{equation}\label{smm.ex}
\int_0^1(Lf,f)ds\geq \de_0\int_0^1\| f(s)\|_\nu^2ds.
\end{equation}

\end{proposition}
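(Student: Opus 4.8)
The plan is to argue by contradiction, after reducing \eqref{smm.ex} to a time-integrated \emph{macroscopic estimate} on $[0,1]$ and then exploiting a compactness argument together with the rigidity of hydrodynamic fields under specular reflection. Recall the pointwise-in-time microscopic coercivity $(Lf,f)\geq\de_0\|\{\FI-\FP\}f\|_\nu^2$ together with the orthogonal splitting $\|f\|_\nu^2=\|\FP f\|_\nu^2+\|\{\FI-\FP\}f\|_\nu^2$, where $\|\FP f\|_\nu\sim\|\FP f\|_2$ since $\FP f$ lives in a fixed finite-dimensional subspace of $L^2_v$. Hence \eqref{smm.ex} is equivalent to the bound $\int_0^1\|\FP f(s)\|_2^2\,ds\lesssim\int_0^1\|\{\FI-\FP\}f(s)\|_\nu^2\,ds$ for all solutions $f$ of \eqref{sleq}, \eqref{slbd}. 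A first observation is that, integrating $(v\cdot\nabla_x f,f)$ by parts in $x$, the boundary contribution equals $\tfrac12\int_0^1\int_\gamma|f|^2(v\cdot n)\,d\gamma\,ds$, which vanishes identically: the specular relation $f(t,x,v)=f(t,x,R_xv)$ together with $|R_xv\cdot n(x)|=|v\cdot n(x)|$ forces $|f|_{2,+}^2=|f|_{2,-}^2$. Thus, in contrast with the diffuse case, the boundary produces no obstruction here.

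Suppose no such $\de_0$ exists. By scale invariance of the inequality one can pick solutions $f_n$ of \eqref{sleq}, \eqref{slbd} on $[0,1]$ with $\int_0^1\|f_n\|_\nu^2\,ds=1$ and $\int_0^1(Lf_n,f_n)\,ds\to0$; by the microscopic coercivity this yields $\{\FI-\FP\}f_n\to0$ in $L^2((0,1);L^2_\nu)$, while $\int_0^1\|\FP f_n\|_2^2\,ds\to1$. Writing $\FP f_n=\{a_n+b_n\cdot v+\tfrac12(|v|^2-3)c_n\}\sqrt\mu$, the coefficients $(a_n,b_n,c_n)$ are bounded in $L^2((0,1)\times\Omega)$, so after extracting a subsequence $f_n\rightharpoonup f=\FP f=\{a+b\cdot v+\tfrac12(|v|^2-3)c\}\sqrt\mu$ weakly in $L^2$, the microscopic part vanishing in the limit. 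From the equation, $\partial_tf_n+v\cdot\nabla_xf_n=-Lf_n=-L\{\FI-\FP\}f_n$, and since $\nu$ is bounded above and $K$ is controlled by $\|\cdot\|_\nu$ via \eqref{K.ip2}, the right-hand side tends to $0$ in $L^2((0,1)\times\Omega\times\R^3)$; hence the limit satisfies $\partial_tf+v\cdot\nabla_xf=0$ together with the specular boundary condition. The delicate step is to upgrade the weak convergence of $\FP f_n$ to strong convergence in $L^2((0,1)\times\Omega\times\R^3)$, which is what guarantees $\int_0^1\|\FP f\|_2^2\,ds=1$, i.e.\ $f\not\equiv0$. I would obtain this from a velocity-averaging lemma applied to the transport equation for $f_n$ with an $L^2$-small right-hand side (using the trace bound $f_\gamma\in L^2$ to handle the boundary): it provides relative $L^2((0,1)\times\Omega)$-compactness of the moments $\int_{\R^3}f_n\psi(v)\,dv$ for $\psi\in\{\sqrt\mu,v\sqrt\mu,|v|^2\sqrt\mu\}$, hence of $(a_n,b_n,c_n)$, and therefore of $\FP f_n$ since $\{\FI-\FP\}f_n\to0$ strongly.

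It remains to show the nonzero limit $f$ leads to a contradiction. Since $\mu$ is $(t,x)$-independent, $\partial_tf+v\cdot\nabla_xf=0$ reads $\partial_t(f/\sqrt\mu)+v\cdot\nabla_x(f/\sqrt\mu)=0$, and matching the coefficients of the $v$-monomials $1,\ v_i,\ v_iv_j,\ v_iv_jv_k$ produces an over-determined linear system for $(a,b,c)$. As in the hard-potential analysis of \cite{Guo-2010}, this system forces $(a,b,c)$ to be a low-degree polynomial field in $(t,x)$; imposing the specular condition on $\partial\Omega$, and using that $\Omega$ is strictly convex \eqref{scon} and $\xi$ analytic, eliminates all of this freedom except, at most, the rigid rotation $b=\varpi\times(x-x_0)$, $a=c=0$ in the case \eqref{axis} of a rotational symmetry. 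Finally, the conservation laws \eqref{mass.c} and \eqref{eng.c} (and \eqref{axiscon} in the symmetric case) are propagated by the flow and pass to the weak limit, which removes the surviving constant and rotational modes; hence $f\equiv0$ on $[0,1]$, contradicting $\int_0^1\|\FP f\|_2^2\,ds=1$. This produces the desired $\de_0$.

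The main obstacle is the compactness upgrade in the second paragraph — making the velocity-averaging argument operative in the present weighted, soft-potential framework, with due care at the boundary — together with the rigidity classification in the third paragraph, where strict convexity and analyticity of $\xi$ and the geometry of the specular characteristics are essential; once these are settled, \eqref{smm.ex} follows in a routine way from the microscopic coercivity and the vanishing boundary flux.
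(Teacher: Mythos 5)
Your proposal is correct and follows essentially the same route as the paper's proof: argue by contradiction with a normalized sequence, show the weak limit is purely hydrodynamic and solves free transport with the specular condition, upgrade to strong $L^2$ convergence of the macroscopic parts via the velocity-averaging lemma, and then use the rigidity classification of such hydrodynamic solutions (quoted from the hard-potential analysis of Guo) together with the conservation laws \eqref{mass.c}, \eqref{eng.c}, \eqref{axiscon} to force the limit to vanish, contradicting the normalization. The only deviations are cosmetic — the paper localizes with a cutoff in $(t,x,v)$ before invoking the averaging lemma rather than using the boundary trace, and your ``orthogonal splitting'' of $\|\cdot\|_\nu^2$ holds only up to constants since $\FP$ is orthogonal in unweighted $L^2_v$ — neither of which affects the argument.
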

\begin{proof}
The proof is based on contradiction and it is divided into four steps.

\noindent{\it Step 1. Proof of contradiction.} If Proposition \ref{smm} is false, then no $\de_0$ exists as in Proposition \ref{smm}. Hence for any $n\geq1$, there exists a sequence of non-zero $f_n\in L^\infty([0,1],L^2(\Omega\times\R^3))$ to the linearized Boltzmann equation \eqref{sleq} such that
\begin{equation}\label{op.mm}
0\leq\int_0^1(Lf_n,f_n)ds\leq\frac{1}{n} \int_0^1\| f_n(s)\|_\nu^2ds.
\end{equation}
Since $f_n$ satisfies
$$\pa_tf_n+v\cdot\na_xf_n+Lf_n=0,\quad \text{
in }(0,1]\times \Omega \times \R^{3},$$
and
$$
f_n(t,x,v)|_{\ga_-}=f_n(t,x,R_xv),\ \ \textrm{on}\  [0,1]\times\gamma _{-}.
$$
With this and by a similar argument as for obtaining Lemma 8 in \cite[pp.340]{G-soft}, one has
\begin{equation}\label{fnul}
\sup\limits_{0\leq t\leq 1}\|\nu^{1/2}f_n(t)\|_2^2\leq C\|\nu^{1/2}f_{n}(0)\|_2^2,\ \ \int_0^1\|f_n(s)\|^2_\nu ds\geq C\|\nu^{1/2}f_{n}(0)\|_2^2.
\end{equation}
Assume $f_{n}(0)$ is not identical to zero and set
$$
Z_n=\frac{f_n(t,x,v)}{\sqrt{\int_0^1\|f_n(s)\|_\nu^2ds}},
$$
then
\begin{equation}\label{zn}
\int_0^1\|Z_n(s)\|_\nu^2ds=1,
\end{equation}
and
\eqref{op.mm} is equivalent to
\begin{equation}\label{znlm}
0\leq\int_0^1(LZ_n,Z_n)ds\leq \frac{1}{n}.
\end{equation}
\eqref{zn} and \eqref{znlm} imply there exists $Z(t,x,v)$ such that
$$
Z_n\rightarrow Z\ \textrm{weakly in} \ \ \int_0^1\|\cdot\|_\nu^2ds,
$$
and
\begin{equation}\label{miZ0}
\int_0^1(LZ_n,Z_n)ds=\int_0^1(L(\mathbf{I}-\mathbf{P})Z_{n},(\mathbf{I}-\mathbf{P})Z_{n})ds\rightarrow0.
\end{equation}
Notice that it is straightforward to verify
\begin{equation*}
\mathbf{P}Z_{n}\rightarrow \mathbf{P}Z, \ (\mathbf{I}-\mathbf{P})Z_{n}\rightarrow (\mathbf{I}-\mathbf{P})Z,\ \text{ weakly in }\int_{0}^{1}||\cdot
||_{\nu }^{2}ds.
\end{equation*}%
It follows from \eqref{miZ0} that $(\mathbf{I}-\mathbf{P)}Z=0$, therefore
\begin{equation*}
Z(t,x,v)=\{a(t,x)+v\cdot b(t,x)+|v|^{2}c(t,x)\}\sqrt{\mu }.
\end{equation*}
Moreover, we have from $\pa_tf_n+v\cdot\na_xf_n+Lf_n=0$ that
\begin{equation}\label{zneq}
\pa_tZ_n+v\cdot\na_xZ_n+LZ_n=0,
\end{equation}
which yields
\begin{equation}\label{zeq}
\pa_tZ+v\cdot\na_xZ=0.
\end{equation}
In what follows, we will show on the one hand $Z=0$ from \eqref{zeq} and the inherited boundary condition \eqref{slbd}. On the other hand,
$Z_{n}$ will be proven to converge strongly to $Z$ in $\int_{0}^{1}\|\cdot
\|_\nu^{2}ds,$ and $\int_{0}^{1}\|Z\|_{\nu}^{2}ds\neq 0.$ This leads to a
contradiction.

\noindent{\it Step 2. The limit function $Z(t,x,v)$.}
\begin{lemma}
\label{limit} There exists constants $a_{0},c_0,c_{1},c_{2},$ and constant
vectors $b_{0},b_{1}$ and $\varpi $ such that $Z(t,x,v)$ takes the form:%
\begin{equation*}
\begin{split}
\Bigg(&\left\{\frac{c_{0}}{2}|x|^{2}-b_{0}\cdot
x+a_{0}\right\}+\left\{-c_{0}tx-c_{1}x+\varpi \times x+b_{0}t+b_{1}\right\}\cdot v\\&+\left\{\frac{%
c_{0}t^{2}}{2}+c_{1}t+c_{2}\right\}|v|^{2}\Bigg) \sqrt{\mu }.
\end{split}
\end{equation*}%
Moreover, these constants are finite:%
\begin{equation*}
|a_{0}|+|c_{0}|+|c_{1}|+|c_{2}|+|b_{0}|+|b_{1}|+|\varpi |<+\infty .
\end{equation*}
\end{lemma}
\begin{proof}
See Lemma 6 in \cite[pp.736]{Guo-2010}.
\end{proof}

\noindent{\it Step 3. Compactness.} To show the strong convergence $\lim\limits_{n\rightarrow \infty }\int_{0}^{1}\| \{Z_{n}-Z\}(s)\|_\nu^{2}ds=0,$ we resort to the Averaging Lemma. 
\begin{lemma}
\label{interior}
Up to a subsequence, it holds that $
\lim\limits_{k\rightarrow \infty }\int_{0}^{1}\|\{Z_{n}-Z\}(s)\|_\nu^{2}ds=0.$
\end{lemma}
\begin{proof}
Define
\begin{equation*}
\Omega _{\varepsilon ^{4}}\equiv \{x\in \Omega :\xi (x)<-\varepsilon ^{4}\}.
\end{equation*}
Choose any $\eta_0>0$ and a smooth cutoff function $\chi_1 (t,x,v)$ in $(0,1)\times \Omega \times\R^3$, such that $\chi_1 (t,x,v)=1$
in $[\eta_0,1-\eta_0]\times \Omega\setminus\Omega_{\vps^4} \times\{|v|\leq \frac{1}{\eta_0}\}$.
Next,
multiplying the equation \eqref{zneq} by $\chi_1 $, we obtain
\begin{equation*}
\lbrack \partial _{t}+v\cdot \nabla _{x}]\{\chi_1 Z_{n}\}=\{[\partial
_{t}+v\cdot \nabla _{x}]\chi_1 \}Z_{n}-\chi_1 LZ_{n}.
\end{equation*}%
Since $f_n\in L^\infty([0,1],L^2(\Omega\times\R^3))$, one sees that $\chi_1Z_n\in L^2([0,1],L^2(\Omega\times\R^3))$ and
$\{[\partial
_{t}+v\cdot \nabla _{x}]\chi_1 \}Z_{n}-\chi_1 LZ_{n}\in L^2([0,1],L^2(\Omega\times\R^3))$, then we know from the
Averaging Lemma cf. \cite{DL, DL2}, $\int \chi_1 Z_{n}e(v)dv$ are compact in
$L^{2}([0,1]\times \Omega )$ for any exponential decay function $e(v)$.
On the other hand, as \eqref{fnul}, from \eqref{zneq}, it follows that
\begin{equation*}
\sup\limits_{0\leq t\leq 1}\|\nu^{1/2}Z_n(t)\|_2^2\leq C\|\nu^{1/2}Z_{n}(0)\|_2^2,\ \ \int_0^1\|Z_n(s)\|^2_\nu ds\geq C\|\nu^{1/2}Z_{n}(0)\|_2^2.
\end{equation*}
Using this, one deduce
\begin{equation*}
\begin{split}
\int_0^1\int_\Omega&\left(\int(1-\chi_1) Z_{n}e(v)dv\right)^2dxds+\int_0^1\int_\Omega\left(\int(1-\chi_1) Ze(v)dv\right)^2dxds\\
\leq&C\int_0^1\int_{\Omega\times\R^3}\left\{(1-\chi_1)^2 Z_{n}^2e(v)+(1-\chi_1)^2 Z^2e(v)\right\}dvdxds\\
\leq&C\int_{0\leq s\leq \eta_0}\int_{\Omega\times\R^3}+C\int_{1-\eta_0\leq s\leq 1}\int_{\Omega\times\R^3}+C\int_0^1\int_{\Omega}\int_{|v|\geq\frac{1}{\eta_0}}\\
\leq& C\eta_0\sum\limits_{0\leq s\leq1}\int_{\Omega\times\R^3}(1+|v|)^\varrho(Z_{n}^2+Z^2)dvdx\leq C\eta_0.
\end{split}
\end{equation*}
Therefore, up to a subsequence,
the macroscopic parts of $Z_{k}$ satisfy $\mathbf{P}Z_{k}\rightarrow
\mathbf{P}Z=Z$ strongly in $L^{2}([0,1]\times \Omega \times
\R^{3}).$ Therefore, in light of $\int_{0}^{1}||(\mathbf{I}-\mathbf{%
P)}Z_{k}(s)||_{\nu }^{2}ds\rightarrow 0$ in \eqref{miZ0}, 
we conclude our lemma.

\end{proof}

\noindent{\it Step 4. Boundary condition leads to $Z=0$.} Performing the same calculations as that of Section 3.6 in \cite[pp.747]{Guo-2010}
, we see that $Z=0$, and this leads to a contradiction,
so finished up the proof of Proposition \ref{smm}.

\end{proof}

Once the coercivity estimate \eqref{smm.ex} is obtained, like Proposition \ref{l2-lqn}, one can now deduce the basic energy estimates and time decay rates as follows:
\begin{lemma}\label{sl2.eng}
Assume $f(t,x,v)$ satisfies \eqref{sleq} and \eqref{slbd}, then it holds that
\begin{equation}\label{seng1}
\|f(t)\|_2^2+\int_0^t\|f\|^2_\nu\leq C\|f_0\|_2^2,
\end{equation}
and
\begin{equation}\label{seng2}
\|w_{q/4,\ta}f(t)\|_2^2+\int_0^t\|w_{q/4,\ta}f\|^2_\nu\leq C\|w_{q/4,\ta}f_0\|_2^2.
\end{equation}
Moreover, there exists $\la>0$ such that
\begin{equation}\label{seng3}
\|f(t)\|_2^2+e^{-\la t^{\rho_0}}\int_0^te^{\la s^{\rho_0}}\|f\|^2_\nu\leq Ce^{-\la t^{\rho_0}}\|w_{q/4,\ta}f_0\|_2^2,
\end{equation}
here $\rho_0$ is given as in Proposition \ref{l2-lqn}.
\end{lemma}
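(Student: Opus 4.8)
The plan is to derive Lemma~\ref{sl2.eng} from the coercivity estimate \eqref{smm.ex} of Proposition~\ref{smm} in exactly the same spirit as Proposition~\ref{l2-lqn} was obtained from Lemma~\ref{dabc}, with the simplification that the specular boundary condition \eqref{slbd} makes all boundary terms in Green's identity vanish. First I would take the inner product of \eqref{sleq} with $f$ over $\Om\times\R^3$ and use Green's identity together with $f(t,x,v)|_{\ga_-}=f(t,x,R_xv)$, which kills the boundary contribution since $d\ga$ is invariant under $v\mapsto R_xv$ on $\pa\Om$; this yields $\frac{d}{dt}\|f\|_2^2+2(Lf,f)=0$. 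Integrating in time and applying \eqref{smm.ex} on each unit interval $[n,n+1]$ (using translation invariance of the equation in $t$) gives $\|f(t)\|_2^2+\de_0\int_0^t\|f\|_\nu^2\,ds\le C\|f_0\|_2^2$, which is \eqref{seng1}.

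For the weighted estimate \eqref{seng2} I would set $h=w_{q/4,\ta}f$, so that $h$ solves $\pa_t h+v\cdot\na_x h+\nu h-w_{q/4,\ta}K(h/w_{q/4,\ta})=0$ with the inherited boundary condition $h_-=h(t,x,R_xv)$ (this works because $w_{q/4,\ta}$ depends only on $|v|$, hence is $R_x$-invariant). Taking the inner product with $h$, invoking \eqref{K.ip1} of Lemma~\ref{es.k} to absorb the $K$-term as $\eta\|h\|_\nu^2+C_\eta\|\mathbf 1_{|v|\le C_\eta}f\|_2^2$, and then controlling the low-velocity remainder by $C_\eta\|f\|_\nu^2$ and feeding in \eqref{seng1}, gives $\|h(t)\|_2^2+\int_0^t\|h\|_\nu^2\,ds\lesssim \|h_0\|_2^2+\int_0^t\|f\|_\nu^2\,ds\lesssim\|w_{q/4,\ta}f_0\|_2^2$, which is \eqref{seng2}.

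Finally, for the time-decay estimate \eqref{seng3} I would multiply $\frac{d}{dt}\|f\|_2^2+2(Lf,f)\le0$ (or, more precisely, the version with $\la\|f\|_\nu^2$ on the left obtained from \eqref{smm.ex}) by $e^{\la t^{\rho_0}}$, integrate in $t$, and treat the resulting $\la\rho_0 s^{\rho_0-1}e^{\la s^{\rho_0}}\|f\|_2^2$ term by splitting the velocity domain into $E=\{v:s^{\rho_0-1}\le\ka_0(1+|v|^2)^{\varrho/2}\}$ and its complement $E^c$, exactly as in \eqref{E.es}--\eqref{Ec.es}: on $E$ the factor $s^{\rho_0-1}$ is dominated by $\ka_0\nu(v)$ so the term is absorbed into $\ka_0 e^{\la s^{\rho_0}}\|f\|_\nu^2$, while on $E^c$ one uses $2\la s^{\rho_0}\le \frac{q}{4}|v|^\ta$ (by the choice $\rho_0=\frac{\ta}{\ta-\varrho}$ and $\la$ small depending on $q,\ka_0$) to trade the exponential growth for the weight $w_{q/4,\ta}$ and then applies \eqref{seng2}. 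Dividing through by $e^{\la t^{\rho_0}}$ gives \eqref{seng3}. The only genuinely new input compared with the diffuse case is Proposition~\ref{smm} itself (the contradiction/averaging-lemma argument needed because specular boundary conditions do not directly give the macroscopic estimate of Lemma~\ref{dabc}); once \eqref{smm.ex} is granted, the main—rather routine—obstacle is just bookkeeping the low-velocity error terms from $K$ and verifying that the interpolation split in $v$ goes through with the $w_{q/4,\ta}$ weight exactly as in Proposition~\ref{l2-lqn}.
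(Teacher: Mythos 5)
Your proposal takes essentially the same route as the paper's proof: boundary terms vanish under specular reflection, the unit-interval coercivity \eqref{smm.ex} of Proposition \ref{smm} is applied by time translation, the weighted bound \eqref{seng2} comes from Lemma \ref{es.k}, and the decay \eqref{seng3} is closed by the $e^{\la t^{\rho_0}}$-weighted energy estimate with the velocity splitting of \eqref{E.es}--\eqref{Ec.es} fed by \eqref{seng2}. The only point to state more carefully is that \eqref{smm.ex} is not a pointwise-in-time coercivity, so in the decay step it must be invoked after integrating over whole unit intervals $[j,j+1)$ with the factor $e^{\la s^{\rho_0}}$ frozen (comparable to $e^{\la j^{\rho_0}}$) on each, and the final partial interval $[N,t]$ handled separately via Lemma \ref{es.k}, exactly as the paper does.
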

\begin{proof}
We prove \eqref{seng3} only, the proof for \eqref{seng1} and \eqref{seng2} being similar and easier. Taking the inner product of \eqref{sleq} with $e^{\la t^{\rho_0}}f$ over $\Omega\times\R^3$, one has
\begin{equation}\label{sin}
\frac{d}{dt}\left\{e^{\la t^{\rho_0}}\|f(t)\|_2^2\right\}+2(e^{\la t^{\rho_0}}Lf,f)=\la\rho_0 t^{\rho_0-1}e^{\la t^{\rho_0}}\|f(t)\|_2^2.
\end{equation}
For any $t>0$, there exists a nonnegative integer $N$ such that $t\in[N,N+1)$.
For the time interval $[0,N]$ (we may assume without lose of generality $N\geq1$), it follows that
\begin{equation*}
e^{\la N^{\rho_0}}\|f(N)\|_2^2+2\int_0^N(e^{\la s^{\rho_0}}Lf,f)ds=\|f_0\|^2_2+\la\rho_0 \int_0^Ns^{\rho_0-1}e^{\la s^{\rho_0}}\|f(s)\|_2^2ds.
\end{equation*}
Split the time interval into $\cup_{j=0}^{N-1}[j,j+1)$ and define $f_j(s,x,v)=f(j+s,x,v)$ for $j=0,1,2,\cdots,N-1,$ to deduce
\begin{equation*}
\begin{split}
e^{\la N^{\rho_0}}\|f(N)\|_2^2&+2\sum\limits_{j=1}^{N-1}\int_0^1(e^{\la (j+s)^{\rho_0}}Lf_j,f_j)ds\\
\leq&\|f_0\|^2_2+\la\rho_0 \sum\limits_{j=1}^{N-1}\int_0^1(j+s)^{\rho_0-1}e^{\la (j+s)^{\rho_0}}\|f_j(s)\|_2^2ds,
\end{split}
\end{equation*}
which further implies
\begin{equation}\label{eng0N}
\begin{split}
e^{\la N^{\rho_0}}\|f(N)\|_2^2&+2\sum\limits_{j=1}^{N-1}\int_0^1(e^{\la j^{\rho_0}}Lf_j,f_j)ds\\
\leq&\|f_0\|^2_2+C\la\rho_0 \sum\limits_{j=1}^{N-1}\int_0^1j^{\rho_0-1}e^{\la j^{\rho_0}}\|f_j(s)\|_2^2ds,
\end{split}
\end{equation}
for $0<\rho_0<1.$

On the other hand, we get from \eqref{smm.ex} that
\begin{equation}\label{sLj}
\sum\limits_{j=1}^{N-1}\int_0^1(e^{\la j^{\rho_0}}Lf_j,f_j)ds\geq\de_0\sum\limits_{j=1}^{N-1}\int_0^1e^{\la j^{\rho_0}}\|f_j\|_{\nu}^2ds.
\end{equation}
Substituting \eqref{sLj} into \eqref{eng0N} leads us to
\begin{equation}\label{eng0Nr}
\begin{split}
e^{\la N^{\rho_0}}\|f(N)\|_2^2&+\sum\limits_{j=1}^{N-1}\int_0^1e^{\la j^{\rho_0}}\|f_j\|_{\nu}^2ds\\ \leq& C\|f_0\|^2_2+C\la\rho_0 \sum\limits_{j=1}^{N-1}\int_0^1j^{\rho_0-1}e^{\la j^{\rho_0}}\|f_j(s)\|_2^2ds.
\end{split}
\end{equation}
To handle the integral on the right hand side of the above inequality, we decompose the velocity integration domain as
$$
E_j=\{v\mid j^{\rho_0-1}\leq \ka'_0\nu\},\ \ E^c_j=\{v\mid j^{\rho_0-1}> \ka'_0\nu\},
$$
where $\ka'_0>0$ and small enough.
Therefore, for $\la=\frac{q}{16} (\ka'_0)^{\frac{\rho_0}{1-\rho_0}}$, it follows
\begin{equation}\label{sdec}
\begin{split}
\sum\limits_{j=1}^{N-1}&\int_0^1j^{\rho_0-1}e^{\la j^{\rho_0}}\|f_j(s)\|_2^2ds\\
\leq& \ka'_0\sum\limits_{j=1}^{N-1}\int_0^1e^{\la j^{\rho_0}}\|f_j(s)\|_\nu^2ds
\\&+C\sum\limits_{j=1}^{N-1}\int_0^1j^{\rho_0-1}e^{-\la j^{\rho_0}}
e^{2\la (\ka'_0)^{\frac{\rho_0}{\rho_0-1}}\nu^{\frac{\rho_0}{\rho_0-1}}}\|{\bf 1}_{E_j^c}f_j(s)\|_2^2ds\\
\leq& \ka'_0\sum\limits_{j=1}^{N-1}\int_0^1e^{\la j^{\rho_0}}\|f_j(s)\|_\nu^2ds
+C\sum\limits_{0\leq s\leq N}\|w_{q/4,\ta}f(s)\|_2^2\sum\limits_{j=1}^{N-1}j^{\rho_0-1}e^{-\la j^{\rho_0}}.
\end{split}
\end{equation}
Putting \eqref{sdec} back to \eqref{eng0Nr} and noticing that $\sum\limits_{j=1}^{N-1}j^{\rho_0-1}e^{-\la j^{\rho_0}}<\infty$, we arrive at
\begin{equation*}
e^{\la N^{\rho_0}}\|f(N)\|_2^2+\sum\limits_{j=1}^{N-1}\int_0^1e^{\la j^{\rho_0}}\|f_j\|_{\nu}^2ds\leq C\|f_0\|^2_2+C\|w_{q/4,\ta}f_0\|_2^2,
\end{equation*}
where we used \eqref{seng2}. Changing back to $f_j(s)=f(s+j)$ and using $e^{(j+s)^{\rho_0}-s^{\rho_0}}\leq e^{j^{\rho_0}}$, one further has
\begin{equation}\label{eng0N2}
e^{\la N^{\rho_0}}\|f(N)\|_2^2+\int_0^Ne^{\la s^{\rho_0}}\|f(s)\|_{\nu}^2ds\leq C\|f_0\|^2_2+C\|w_{q/4,\ta}f_0\|_2^2.
\end{equation}
Now integrate \eqref{sin} over $[N,t]$ to obtain
\begin{equation}\label{engNt1}
e^{\la t^{\rho_0}}\|f(t)\|_2^2+\int_N^te^{\la s^{\rho_0}}(Lf,f)ds\leq \la\rho_0 \int_N^ts^{\rho_0-1}e^{\la s^{\rho_0}}\|f(s)\|_2^2ds+e^{\la N^{\rho_0}}\|f(N)\|_2^2.
\end{equation}
Thanks to Lemma \ref{es.k}, one has
\begin{equation}\label{engNt2}
\int_N^te^{\la s^{\rho_0}}(Lf,f)ds\geq \de\int_N^te^{\la s^{\rho_0}}\|f(s)\|_{\nu}^2ds-C\int_N^te^{\la s^{\rho_0}}\|{\bf 1}_{|v|\leq C}f(s)\|_{\nu}^2ds.
\end{equation}
From \eqref{engNt1} and \eqref{engNt2}, it follows that
\begin{equation*}
\begin{split}
e^{\la t^{\rho_0}}\|f(t)\|_2^2&+\de\int_N^te^{\la s^{\rho_0}}\|f(s)\|_{\nu}^2ds\\ \leq& \la\rho_0 \int_N^ts^{\rho_0-1}e^{\la s^{\rho_0}}\|f(s)\|_2^2ds+C\|f_0\|_2^2
+e^{\la N^{\rho_0}}\|f(N)\|_2^2,
\end{split}
\end{equation*}
where the fact that $\int_N^te^{\la s^{\rho_0}}\|{\bf 1}_{|v|\leq C}f(s)\|_{\nu}^2ds\leq C\sum\limits_{0\leq s\leq t}\|f(s)\|_2^2\leq C\|f_0\|_2^2$ was used.
We then have by performing the similar calculations as for obtaining \eqref{eng0N2}
 \begin{equation}\label{engNt3}
e^{\la t^{\rho_0}}\|f(t)\|_2^2+\int_N^te^{\la s^{\rho_0}}\|f(s)\|_{\nu}^2ds\leq C\|f_0\|^2_2+C\|w_{q/4,\ta}f_0\|_2^2+Ce^{\la N^{\rho_0}}\|f(N)\|_2^2.
\end{equation}
Thereby, \eqref{seng3} follows from \eqref{eng0N2} and \eqref{engNt3}. This finishes the proof of Lemma \ref{sl2.eng}.



\end{proof}

\subsection{$L^\infty$ theory for the linearized equation}

Recall
\begin{equation*}
w_{q,\ta,\vth}=\exp\left\{\frac{q|v|^\ta}{8}+\frac{q|v|^\ta}{8(1+t)^\vth}\right\}, \ (q,\ta)\in \CA_{q,\ta},\ 0\leq\vth< -\frac{\ta}{\varrho}.
\end{equation*}
Let $h=w_{q,\ta,\vth}(t,v)f(t,x,v)$,
the problem \eqref{sleq} and \eqref{slbd} is now equivalent to
\begin{equation}
\partial_{t}h+v\cdot \nabla _{x}h+ \left(\nu +\frac{\vth q|v|^\ta}{8(1+t)^{\vth+1}}\right)h=K_{\overline{w}}h,\text{ \ \ }h(0)=h_{0},\quad \text{
in }(0,\infty)\times \Omega \times \R^{3},  \label{ssln.eq}
\end{equation}
with
\begin{equation}\label{sslbd}
h(t,x,v)|_{\ga_-}=h(t,x,R_{x}v),\ \ \textrm{on}\  [0,\infty)\times\gamma _{-}.
\end{equation}
Here $K_{\overline{w}}h=w_{q,\ta,\vth}K\left(\frac{h}{w_{q,\ta,\vth}}\right)$  as in the subsection \ref{L2th}. 

We express solution, $h(t,x,v)$, to \eqref{ssln.eq} and \eqref{sslbd} through semigroup $U(t)$ as
\begin{equation*}
h(t,x,v)=\{U(t)h_0\}(x,v),
\end{equation*}
with initial boundary data given by
$$
\{U(0)h_0\}(x,v)=h_0(x,v), \ \textrm{and} \ U(0)h_0(x,v)|_{\ga_-}=h_0(x,R_{x}v).
$$
For the sake of simplicity, we denote
\begin{equation*}
\widetilde{\nu}(v,t)=\nu +\frac{\vth q|v|^\ta}{8(1+t)^{\vth+1}}.
\end{equation*}
It is obvious to see $\widetilde{\nu}^{-1}< \nu^{-1},$ which plays a significant role in the later proof.

Applying Young's inequality, one can see that there exists $C_{\varrho,q,\vth}>0$ independent of $v$ such that
\begin{equation}\label{IP.lbd1}
\widetilde{\nu}(v,t)\geq C_{\varrho,q,\vth}(1+t)^{\frac{(1+\vth)\varrho}{\ta-\varrho}},
\end{equation}
and for $t>0$, one sees that
\begin{equation}\label{IP.lbd1s}
C_{\varrho,q,\vth}(1+t)^{\frac{(1+\vth)\varrho}{\ta-\varrho}}\thicksim C_{\varrho,q,\vth}t^{\frac{(1+\vth)\varrho}{\ta-\varrho}}.
\end{equation}
From \eqref{IP.lbd1} and \eqref{IP.lbd1s}, it follows
\begin{equation}\label{IP.lbd2}
e^{-\int_s^t\widetilde{\nu}(v,\tau)d\tau}\leq \exp\left(-\la_2\left\{t^{\frac{\ta+\vth\varrho}{\ta-\varrho}}-s^{\frac{\ta+\vth\varrho}{\ta-\varrho}}\right\}\right)\eqdef e^{\la_2s^{\rho_1}-\la_2t^{\rho_1}},\ t\geq s\geq0,
\end{equation}
here $\rho_1=\frac{\ta+\vth\varrho}{\ta-\varrho}$ with $\ta+\vth\varrho>0$, moreover  $\la_2>0$ is independent of $v$.

Our goal in this subsection will be to prove the following
\begin{proposition}
\label{specularbd}Let $0<\vth< -\frac{\ta}{\varrho}$ with $-3<\varrho<0$ and $(q,\ta)\in\CA_{q,\ta}$. Assume that $\xi $ is
both strictly convex \eqref{scon} and analytic, and the mass \eqref{mass.c} and energy \eqref{eng.c} are conserved. In the case of $\Omega $ has
rotational symmetry \eqref{axis}, we also assume conservation of
corresponding angular momentum \eqref{axiscon}. Let $h_{0}\in L^{\infty }.$ There exist $\la_0>0$ and $C>0$ such that
\eqref{ssln.eq} and \eqref{sslbd} admit a unique solution $U(t)h_0$ satisfying
\begin{equation}\label{Ubd}
\left\|U(t)h_0\right\|_{\infty}\leq Ce^{-\frac{\la_0}{2}t^{\rho_1}}\left\|h_{0}\right\|_{\infty},
\end{equation}
where $\rho_1=\frac{\ta+\vth\varrho}{\ta-\varrho}$.
\end{proposition}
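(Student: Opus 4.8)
The plan is to carry the weighted $L^\infty$ analysis along the specular billiard characteristics, using the \emph{revised} collision frequency $\widetilde\nu(v,t)=\nu+\frac{\vth q|v|^\ta}{8(1+t)^{\vth+1}}$ to supply the decay that the bare frequency $\nu$ --- having no positive lower bound --- cannot, and then to close the estimate by a bootstrap over unit time slabs that feeds in the $L^2$ decay of Lemma \ref{sl2.eng}. First I would write the mild formulation of \eqref{ssln.eq}--\eqref{sslbd}: letting $t_1=t-t_{\mathbf b}(x,v)$ be the first backward bounce time and iterating the specular condition \eqref{sslbd} --- which, because $|R_xv|=|v|$ and hence $w_{q,\ta,\vth}(R_xv)=w_{q,\ta,\vth}(v)$, is an $L^\infty$-isometry, $|h(t_j,x_j,R_{x_j}v_{j-1})|=|h(t_j,x_j,\cdot)|$ --- one gets for $h=U(t)h_0$ a Duhamel representation in which every transport factor has the form $\exp\{-\int_s^t\widetilde\nu(v,\tau)\,d\tau\}$. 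The decisive point is that, by \eqref{IP.lbd1}--\eqref{IP.lbd2}, this factor is dominated \emph{uniformly in $v$} by $e^{\la_2 s^{\rho_1}-\la_2 t^{\rho_1}}$ with $\rho_1=\frac{\ta+\vth\varrho}{\ta-\varrho}\in(0,1)$; moreover $\rho_1<\rho_0=\frac{\ta}{\ta-\varrho}$ since $\varrho<0$, so the $L^2$ rate of Lemma \ref{sl2.eng} is strictly faster than the rate targeted here.

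Second, I would split $K_{\overline{w}}=K^{\chi}_{\overline{w}}+K^{1-\chi}_{\overline{w}}$ as in Section \ref{pre}. The non-singular remainder $K^{1-\chi}_{\overline{w}}$ is absorbed at once: by \eqref{K.ip3} its contribution to $|h(t,x,v)|$ is at most $C\eps^{\varrho+3}\sup_{0\le s\le t}\|h(s)\|_\infty$ times a transport factor, negligible for $\eps$ small. For the singular part I would, working over a window $[t',t]$ with $t-t'$ of unit order (equivalently a unit time slab), substitute the same representation into $K^{\chi}_{\overline{w}}h$ once more (Vidav's double iteration), producing a kernel $\mathbf k^{\chi}_{w}(v,v')\,\mathbf k^{\chi}_{w}(v',v'')$ acting on $h(s_1,\cdot,v'')$ along the twice-iterated billiard trajectories, plus lower-order boundary and initial terms estimated exactly as in Sections \ref{lifth} and \ref{non.de}.

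Third --- the technical core --- I would bound this twice-iterated term by $C_N\sup_{[t',t]}\|f(s)\|_2$ up to a small multiple of $\sup_{[t',t]}\|h(s)\|_\infty$. Decompose the $(v,v',v'')$-integration: (i) $|v|$, $|v'|$ or $|v''|$ large --- small by the Gaussian tails of $\mathbf k^\chi_w$ together with the velocity weight (Lemma \ref{es.k}, cf.\ the arguments around \eqref{J31}--\eqref{J32}); (ii) all velocities bounded but the time lag $t-s_1$ below a mesh size --- small by the measure of the time interval; (iii) all velocities $\le N$ and $t-s_1$ bounded below --- here one replaces $\mathbf k^\chi_w$ by the truncated kernels $\mathbf k^\chi_{m,w}$ of \eqref{km} and changes variables $v'\mapsto y=$ (point on the trajectory). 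This is exactly where the obstruction flagged in the Introduction --- the full specular Jacobian depends on $t,x,v$ and on the bounce counts $k,k'$, which grow exponentially in $t$ by Lemma \ref{velocity} --- is circumvented: over the unit-length window $[t',t]$ and on the velocity range $|v|\le N$, Lemma \ref{huang} together with the Velocity Lemma \ref{velocity} force the number of boundary hits to be bounded by a constant $C(N)$, so the Jacobian is bounded below by a positive constant depending only on $N$ and the mesh (and \emph{not} on $t$), which is all that is needed to turn the $L^\infty$ norm of $h$ into $\|f(s_1)\|_2$. Thus the exponential growth of the bounce count is immaterial here, because the bootstrap never unrolls the trajectory over more than a unit time interval.

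Collecting the three regions over the slab $[t',t]$, I expect an inequality of the shape
\begin{equation*}
e^{\frac{\la_0}{2}t^{\rho_1}}\|h(t)\|_\infty\le C e^{\frac{\la_0}{2}(t')^{\rho_1}}\|h(t')\|_\infty+C\Big(\tfrac{1}{T_0^{5/4}}+\tfrac1N\Big)\sup_{t'\le s\le t}e^{\frac{\la_0}{2}s^{\rho_1}}\|h(s)\|_\infty+C_N\sup_{t'\le s\le t}e^{\frac{\la_0}{2}s^{\rho_1}}\|f(s)\|_2 ,
\end{equation*}
with $\la_0$ chosen $\le\la_2$ and $\le\la$. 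Taking $T_0,N$ large absorbs the middle term; iterating over the slabs $[j,j+1)$ --- using $e^{(j+s)^{\rho_1}-s^{\rho_1}}\le e^{j^{\rho_1}}$ to reassemble, as in the proof of Lemma \ref{sl2.eng}, since $\widetilde\nu$ depends on $t$ and $U(t)$ is not a genuine semigroup --- and inserting the $L^2$ decay $\|f(s)\|_2\lesssim e^{-\frac{\la}{2}s^{\rho_0}}\|w_{q/4,\ta}f_0\|_2\lesssim e^{-\frac{\la}{2}s^{\rho_0}}\|h_0\|_\infty$ from Lemma \ref{sl2.eng} (so that $e^{\frac{\la_0}{2}s^{\rho_1}}\|f(s)\|_2$ stays bounded, as $\rho_1<\rho_0$), one obtains $\sup_{t\ge0}e^{\frac{\la_0}{2}t^{\rho_1}}\|h(t)\|_\infty\le C\|h_0\|_\infty$, i.e.\ \eqref{Ubd}. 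Existence follows from the convergence of the iteration scheme underlying the representation, and uniqueness from the $L^2$ energy identity for the difference of two solutions, legitimate because the conservation laws \eqref{mass.c}, \eqref{eng.c} (and \eqref{axiscon} when \eqref{axis} holds) are inherited by the linearized flow. The main obstacle, as indicated, is step three: simultaneously compensating for the absent positive lower bound of $\nu$ --- the role of the weight $w_{q,\ta,\vth}$ and of \eqref{IP.lbd2} --- and extracting an $L^2$ norm from the doubly iterated term in spite of the intractable specular Jacobian, the device for the latter being to run everything on unit time slabs where, for bounded velocities, the bounce count stays bounded.
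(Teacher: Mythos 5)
Your overall architecture coincides with the paper's: Duhamel representation along specular cycles with the revised frequency $\widetilde{\nu}$ supplying the uniform-in-$v$ decay via \eqref{IP.lbd2}, the split $K_{\overline{w}}=K^{\chi}_{\overline{w}}+K^{1-\chi}_{\overline{w}}$ with Vidav's double iteration, conversion of the doubly iterated term into $\sup_s\|f(s)\|_2$ by a change of variables $v'\mapsto y$ with truncated kernels, and then a finite-time estimate iterated over time slabs into which the $L^2$ decay of Lemma \ref{sl2.eng} is inserted (using $\rho_1<\rho_0$). This is exactly the paper's Lemma \ref{finitelem} followed by the slab iteration, so no credit is lost on structure.

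The gap is in your third step, at precisely the point the introduction flags as the main difficulty. You assert that, over a unit window and for $|v|\le N$, Lemma \ref{huang} and the Velocity Lemma \ref{velocity} bound the number of bounces by $C(N)$ and that this in turn bounds the specular Jacobian $\det(\partial\{x'_{k'}+(s_1-t'_{k'})v'_{k'}\}/\partial v')$ below. Neither implication holds as stated. First, \eqref{tlower} gives $t_l-t_{l+1}\gtrsim |n(x_l)\cdot v_l|/|v_l|^2$, so near-grazing trajectories can bounce arbitrarily many times in a unit interval even with $|v|\le N$; a bounded bounce count requires a quantitative non-grazing condition, which is why the paper restricts to the set $A_\alpha$ of \eqref{aalpha} (kinetic distance $\alpha\ge 1/N$), splits $h=h\mathbf{1}_{A_\alpha}+h(1-\mathbf{1}_{A_\alpha})$, propagates $\alpha$ along both cycles by the Velocity Lemma, and treats the near-grazing region $\alpha(X_{\mathbf{cl}}(s),v')<\varepsilon$ separately as a small-measure contribution ($H_{5,1}$); the complementary region $1-\mathbf{1}_{A_\alpha}$ is then absorbed in Step 2 through the smallness of $\int|\nu^{-1}\mathbf{k}^\chi_{\overline w}|$ over $\{|v'|\ge N\}\cup\{|v'|\le 1/N\}\cup\{\alpha\le 1/N\}$. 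Second, even with $k,k'$ bounded, a lower bound on the specular Jacobian is not a consequence of bounded bounce counts: the map $v'\mapsto y$ can degenerate, and the paper needs the nontrivial Lemma \ref{specularlower} (Guo's analyticity-based Lemma 22), which yields $|J_{k,k'}|\ge\widetilde{\delta}>0$ only outside small exceptional open sets $O_{t_i,x_i,v_i}$, each of whose contributions must be estimated separately. This is the only place the hypothesis that $\xi$ is analytic enters, and your argument never uses it — a sign that the decisive ingredient is missing. To repair the proof you would need to introduce the $A_\alpha$ decomposition and invoke (or reprove) Lemma \ref{specularlower}, after which your slab iteration and the insertion of the $L^2$ decay go through as in the paper.
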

The Duhamel Principle will be applied to prove Proposition \ref{specularbd} and the first step is an appropriate decomposition. Initially, we look for solutions to the linearized equation \eqref{ssln.eq} with the almost compact operator $K_{\overline{w}}$ removed. Namely, we first consider
\begin{equation}
\partial _{t}h+v\cdot \nabla _{x}h+\widetilde{\nu}(v,t)h=0,\text{ \ \ }h(0)=h_{0},\quad \text{
in }(0,\infty)\times \Omega \times \R^{3},  \label{sl.eq}
\end{equation}
with
\begin{equation}\label{slb}
h(t,x,v)|_{\ga_-}=h(t,x,R_{x}v),\ \ \textrm{on}\  [0,\infty)\times\gamma _{-}.
\end{equation}
Let us denote the solution to \eqref{sl.eq} and \eqref{slb} by semigroup $G(t)h_{0}.$

Priori to investigating the properties of the solution operators $U(t)$ and $G(t)$, we give the following definition

\begin{definition}
\label{specularcycles}Let $\Omega$
be convex \eqref{scon}. Fix any
point $(t,x,v)\notin \gamma _{0}\cap \gamma _{-},$ and define $%
(t_{0},x_{0},v_{0})=(t,x,v)$, and for $k\geq 1$
\begin{equation}
(t_{k+1},x_{k+1},v_{k+1})=(t_{k}-t_{\mathbf{b}}
(t_{k},x_{k},v_{k}),x_{%
\mathbf{b}}(x_{k},v_{k}),R_{x_{k+1}}v_{k}),  \label{specularcycle}
\end{equation}%
where $R_{x_{k+1}}v_{k}=v_{k}-2(v_{k}\cdot n(x_{k+1}))n(x_{k+1}).$ And we
define the specular back-time cycle
\begin{equation*}
X_{\mathbf{cl}}^{{}}( s )\equiv \sum_{k=1}\mathbf{1}%
_{[t_{k+1},t_{k})}( s )\left\{ x_{k}+v_{k}( s -t_{k})\right\} ,\text{ \ \ }V_{%
\mathbf{cl}}^{{}}( s )\equiv \sum_{k=1}\mathbf{1}%
_{[t_{k+1},t_{k})}( s )v_{k}.
\end{equation*}
\end{definition}
\begin{lemma}\label{slG}
Let $h_0\in L^\infty(\Omega\times\R^3)$. There exists a unique solution $G(t)h_0$ to
\begin{equation*}
\left\{\partial _{t}+v\cdot \nabla _{x}+\widetilde{\nu}(v,t)\right\}\{G(t)h_{0}\}=0,\text{ \ \ \ \ }%
\left\{G(0)h_{0}\right\}=h_{0},  \label{gh0}
\end{equation*}%
with the specular reflection $\{G(0)h_{0}\}(t,x,v)
=\{G(0)h_{0}\}(t,x,R_{x}v)$
for $x\in \partial \Omega .$ For almost any $(x,v)\in \overline{\Omega}\times
\R^{3}\setminus \gamma _{0},$
\begin{equation}
\begin{split}
\{G(t)h_{0}\}(t,x,v)
=&e^{-\int_0^t\widetilde{\nu}(v,\tau)d\tau}h_{0}
\left( X_{\mathbf{cl}}(0),V_{\mathbf{cl}}(0)\right)\\
=&\sum\limits_{k}^\infty\mathbf{1}_{[t_{k+1},t_{k})}(0)
e^{-\int_0^t\widetilde{\nu}(v,\tau)d\tau}h_{0}(x_{k}-t_{k}v_{k},v_{k}).
\label{bouncebackformular}
\end{split}
\end{equation}%
Here, we define $t_k=0$ if $t_k<0.$

Moreover, it holds
\begin{equation}\label{Gdecay0}
\left\|G(t)h_{0}\right\|_{\infty }\leq \left\|e^{-\int_0^t\widetilde{\nu}(v,\tau)d\tau}h_{0}\right\|_\infty,
\end{equation}
and there exists $\la_2>0$ such that
\begin{equation}\label{Gdecay}
\left\|G(t)h_{0}\right\|_{\infty }
\leq Ce^{-\la_2t^{\rho_1}}\left\|h_{0}\right\|_{\infty },\ \ t\geq0,
\end{equation}
and
\begin{equation}\label{Gdecay2}
\left\|G(t-s)h(s)\right\|_{\infty }
\leq Ce^{-\la_2\left\{t^{\rho_1}-s^{\rho_1}\right\}}\left\|h(s)\right\|_{\infty },\ \ t\geq s\geq0.
\end{equation}

\end{lemma}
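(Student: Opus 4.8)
The plan is to solve the equation for $G(t)h_0$ explicitly by the method of characteristics along the specular back-time cycle of Definition \ref{specularcycles}, exploiting the one feature of specular reflection that makes this completely tractable: since $R_{x}v$ preserves $|v|$, one has $|v_{k}|=|v_{0}|=|v|$ for every $k$ along the cycle, and because $\widetilde{\nu}(v,\tau)$ depends on $v$ only through $|v|$, the integrating factor picked up on a segment $[t_{k+1},t_{k})$ is $\exp\{-\int_{\max\{t_{k+1},0\}}^{t_{k}}\widetilde{\nu}(v,\tau)\,d\tau\}$; telescoping over $k$, the product of these factors collapses to $e^{-\int_{0}^{t}\widetilde{\nu}(v,\tau)\,d\tau}$, no matter how many reflections occur. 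Thus the damping is entirely decoupled from the (possibly enormous) number of bounces, which is exactly the obstruction one would meet in the presence of the collision operator $K$.

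First I would recall that, for $\Omega$ strictly convex \eqref{scon} and $\xi$ analytic, the specular cycle $\{(t_{k},x_{k},v_{k})\}$ is well defined for almost every $(x,v)\in\overline{\Omega}\times\R^{3}\setminus\gamma_{0}$: by the Velocity Lemma \ref{velocity} and the lower bound \eqref{tlower} of Lemma \ref{huang}, away from a set of measure zero the consecutive bounce times are separated by at least $|n(x_{k})\cdot v_{k}|/(C_{\xi}|v_{k}|^{2})$, so only finitely many reflections take place on $[0,t]$ and the broken trajectory reaches time $0$ avoiding $\gamma_{0}$; this is the analysis of \cite[Section~3.6]{Guo-2010}. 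On each segment the equation for $G(t)h_0$ reduces to the scalar transport--damping ODE $\tfrac{d}{ds}\{G(t)h_{0}\}(s,x_{k}+(s-t_{k})v_{k},v_{k})=-\widetilde{\nu}(v_{k},s)\{G(t)h_{0}\}(s,x_{k}+(s-t_{k})v_{k},v_{k})$, whose integration, matched across $x_{k+1}\in\partial\Omega$ by the boundary condition $\{G(t)h_0\}(t_{k+1},x_{k+1},v_{k})=\{G(t)h_0\}(t_{k+1},x_{k+1},R_{x_{k+1}}v_{k})=\{G(t)h_0\}(t_{k+1},x_{k+1},v_{k+1})$, produces formula \eqref{bouncebackformular}. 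Conversely, \eqref{bouncebackformular} defines a measurable function which one checks solves the equation for $G(t)h_0$ in the mild sense with the prescribed initial data and specular boundary condition; uniqueness follows either from the representation itself or from the standard $L^{2}$ estimate obtained by pairing the equation with $G(t)h_{0}$, the boundary flux $\int_{\gamma}h^{2}(n\cdot v)$ vanishing after the change of variables $v\mapsto R_{x}v$ since $|R_{x}v|=|v|$ and $|n(x)\cdot R_{x}v|=|n(x)\cdot v|$.

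The three estimates are then immediate from \eqref{bouncebackformular}. For \eqref{Gdecay0}, since $|V_{\mathbf{cl}}(0)|=|v|$ we have $e^{-\int_{0}^{t}\widetilde{\nu}(v,\tau)d\tau}=e^{-\int_{0}^{t}\widetilde{\nu}(V_{\mathbf{cl}}(0),\tau)d\tau}$, so the pointwise bound $|\{G(t)h_{0}\}(t,x,v)|\le e^{-\int_{0}^{t}\widetilde{\nu}(v,\tau)d\tau}|h_{0}(X_{\mathbf{cl}}(0),V_{\mathbf{cl}}(0))|$ yields $\|G(t)h_{0}\|_{\infty}\le\|e^{-\int_{0}^{t}\widetilde{\nu}(\cdot,\tau)d\tau}h_{0}\|_{\infty}$ after taking the supremum. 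For \eqref{Gdecay}, apply \eqref{IP.lbd2} with $s=0$ to get $e^{-\int_{0}^{t}\widetilde{\nu}(v,\tau)d\tau}\le e^{-\la_{2}t^{\rho_{1}}}$ for $t$ away from $0$, while on any fixed bounded interval one simply uses $e^{-\int_{0}^{t}\widetilde{\nu}}\le1\le Ce^{-\la_{2}t^{\rho_{1}}}$; combining with \eqref{Gdecay0} gives \eqref{Gdecay}. For \eqref{Gdecay2}, the same construction run on $[s,t]$ with data $h(s)$ prescribed at time $s$ gives a representation with integrating factor $e^{-\int_{s}^{t}\widetilde{\nu}(v,\tau)d\tau}$, which by \eqref{IP.lbd2} is bounded by $Ce^{-\la_{2}\{t^{\rho_{1}}-s^{\rho_{1}}\}}$.

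The main obstacle is the a.e.\ well-definedness of the specular back-time cycle --- ruling out, up to a null set, the phase points whose trajectories undergo infinitely many reflections in finite time or become tangent to $\partial\Omega$. This is precisely where strict convexity and analyticity of $\xi$ enter, through Lemmas \ref{velocity} and \ref{huang} together with the geometric analysis of the grazing set carried out in \cite{Guo-2010}, and it is the only genuinely non-elementary ingredient; once the cycle is under control, everything else is the bookkeeping of integrating factors described above, which is straightforward exactly because specular reflection conserves $|v|$.
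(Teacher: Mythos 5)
Your proposal is correct and follows essentially the same route as the paper: the representation formula and \eqref{Gdecay0} are exactly the characteristics/telescoping argument (exploiting that specular reflection preserves $|v|$ and that $\widetilde{\nu}$ depends only on $|v|$ and $t$) which the paper obtains by citing Lemma 15 of \cite{Guo-2010}, and \eqref{Gdecay}, \eqref{Gdecay2} are then deduced from \eqref{IP.lbd2} and \eqref{Gdecay0} just as in the paper. The only difference is that you spell out the cited construction, including the a.e.\ finiteness of reflections, rather than quoting it.
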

\begin{proof}
The proof for \eqref{bouncebackformular} and \eqref{Gdecay0} is the same as that of Lemma 15 in \cite[pp.757]{Guo-2010}. \eqref{Gdecay} and
\eqref{Gdecay2}
directly follows from \eqref{IP.lbd2} and \eqref{Gdecay0}, this completes the proof of Lemma \ref{slG}.
\end{proof}

The following lemma shows that the solution operator $G(t)h_{0}$ is indeed continuous away from the grazing set. \begin{lemma}\label{specularcon}\cite[Lemma 21, pp.768]{Guo-2010}
Let $\xi $ be convex as in \eqref{scon}. Let $h_{0}$
be continuous in $\bar{\Omega}\times \R^{3}\setminus \gamma _{0}\,\ $
and $g(t,x,v)$ be continuous in the interior of $[0,\infty)\times \Omega
\times \R^{3}$ and $\sup_{[0,\infty)\times \Omega \times \R^{3}}|\frac{g(t,x,v)}{\widetilde{\nu} (v,t)}|<\infty .$ Assume that on $\gamma _{-},$ $h_{0}(x,v)=h_{0}(x,R(x)v)$. Then the specular solution $h(t,x,v)$ to
\begin{equation*}
\partial _{t}h+v\cdot \nabla _{x}h+\widetilde{\nu}(v,t) h= g(t,x,v),\text{ \ \ }h(0)=h_{0},\quad \text{
in }(0,\infty)\times \Omega \times \R^{3},
\end{equation*}
with
\begin{equation*}
h(t,x,v)|_{\ga_-}=h(t,x,R_{x}v),\ \ \textrm{on}\  [0,\infty)\times\gamma _{-},
\end{equation*}
 is continuous on $[0,\infty)\times \{\bar{\Omega}\times \R^{3}\setminus \gamma _{0}\}.$
\end{lemma}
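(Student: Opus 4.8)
The plan is to follow the explicit back-time-cycle representation of \cite[Lemma 21]{Guo-2010}, adapted to the time-dependent absorption rate $\widetilde{\nu}(v,t)$. Since specular reflection preserves $|v|$, along the specular cycle of Definition \ref{specularcycles} one has $\widetilde{\nu}(V_{\mathbf{cl}}(s),s)=\widetilde{\nu}(v,s)$, so integrating $\partial_t h+v\cdot\nabla_x h+\widetilde{\nu}(v,t)h=g$ against the integrating factor $\exp(-\int_s^t\widetilde{\nu}(v,\tau)\,d\tau)$ along the (deterministic, non-branching) trajectory gives, for a.e.\ $(x,v)\notin\gamma_0$,
\[
h(t,x,v)=e^{-\int_0^t\widetilde{\nu}(v,\tau)\,d\tau}h_0\big(X_{\mathbf{cl}}(0),V_{\mathbf{cl}}(0)\big)+\int_0^t e^{-\int_s^t\widetilde{\nu}(v,\tau)\,d\tau}g\big(s,X_{\mathbf{cl}}(s),V_{\mathbf{cl}}(s)\big)\,ds ,
\]
with the convention $t_k=0$ when the cycle returns to the initial time inside $\bar{\Omega}$. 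The whole task then reduces to showing that the geometric data $t_k(t,x,v)$, $x_k(t,x,v)$, $v_k(t,x,v)$, hence $X_{\mathbf{cl}}(\cdot),V_{\mathbf{cl}}(\cdot)$, depend continuously on $(t,x,v)$ off $\gamma_0$, and that on a bounded time window only finitely many of them matter.

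Next I would prove this geometric continuity by induction on the bounce index. Fix $(t_*,x_*,v_*)$ with $(x_*,v_*)\notin\gamma_0$; since $n(x_*)\cdot v_*\neq0$, the map $t\mapsto\xi(x_*-tv_*)$ has nonvanishing derivative at $t=t_{\mathbf{b}}(x_*,v_*)$, so by the implicit function theorem $t_{\mathbf{b}}$ and $x_{\mathbf{b}}$ are continuous at $(x_*,v_*)$. To propagate this through reflections I would use the Velocity Lemma \ref{velocity}: by strict convexity, the functional $\alpha(s)$ is comparable to its boundary value $[v_k\cdot\nabla\xi(x_k)]^2=|\nabla\xi(x_k)|^2|n(x_k)\cdot v_k|^2$ at each bounce, and it cannot vanish along the cycle if it is positive at the first bounce; hence non-grazing persists at all later bounces, $n(x_k)\cdot v_k$ stays bounded away from $0$ locally, and $t_{\mathbf{b}}$ remains continuous at each $(x_k,v_k)$. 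Moreover Lemma \ref{huang} gives
\[
t_{k}-t_{k+1}\geq \frac{|n(x_{k+1})\cdot v_{k+1}|}{C_\xi|v_{k+1}|^2}\geq c>0
\]
on a neighborhood (using $|v_k|=|v_*|$), so on $[0,t_*+1]$ at most $N=N(t_*,x_*,v_*)$ bounces occur, uniformly nearby; composing finitely many continuous maps, $t_k,x_k,v_k$ are continuous near $(t_*,x_*,v_*)$.

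Then I would assemble continuity of $h$ from the formula. On the open set $\{0<t_{k+1}\}$ the representation is a finite sum of continuous terms: $h_0$ evaluated at the continuously varying \emph{interior} point $X_{\mathbf{cl}}(0)$ (continuous since $h_0$ is continuous off $\gamma_0$ and the point stays interior there), times the smooth damping factor, plus $\int_0^t(\cdots)\,g(\cdots)\,ds$, which is continuous by dominated convergence — the integrand is continuous in the interior and dominated using $\sup|g/\widetilde{\nu}|<\infty$ together with $\int_0^te^{-\int_s^t\widetilde{\nu}(v,\tau)\,d\tau}\widetilde{\nu}(v,s)\,ds\le1$. The delicate point is continuity across the thresholds $t_k=0$, where one free-flight segment degenerates: the $g$-integral over that segment vanishes, while the $h_0$-argument $X_{\mathbf{cl}}(0)=x_k-t_kv_k$ tends to $x_k\in\partial\Omega$; matching the two adjacent branches of the formula there requires precisely the compatibility hypothesis $h_0(x,v)=h_0(x,R_xv)$ on $\gamma_-$, which yields $h_0(x_k,v_k)=h_0(x_k,R_{x_k}v_k)$ and kills the would-be jump. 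Finally I would observe that the a.e.\ formula holds at every non-grazing $(x,v)$ because the specular trajectory is uniquely defined there, and that existence/uniqueness of the solution is already contained in Lemma \ref{slG} together with Duhamel for the source.

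The main obstacle I anticipate is the bookkeeping at the bounce thresholds $t_k=0$: verifying that the finitely many pieces of the representation glue into a single continuous function using only the stated compatibility condition, and making fully rigorous the persistence-of-non-grazing argument via the Velocity Lemma, i.e.\ that $(x,v)\notin\gamma_0$ forces $\alpha>0$ at the first bounce and that this is preserved downstream so no new grazing is created along the cycle.
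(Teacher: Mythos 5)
Your proposal is correct and follows essentially the argument behind the cited result: the paper itself gives no proof here but simply quotes \cite[Lemma 21]{Guo-2010}, whose proof is exactly this explicit representation along finitely many specular bounces, with the Velocity Lemma preventing grazing downstream, Lemma \ref{huang} bounding inter-bounce times from below so only finitely many cycles matter on compact time intervals, and the compatibility condition $h_0(x,v)=h_0(x,R_xv)$ gluing the branches at the thresholds $t_k=0$. Your adaptation to the time-dependent rate $\widetilde{\nu}(v,t)$ is harmless since $\widetilde{\nu}$ depends on $v$ only through $|v|$, which is preserved by specular reflection, so the integrating factor and the domination $\sup|g/\widetilde{\nu}|<\infty$, $\int_0^t e^{-\int_s^t\widetilde{\nu}\,d\tau}\widetilde{\nu}\,ds\leq 1$ work exactly as in the autonomous case.
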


We now go back to \eqref{ssln.eq} and \eqref{sslbd}, from Duhamel formula, it follows
\begin{equation*}
\{U(t)h_0\}(x,v)
=G(t)h_{0}(x,v)
+\int_0^{t}ds~G(t- s )K_{\overline{w}}
\{U( s )h_0\}(x,v).
\end{equation*}
Employing the decomposition $K_{\overline{w}}=K^{\chi}_{\overline{w}}+K^{1-\chi}_{\overline{w}}$ again,
we then expand out
\begin{equation*}
\begin{split}
\{U(t)h_0\}(x,v)
=&G(t)h_{0}(x,v)
+\int_0^{t}d s ~G(t- s )K^{1-\chi}_{\overline{w}}
\{U( s )h_0\}(x,v)\\
&+\int_0^{t}d s ~G(t- s )K^{\chi}_{\overline{w}}
\{U( s )h_0\}(x,v).
\end{split}
\end{equation*}
We further iterate the Duhamel formula of the last term, as did in \cite{Vi}
\begin{equation*}
\{U( s )h_0\}(x,v)
=G( s )h_{0}(x,v)
+\int_0^{ s }d s _1~G( s - s _1)K_{\overline{w}}
\{U( s _1)h_0\}(x,v).
\end{equation*}
Substituting this into previous expression and using $K_{\overline{w}}=K^{\chi}_{\overline{w}}+K^{1-\chi}_{\overline{w}}$ again yield a more elaborate formula
\begin{equation}\label{Unex}
\begin{split}
\{U(t)h_0\}(x,v)=&G(t)h_{0}(x,v)
+\int_0^{t}d s ~G(t- s )K^{1-\chi}_{\overline{w}}
\{U( s )h_0\}(x,v)\\
&+\int_0^{t}d s ~G(t- s )K^{\chi}_{\overline{w}}
\{G( s )h_{0}\}(x,v)\\
&+\int_0^{t}d s \int_0^{ s }d s _1~G(t- s )
K^{\chi}_{\overline{w}}
G( s - s _1)K^{1-\chi}_{\overline{w}}
\{U( s _1)h_0\}(x,v)\\
&+\int_0^{t}d s \int_0^{ s }d s _1~G(t- s )
K^{\chi}_{\overline{w}}
G( s - s _1)K^{\chi}_{\overline{w}}
\{U( s _1)h_0\}(x,v)\\
\eqdef& \sum\limits_{l=1}^5H_l(t,x,v).
\end{split}
\end{equation}
For any fixed point $(t,x,v)$ with $(x,v)\notin \gamma _{0},$ let the
back-time specular cycle of $(t,x,v)$ be
$[x_{\mathbf{cl}}( s ),v_{\mathbf{cl}}( s )],$ then the most delicate term $H_5$ in \eqref{Unex} can be rewritten as
\begin{equation*}
\begin{split}
H_5(t,x,v)=&\int_{0}^{t}d s \int_{0}^{ s }d s _1\int dv^{\prime }d v^{\prime \prime }e^{-\int^{t}_s\widetilde{\nu}(v,\tau)d\tau-\int_{s_1}^s\widetilde{\nu}(v',\tau)d\tau}\\
&\times\mathbf{k}^{\chi}_{\overline{w}}(V_{\mathbf{cl}}( s ),v^{\prime })\mathbf{k}^{\chi}_{\overline{w}}(V_{\mathbf{cl}}^{\prime }( s _1),v^{\prime \prime })h\left( X_{\mathbf{cl}
}^{\prime }( s _1),v^{\prime \prime }\right),
\end{split}
\end{equation*}
where $\mathbf{k}^{\chi}_{\overline{w}}(\cdot)=w_{q,\ta,\vth}\mathbf{k}^{\chi}(\frac{\cdot}{w_{q,\ta,\vth}})$ and
the back-time specular cycle from $( s ,X_{\mathbf{cl}
}^{{}}( s ),v^{\prime })$ is denoted by
\begin{equation}
X_{\mathbf{cl}}^{\prime }( s _1)=X_{\mathbf{cl}}( s _1; s ,X_{\mathbf{cl}%
}( s ),v^{\prime }),\text{ \ \ \ \ }V_{\mathbf{cl}}^{\prime }( s _1)=V_{%
\mathbf{cl}}^{{}}( s _1; s ,X_{\mathbf{cl}}^{{}}( s ),v^{\prime }).
\label{x'}
\end{equation}%
More explicitly, let $t_{k}$ and $t_{k^{\prime }}^{\prime }$ be the
corresponding times for both specular cycles as in (\ref{specularcycle}).
For $t_{k+1}\leq  s <t_{k},$ $t_{k^{\prime }+1}^{\prime }\leq
 s _1<t_{k^{\prime }}^{\prime }$
\begin{equation}
X_{\mathbf{cl}}^{\prime }( s _1)=X_{\mathbf{cl}}( s _1; s ,X_{\mathbf{cl}%
}( s ),v^{\prime })\equiv x_{k^{\prime }}^{\prime }+( s _1-t_{k^{\prime
}}^{\prime })v_{k^{\prime }}^{\prime },  \label{x'explicit}
\end{equation}%
where $x_{k^{\prime }}^{\prime }=X_{\mathbf{cl}}(t'_{k^{\prime
}}; s ,x_{k}+( s -t_{k})v_{k},v^{\prime }),v_{k^{\prime }}^{\prime }=V_{%
\mathbf{cl}}(t'_{k^{\prime }}; s ,x_{k}+( s -t_{k})v_{k},v^{\prime }).\,$%
\ Recall $\alpha $ in \eqref{alpha} and define naturally
\begin{equation*}
\alpha (x,v)\equiv \alpha (t)=\xi ^{2}(x)+[v\cdot \nabla \xi
(x)]^{2}-2[v\cdot\nabla ^{2}\xi (x)\cdot v]\xi (x).
\end{equation*}%
We define the main set
\begin{equation}
A_\alpha=\{(x,v):x\in \bar{\Omega},\text{ }\frac{1}{N}\leq |v|\leq N,%
\text{ and }\alpha (x,v)\geq \frac{1}{N}\}.  \label{aalpha}
\end{equation}

\begin{lemma}\label{specularlower}\cite[Lemm 22, pp.775]{Guo-2010}
Fix $k$ and $k^{\prime }.$ Define for $t_{k+1}\leq
 s \leq t_{k}, s _1\in \R$ and
\begin{equation*}
J\equiv J_{k,k^{\prime }}(t,x,v, s , s _{1},v^{\prime })\equiv \det \left( \frac{%
\partial \{x_{k^{\prime }}^{\prime }+( s _1-t_{k^{\prime }}^{\prime
})v_{k^{\prime }}^{\prime }\}}{\partial v^{\prime }}\right) .
\end{equation*}%
For any $\varepsilon >0$ sufficiently small, there is $\widetilde{\delta} (N,\varepsilon,T_0
,k,k^{\prime })>0$ and an open covering $\cup
_{i=1}^{m}B(t_{i},x_{i},v_{i};r_{i})$ of $[0,T_0]\times A_\alpha$ and
corresponding open sets $O_{t_{i},x_{i},v_{i}}$ for $[t_{k+1}+\varepsilon
,t_{k}-\varepsilon ]\times \R\times \R^{3}$ with $%
|O_{t_{i},x_{i},v_{i}}|<\varepsilon ,$ such that
\begin{equation*}
|J_{k,k^{\prime }}(t,x,v, s , s _{1},v^{\prime })|\geq \widetilde{\delta} >0,
\end{equation*}%
for $0\leq t\leq T_0,$ $(x,v)\in A_\alpha$ and $( s , s _{1},v^{\prime })$
in
\begin{equation*}
O_{t_{i,}x_{i},v_{i}}^{c}\cap \lbrack t_{k+1}+\varepsilon ,t_{k}-\varepsilon
]\times \lbrack 0,T_0]\times \{|v^{\prime }|\leq 2 N\}.
\end{equation*}
\end{lemma}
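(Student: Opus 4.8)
The plan is to follow the strategy of \cite[Lemma 22]{Guo-2010}, exploiting the analyticity of $\xi$ to reduce the non-degeneracy of the Jacobian to a statement about the zero set of a real-analytic function on a compact parameter region. First I would record that, because $\Omega$ is strictly convex \eqref{scon}, the backward exit time $t_{\mathbf b}$ and the exit point $x_{\mathbf b}$ are real-analytic functions on the non-grazing part of $\bar\Omega\times\R^{3}$: the defining equation $\xi(x-tv)=0$ has a transversal root there, since $\partial_{t}\xi(x-t_{\mathbf b}v)=-v\cdot\nabla\xi(x_{\mathbf b})\neq0$ off $\gamma_0$, so the analytic implicit function theorem applies, and analyticity propagates through each specular reflection $v\mapsto R_{x_{\mathbf b}}v$. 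Iterating $k'$ times, the map $v'\mapsto x_{k'}'+(s_1-t_{k'}')v_{k'}'$ is real-analytic in $(s,s_1,v')$ for $(x,v)$ in the main set $A_\alpha$ of \eqref{aalpha} and on the non-grazing slab $[t_{k+1}+\varepsilon,t_{k}-\varepsilon]\times[0,T_0]\times\{|v'|\le 2N\}$, with all derivatives depending continuously on $(t,x,v)$.

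Next I would show $J_{k,k'}\not\equiv0$ as a function of $(s,s_1,v')$. For $k'=0$ this is immediate, since $J$ reduces to a free-transport Jacobian of the form $(s_1-s)^{3}$. For $k'\ge1$ the point is that at a suitably chosen direction $v'$ the composition of the $k'$ billiard reflections is a local diffeomorphism: strict convexity \eqref{scon} guarantees that the differential of the billiard map on $\partial\Omega$ is invertible at every non-grazing bounce, with its determinant controlled from below in terms of $c_\xi$ and the incidence angle, so the chain rule gives a nonzero total Jacobian at that configuration. Hence $J_{k,k'}$ is a nontrivial real-analytic function, and its zero set has Lebesgue measure zero in the compact parameter box.

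The measure-zero zero set must then be upgraded to a quantitative bound. Here I would use the standard covering argument for real-analytic functions: fix $(t,x,v)\in A_\alpha$ and $s\in[t_{k+1}+\varepsilon,t_{k}-\varepsilon]$, cover the closed measure-zero zero set of $(s_1,v')\mapsto J_{k,k'}$ by an open set of measure less than $\varepsilon$, and observe that on its compact complement $|J_{k,k'}|$ attains a positive minimum. Patching these over a finite subcover $\cup_{i=1}^{m}B(t_{i},x_{i},v_{i};r_{i})$ of the compact set $[0,T_0]\times A_\alpha$ — using the continuous dependence on $(t,x,v)$ from the first step to select the radii $r_{i}$ and to make the bound uniform — produces the exceptional open sets $O_{t_{i},x_{i},v_{i}}$ with $|O_{t_{i},x_{i},v_{i}}|<\varepsilon$ and the uniform lower bound $|J_{k,k'}(t,x,v,s,s_{1},v')|\ge\widetilde{\delta}(N,\varepsilon,T_0,k,k')>0$ on the complementary set, which is exactly the assertion.

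I expect the main obstacle to be the analyticity and non-degeneracy bookkeeping for long trajectories: one must ensure the billiard-map composition stays real-analytic through all $k'$ reflections, which forces the restriction to $A_\alpha$ and to the slabs $[t_{k+1}+\varepsilon,t_{k}-\varepsilon]$ away from grazing, and one must verify that $J_{k,k'}\not\equiv0$ precisely enough for the covering and compactness argument to run for each fixed pair $(k,k')$. Since $\widetilde{\delta}$ is allowed to depend on $k$ and $k'$, no uniformity in the number of bounces is required, and it is exactly this concession that makes the argument go through.
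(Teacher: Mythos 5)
Your overall architecture matches the strategy of the cited result (this paper gives no proof of Lemma \ref{specularlower}; it simply quotes Guo's Lemma 22): real-analyticity of the specular backward trajectory away from grazing via the analytic implicit function theorem applied to $\xi(x-tv)=0$ and propagation through the reflections, with Lemma \ref{velocity} keeping the bounces of points of $A_\alpha$ in \eqref{aalpha} non-grazing on the trimmed time slabs; then measure-zero zero set of a non-trivial analytic function; then a compactness/covering argument over $[0,T_0]\times A_\alpha$, with $\widetilde\delta$ allowed to depend on $k,k'$. The genuine gap is your justification that $J_{k,k'}\not\equiv 0$ for $k'\ge 1$. Invertibility of the differential of the boundary billiard map at each non-grazing bounce concerns the full phase-space map $(x_j',v_j')\mapsto (x_{j+1}',v_{j+1}')$; it does not, via the chain rule, yield non-vanishing of $\det\left(\partial\{x_{k'}'+(s_1-t_{k'}')v_{k'}'\}/\partial v'\right)$, because here the base point $X_{\mathbf{cl}}(s)$ is held fixed and only the initial direction $v'$ varies. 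That determinant is a focusing (conjugate-point) quantity, and it genuinely vanishes at caustic configurations of the convex billiard -- which is exactly why the lemma must excise the small exceptional sets $O_{t_i,x_i,v_i}$ instead of asserting a lower bound everywhere. So ``strict convexity $\Rightarrow$ invertible bounce differentials $\Rightarrow$ nonzero Jacobian at that configuration'' is not a proof of $J\not\equiv 0$; moreover the domain of analyticity decomposes into components according to the bounce pattern, and your only explicit computation, $J=(s_1-s)^3$, lives in the $k'=0$ regime and cannot be transported by analytic continuation across grazing into the fixed-$k'\ge 1$ components where the statement is needed.

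What is required at this point is an actual witness of non-degeneracy inside the $k'$-bounce regime. A workable route -- and the heart of the cited proof -- is to note that for fixed $(t,x,v,s,v')$ the map $s_1\mapsto x_{k'}'+(s_1-t_{k'}')v_{k'}'$ is affine, so $J_{k,k'}$ is a cubic polynomial in the \emph{unrestricted} variable $s_1\in\R$ (this is why the lemma is stated for $s_1\in\R$ rather than only for $s_1\in[t_{k'+1}',t_{k'}')$), and then to show by an explicit induction on the reflections, computing $\partial x_j'/\partial v'$ and $\partial v_j'/\partial v'$ in terms of $\nabla\xi$ and $\nabla^2\xi$ on the non-grazing set, that some coefficient of this polynomial does not vanish identically. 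Without such a computation (or some other exhibited configuration with $J\neq 0$ in the same analyticity component), the measure-zero and covering steps you describe have nothing to act on; with it, your final compactness/patching paragraph is correct and completes the argument as in the reference.
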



In order to prove Proposition \ref{specularbd}, we first show the following crucial estimates with the aid of Lemmas \ref{specularcon}
and \ref{specularlower}.
\begin{lemma}\label{finitelem}
There exist constants $T_0>0$ and $C_{T_0}>0$ such that
\begin{equation}\label{finitees}
\|U(T_0)h_0\|_\infty\leq e^{-\la_0T_0^{\rho_1}}\|h_0\|_\infty+C_{T_0}\int_0^{T_0}\|f(s)\|_2ds.
\end{equation}
\end{lemma}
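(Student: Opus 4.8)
The plan is to start from the iterated Duhamel expansion \eqref{Unex}, $U(T_0)h_0=\sum_{l=1}^5 H_l(T_0,x,v)$, and to bound each of the five terms on the time interval $[0,T_0]$, converting the contribution of the doubly iterated $K^\chi_{\overline w}$ term into an $L^2$ quantity by exploiting the non-degeneracy of the Jacobian $J_{k,k'}$ from Lemma \ref{specularlower}. The three ``easy'' pieces are $H_1,H_2,H_3$: for $H_1=G(T_0)h_0$ one uses \eqref{Gdecay}, which gives the clean factor $e^{-\la_2 T_0^{\rho_1}}\|h_0\|_\infty$ (and after fixing $\la_0<\la_2$, absorbs into $e^{-\la_0T_0^{\rho_1}}\|h_0\|_\infty$); for $H_2$, involving $K^{1-\chi}_{\overline w}$, one uses \eqref{K.ip3} of Lemma \ref{es.k}, which shows $K^{1-\chi}_{\overline w}$ has operator norm $\lesssim \eps^{\varrho+3}$ on $L^\infty$, together with $\int_0^{T_0}\! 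G(T_0-s)(\cdot)\,ds$ contributing a bounded factor via \eqref{Gdecay2} and $\int e^{-\la_2((T_0)^{\rho_1}-s^{\rho_1})}ds<\infty$; choosing $\eps$ small makes this a small multiple of $\sup_{0\le s\le T_0}\|U(s)h_0\|_\infty$, which in turn is $\lesssim\|h_0\|_\infty$ by iterating the $L^\infty$ bound $\|U(s)h_0\|_\infty\lesssim\|h_0\|_\infty$ that follows from \eqref{Gdecay0} and \eqref{K.ip2}. The term $H_3$ is handled similarly since $K^\chi_{\overline w}$ acting on $G(s)h_0$ and then propagated by $G(T_0-s)$ yields, via \eqref{K.ip4} (kernel decay $\langle v\rangle^{\varrho-2}$) and the decay of $G$, a contribution $\lesssim \|h_0\|_\infty$ times a small constant once $T_0$ is taken large; one may also split off a piece controlled by $\|f(s)\|_2$ here if desired. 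The term $H_4$, with one $K^\chi_{\overline w}$ and one $K^{1-\chi}_{\overline w}$, is smaller than $H_5$ and is bounded by the same scheme.

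The heart of the matter is $H_5$, the double $K^\chi_{\overline w}$ term written along the specular back-time cycles:
\begin{equation*}
H_5(T_0,x,v)=\int_0^{T_0}\!\!\int_0^s\!\!\iint dv'dv''\,e^{-\int_s^{T_0}\widetilde\nu(v,\tau)d\tau-\int_{s_1}^s\widetilde\nu(v',\tau)d\tau}\mathbf{k}^\chi_{\overline w}(V_{\mathbf{cl}}(s),v')\mathbf{k}^\chi_{\overline w}(V'_{\mathbf{cl}}(s_1),v'')h(X'_{\mathbf{cl}}(s_1),v'').
\end{equation*}
Here I would follow the Case-1/Case-2/Case-3 dissection used for \eqref{J3} and Theorem 6 of \cite{Guo-2010}. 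When $(x,v)$ or the intermediate velocity lies outside the main set $A_\alpha$ (i.e.\ $|v|\ge N$, $|v|\le 1/N$, or $\alpha$ small), or when $|v'|\ge 2N$ or $|v''|\ge 2N$, the kernel bounds \eqref{K.ip4} together with the exponential gains $e^{-\eps N^2/16}$ from the Gaussian tails of $\mathbf{k}^\chi_{\overline w}$ make this part $\le C(\frac1N+e^{-\eps N^2})\sup_{0\le s\le T_0}\|U(s)h_0\|_\infty\lesssim C(\frac1N+e^{-\eps N^2})\|h_0\|_\infty$. In the remaining ``good'' region $|v|\le N,\ |v'|\le 2N,\ \alpha(x,v)\ge 1/N$, I would first truncate the kernels via the approximation $\mathbf{k}^\chi_{m,\overline w}$ analogous to \eqref{km}, so that $\sup_p\int|\mathbf{k}^\chi_{m,\overline w}-\mathbf{k}^\chi_{\overline w}|dv'\le 1/N$, picking up another $\frac1N\|h_0\|_\infty$; then on the product of the truncated kernels I would perform the change of variables $y=x'_{k'}+(s_1-t'_{k'})v'_{k'}$ in the $v'$ integration and invoke Lemma \ref{specularlower}: on the open set $O^c$ the Jacobian $J_{k,k'}$ is bounded below by $\widetilde\delta=\widetilde\delta(N,\eps,T_0,k,k')>0$, so the $L^\infty$ norm $\|h(s_1)\|_\infty$ over $\{|v''|\le 2N\}$ converts into $\widetilde\delta^{-1}\int_\Omega\int_{|v''|\le 2N}|h(s_1,y,v'')|\,dy\,dv''\lesssim C_{T_0}\|f(s_1)\|_2$ (absorbing $w_{q,\ta,\vth}\mathbf{1}_{|v''|\le 2N}\le C_N$ into the constant). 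The small remaining piece coming from the exceptional set $O$ of measure $<\eps$ contributes $\lesssim \eps\,\|h_0\|_\infty$ by Ukai's trace theorem Lemma \ref{ukai} applied to control boundary-grazing contributions, exactly as in \cite[pp.775--778]{Guo-2010}.

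Collecting all contributions gives
\begin{equation*}
\|U(T_0)h_0\|_\infty\le e^{-\la_2T_0^{\rho_1}}\|h_0\|_\infty+C\Big(\eps^{\varrho+3}+\tfrac1N+\tfrac{1}{T_0}+\eps\Big)\|h_0\|_\infty+C_{T_0,N,\eps}\int_0^{T_0}\|f(s)\|_2\,ds.
\end{equation*}
The argument closes by fixing first $N$ large and $\eps$ small so that the middle coefficient is $\le \tfrac12 e^{-\la_0 T_0^{\rho_1}}$ type quantity absorbable into the first term after possibly enlarging the constant in front of the $\|h_0\|_\infty$-factor, then (noting $e^{-\la_2T_0^{\rho_1}}\le e^{-\la_0T_0^{\rho_1}}$ once $\la_0<\la_2$) choosing $T_0$ large enough that all the genuinely small terms are dominated; this yields exactly \eqref{finitees} with $C_{T_0}=C_{T_0,N,\eps}$. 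I expect the main obstacle to be the bookkeeping in the ``good region'' of $H_5$: one must carefully track the dependence of the covering and of $\widetilde\delta$ on $k,k'$ (which themselves grow along the specular cycles via Velocity Lemma \ref{velocity}) and ensure the $k,k'$-sums of the resulting $L^2$-type bounds remain finite uniformly in $T_0$ — this is precisely where the time-velocity weight $w_{q,\ta,\vth}$ and the revised collision frequency $\widetilde\nu$, with its lower bound \eqref{IP.lbd1}, are indispensable, since they provide the decay \eqref{IP.lbd2} needed to damp the otherwise uncontrolled number of reflections.
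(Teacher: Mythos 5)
There is a genuine gap in your treatment of $H_5$, and it is exactly the point the paper's two-step structure is designed to handle. You propose a one-pass estimate of $\|U(T_0)h_0\|_\infty$ in which the region where the \emph{supremum variable} $(x,v)$ lies outside $A_\alpha$ (in particular where $\alpha(x,v)$ is small but $|v|$ is moderate) is declared small, ``$\le C(\frac1N+e^{-\eps N^2})\sup\|U(s)h_0\|_\infty$''. No such smallness exists: at a phase point with $\alpha(x,v)\le 1/N$ and, say, $|v|\sim 1$, the kernels $\mathbf{k}^\chi_{\overline w}$ give no gain, and worse, the Velocity Lemma \ref{velocity} combined with Lemma \ref{huang} no longer bounds the number of specular bounces $k$ of the cycle starting from $(t,x,v)$ on $[0,T_0]$, so Lemma \ref{specularlower} (which is stated for $(x,v)\in A_\alpha$ and for fixed $k,k'$) cannot be invoked and the change of variables $v'\mapsto y$ cannot be summed. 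The smallness attached to ``bad'' phase points is only available when they occur as the \emph{integration} variable $v'$ against the kernel: for fixed $x$, the set $\{v':\alpha(x,v')\le 1/N\}$ is a thin slab near the grazing direction and $\int_{\alpha(x,v')\le 1/N}|\nu^{-1}\mathbf{k}^\chi_{\overline w}(v,v')|dv'=o(1)$. This is why the paper first proves the bound \eqref{hm} only for $\|h\,\mathbf{1}_{A_\alpha}\|_\infty$ (Step 1, where the Jacobian argument and the conversion to $\int_0^{T_0}\|f(s)\|_2\,ds$ are legitimate), and then runs one more Duhamel iteration for the full $h$ (Step 2), splitting $K^\chi_{\overline w}h$ according to $\mathbf{1}_{A_\alpha(x,v')}$: the complement is $o(1)$ by the measure-of-bad-$v'$ argument, and the $A_\alpha$ part is controlled by \eqref{hm}. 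Without this bootstrap your final inequality does not control the supremum over $(x,v)\notin A_\alpha$, so \eqref{finitees} is not established.

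Two smaller inaccuracies: the exceptional sets $O_{t_i,x_i,v_i}$ of Lemma \ref{specularlower} are handled simply by $|O_{t_i,x_i,v_i}|<\varepsilon$ together with $\sum_{k'}\int_{t'_{k'+1}}^{t'_{k'}}=\int_0^s\le\int_0^{T_0}$, not by Ukai's trace theorem (Lemma \ref{ukai} plays no role here); and the finiteness of $k,k'$ on the good region comes from the Velocity Lemma plus the lower bound \eqref{tlower} on the time between reflections (giving $k\le C_{T_0,\xi,N}$, $k'\le C_{T_0,\xi,N,\varepsilon}$), not from the time--velocity weight $w_{q,\ta,\vth}$ or $\widetilde\nu$, whose role is solely to supply the exponential factor $e^{-\la_2 t^{\rho_1}}$ for $G(t)$ despite the vanishing lower bound of $\nu$.
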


\begin{proof}Our proof is divided into two steps.

\noindent{\it Step 1. Estimate of $h{\bf 1}_{A_\al}$.}
Let us split $h=h{\bf 1}_{A_\al}+h(1-{\bf 1}_{A_\al}),$
and we first express and estimate the main part $h{\bf 1}_{A_\al}$ through \eqref{Unex}.
By utilizing \eqref{IP.lbd2} and Lemmas \ref{es.k} and \ref{slG}, we see that
\begin{equation*}
|H_1(t,x,v)|
\leq Ce^{-\la_2t^{\rho_1}}\left\|h_0\right\|_{\infty},
\end{equation*}
\begin{equation*}
\begin{split}
|H_2(t,x,v)|
\leq& C\int_0^te^{-\frac{1}{2}\int_s^t\widetilde{\nu}(v)d\tau}\widetilde{\nu}(v)
e^{-\frac{\la_2}{2}(t^{\rho_1}-s^{\rho_1})}
e^{-\frac{\la_2}{2}s^{\rho_1}}
e^{\frac{\la_2}{2}s^{\rho_1}}\\&\times[K_{\overline{w}}^{1-\chi}U(s)h(s)]\widetilde{\nu}^{-1}(v)ds\\
\leq& C\eps^{3+\varrho}e^{-\frac{\la_2}{2}t^{\rho_1}}\sup\limits_{0\leq  s \leq t}
\left\|e^{\frac{\la_2}{2}s^{\rho_1}}U(s)h(s)\right\|_{\infty}\int_0^t
e^{-\frac{1}{2}\int_0^s\widetilde{\nu}(v')d\tau}\widetilde{\nu}(v)ds\\
\leq& C\eps^{3+\varrho}e^{-\frac{\la_2}{2}t^{\rho_1}}\sup\limits_{0\leq  s \leq t}
\left\|e^{\frac{\la_2}{2}s^{\rho_1}}U(s)h(s)\right\|_{\infty}.
\end{split}
\end{equation*}
here we have used the fact that $\int_0^te^{-\frac{1}{2}\int_s^t\widetilde{\nu}(v)d\tau}\widetilde{\nu}(v)ds<\infty$ and the significant observation $\widetilde{\nu}^{-1}\leq \nu^{-1}$.
Continuing, one has
\begin{equation*}
\begin{split}
|H_3(t,x,v)|
\leq& C\left\|h_0\right\|_{\infty}
\int_{\R^3}\int_{0}^te^{-\frac{1}{2}\int_s^t\widetilde{\nu}(v)d\tau}\widetilde{\nu}(v)
 e^{-\frac{\la_2}{2}(t^{\rho_1}-s^{\rho_1})}
e^{-\frac{\la_2}{2}s^{\rho_1}}\nu^{-1}(v) \\&\times{\bf k}_{\overline{w}}^\chi(V_{\mathbf{cl}}( s ),v^{\prime })dsdv'\\
\leq& Ce^{-\frac{\la_2}{2}t^{\rho_1}}\left\|h_0\right\|_{\infty},
\end{split}
\end{equation*}
and
\begin{equation*}
\begin{split}
|H_4(t,x,v)|
\leq& C\eps^{3+\varrho}\sup\limits_{0\leq  s \leq t}
\left\|e^{\frac{\la_2}{2}s^{\rho_1}}U(s)h(s)\right\|_{\infty}
\\&\times\int_{\R^3}\int_{0}^t\int_0^se^{-\frac{1}{2}\int_s^t\widetilde{\nu}(v)d\tau} e^{-\frac{1}{2}
\int_{s_1}^s\widetilde{\nu}(v')d\tau}\widetilde{\nu}(v)\widetilde{\nu}(v')\\&\times e^{-\frac{\la_2}{2}(t^{\rho_1}-s^{\rho_1})}
 e^{-\frac{\la_2}{2}(s^{\rho_1}-s_1^{\rho_1})}
e^{-\frac{\la_2}{2}s_1^{\rho_1}} \nu^{-1}(v)
{\bf k}_{\overline{w}}^\chi(V_{\mathbf{cl}}( s ),v^{\prime })dsds_{1}dv'\\
\leq& C\eps^{3+\varrho}e^{-\frac{\la_2}{2}t^{\rho_1}}\sup\limits_{0\leq  s \leq t}
\left\|e^{\frac{\la_2}{2}s^{\rho_1}}U(s)h(s)\right\|_{\infty}.
\end{split}
\end{equation*}

For the main contribution $H_5$,
notice that along the
back-time specular cycles $[X_{\mathbf{cl}}^{{}}( s ),V_{\mathbf{cl}}^{{}}( s )]$
and $[X_{\mathbf{cl}}^{\prime }( s _1),V_{\mathbf{cl}}^{\prime }( s _1)]$ in (\ref%
{x'}), $|V_{\mathbf{cl}}^{{}}( s )|\equiv |v|$ and $|V_{\mathbf{cl}%
}^{\prime }( s _1)|\equiv |v^{\prime }|.$ Therefore, the integration over $|v|>N$ or $%
|v^{\prime }|\geq 2N$ or $|v^{\prime }|\leq 2N$ but $|v^{\prime \prime
}|\geq 3N$ are bounded by
$$
C\left\{e^{-\frac{\vps N^2}{16}}+\frac{1}{N}\right\}e^{-\frac{\la_2}{2}t^{\rho_1}}\sup\limits_{0\leq  s \leq t}
\left\|e^{\frac{\la_2}{2}s^{\rho_1}}h(s)\right\|_{\infty}.
$$
As in Case 3 in subsection \ref{lifth}, by using the same
approximation, we only need to concentrate on the bounded set
$\{|v|\leq N,\ |v^{\prime}|\leq 2N$ and $|v^{\prime \prime }|\leq 3N\}$ of%
\begin{equation*}
\begin{split}
\int_{0}^{t}\int_{0}^{ s }&\int_{|v^{\prime }|\leq 2N,|v^{\prime \prime
}|\leq 3N}e^{-\int^t_s\widetilde{\nu}(v)\tau-\int_{s_1}^s\widetilde{\nu}(v')d\tau}
\left|h\left(  s _1,X'_{\mathbf{cl}}( s _1),v^{\prime \prime }\right)\right|dv^{\prime }dv^{\prime \prime
}d s _{1}d s \\
=&\int_{\substack{ \alpha (X_{\mathbf{cl}}( s ),v')<\varepsilon
\\ |v^{\prime }|\leq 2N,|v^{\prime \prime }|\leq 3N}}+\int_{\substack{ %
\alpha (X_{\mathbf{cl}}( s ),v')\geq \varepsilon  \\ |v^{\prime
}|\leq 2N,|v^{\prime \prime }|\leq 3N}}=H_{5,1}+H_{5,2}.
\end{split}
\end{equation*}%
In the case $\alpha (X_{\mathbf{cl}}( s ),v')\leq \varepsilon $,
$\xi ^{2}(X_{\mathbf{cl}}( s ))+[v'\cdot\nabla \xi (X_{\mathbf{
cl}}( s ))]^{2}\leq \varepsilon,$
notice that $|\nabla \xi (X_{\mathbf{%
cl}}( s ))|\geq c>0$, hence for $\varepsilon $ small and $X_{
\mathbf{cl}}( s )\backsim \partial \Omega ,$ $H_{5,1}$ is dominated by
\begin{equation*}
\begin{split}
H_{5,1}\leq& C_{N}\int_{0}^{t}\int_{0}^{ s }
e^{-\frac{1}{2}\int_s^t\widetilde{\nu}(v)d\tau} e^{-\frac{1}{2}
\int_{s_1}^s\widetilde{\nu}(v')d\tau}\widetilde{\nu}(v)\widetilde{\nu}(v')\\&\times e^{-\frac{\la_2}{2}(t^{\rho_1}-s^{\rho_1})}
 e^{-\frac{\la_2}{2}(s^{\rho_1}-s_1^{\rho_1})}
e^{-\frac{\la_2}{2}s_1^{\rho_1}}e^{\frac{\la_2}{2}s_1^{\rho_1}}\left\|\widetilde{\nu}^{-1}h( s _1)\right\|_{\infty
}dsds_{1}\\&\times\int_{\substack{ \alpha (X_{\mathbf{cl}}( s ),v^{\prime })\leq
\varepsilon  \\ |v^{\prime }|\leq 2N,|v^{\prime \prime }|\leq 3N}} \\
\leq &C_{N}e^{-\frac{\la_2}{2}t^{\rho_1}}\sup\limits_{0\leq  s \leq t}
\left\|e^{\frac{\la_2}{2}s^{\rho_1}}h(s)\right\|_{\infty}
\int_{|v^{\prime }\cdot \frac{\nabla \xi (X_{\mathbf{cl}}( s ))}{|\nabla
\xi (X_{\mathbf{cl}}( s ))|}|\leq c\varepsilon ,|v^{\prime }|\leq
2N,|v^{\prime \prime }|\leq 3N}\\
\leq& C_{N}\varepsilon
e^{-\frac{\la_2}{2}t^{\rho_1}}\sup\limits_{0\leq  s \leq t}
\left\|e^{\frac{\la_2}{2}s^{\rho_1}}h(s)\right\|_{\infty}.
\end{split}
\end{equation*}
As to the case $\alpha (X_{%
\mathbf{cl}}(s),v^{\prime })\geq \varepsilon $, from (\ref{x'explicit}), we bound $H_{5,2}$ as
\begin{equation*}
\begin{split}
C_{N}&\int_{0}^{t}e^{-\frac{\la_2}{2}(t^{\rho_1}-s_1^{\rho_1})}\int_{0}^{s}\int_{\substack{ \alpha
(X_{\mathbf{cl}}(s),v^{\prime })\geq \varepsilon  \\ |v^{\prime }|\leq
2N,|v^{\prime \prime }|\leq 3N}}|h\left( s_1,X_{%
\mathbf{cl}}^{\prime }(s_1),v^{\prime \prime }\right) |dv^{\prime }dv^{\prime
\prime } \\
=&C_{N}\sum_{k,k^{\prime }}\int_{t_{k+1}}^{t_{k}}\int_{t_{k^{\prime
}+1}^{\prime }}^{t_{k^{\prime }}^{\prime }}\int_{\substack{ \alpha (X_{%
\mathbf{cl}}(s),v^{\prime })\geq \varepsilon  \\ |v^{\prime }|\leq
2N,|v^{\prime \prime }|\leq 3N}}e^{-\frac{\la_2}{2}(t^{\rho_1}-s_1^{\rho_1})}\\&\times
|h\left( s_1,x_{k^{\prime }}^{\prime }+(s_1-t_{k^{\prime }}^{\prime
})v_{k^{\prime }}^{\prime },v^{\prime \prime }\right) |dv^{\prime }dv^{\prime
\prime },
\end{split}
\end{equation*}%
where $[t_{k^{\prime }}^{\prime },x_{k^{\prime }}^{\prime },v_{k^{\prime
}}^{\prime }]$ is the back-time cycle of $%
(s,x_{k}+(s-t_{k})v_{k},v_{k}),$ for $t_{k+1}\leq s\leq t_{k}.$

We now study $x_{k^{\prime }}^{\prime }+(s_1-t_{k^{\prime }}^{\prime
})v_{k^{\prime }}^{\prime }.$ By the repeatedly using Velocity Lemma \ref%
{velocity}, we deduce for $(t,x,v)\in A_{\alpha }$ and $0<t\leq T_0$ and $\alpha
(X_{\mathbf{cl}}(s),v^{\prime })\geq \varepsilon$
\begin{eqnarray*}
\alpha (t_{l}) &\sim&\{v_{l}\cdot n_{x_{l}}\}^{2}\geq e^{-\{C_{\xi
}N-1\}T_{0}}\alpha (t)\geq C_{T_{0},\xi ,N}>0; \\
\alpha (t_{l'}^{\prime }) &\sim&\{v_{l'}^{\prime }\cdot n_{x_{l'}^{\prime
}}\}^{2}\geq e^{-\{C_{\xi }N-1\}T_{0}}\alpha (X_{\mathbf{cl}%
}(s),v^{\prime })\geq C_{T_{0},\xi,N }\varepsilon >0.
\end{eqnarray*}%
Therefore, applying (\ref{tlower}) in Lemma \ref{huang} yields $%
t_{l}-t_{l+1}\geq \frac{c_{T_{0},\xi ,N}}{N^{2}}$ and $t_{l'}^{\prime
}-t_{l'+1}^{\prime }\geq \frac{c_{T_{0},\xi ,N}\varepsilon }{4N^{2}}$ so that
\begin{equation*}
k\leq \frac{T_{0}N^{2}}{c_{T_{0},\xi ,N}}=C_{T_{0},\xi ,N},\text{ \ \ }%
k^{\prime }\leq \frac{T_{0}N^{2}}{c_{T_{0},\xi ,N}\varepsilon }=C_{T_{0},\xi
,N,\varepsilon }.
\end{equation*}
With this, one can further split the $s-$integral as
\begin{equation*}
\begin{split}
C_{N}&\int_{t_{k+1}}^{t_{k}}\int_{\substack{  \\ |v^{\prime }|\leq
2N,|v^{\prime \prime }|\leq 3N}}\sum_{k\leq C_{T_{0},N},k^{\prime }\leq
C_{T_{0},N,}\varepsilon }\int_{t_{k^{\prime }+1}^{\prime }}^{t_{k^{\prime
}}^{\prime }}\mathbf{1}_{A_{\alpha }}e^{-\frac{\la_2}{2}(t^{\rho_1}-s_1^{\rho_1})}\\&\times|h\left(
s_1,x_{k^{\prime }}^{\prime }+(s_1-t_{k^{\prime }}^{\prime })v_{k^{\prime
}}^{\prime },v^{\prime \prime }\right) | \\
=&\int_{t_{k+1}+\varepsilon }^{t_{k}-\varepsilon
}+\int^{t_{k}}_{t_{k}-\varepsilon }+\int_{t_{k+1}}^{t_{k+1}+\varepsilon }.
\end{split}
\end{equation*}

Notice that $\sum_{k^{\prime }}\int_{t_{k^{\prime }+1}^{\prime }}^{t_{k^{\prime
}}^{\prime }}=\int_{0}^{s},$ the last two terms make small contribution
as
\begin{equation*}
\begin{split}
\varepsilon C_{N}&\sup_{0\leq s\leq t}e^{-\frac{\la_2}{2}(t^{\rho_1}-s^{\rho_1})}||h(s)||_{\infty
}\int_{0}^{T_{0}}\int_{\substack{  \\ |v^{\prime }|\leq 2N,|v^{\prime \prime
}|\leq 3N}}\\&=\varepsilon C_{N,T_0}\sup_{0\leq s\leq t}e^{-\frac{\la_2}{2}(t^{\rho_1}-s^{\rho_1})}\left\|h(s)\right\|_{\infty }.
\end{split}
\end{equation*}%
For the main contribution $\int_{t_{k+1}+\varepsilon }^{t_{k}-\varepsilon },$
By Lemma \ref{specularlower}, on the set $O_{t_{i},x_{i},v_{i}}^{c}%
\cap \lbrack t_{k+1}+\varepsilon ,t_{k}-\varepsilon ]\times \lbrack
0,T_{0}]\times \{|v^{\prime }|\leq N\},$ we can define a change of variable
\begin{equation*}
y\equiv x_{k^{\prime }}^{\prime }+(s_1-t_{k^{\prime }}^{\prime })v_{k^{\prime
}}^{\prime },
\end{equation*}%
so that $\det (\frac{\partial y}{\partial v^{\prime }})>\delta $ on the same
set. By the Implicit Function Theorem, there are an finite open covering $%
\cup _{j=1}^{m}V_{j}$ of $O_{t_{i},x_{i},v_{i}}^{c}\cap \lbrack
t_{k+1}+\varepsilon ,t_{k}-\varepsilon ]\times \lbrack 0,T_{0}]\times
\{|v^{\prime }|\leq N\}$, and smooth function $F_{j}$ such that $v^{\prime
}=F_{j}(t,x,v,y,s_{1},s)$ in $V_{j}$. We therefore have
\begin{equation*}
\begin{split}
\sum_{k,k^{\prime }}\int_{t_{k+1}+\varepsilon }^{t_{k}-\varepsilon }\int
_{\substack{  \\ |v^{\prime }|\leq 2N,|v''|\leq3N}}\int_{t_{k^{\prime }+1}^{\prime
}}^{t_{k^{\prime }}^{\prime }}\leq& \sum_{k,k^{\prime
}}\int_{t_{k+1}+\varepsilon }^{t_{k}-\varepsilon }\int_{\substack{  \\ %
|v^{\prime }|\leq 2N,|v''|\leq3N}}\int_{t_{k^{\prime }+1}^{\prime }}^{t_{k^{\prime
}}^{\prime }}\mathbf{1}_{O_{t_{i},x_{i},v_{i}}}\\&+\sum_{j,k,k^{\prime
}}\int_{t_{k+1}+\varepsilon }^{t_{k}-\varepsilon }\int_{\substack{  \\ %
|v^{\prime }|\leq 2N,|v''|\leq3N}}\int_{t_{k^{\prime }+1}^{\prime }}^{t_{k^{\prime
}}^{\prime }}\mathbf{1}_{V_{j}}.
\end{split}
\end{equation*}%
Since $\sum_{k^{\prime }}\int_{t_{k^{\prime }+1}^{\prime }}^{t_{k^{\prime
}}^{\prime }}=\int_{0}^{s}\leq \int_{0}^{T_{0}}$ and $%
|O_{t_{i},x_{i},v_{i}}|<\varepsilon ,$ the first part is bounded by
$$%
C_{N,T_0}\varepsilon e^{-\frac{\la_2}{2}t^{\rho_1}}\sup\limits_{0\leq s\leq t}\left\{e^{\frac{\la_2}{2}%
s^{\rho_1}}\left\|h(s)\right\|_{\infty }\right\}.$$

For the second part, we can make a change of variable $v^{\prime
}\rightarrow y=x_{k^{\prime }}^{\prime }+(s_1-t_{k^{\prime }}^{\prime
})v_{k^{\prime }}^{\prime }$ on each $V_{j}$ to get
\begin{equation*}
\begin{split}
C_{\varepsilon ,T_{0},N}&\sum_{j,k,k^{\prime }}\int_{V_{j}}\int_{|v^{\prime
\prime }|\leq 3N}e^{-\frac{\la_2}{2}(t^{\rho_1}-s_1^{\rho_1})}|h\left( s_1,x_{k^{\prime }}^{\prime
}+(s_1-t_{k^{\prime }}^{\prime })v_{k^{\prime }}^{\prime },v^{\prime \prime
}\right) | \\
=&C_{\varepsilon ,T_{0},N}\sum_{j}\int_{V_{j}}\int_{|v^{\prime \prime
}|\leq 3N}e^{-\frac{\la_2}{2}(t^{\rho_1}-s_1^{\rho_1})}|h\left( s_1,y,v^{\prime \prime }\right) |\frac{1}{%
\left|\det \{\frac{\partial y}{\partial v^{\prime }}\}\right|}dydv^{\prime \prime
}dsds_{1} \\
\leq &\frac{C_{\varepsilon ,T_{0},N}}{\delta }\int_{0}^{t}%
\int_{0}^{s}e^{-\frac{\la_2}{2}t^{\rho_1}}\int_{|v^{\prime \prime }|\leq 3N}e^{\frac{\la_2}{2}s_1^{\rho_1}}\left\{ \int_{\Omega }h^{2}\left( s_1,y,v^{\prime \prime }\right)
dy\right\} ^{1/2}dv^{\prime \prime }dsds_{1} \\
\leq &C_{\varepsilon ,T_{0},N}\int_{0}^{t}\|f(s)\|_2ds,
\end{split}
\end{equation*}%
where $f=\frac{h}{w_{q,\ta,\vth}}.$ We therefore conclude, summing over $k$ and $%
k^{\prime }$ and collecting terms
\begin{eqnarray}
\left\|h(t,x,v)\mathbf{1}_{A_{\alpha }}\right\|_{\infty } &\leq &Ce^{-\frac{\la_2}{2}t^{\rho_1}}\left\|h_{0}\right\|_{\infty }+C_{\varepsilon,T_{0},N}\int_{0}^{t}\|f(s)\|_2ds  \notag \\
&&+\left\{\frac{C}{N}+C_{N,T_{0}}\varepsilon \right\}e^{-\frac{\la_2}{2}t^{\rho_1}}\sup_{0\leq s\leq t}e^{\frac{\la_2}{2}s^{\rho_1}}\left\|h(s)\right\|_{\infty }.  \label{hm}
\end{eqnarray}
{\it Step 2:} Estimate of $h.$ We first get from $h(t,x,v)=G(t)h_{0}+\int_{0}^{t}G(t,s)K_{\overline{w}}h(s)ds$ that
\begin{equation}\label{hduhamel}
\begin{split}
\left\|h(t)\right\|_{\infty }\leq& e^{-\la_2t^{\rho_{2}}}\left\|h_{0}\right\|_{\infty }+\int_{0}^{t}e^{-\frac{1}{2}\int_s^t\widetilde{\nu}(v)d\tau}\widetilde{\nu}(v)e^{-\frac{\la_2}{2}(t^{\rho_1}-s^{\rho_1})}
\\&\times\left\|\nu^{-1}(v)K^{1-\chi}_{\overline{w}}h\right\|_{\infty }(s)ds
\\&+\int_{0}^{t}e^{-\frac{1}{2}\int_s^t\widetilde{\nu}(v)d\tau}\widetilde{\nu}(v)e^{-\frac{\la_2}{2}(t^{\rho_1}-s^{\rho_1})}
\left\|\nu^{-1}(v)K^{\chi}_{\overline{w}}h\right\|_{\infty }(s)ds
\\ \leq& e^{-\la_2t^{\rho_{2}}}\left\|h_{0}\right\|_{\infty }+C\eps^{3+\varrho}e^{-\frac{\la_2}{2}t^{\rho_{2}}}\sup_{0\leq s\leq t}e^{\frac{\la_2}{2}s^{\rho_1}}\left\|h(s)\right\|_{\infty }
\\&+\int_{0}^{t}e^{-\frac{1}{2}\int_s^t\widetilde{\nu}(v)d\tau}\widetilde{\nu}(v)e^{-\frac{\la_2}{2}(t^{\rho_1}-s^{\rho_1})}
\left\|\nu^{-1}(v)K^{\chi}_{\overline{w}}h\right\|_{\infty }(s)ds.
\end{split}
\end{equation}%
Next, since $\{K^{\chi}_{\overline{w}}h\}(s,x,v)=\int \mathbf{k}^{\chi}_{\overline{w}}(v,v^{\prime
})h(s,x,v^{\prime })dv^{\prime }$, we then rewrite
\begin{equation*}
\begin{split}
\int_{0}^{t}&e^{-\frac{1}{2}\int_s^t\widetilde{\nu}(v)d\tau}\widetilde{\nu}(v)e^{-\frac{\la_2}{2}(t^{\rho_1}-s^{\rho_1})}
\left\|\nu^{-1}(v)K^{\chi}_{\overline{w}}h\right\|_{\infty }(s)ds
\\=&
\int_{0}^{t}e^{-\frac{1}{2}\int_s^t\widetilde{\nu}(v)d\tau}\widetilde{\nu}(v)e^{-\frac{\la_2}{2}(t^{\rho_1}-s^{\rho_1})}
\\&\quad\times\left\|\nu^{-1}\int \mathbf{k}^{\chi}_{\overline{w}}(v,v^{\prime })h(s,x,v^{\prime })\{1-\mathbf{1}%
_{A_{\alpha }(x,v^{\prime })}\}dv^{\prime }\right\|_\infty ds
\\&+\int_{0}^{t}e^{-\frac{1}{2}\int_s^t\widetilde{\nu}(v)d\tau}\widetilde{\nu}(v)e^{-\frac{\la_2}{2}(t^{\rho_1}-s^{\rho_1})}
\left\|\nu^{-1}\int \mathbf{k}^{\chi}_{\overline{w}}(v,v^{\prime
})h(s,x,v^{\prime })\mathbf{1}_{A_{\alpha }(x,v^{\prime })}dv^{\prime }\right\|_\infty ds\\
\eqdef& H_6+H_7.
\end{split}
\end{equation*}%
From the definition of $A_{\alpha }$ in (\ref{aalpha}), it follows that
\begin{equation*}
\begin{split}
H_6\leq& C\left( \int_{|v^{\prime }|\geq N,\text{ or }|v^{\prime }|\leq \frac{1}{N}}|\nu^{-1}
\mathbf{k}^{\chi}_{\overline{w}}(v,v^{\prime })|dv^{\prime }+\int_{\alpha (x,v^{\prime })\leq
\frac{1}{N}}|\nu^{-1}\mathbf{k}^{\chi}_{\overline{w}}(v,v^{\prime })|\right)\\&\times e^{-\frac{\la_2}{2}t^{\rho_1}}\sup_{0\leq s\leq t}e^{\frac{\la_2}{2}s^{\rho_1}}\left\|h(s)\right\|_{\infty }.
\end{split}
\end{equation*}%
By approximation if necessary, one sees $\int_{|v^{\prime }|\geq
N,\text{ or }|v^{\prime }|\leq \frac{1}{N}}|\nu^{-1}\mathbf{k}^{\chi}_{\overline{w}}(v,v^{\prime
})|dv^{\prime }=o(1)$ as $N\rightarrow \infty .$ From $\alpha (x,v^{\prime
})\leq \frac{1}{N}$, $\xi ^{2}(x)+[v^{\prime }\cdot $ $\nabla \xi
(x)]^{2}\leq \frac{1}{N}.$ For $N$ large, $x\backsim \partial \Omega $ and $%
|\nabla \xi (x)|\geq c$ so that
\begin{equation*}
\int_{\alpha (x,v^{\prime })\leq \frac{1}{N}}
|\nu^{-1}\mathbf{k}^{\chi}_{\overline{w}}(v,v^{\prime})|dv^{\prime }\leq \int_{|v^{\prime }\cdot \frac{\nabla \xi (x)}{|\nabla
\xi (x)|}|\leq \frac{1}{c\sqrt{N}}}|\nu^{-1}\mathbf{k}^{\chi}_{\overline{w}}(v,v^{\prime })|dv^{\prime
}=o(1),
\end{equation*}%
as $N\rightarrow \infty .$ As a consequence, it follows that
\begin{equation*}
H_6\leq o(1) e^{-\frac{\la_2}{2}t^{\rho_1}}\sup_{0\leq s\leq t}e^{\frac{\la_2}{2}s^{\rho_1}}\left\|h(s)\right\|_{\infty }.
\end{equation*}
As to $H_7$,
in view of \eqref{hm}, one has
\begin{equation*}
\begin{split}
H_7 \leq& Ce^{-\frac{\la_2}{2}t^{\rho_1}}\left\|h_{0}\right\|_{\infty }+\left\{\frac{C}{N}+C_{N,T_{0}}\varepsilon \right\}e^{-\frac{\la_2}{2}t^{\rho_1}}\sup_{0\leq s\leq t}e^{\frac{\la_2}{2}s^{\rho_1}}\left\|h(s)\right\|_{\infty }  \\&+C_{\varepsilon,T_{0},N}\int_{0}^{t}\|f(s)\|_2ds.
\end{split}
\end{equation*}
Hence, substituting the estimates for $H_6$ and $H_7$ into \eqref{hduhamel}, we arrive at
\begin{equation*}
\begin{split}
\left\|h(t)\right\|_{\infty }\leq& Ce^{-\frac{\la_2}{2}t^{\rho_1}}\left\|h_{0}\right\|_{\infty }+\left\{\frac{C}{N}+C_{N,T_{0}}\varepsilon+o(1) \right\}e^{-\frac{\la_2}{2}t^{\rho_1}}\sup_{0\leq s\leq t}e^{\frac{\la_2}{2}s^{\rho_1}}\left\|h(s)\right\|_{\infty }  \\&+C_{\varepsilon,T_{0},N}\int_{0}^{t}\|f(s)\|_2ds.
\end{split}
\end{equation*}%
We choose $T_{0}$ large such that $2Ce^{-\frac{\la_2}{2}T_0^{\rho_1}}=e^{-\la_0T_0^{\rho_1}},$ for some $\la_0 >0.$ We then further choose $%
N $ large, and then $\varepsilon $ sufficiently small such that $C\{o(1)+%
\frac{1}{N}+C_{N,T_{0}}\varepsilon \}<\frac{1}{2}.$ Therefore, one has
\begin{equation*}
\sup_{0\leq s\leq t}\left\{e^{\frac{\la_2}{2}s^{\rho_1}}\left\|h(s)\right\|_{\infty }\right\}\leq
2C\left\|h_{0}\right\|_{\infty }+C_{T_{0}}\int_{0}^{t}\|f(s)\|_2ds.
\end{equation*}%
Choosing $s=t=T_{0},$ we deduce the finite-time estimate (\ref{finitees}),
and the proof of Lemma \ref{finitelem} is completed.
\end{proof}
We are ready to present
\begin{proof}[The proof of Proposition \ref{specularbd}]
It suffices to only prove (\ref{Ubd}) for $t\geq 1.$ For any $%
m\geq 1,$ we employ the finite-time estimate (\ref{finitees}) repeatedly to
functions $h(lT_{0}+s)$ for $l=m-1,m-2,...0$ to deduce
\begin{eqnarray*}
\left\|h(mT_{0})\right\|_{\infty } &\leq &e^{-\la_0 T^{\rho_1}_{0}}\left\|h(\{m-1\}T_{0})\right\|_{\infty}
+C_{T_{0}}\int_{0}^{T_{0}}\|f(\{m-1\}T_{0}+s)\|_2ds \\
&=&e^{-\la_0 T^{\rho_1}_{0}}\left\|h(\{m-1\}T_{0})\right\|_{\infty}
+C_{T_{0}}\int_{\{m-1\}T_{0}}^{mT_{0}}\|f(s)\|_2ds \\
&\leq &e^{-2\la_0 T^{\rho_1}_{0}}\left\|h(\{m-2\}T_{0})\right\|_{\infty }
+e^{-\la_0 T^{\rho_1}_{0}}C_{T_{0}}\int_{\{m-2\}T_{0}}^{\{m-1\}T_{0}}\|f(s)\|_2ds \\
&&+C_{T_{0}}\int_{\{m-1\}T_{0}}^{mT_{0}}\left\|f(s)\right\|_2ds \\
&\leq &e^{-m\la_0 T^{\rho_1}_{0}}\left\|h(0)\right\|_{\infty}
+C_{T_{0}}\sum_{k=0}^{m-1}e^{-k\la_0 T^{\rho_1}_{0}}\int_{\{m-k-1\}T_{0}}^{\{m-k\}T_{0}}\|f(s)\|_2ds,
\end{eqnarray*}%
where $h(t)=U(t)h_{0}.$

Next, by the $L^{2}$ decay constructed in Lemma \ref{sl2.eng}, in the interval
$\{m-k-1\}T_{0}\leq s\leq \{m-k\}T_{0},$ one has
$$\|f(s)\|_2\leq e^{-\lambda s^{\rho_0}}\|w_{q/4,\ta}f_{0}\|_2\leq e^{-\lambda (\{m-k-1\}T_{0})^{\rho_0}}\|w_{q/4,\ta}f_{0}\|_2.$$
Noticing that $\rho_0=\frac{\ta}{\ta-\varrho}>\frac{\ta+\vth\varrho}{\ta-\varrho}=\rho_1$, taking $\la_0=\min\{\la,\la_0\}$ and applying
$(k+1)T^{\rho_1}_{0}\geq ((k+1)T_{0})^{\rho_1}$ for $0<\rho_1<1,$
we further obtain
\begin{equation*}
\begin{split}
\left\|h(mT_{0})\right\|_{\infty }\leq&
e^{-m\la_0 T^{\rho_1}_{0}}\left\|h(0)\right\|_{\infty}
+C_{T_{0}}\sum_{k=0}^{m-1}e^{-k\la_0 T^{\rho_1}_{0}}\int_{\{m-k-1\}T_{0}}^{\{m-k\}T_{0}}\\&\times e^{-\lambda (\{m-k-1\}T_{0})^{\rho_0}}\|w_{q/4,\ta}f_{0}\|_2ds \\
\leq &e^{-m\la_0 T^{\rho_1}_{0}}\left\|h(0)\right\|_{\infty}+C_{T_{0}}e^{\la_0 T^{\rho_1}_{0}}mT_{0}e^{-\la_0m^{\rho_1} T^{\rho_1}_{0}}\|w_{q/4,\ta}f_{0}\|_2 \\
\leq &C_{T_{0},\la_0 }e^{-\frac{\la_0 m^{\rho_1} T^{\rho_1}_{0}}{2}}\left\|h(0)\right\|_{\infty},
\end{split}
\end{equation*}%
where we also used the fact that
$$\|w_{q/4,\ta}f_{0}\|_2=\left\|w_{q/4,\ta}w^{-1}_{q,\ta,\vth}h_{0}\right\|_2\leq
C\left\|h_{0}\right\|_{\infty },$$
and
$$(\{m-k-1\}T_{0})^{\rho_1}+(\{k+1\}T_{0})^{\rho_1}\geq (mT_{0})^{\rho_1},\ \
mT_{0}e^{-\la_0 m^{\rho_1}T^{\rho_1}_{0}}\leq e^{-\frac{\la_0
m^{\rho_1} T^{\rho_1}_{0}}{2}}.$$
Finally, for any $t,$ we can find $m$ such that $mT_{0}\leq
t\leq \{m+1\}T_{0},$ and
\begin{equation*}
\begin{split}
\left\|h(t)\right\|_{\infty }\leq& C\left\|h(mT_{0})\right\|_{\infty }
\leq C_{T_{0},\la_0 }e^{-%
\frac{\la_0 m^{\rho_1}T^{\rho_1}_{0}}{2}}\left\|h(0)\right\|_{\infty}\\
\leq&\left\{C_{T_{0},\la_0 }e^{\la_0 T^{\rho_1}_{0}}\right\}e^{-\frac{\la_0 }{2}t^{\rho_1}}\left\|h(0)\right\|_{\infty},
\end{split}
\end{equation*}%
according to the fact $e^{-\frac{\la_0m^{\rho_1} T^{\rho_1}_{0}}{2}}\leq
e^{-\frac{\la_0 }{2}t^{\rho_1}}
e^{\frac{\la_0 T^{\rho_1}_{0}}{2}}$. This ends the proof of Proposition \ref{specularbd}.
\end{proof}

\subsection{Nonlinear existence and time exponential decay}
In this subsection, we make use of Proposition \ref{specularbd} to prove the global existence and time exponential decay of the nonlinear Boltzmann equation with specular reflection boundary condition. Namely, we tend to complete
\begin{proof}[The proof of Theorem \ref{specularnl}]
We start with the following iteration scheme
\begin{eqnarray}\label{nn.sit}
\left\{\begin{array}{rll}
&&\partial _{t}h^{\ell +1}+v\cdot \nabla _{x}h^{\ell +1}+\widetilde{\nu} h^{\ell +1}-K_{\overline{w}}h^{\ell +1}
=w_{q,\ta,\vth}\Ga\left(\frac{h^{\ell}}{w_{q,\ta,\vth}},\frac{h^{\ell}}{w_{q,\ta,\vth}}
\right),\\
&&h^{\ell+1}(0,x,v)=h_0(x,v),
 \end{array}\right.
\end{eqnarray}%
with $h_{{-}}^{\ell +1}(t,x,v)=h^{\ell+1}(t,x,R_xv)$ and $h^{0}=h_0(x,v)$. Here $h^\ell=f^\ell w_{q,\ta,\vth}.$
From the Duhamel principle, it follows
\begin{equation*}
h^{\ell+1}=U(t)h_{0}+\int_{0}^{t}U(t-s)w_{q,\ta,\vth}\Gamma \left(\frac{h^{\ell}}{w_{q,\ta,\vth}},\frac{h^{\ell}}{w_{q,\ta,\vth}}%
\right)(s)ds.
\end{equation*}%
We then get from Proposition \ref{specularbd} and Lemma \ref{es.nop} that
\begin{equation}\label{hlsp}
\begin{split}
\left\|h^{\ell+1}(t)\right\|_{\infty }\leq& Ce^{-\frac{\la_0}{2}t^{\rho_1}}\|h_{0}\|_{\infty}+\left\|\int_{0}^{t}U(t-s)w_{q,\ta,\vth}\Gamma \left(\frac{h^{\ell}}{w_{q,\ta,\vth}},\frac{h^{\ell}}{w_{q,\ta,\vth}}%
\right)(s)ds\right\|_{\infty }\\
\leq& Ce^{-\frac{\la_0t^{\rho_1}}{2}}\|h_{0}\|_{\infty}+\int_{0}^{t}e^{-\frac{\la_0}{2}(t-s)^{\rho_1}-\la_0s^{\rho_1}}ds
\sup\limits_{0\leq s\leq t}\left\|e^{\frac{\la_0}{2}s^{\rho_1}}h^{\ell}(s)\right\|_{\infty }^2
\\
\leq& Ce^{-\frac{\la_0t^{\rho_1}}{2}}\|h_{0}\|_{\infty}+e^{-\frac{\la_0}{2}t^{\rho_1}}\sup\limits_{0\leq s\leq t}\left\|e^{\frac{\la_0}{2}s^{\rho_1}}h^{\ell}(s)\right\|_{\infty }^2,
\end{split}
\end{equation}%
where the fact the $\nu(v)<C$ was used.
This implies that
$$
\sup\limits_{\ell}\sup\limits_{0\leq t\leq \infty }
\left\{e^{\frac{\la_0}{2}t^{\rho_1}}\left\|h^{\ell}(t)\right\|_{\infty }\right\}\leq C\|h_{0}\|_{\infty },$$
for
$\|h_{0}\|_\infty$ sufficiently small. Moreover, subtracting $h^{\ell+1}-h^{\ell}$
yields
\begin{equation*}
\begin{split}
\{\partial _{t}&+v\cdot \nabla _{x}+\widetilde{\nu} -K_{\overline{w}}\}\{h^{\ell+1}-h^{\ell}\}\\&=w_{q,\ta,\vth}\left\{\Gamma \left(%
\frac{h^{\ell}}{w_{q,\ta,\vth}},\frac{h^{\ell}}{w_{q,\ta,\vth}}\right)-\Gamma \left(\frac{h^{\ell-1}}{w_{q,\ta,\vth}},\frac{h^{\ell-1}}{w_{q,\ta,\vth}}
\right)\right\},
\end{split}
\end{equation*}%
with $\{h^{\ell+1}-h^{\ell}\}(0,x,v)=0$ and $\{h^{\ell+1}-h^{\ell}\}(t,x,v)|_-=\{h^{\ell+1}-h^{\ell}\}(t,x,R_xv)$.
By the decomposition
\begin{equation*}
\begin{split}
\Gamma& \left(\frac{h^{\ell}}{w_{q,\ta,\vth}},\frac{h^{\ell}}{w_{q,\ta,\vth}}\right)-\Gamma\left (\frac{h^{\ell-1}}{w_{q,\ta,\vth}},\frac{%
h^{\ell-1}}{w_{q,\ta,\vth}}\right)\\&=\Gamma \left(\frac{h^{\ell}-h^{\ell-1}}{w_{q,\ta,\vth}},\frac{h^{\ell}}{w_{q,\ta,\vth}}\right)-\Gamma \left(\frac{%
h^{\ell-1}}{w_{q,\ta,\vth}},\frac{h^{\ell-1}-h^{\ell}}{w_{q,\ta,\vth}}\right).
\end{split}
\end{equation*}%
Performing the similar calculation as (\ref{hlsp}), we then obtain
\begin{equation*}
\begin{split}
\left\|\{h^{\ell+1}-h^{\ell}\}(t)\right\|_{\infty }\leq&
\left\|\int_{0}^{t}U(t-s)w_{q,\ta,\vth}\Gamma (\frac{h^{\ell}-h^{\ell-1}}{w_{q,\ta,\vth}},\frac{h^{\ell}}{w_{q,\ta,\vth}})(s)ds\right\|_{\infty } \\
&+\left\|\int_{0}^{t}U(t-s)w_{q,\ta,\vth}\Gamma (\frac{h^{\ell-1}}{w_{q,\ta,\vth}},\frac{h^{\ell-1}-h^{\ell}}{w_{q,\ta,\vth}
})(s)ds\right\|_{\infty }  \\
\leq &Ce^{-\frac{\la_0}{2}t^{\rho_1}}\sup\limits_{0\leq s\leq t}\left\{\left\|e^{\frac{\la_0}{2}s^{\rho_1}}h^{\ell}(s)\right\|_{\infty
}+\left\|e^{\frac{\la_0}{2}s^{\rho_1}}h^{\ell-1}(s)\right\|_\infty\right\}\\&\times \sup\limits_{0\leq s\leq t}
\left\|e^{\frac{\la_0}{2}s^{\rho_1}}\{h^{\ell}(s)-h^{\ell-1}(s)\}\right\|_{\infty }.
\end{split}
\end{equation*}%
Hence $h^{\ell}$ is a Cauchy sequence and the limit $h$ is a desired unique
solution satisfing
\begin{equation*}
\sup\limits_{0\leq t\leq \infty }\left\|e^{\frac{\la_0}{2}t^{\rho_1}}h(t)\right\|_{\infty }\leq C||h_{0}||_{\infty }.
\end{equation*}

In addition, if $\Omega $ is strictly convex, we {\it claim} that $h^{\ell+1}$ is
continuous in $[0,\infty )\times \{\bar{\Omega}\times \R%
^{3}\setminus \gamma _{0}\}$ inductively$.$ To prove this claim, for any
given fixed $\ell,$ we can use another iteration to solve the linear problem
for $h^{\ell+1}$ in (\ref{nn.sit}) as the limit of $\ell^{\prime }\rightarrow
\infty $:
\begin{equation*}
\{\partial _{t}+v\cdot \nabla _{x}+\widetilde{\nu} \}h^{\ell+1,\ell^{\prime
}+1}=K_{\overline{w}}h^{\ell+1,\ell^{\prime }}+w_{q,\ta,\vth}\Gamma \left(\frac{h^{\ell}}{w_{q,\ta,\vth}},\frac{h^{\ell}}{w_{q,\ta,\vth}}\right),
\end{equation*}%
with the initial boundary condition: $$h_{-}^{\ell +1,\ell'+1}(t,x,v)=h^{\ell +1,\ell'+1}(t,x,R_xv),\ h^{\ell +1,\ell'+1}(0)=h_0(x,v),$$
 and $
h^{\ell+1,0}\equiv h_0(x,v)$.
By induction over $\ell^{\prime },$ $h^{\ell+1,\ell^{\prime }}$
is continuous in $[0,\infty )\times \{\bar{\Omega}\times \R
^{3}\setminus \gamma _{0}\},$ and by Lemma \ref{es.k}, it is standard to
show that $K_{\overline{w}}h^{\ell+1,\ell^{\prime }}$ is continuous in the interior of $%
[0,\infty )\times \Omega \times \R^{3}.$ From the induction
hypothesis on continuity of $h^{\ell}$ in $[0,\infty )\times \{\bar{\Omega}%
\times \R^{3}\setminus \gamma _{0}\},$ it is also straightforward
and routine to verify that $w_{q,\ta,\vth}\Gamma(\frac{h^{\ell}}{w_{q,\ta,\vth}},\frac{h^{\ell}}{w_{q,\ta,\vth}})$ is
continuous in the interior of $[0,\infty )\times \Omega \times \R^{3}.$ 
In view of Lemma \ref{specularcon}%
, we thus deduce that $h^{\ell+1,\ell^{\prime }+1}$ is continuous in $[0,\infty )\times
\{\bar{\Omega}\times \R^{3}\setminus \gamma _{0}\}.$ Furthermore, it follows that
\begin{equation*}
\{\partial _{t}+v\cdot \nabla _{x}+\widetilde{\nu} \}\{h^{\ell+1,\ell^{\prime
}+1}-h^{\ell+1,\ell^{\prime }}\}=K_{\overline{w}}\{h^{\ell+1,\ell^{\prime }}-h^{\ell+1,\ell^{\prime }-1}\}
\end{equation*}%
with $\{h^{\ell+1,\ell^{\prime
}+1}-h^{\ell+1,\ell^{\prime }}\}(t,x,v)|_{-}=\{h^{\ell+1,\ell^{\prime
}+1}-h^{\ell+1,\ell^{\prime }}\}(t,x,R_xv)$ and $\{h^{\ell+1,\ell^{\prime
}+1}-h^{\ell+1,\ell^{\prime }}\}(0)=0$,
with this, one deduce that
\begin{eqnarray*}
\sup_{0\leq t\leq T}\left\|h^{\ell+1,\ell^{\prime }+1}(t)-h^{\ell+1,\ell^{\prime
}}(t)\right\|_{\infty } &\leq &C_{K}\int_{0}^{T}\left\|h^{\ell+1,\ell^{\prime
}}(s)-h^{\ell+1,\ell^{\prime }-1}(s)\right\|_{\infty }ds\leq \cdots \\
&\leq &C\frac{\{C_{K}T\}^{\ell^{\prime }}}{\ell^{\prime }!}.
\end{eqnarray*}%
Therefore, $\{h^{\ell+1,\ell^{\prime }}\}_{\ell'=1}^\infty$ is a Cauchy sequence in $L^{\infty },$ and its limit $%
h^{\ell+1}$ is continuous in $[0,\infty )\times \{\bar{\Omega}\times \R^{3}\setminus \gamma _{0}\}.$
We conclude our {\it claim}. Once $h^{\ell}$ is continuous, its limits $h$ is continuous as well.

Finally, the uniqueness and positivity of $F$ follows the same argument as the proof of the
the Theorem 3 in \cite[pp.804]{Guo-2010}, we omit the details for brevity. This finishes the proof of Theorem \ref{specularnl}.

\end{proof}

\medskip

\noindent {\bf Acknowledgements:}
Shuangqian Liu has received grants from the National Natural Science Foundation of China (contracts: 11471142 and 11271160) and China
Scholarship Council. Xiongfeng Yang has received grants from the National Natural Science Foundation of China (11171212) and
the SJTU's SMC Projection A.
The authors would like to thank the generous hospitality of the Division of Applied Mathematics at Brown University during their visit.
The authors are also very grateful to Professor Yan Guo for many fruitful discussions on the subject of the paper.

\medskip



\begin{thebibliography}{99}
\bibitem{AC-93}
Arkeryd, L. and Cercignani, C.: A global existence theorem for the initial-boundary value problem for the Boltzmann equation when the boundaries are not isothermal, {\it Arch. Rational Mech. Anal.}  {\bf 125}  (1993),  no. 3, 271--287.

\bibitem{AM-94}
Arkeryd, L. and Maslova, N.: On diffuse reflection at the boundary for the Boltzmann equation and related equations, {\it J. Statist. Phys.}  {\bf 77}  (1994),  no. 5-6, 1051--1077.


\bibitem{BG-2015}Briant, M. and Guo, Y.: Asymptotic stability of the Boltzmann equation with Maxwell boundary conditions, arXiv:1511.01305.

\bibitem{Cer-92}
Cercignani, C.: On the initial-boundary value problem for the Boltzmann equation,
{\it Arch. Ration. Mech. Anal.} {\bf 116} (1992), no. 4, 307--315.

\bibitem{CIP}
Cercignani, C., Illner, R. and Pulvirenti, M.: {\it The Mathematical Theory of Dilute Gases}, Springer-Verlag, Berlin, 1994.






\bibitem{CC}
Chapman, S. and Colwing, T. G.: {\it The Mathematical Theory of Non-uniform Gases}, 3rd ed., Cambridge Math. Lib., Cambridge University Press, Cambridge, 1990.




\bibitem{Des-90}
Desvillettes, L.: Convergence to equilibrium in large time for Boltzmann and B.G.K. equations, {\it Arch. Rational Mech. Anal.}  {\bf 110}  (1990),  no. 1, 73--91.

\bibitem{DV-05}
Desvillettes, L., Villani, C.: On the trend to global equilibrium for spatially inhomogeneous kinetic systems: the Boltzmann equation, {\it Invent. Math.}  {\bf 159}  (2005),  no. 2, 245--316.

\bibitem{DL}
DiPerna, R. J., Lions, P.-L.: On the Cauchy problem for Boltzmann equations: global existence and weak stability, {\it Ann. of Math.} {\bf 130} (1989), no. 2, 321-366.

\bibitem{DL2}DiPerna, R. J., Lions, P.-L.: Global weak solution of Vlasov-Maxwell
systems, {\it Comm. Pure Appl. Math.} {\bf 42}, (1989), 729--757.

\bibitem{D-im}
Duan, R.-J.: Global smooth dynamics of a fully ionized plasma with long-range collisions, {\it Ann. Inst. H. Poincar¨¦ Anal. Non Lin¨¦aire}  {\bf 31}  (2014),  no. 4, 751--778.

\bibitem{DL-VPB}
Duan, R.-J.,  Liu, S.-Q.: Stability of the rarefaction wave of the Vlasov-Poisson-Boltzmann system, {\it SIAM J. Math. Anal.} {\bf 47} (2015), no. 5, 3585--3647.

\bibitem{DLYZ-VMB}
Duan, R.-J., Liu, S.-Q., Yang, T., Zhao, H.-J.: Stability of the nonrelativistic Vlasov-Maxwell-Boltzmann system for angular non-cutoff potentials, {\it Kinet. Relat. Models}  {\bf 6}  (2013),  no. 1, 159--204.

\bibitem{DYZ-h}
Duan, R.-J., Yang, T., Zhao, H.-J.:  The Vlasov-Poisson-Boltzmann system in the whole space: the hard potential case, {\it J. Diff. Equa.} {\bf 252} (2012), no. 12, 6356--6386.


\bibitem{DYZ-s}
Duan, R.-J., Yang, T.,  Zhao, H.-J.:  The Vlasov-Poisson-Boltzmann system for soft potentials, {\it Math. Models Methods Appl. Sci.} {\bf 23} (2013), no. 6, 979--1028.


\bibitem{EGM}Esposito, R.,  Guo, Y., Marra, R.:  Phase transition in
a Vlasov-Boltzmann binary mixture, {\it Comm. Math. Phys.} {\bf 296} (2010),
no. 1, 1--33.

\bibitem{EGKM-13}Esposito, R., Guo, Y., Kim, C., Marra, R.:
Non-isothermal boundary in the Boltzmann theory and Fourier law, {\it Comm. Math. Phys.}  {\bf 323}  (2013),  no. 1, 177--239.

\bibitem{EGKM-15}Esposito, R., Guo, Y., Kim, C., Marra, R.:
Stationary solutions to the Boltzmann equation in the Hydrodynamic limit, arXiv:1502.05324.

\bibitem{ELM1}Esposito, R., Lebowitz, J. L.,  Marra, R.:
Hydrodynamic limit of the stationary Boltzmann equation in a slab, {\it Comm.
Math. Phys.} {\bf 160} (1994), 49--80.

\bibitem{ELM2}Esposito, R., Lebowitz, J. L.,  Marra, R.: The
Navier-Stokes limit of stationary solutions of the nonlinear Boltzmann
equation, {\it J. Stat. Phys.} {\bf 78} (1995), 389--412.


\bibitem{GPS}
Golse, F., Perthame, B., Sulem, C.: On a boundary layer problem for the nonlinear Boltzmann equation, {\it Arch. Ration. Mech. Anal.} {\bf 103} (1986), 81--96.

\bibitem{Gr-58}
Grad, H.:
Principles of the kinetic theory of gases. In: {\it Handbuch der Physik}, vol. XII,
pp. 205--294. Springer, Berlin, 1958.

\bibitem{GrII}Grad, H.: Asymptotic theory of the Boltzmann equation. II. 1963  Rarefied Gas Dynamics In:
{\it Proceedings of the 3rd international Symposium}, pp. 26--59, Paris, 1962.




\bibitem{G-soft}
Guo, Y.: Classical solutions to the Boltzmann equation for molecules with an angular cutoff, {\it Arch. Ration. Mech. Anal.}  {\bf 169}  (2003),  no. 4, 305--353.


\bibitem{G-IUMJ}
Guo, Y.:
The Boltzmann equation in the whole space,
{\it Indiana Univ. Math. J.} {\bf 53} (2004), no. 4, 1081--1094.

\bibitem{G06}
Guo, Y.: Boltzmann diffusive limit beyond the Navier-Stokes approximation, {\it Comm. Pure. Appl. Math.} {\bf 55} (2006), no. 9, 0626--0687.

\bibitem{Guo-2010}
Guo, Y.:
Decay and continuity of the Boltzmann equation in bounded domains,
{\it Arch. Ration. Mech. Anal.} {\bf 197} (2010), no. 3, 713--809.



\bibitem{GKTT-12}
Guo, Y., Kim, C., Tonon, D., Trescases, A.:
Regularity of the Boltzmann equation in convex domains, to appear in {\it Invent. Math.} doi:10.1007/s00222-016-0670-8.

\bibitem{GKTT-14}
Guo, Y., Kim, C., Tonon, D., Trescases, A.: BV-regularity of the Boltzmann equation in non-convex domains, {\it Arch. Ration. Mech. Anal.}
{\bf 220}  (2016),  no. 3, 1045--1093.

\bibitem{Ha-92}
Hamdache, K.: Initial boundary value problems for Boltzmann equation: global existence
of week solutions, {\it  Arch. Ration. Mech. Anal.} {\bf 119} (1992), no. 4, 309-353.

\bibitem{Kim-11}Kim, C.: Formation and propagation of discontinuity
for Boltzmann equation in non-convex domains, {\it Comm. Math. Phys.} {\bf 308} (2011), no. 3, 641--701.

\bibitem{Kim}
 Kuo, H.-W., Liu T.-P., Tsai, L.-C.: Equilibrating effects of boundary and collision in rarefied gases, {\it Comm. Math. Phys.}  {\bf 328}  (2014),  no. 2, 421--480.

\bibitem{LY-im}
Liu T.-P., Yu, S.-H.: Invariant manifolds for steady Boltzmann flows and applications, {\it Arch. Ration. Mech. Anal.} {\bf 209} (2013), no. 3, 869--997.

\bibitem{LY-GF-04}
Liu T.-P., Yu, S.-H.: The Green's function and large-time behavior of solutions for the one-dimensional Boltzmann equation, {\it Comm. Pure Appl. Math.}  {\bf 57}  (2004),  no. 12, 1543--1608.


\bibitem{LY-IBP-07}
Liu T.-P., Yu, S.-H.: Initial-boundary value problem for one-dimensional wave solutions of the Boltzmann equation, {\it Comm. Pure Appl. Math.}  {\bf 60}  (2007),  no. 3, 295--356.


\bibitem{LY-DCDS}
Liu T.-P., Yu, S.-H.:  Boltzmann equation, boundary effects, {\it Discrete Contin. Dyn. Syst.}  {\bf 24}  (2009),  no. 1, 145--157.

\bibitem{MS-03}Masmoudi, N., Saint-Raymond, L.:  From the Boltzmann equation to the Stokes-Fourier system in a bounded domain, {\it Comm. Pure Appl. Math.} {\bf 56} (2003), no. 9, 1263--1293.


\bibitem{Mis}
Mischler, S.: On the initial boundary value problem for the Vlasov-Poisson-Boltzmann system, {\it Comm. Math. Phys.} {\bf 210} (2000), no. 2, 447--466.

\bibitem{SA-77}
Shizuta, Y.,  Asano, K.: Global solutions of the Boltzmann equation in a bounded convex domain, {\it Proc. Japan Acad. Ser. A Math. Sci.}  {\bf 53}  (1977), no. 1, 3--5.

\bibitem{So}Sone, Y.: {\it Molecular gas dynamics. Theory, techniques, and
applications}, Modeling and Simulation in Science, Engineering and
Technology, Birkh\"{a}user Boston, Inc., Boston, MA, 2007.

\bibitem{S}Strain, R. M.: Asymptotic stability of the relativistic
Boltzmann equation for the soft potentials, {\it Comm. Math. Phys.} {\bf 300}
(2010), no. 2, 529--597.

\bibitem{SG-06}
Strain, R. M., Guo, Y.: Almost exponential decay near Maxwellian, {\it Commun. Partial Differ. Eqs.} {\bf 31} (2006), 417--429.

\bibitem{SG-08}
Strain, R. M., Guo, Y.: Exponential decay for soft potentials near Maxwellian, {\it Arch. Ration. Mech. Anal.}  {\bf 187}  (2008),  no. 2, 287--339.



\bibitem{Ukai-86}Ukai, S.: Solutions of the Boltzmann equations.
In: {\it Pattern and Waves-Qualitative Analysis of Nonlinear Differential
Equations}, {\it Stud. Math. Appl.} {\bf 18} (1986), 37--96.

\bibitem{UY-AA}
Ukai, S., Yang, T.: The Boltzmann equation in the space $L^2\cap L^\infty_\beta$: Global and time-periodic solutions, {\it Anal. Appl. (Singap.)} {\bf 4} (2006), 263--310.

\bibitem{Vi}Vidav, I.:  Spectra of perturbed semigroups with
applications to transport theory, {\it J. Math. Anal. Appl.} {\bf 30} (1970),
264--279.


\bibitem{V2}Villani, C.: Hypocoercivity, {\it Mem. Amer. Math. Soc.} {\bf 202} (2009),
no. 950, iv+141 pp.

\bibitem{YZ}
Yang, T., Zhao, H.-J.: A half-space problem for the Boltzmann equation with specular reflection boundary condition, {\it Comm. Math. Phys.} {\bf 255} (2005), no.3, 683--726.


\bibitem{Yu}Yu, S.-H.: Stochastic formulation for the
initial-boundary value problems of the Boltzmann equation, {\it Arch. Rat. Mech.
Anal.} {\bf 192} (2009), no. 2, 217--274












\end{thebibliography}
\end{document}